\newcommand{\id}{\mathbbm{1}}
\newcommand{\tr}{\mathrm{tr}}
\newcommand{\be}{\begin{equation}}
\newcommand{\ee}{\end{equation}}
\newcommand{\beq}{\begin{eqnarray}}
\newcommand{\eeq}{\end{eqnarray}}
\DeclareMathAlphabet\mathbfcal{OMS}{cmsy}{b}{n}
\let\oldsqrt\sqrt
\def\sqrt{\mathpalette\DHLhksqrt}
\def\DHLhksqrt#1#2{%
\setbox0=\hbox{$#1\oldsqrt{#2\,}$}\dimen0=\ht0
\advance\dimen0-0.2\ht0
\setbox2=\hbox{\vrule height\ht0 depth -\dimen0}%
{\box0\lower0.4pt\box2}}
\DeclareMathOperator{\Real}{Re}
\DeclareMathOperator{\sign}{sign}
\newcommand{\mathsym}[1]{{}}
\newcommand{\unicode}[1]{{}}
\newcommand\Rb{\mathbb{R}}
\newcommand\Cb{\mathbb{C}}
\newcommand\Mb{\mathbb{M}}
\newcommand\Zb{\mathbb{Z}}
\newcommand\Hb{\mathbb{H}}
\DeclareMathOperator{\Tr}{Tr}
\DeclareMathOperator{\mo}{mod}
\DeclareMathOperator{\iden}{id}
\DeclareMathOperator{\Inn}{Inn}
\newglossaryentry{k}
{
  name={\ensuremath{k}},
  description={a field, typically \ensuremath{\Rb} or \ensuremath{\Cb}},
  sort=k
}
\newglossaryentry{x}
{
  name={\ensuremath{x}},
  description={a fixed invertible element of the algebra \ensuremath{A}},
  sort=x
}
\newglossaryentry{Sigma}
{
  name={\ensuremath{\Sigma}},
  description={a two-dimensional compact manifold, orientable but not necessarily closed},
  sort=Se
}
\newglossaryentry{sigma}
{
  name={\ensuremath{\sigma}},
  description={the unique Frobenius algebra automorphism satisfying \ensuremath{\varepsilon(a \cdot b)=\varepsilon(\sigma(b) \cdot a)} for all \ensuremath{a,b \in A} known as the Nakayama automorphism},
  sort=Sd
}
\newglossaryentry{Sigma_g}
{
  name={\ensuremath{\Sigma_g}},
  description={a closed oriented surface of genus \ensuremath{g}},
  sort=Sf
}
\newglossaryentry{Sigma_k}
{
  name={\ensuremath{\Sigma^k}},
  description={a closed unorientable surface of genus \ensuremath{k}},
  sort=Sg
}
\newglossaryentry{C}
{
  name={\ensuremath{C_{abc}}},
  description={amplitude of an oriented triangle with edges labelled by states \ensuremath{a,b,c}},
  sort=Cb
}
\newglossaryentry{R}
{
  name={\ensuremath{R}},
  description={vertex amplitude},
  sort=R
}
\newglossaryentry{field_unit}
{
  name={\ensuremath{1_k}},
  description={unit element of the field \ensuremath{k}},
  sort=aab
}
\newglossaryentry{m}
{
  name={\ensuremath{m}},
  description={multiplication map on the vector space \ensuremath{A}},
  sort=m
}
\newglossaryentry{T}
{
  name={\ensuremath{T}},
  description={a triangulation of a piecewise-linear surface},
  sort=T
}
\newglossaryentry{ast}
{
  name={\ensuremath{\ast}},
  description={a linear map on the vector space \ensuremath{A} that is typically an involution},
  sort=m
}
\newglossaryentry{eps}
{
  name={\ensuremath{\varepsilon}},
  description={a non-degenerate linear map \ensuremath{A \to k} called a Frobenius form},
  sort=e
}
\newglossaryentry{unit}
{
  name={\ensuremath{1}},
  description={unit element of an algebra},
  sort=aaa
}
\newglossaryentry{Gamma}
{
  name={\ensuremath{\Gamma}},
  description={a trivalent graph composed of nodes and arrows referred to as a defect graph},
  sort=Gc
}
\newglossaryentry{B}
{
  name={\ensuremath{B^{ab}}},
  description={amplitude for a pair of identified edges labelled with states \ensuremath{a,b} and belonging to triangles with the same orientation},
  sort=Bc
}
\newglossaryentry{Ss}
{
  name={\ensuremath{S^{ab}}},
  description={amplitude for a pair of identified edges labelled with states \ensuremath{a,b} and belonging to triangles with opposite orientation},
  sort=sc
}
\newglossaryentry{S}
{
  name={\ensuremath{S}},
  description={set of states used to label edges of a triangulation},
  sort=sb
}
\newglossaryentry{Tb}
{
  name={\ensuremath{\mathfrak{T}(\mathfrak{b})}},
  description={set of states used to label an arrow \ensuremath{\mathfrak{b}} of a defect graph},
  sort=Tb
}
\newglossaryentry{s}
{
  name={\ensuremath{s}},
  description={a spin structure on a surface},
  sort=sa
}
\newglossaryentry{Z}
{
  name={\ensuremath{Z}},
  description={partition function},
  sort=zb
}
\newglossaryentry{lambda}
{
  name={\ensuremath{\lambda}},
  description={a crossing map \ensuremath{A \otimes A \to A \otimes A} deemed compatible with \ensuremath{(C,B,R)}},
  sort=lambda
}
\newglossaryentry{bi}
{
  name={\ensuremath{\tilde{\lambda}}},
  description={a linear map \ensuremath{H \times H \to k} satisfying the axioms of a bicharacter},
  sort=lambdatilde
}
\newglossaryentry{ZA}
{
  name={\ensuremath{\mathcal{Z}(A)}},
  description={centre of the algebra \ensuremath{A}},
  sort=zc
}
\newglossaryentry{z}
{
  name={\ensuremath{z}},
  description={preferred element on an FHK model},
  sort=z
}
\newglossaryentry{chi}
{
  name={\ensuremath{\chi}},
  description={preferred element on a spin model associated with odd spin structures},
  sort=xb
}
\newglossaryentry{eta}
{
  name={\ensuremath{\eta}},
  description={preferred element on a spin model associated with even spin structures},
  sort=heta
}
\newglossaryentry{phi}
{
  name={\ensuremath{\varphi}},
  description={a map \ensuremath{A \to A}, the curl map associated with a spin model},
  sort=vb
}
\newglossaryentry{w}
{
  name={\ensuremath{w}},
  description={preferred element on an KM model},
  sort=w
}
\newglossaryentry{A}
{
  name={\ensuremath{A}},
  description={a vector space which is usually also a Frobenius algebra},
  sort=aac
}
\newglossaryentry{V}
{
  name={\ensuremath{V}},
  description={a vector space, an \ensuremath{A}-\ensuremath{A} bimodule},
  sort=V
}
\newglossaryentry{H}
{
  name={\ensuremath{H}},
  description={a finite group of order \ensuremath{|H|}},
  sort=H
}
\newglossaryentry{G}
{
  name={\ensuremath{G}},
  description={a graph constructed from the dual of a triangulation and boundary data with evaluation denoted as \ensuremath{|G|}},
  sort=Ga
}
\newglossaryentry{Gg}
{
  name={\ensuremath{G_{\Gamma}}},
  description={a graph constructed from the dual of a triangulated surface with defects and its boundary data, with evaluation denoted as \ensuremath{|G_{\Gamma}|}},
  sort=Gb
}
\newglossaryentry{Ct}
{
  name={\ensuremath{C}},
  description={a \ensuremath{k}-valued trilinear form on the vector space \ensuremath{A}},
  sort=Ca
}
\newglossaryentry{l}
{
  name={\ensuremath{l}},
  description={a left action \ensuremath{A \otimes V \to V}},
  sort=l
}
\newglossaryentry{r}
{
  name={\ensuremath{r}},
  description={a right action \ensuremath{A \otimes A \to V}},
  sort=l
}
\newglossaryentry{Bb}
{
  name={\ensuremath{B}},
  description={a \ensuremath{k}-valued bilinear form on the vector space \ensuremath{A^{\gls{ast}}}},
  sort=Ba
}
\newglossaryentry{Bi}
{
  name={\ensuremath{B^{-1}}},
  description={a \ensuremath{k}-valued bilinear form on the vector space \ensuremath{A}},
  sort=Bb
}
\newtheorem{definition}{Definition}[chapter]
\newtheorem{theorem}[definition]{Theorem}
\newtheorem{lemma}[definition]{Lemma}
\newtheorem{proposition}[definition]{Proposition}
\newtheorem{corollary}[definition]{Corollary}
\theoremstyle{definition}
\newtheorem{example}[definition]{Example}
\date)  
\date)
\title{Spin state sum models \\ in \\ two dimensions}
  \author{\href{sara.oriana@gmail.com}{Sara Oriana Gomes Tavares, MSc}}
  \author{Sara Oriana Gomes Tavares}
\begin{document}

\renewcommand\baselinestretch{1.2}
\baselineskip=18pt plus1pt


\maketitle  

\frontmatter

\begin{abstracts}
\addcontentsline{toc}{chapter}{Abstract}

We propose a new type of state sum model for two-dimensional surfaces that takes into account topology and spin. The definition used -- new to the literature -- provides a rich class of extended models called spin models. Both examples and general properties are studied. Most prominently, we find this type of model can depend on a surface spin structure through parity alone and we explore explicit cases that feature this behaviour.

Further directions for the two dimensional world are analysed: we introduce a source of new information -- defects -- and show how they can enlarge the class of spin models available.  

\end{abstracts}

\begin{acknowledgements}
\addcontentsline{toc}{chapter}{Acknowledgements}

I remember clearly the reason why I chose to come to Nottingham -- my first meeting with John Barrett where after so many interviews I finally found the scientist that offered me what I was looking for: mathematics with a physics flavour to give it substance. It is hard to believe these four years have passed me by already and I thank my supervisor, Prof John Barrett, first and foremost for guiding me through a world of lattices, defects and graphs and showing me by example the value of conceptual honesty.

It is a truth universally-accepted that life in the quantum gravity group thrives on the back of pub nights, all-student groups, house festivities and Zakopane conferences. I thank Carlos and Steven, Gianluca and Alex for my first insights into life in Nottingham and very shameful Wii performances. I praise Ricardo, Manuel and Benito for their renewal of maths-dominance in the postgraduate theatre. I expect Johnny and Hugo to be entertaining the people of Poland for many years to come.

Shamefully, when I moved from London in 2010 I was not prepared for the accents of the North. I thank Antony for not dismissing me as a nitwit when I failed to understand a single word out of his mouth; I thank Ian, Drew, Mike and Tom for the amazing linguistic training that going to the pub with them offers and for showing me how an after-viva celebration should be.

I cannot forget the people that gave me freedom from doing research alone. Thank you Nico, Dave, Ivette and Lisa for having me on your conferences. Thank you Kristen for sharing with me your passion around literature and films (and making the best pancakes ever).  

M\~{a}e, Pai, Rosana, F\'{a}bio, Gata and Ninja -- thank you to all of you. For all the missed calls and emails, and all the worrying. It's done! 

\vspace{1cm}

The author acknowledges financial support from Funda\c{c}\~{a}o para a Ci\^{e}ncia e Tecnologia through grant no. SFRH/BD/68757/2010. 

\vspace{0.5cm}

\includegraphics[width=12cm]{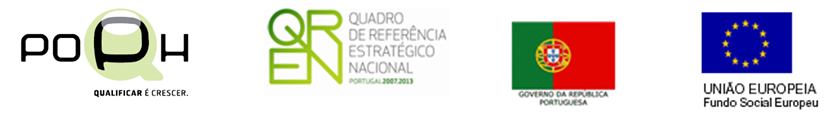}
\end{acknowledgements}


\setcounter{secnumdepth}{3} 
\setcounter{tocdepth}{3}    
\tableofcontents            

\mainmatter

\renewcommand{\chaptername}{} 

\chapter{Introduction}

\section{What is a state sum model?}
The two-dimensional world is one of simplicity -- or so our physical intuition would tell us. General relativity's independence of space-time coordinates indicates that in these worlds there are no local degrees of freedom. Gravity can therefore be dealt with exactly and the manifold theories that model it find validation here, from dynamical triangulations \cite{Ambjorn} to loop quantum gravity \cite{Oriti}. 

The traditional approaches leave out one important physical notion -- spin. The main motivation behind this thesis is mending this oversight. Our starting point are topological theories -- the mathematical construction that encodes the essence of a theory that does not depend on local properties. Nonetheless, before introducing this concept we will first understand how it can be seen as naturally arising from what are known as `state sum models'. Throughout it is assumed the reader is familiar with linear algebra, tensor calculus on vector spaces and the fundamentals of differential geometry to the standard of references \cite{Nakahara} and \cite{MDG}. 

We treat continuous space-time as a limit. We start therefore not from a description that relies on smooth manifolds but from something more rudimentary: a simplex. Intuitively, we can see simplices as the building blocks of polyhedra: a collection of vertices, edges and triangles assembled together to form a hollow three-dimensional object which might possess holes and boundaries. The space thus obtained (a polyhedron) is a special kind of topological manifold, a piecewise-linear manifold. 

\begin{definition}
Let $X$ be a topological manifold. If there exists a polyhedron $\gls{T}$ and a homeomorphism $f \colon T \to X$ then $X$ is said to be a piecewise-linear manifold and the pair $(T,f)$ or simply $T$ is called a triangulation of $X$.
\end{definition}

As the number of components of the triangulation grows the `smoother' it will appear -- in this sense we will naively regard the continuous as the limit when the triangulation becomes ever finer. In other words, the limit when a piecewise-linear manifold approaches smoothness. The interpretation one gives to a theory developed on a triangulation can be of fundamental or technical value. On one hand, we can regard the existence of a continuous world as an idealisation: discreteness should be recovered at a certain scale for which the current primordial candidate is the Planck scale. On the other, the presence of this kind of `lattice' can be regarded as an instrument which allows us to simplify the theory but must not be regarded as exact. 

To avoid convergence issues we assume the number of vertices $v$, edges $e$ and triangles $t$ in the triangulation to be finite -- this means its manifold counterpart is compact. 

\begin{definition}
Let $k$ be a field. A state sum model on a triangulation $T$ consists of: one, assignments $v \mapsto A(v)$, $e \mapsto A(e)$ and $t \mapsto A(t)$ where the numbers $A(v), A(e), A(t) \in k$ are referred to as amplitudes; and two, an evaluation map  
\begin{align}
Z(T)= \sum_{v \in T}\sum_{e \in T}\sum_{t \in \gls{T}} A(v)A(e)A(t)  
\end{align}
referred to as the partition function.
\end{definition}

Given the generality of the formulation above it is perhaps not surprising for a number of theories to fall under this classification -- remarkably, discretised versions of gauge theory \cite{Oeckl}. We are, however, interested in a particular class of these models: those which do not depend on the choice of triangulation. By this we mean $Z(T)$ and $Z(T')$ must match if the piecewise-linear manifolds associated with $\gls{T}$ and $T'$ can be regarded as the same, which is to say there is a homeomorphism between those spaces. We thus circumvent having to make a philosophical choice on how to interpret a triangulation: we work with a discrete structure but the model is overall independent of such a choice. 

We are interested in a construction that is invariant under the action of homeomorphisms, known as topological, because of their relation to the concept of `general covariance'. You will have noticed thus far there has been no mention of a metric -- precisely, a theory that is said to be topological is defined on a manifold (up to homeomorphism) without a metric structure present and all amplitudes one calculates will be topological invariants. On the other hand, a quantum field theory where no a priori choice of metric is made is said to be generally covariant. However, any observables in such a theory will be necessarily only dependent on the topology of the manifold as well. We can therefore regard generally-covariant theories as topological theories and the relation is in fact reciprocal \cite{Witten3}. The general covariance principle of gravity is therefore the motivation behind studying topological theories.  

Chapter \S\ref{chapter:pure_models} is dedicated to the revision of two major contributions to this area \cite{Fukuma}\cite{Karimipour} -- it is a more detailed introduction to the models this thesis tries to generalise. The formalism used in \S\ref{chapter:pure_models} is re-imagined and expanded in chapter \S\ref{sec:diagram}. 

\section{Spin geometry}

Now that we know what a state sum model is we must discuss what information is to be encoded through the amplitudes $A$. In classical discrete gauge theory these assignments would represent information from a principle $\mathcal{G}$-bundle, where $\mathcal{G}$ is a Lie group \cite{Oeckl}. In such theories the amplitudes are used to encode geometrical information not accessible merely through a triangulation, since it merely possesses information about the topology of the piecewise-linear manifold we regard as our space-time. Chapter \S\ref{ch:spin} is dedicated to extending this principle to a different type of extrinsic data: spin. 

First, however, we must understand what is meant by such an elusive concept as `spin information'. A concept that would immediately come to mind on a physical scenario is that of a fermion: a Grassmann-valued field which is acted on by a special type of group, the double cover of $SO(n)$ (where $n$ is typically the space-time dimension). Perhaps the most intuitive way of using amplitudes to encode spin is therefore to present a discretised version of a fermion field on a triangulation. Such an attempt can be found in the works of Hamber \cite{Hamber},  Fairbairn \cite{Fairbairn}, and Barrett, Kerr and Louko \cite{Kerr}. However, with the exception of one-dimensional models the introduction of this fermion field comes at a high price: in all known examples topological invariance is lost.  

To address this issue we have abandoned the idea of a fermion as the fundamental concept to use. We have instead directed our attention to something more primitive: a spin structure. Intuitively, we would see this structure as a pre-requisite: it must exist if a theory based on fermions is to be possible. This statement is made more precise through the following definition \cite{Cimasoni} and by recalling that a fermion field can be seen as a section of a spin bundle \cite{Nakahara}.

\begin{definition} \label{def:spin-intro}
Let $M$ be an oriented $n$-dimensional Riemannian manifold, and let $P_{SO} \to M$ be the principal $SO(n)$-bundle associated to its tangent bundle. A spin structure on $M$ is a principal $Spin(n)$-bundle $P \to M$ together with a 2-fold covering map $P \to P_{SO}$ which restricts to the covering map $Spin(n) \to SO(n)$ on each fiber.
\end{definition} 

In chapter \S\ref{ch:spin} we describe how the presence of a spin structure can be used to inform the amplitudes of a state sum model -- a definition of spin structure equivalent to \ref{def:spin-intro} but more tailored to the topological setting will be used. We show that the amount of spin information obtained in this manner is restricted: for example, the $2^{2g}$ inequivalent spin structures a Riemann surface of genus $g$ can be equipped with fall into one of two categories and only those categories can be distinguished by our model. This work is based on the paper \cite{Tavares} co-authored with John Barrett that has been accepted for publication by the journal \emph{Communications in Mathematical Physics}. 

Chapters \S\ref{ch:defects} and \S\ref{ch:spin_defects} pave the way to merging the formalism of spin models with that of `defects' -- extra sources of information. It is shown through simple examples how more spin structure information can be extracted in the presence of such new data. Finally, future directions of research are analysed through the final remarks of chapter \S\ref{sec:cat}. 


\chapter{Pure state sum models}\label{chapter:pure_models}

\section{Naive state sum models} \label{sec:lattice_tft}

This section reviews the construction of state sum models according to the work of Fukuma, Hosono and Kawai~\cite{Fukuma}, with the calculation of examples. These state sum models are called naive state sum models to distinguish them from the generalisation to diagrammatic ones in \S\ref{sec:diagram}.

\begin{figure}
\centering
\begin{subfigure}[t!]{0.47\textwidth}
                \centering
		\includegraphics{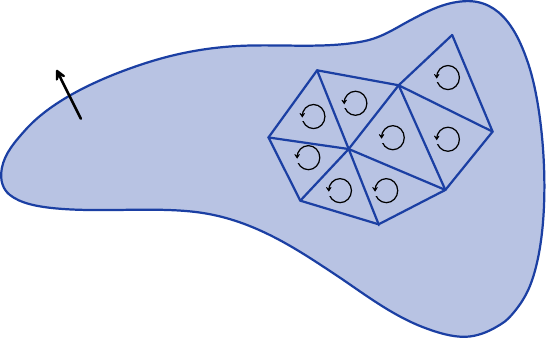}
		\caption[A patch of a triangulated surface]{\emph{A patch of a triangulated surface}. Each triangle inherits the orientation induced by the overall orientation of the surface.}
		\label{fig:glued_orient}
\end{subfigure}
\hspace{5mm}
\begin{subfigure}[t!]{0.47\textwidth}
                	\centering
		\vspace{8.8mm}
		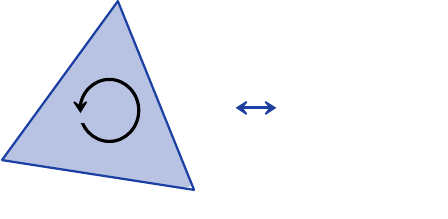
		\vspace{7.2mm}
		\caption[Associating amplitudes with triangles]{\emph{Associating amplitudes with triangles}. Each edge on a triangle is associated with one of a finite set of states \gls{S}. The amplitude for this oriented triangle is $\gls{C}$.}
		\label{fig:tri_label}
\end{subfigure}
\label{fig:triang_and_cyclicity}
\caption{Constructing a state sum model}
\end{figure} 

The idea of a state sum model is to calculate an amplitude for a given triangulated manifold, possibly with a boundary. To simplify the description of the model, triangulations of the manifold are allowed to be degenerate: two simplices can intersect in more than one face. Degenerate triangulations are a particular case of a more general construction of complexes -- cell decompositions -- that are commonly used in the description of state sum models \cite{Oeckl}.

The amplitudes are numbers in a field \gls{k}, for which the main examples of interest here are $\gls{k}=\Rb$ or $\Cb$. A surface \gls{Sigma} is a two-dimensional compact manifold, orientable but not necessarily closed.  The surfaces are triangulated, and since they are compact, the number of vertices, edges and triangles is finite. The orientation of $\gls{Sigma}$ induces an orientation on each triangle. This means a triangle has a specified cyclic order of its vertices and these orientations are coherent as to preserve the overall orientation of the surface (see figure~\ref{fig:glued_orient}). Two triangles which share an edge in the triangulation are referred to as `glued' through that edge. 

A naive state sum model on an oriented triangulated surface $\gls{Sigma}$ has a set of amplitudes for each vertex, edge and triangle. These are glued together using a superposition of all states to give an overall amplitude to $\gls{Sigma}$.   

Each edge on a triangle is associated with one of a finite set of states \gls{S} and the amplitude for the oriented triangle with edge states $a,b,c \in \gls{S}$ is \gls{C} $\in \gls{k}$, as shown in figure~\ref{fig:tri_label}. These amplitudes are required to satisfy invariance under rotations, 
\begin{align} \label{eq:C_cycle}   
\gls{C}=C_{bca}=C_{cab},
\end{align}
which is to say they must respect the cyclic symmetry of an oriented triangle. If the orientation is reversed then the amplitude is $C_{bac}$ and therefore not necessarily equal to $\gls{C}$. 

The triangles are glued together using a matrix \gls{B} associated to each edge of the triangulation not on its boundary (an interior edge). Since the formalism for naive state sum models does not distinguish the two triangles meeting at the edge then one must require symmetry,
\begin{align} \label{eq:metric}
\gls{B}=B^{ba}.
\end{align}
Note that this condition is relaxed in \S \ref{sec:diagram}, together with a modification of the cyclic symmetry \eqref{eq:C_cycle}. 

Finally, each interior vertex has amplitude \gls{R} $\in \gls{k}$. This is a slight generalisation of the  formalism presented in \cite{Fukuma}, where $\gls{R}=\gls{field_unit}$  was assumed.

\begin{figure}
\centering
\begin{subfigure}[t!]{0.47\textwidth}
                \centering
		\vspace{7.3mm}
		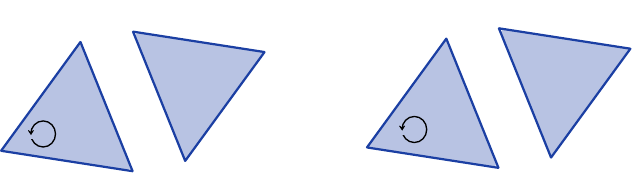 
		\vspace{3mm}
		\caption[Symmetry]{\emph{Symmetry}. The symmetry relation $\gls{B}=B^{ba}$ implies that the left- and right-hand sides of the equation above are equal, i.e., the amplitude for the two triangles glued together is invariant under a rotation by
$\pi$.}
		\label{fig:tri_metric}
\end{subfigure}
\hspace{5mm}
\begin{subfigure}[t!]{0.47\textwidth}
                \centering
		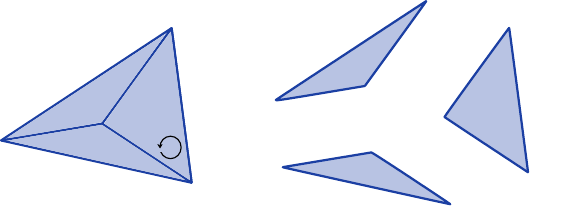
		\vspace{3mm}
		\caption[A triangulation of the disk]{\emph{A triangulation of the disk}. The partition function $Z_{abc}$ is constructed from the constants $\gls{Ct}$ associated to each triangle and matrices $\gls{B}$ associated to each interior edge.}
		\label{fig:disk}
\end{subfigure}
\label{fig:metric_and_disk}
\caption[Constructing partition functions]{Constructing partition functions}
\end{figure} 

All the data needed to calculate the amplitude of a surface is now defined. Each edge in each triangle has a variable $a\in \gls{S}$. For a given value of each of these variables, the amplitude of a triangle $t$ is $A(t)=\gls{C}$ (with $a,b,c$ the three variables on the three edges), and likewise the amplitude of an edge $e$ is $A(e)=\gls{B}$. The amplitude of the surface is called the partition function and is given by the formula that involves summing over the states on all interior edges,
\begin{align} \label{eq:Z_surf}
\gls{Z}(\text{\footnotesize{boundary states}})=\gls{R}^{V}\sum_{\text{interior states}}\left(\prod_{\text{triangles }t}A(t)\prod_{\text{interior edges }e}A(e)\right),
\end{align}
with $V$ the number of interior vertices. For example, the amplitude of the triangulated disk of figure~\ref{fig:disk} is
\begin{align} \label{eq:Z_disk}
Z_{abc}=\gls{R}\,C_{e'dc}\,C_{af'e}\,C_{fbd'}\,B^{dd'}B^{ee'}B^{ff'},
\end{align}
using the Einstein summation convention for each paired index (a convention we will adopt from now on). The resulting partition function depends on the boundary data $a,b,c$, which are not summed. 

The formalism can be interpreted in terms of linear algebra. The states $a \in \gls{S}$ correspond to basis elements $e_a$ of a vector space \gls{A}. The amplitude $\gls{C}$ is the value of a trilinear form $\gls{Ct}\colon A\times A\times A\to \gls{k}$ on basis elements, 
$\gls{Ct}(e_a,e_b,e_c)= \gls{C}$. The form $\gls{Ct}$ can also be viewed as a linear map on the tensor product, 
\begin{align} \label{eq:C_trilinear}
\gls{Ct} \colon A \otimes A \otimes A \to \gls{k}. 
\end{align}
On the other hand, $\gls{Bb}$ can be viewed as a bilinear form on $A^*$, the algebraic dual of $\gls{A}$: $\gls{Bb}\colon A^*\times A^*\to \gls{k}$, with matrix elements $\gls{B}= B(e^a,e^b)$,  using the dual basis elements $e^a$. Equivalently we can write
\begin{align} \label{eq:B_element}
\gls{Bb}=e_a\otimes e_b \, \gls{B} \in A \otimes A.
\end{align} 
This linear algebra perspective means it is possible to regard state sum models as isomorphic if they are related by a change of basis; this is used henceforth. 

The bilinear form $\gls{Bb}$ can be used to `raise indices' -- it is combined with $\gls{Ct}$ to create an $A \otimes A \to \gls{A}$ map. Thus, using the definition $C_{ab}{}^c = C_{abd}\,B^{dc}$ there is a multiplication map $\gls{m} \colon A \otimes A \to \gls{A}$ with components 
\begin{equation}
\label{eq:multiplicationmap} \gls{m}(e_a \otimes e_b)= C_{ab}{}^c\, e_c.
\end{equation} 
The notations $\gls{m}(e_a \otimes e_b)=e_a \cdot e_b$ will be used interchangeably. The state sum model data can also be used to determine a distinguished element of $\gls{A}$,
\begin{align}
\gls{m}(\gls{Bb})=e_a\cdot e_b\,\gls{B}.
\end{align}
Throughout it is assumed the data for the state sum model are non-degenerate: $\gls{R}\neq 0$, $\gls{Bb}(\cdot,a)=0 \Rightarrow a=0$ and $\gls{Ct}(\cdot,\cdot,a)=0 \Rightarrow a=0$. This means $\gls{Bb}$ has an inverse $\gls{Bi}=B_{ab}\,e^a\otimes e^b \in A^* \otimes A^*$. This is defined by 
\begin{equation}
B_{ac}B^{cb}=\delta_a^{b}.
\label{eq:snake}
\end{equation}
 This determines a bilinear form on $\gls{A}$ with components $\gls{Bi}(e_a,e_b)=B_{ab}$ and can be used to lower indices. Note that this discussion of the formalism in terms of linear algebra does not depend on the symmetry of $\gls{Bb}$, and these definitions will also be used in later sections where the symmetry of $\gls{Bb}$ is dropped.

A topological state sum is one for which the partition function of a surface is independent of the triangulation. This is made precise by the following definition.

\begin{definition}
A state sum model is said to be topological if $\gls{Z}(\gls{Sigma})=\gls{Z}(\Sigma')$ whenever $\gls{Sigma}$ and $\Sigma'$ are two closed oriented triangulated surfaces on which the state sum model is defined and there is a piecewise-linear homeomorphism $f\colon \gls{Sigma} \to \Sigma'$ that preserves the orientation.
\end{definition}

Any two triangulations of a surface are connected by a sequence of the two Pachner moves, shown in figures~\ref{fig:Pach1} and \ref{fig:Pach2}, or their inverses. For a closed manifold this result is proved in  \cite{Pachner,Lickorish-moves}. In fact this result can be extended to a manifold with boundary \cite{CowardLackenby}, but this result is not used here. Thus it is sufficient to check for each Pachner move that the partition functions for the disk on the two sides of the move are equal.

\begin{figure}
\centering
\begin{subfigure}[t!]{0.47\textwidth}
                \centering
		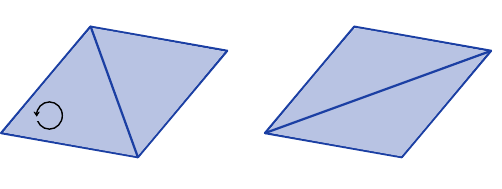 
		\vspace{3mm}
		\caption[Pachner move 2-2]{\emph{Pachner move $2$-$2$}.}
		\label{fig:Pach1}
\end{subfigure}
\hspace{5mm}
\begin{subfigure}[t!]{0.47\textwidth}
                \centering
		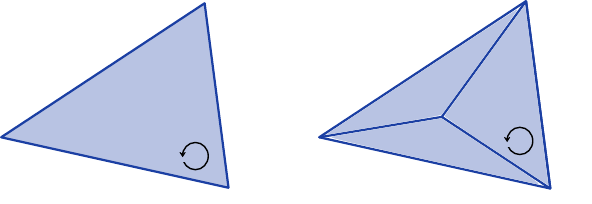
		\vspace{1.5mm}
		\caption[Pachner move 1-3]{\emph{Pachner move $1$-$3$}. }
		\label{fig:Pach2}
\end{subfigure}
\label{fig:Pachner_moves}
\caption{Topological moves}
\end{figure} 

In the case of topological state sum models there is a connection between the vector space $\gls{A}$ and a Frobenius algebra. Recall that the dimension of an algebra is the dimension of its underlying vector space. A Frobenius algebra is a finite-dimensional associative algebra $\gls{A}$ with unit $\gls{unit} \in \gls{A}$ and a linear map $\gls{eps} \colon \gls{A} \to \gls{k}$ that determines a non-degenerate bilinear form $\gls{eps} \circ \gls{m}$ on $\gls{A}$. The linear map $\gls{eps}$ is called the Frobenius form. A Frobenius algebra is called symmetric if $\gls{eps} \circ \gls{m}$ is a symmetric bilinear form. Let $\gls{Bb}\in A\otimes A$ be the inverse of $\gls{Bi}=\gls{eps}\circ \gls{m}$ according to \eqref{eq:snake}. Then the Frobenius algebra is called special if $\gls{m}(\gls{Bb})$ is a non-zero multiple of the identity element.

A naive state sum model that obeys the Pachner moves is the type of model discussed by Fukuma, Hosono and Kawai, and so these are called FHK state sum models. The following result is a more precisely-stated version of their result in \cite{Fukuma}.

\begin{theorem}\label{theo:state_sum_algebra}
Non-degenerate naive state sum model data determine an FHK state sum model if and only if the multiplication map $\gls{m}$, the bilinear form $\gls{Bb}$ and the distinguished element $\gls{m}(\gls{Bb})$ determine on $\gls{A}$ the structure of a symmetric special Frobenius algebra with identity element $\gls{unit}=\gls{R}.\gls{m}(\gls{Bb})$. 
\end{theorem}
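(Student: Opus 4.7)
The plan is to translate each of the two Pachner moves into an algebraic identity on $\gls{A}$ and to read the kinematic constraints \eqref{eq:C_cycle} and \eqref{eq:metric} directly as the symmetric Frobenius property. Since any two triangulations of a closed oriented surface are connected by a sequence of the $2$-$2$ and $1$-$3$ moves, topological invariance of \eqref{eq:Z_surf} is equivalent to invariance under each move separately, and it suffices to check that each move corresponds to an axiom of a symmetric special Frobenius algebra. Setting up the dictionary, $\gls{m}$ is already given by \eqref{eq:multiplicationmap}, the non-degeneracy of $\gls{Bb}$ supplies the invertible pairing $\gls{Bi}$ of \eqref{eq:snake}, and the Frobenius form is introduced as the unique $\gls{eps}\colon \gls{A}\to\gls{k}$ satisfying $\gls{eps}\circ\gls{m}=\gls{Bi}$, or equivalently $\gls{eps}(x)=\gls{Bi}(x,\gls{unit})$ once the identity element $\gls{unit}$ has been produced.

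For the forward direction I would argue in three steps. First, the $2$-$2$ move of figure \ref{fig:Pach1}: writing out both sides via \eqref{eq:Z_disk}, raising one index with $\gls{Bb}$ and using cyclic symmetry of $C$ to align the remaining indices, one obtains an identity which, read through \eqref{eq:multiplicationmap}, is exactly associativity of $\gls{m}$. Second, the $1$-$3$ move of figure \ref{fig:Pach2}: the subdivided disk contributes one vertex factor $\gls{R}$ together with three interior edges, so after rearranging the resulting triple sum into applications of $\gls{m}$ the move reduces to the statement that the element $\gls{R}\cdot\gls{m}(\gls{Bb})$ acts as a two-sided identity for $\gls{m}$, and is therefore the algebra unit $\gls{unit}$. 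Third, cyclicity \eqref{eq:C_cycle} together with symmetry \eqref{eq:metric} immediately gives $\gls{eps}(a\cdot b)=\gls{eps}(b\cdot a)$, so $\gls{A}$ is a symmetric Frobenius algebra, while the relation $\gls{m}(\gls{Bb})=\gls{R}^{-1}\gls{unit}$ exhibits $\gls{m}(\gls{Bb})$ as a non-zero scalar multiple of $\gls{unit}$, delivering the special property.

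The converse is obtained by running the same identifications backwards. Given a symmetric special Frobenius algebra with $\gls{unit}=\gls{R}\cdot\gls{m}(\gls{Bb})$, I would set $C_{abc}=\gls{eps}(e_a\cdot e_b\cdot e_c)$ and $B_{ab}=\gls{eps}(e_a\cdot e_b)$ and verify that the cyclicity and symmetry \eqref{eq:C_cycle}, \eqref{eq:metric} follow from associativity together with the symmetric Frobenius property, and that associativity and the special/identity relation respectively yield the $2$-$2$ and $1$-$3$ moves. The main obstacle is the $1$-$3$ step in the forward direction: one must expand the triple sum produced by the three subdividing triangles, contract three copies of $C$ against three copies of $\gls{Bb}$ with the single scalar $\gls{R}$, and recognise the resulting linear operator as left (and also right) multiplication by $\gls{R}\cdot\gls{m}(\gls{Bb})$. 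Pinning down the precise normalisation constant $\gls{R}$ requires careful, repeated use of cyclic symmetry to place each free index in an input or output slot of $\gls{m}$; once this step is done the remaining verifications are routine index bookkeeping.
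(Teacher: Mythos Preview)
Your approach is essentially the paper's: the $2$-$2$ move yields associativity, the $1$-$3$ move yields the unit, and cyclic symmetry together with the symmetry of $B$ give the symmetric Frobenius structure; the converse runs the same identifications backwards.

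One refinement worth noting concerns your description of the $1$-$3$ step. The main tool there is not really repeated cyclic symmetry but rather (i) the associativity just established from the $2$-$2$ move, used to factor the three-triangle expression as $C_{ah}{}^{c}$ times a remaining piece, and (ii) the non-degeneracy of $C$, used to cancel that outer $C_{ah}{}^{c}$ against the $C_{ab}{}^{c}$ on the other side and isolate the identity $\delta_b^{h}=\gls{R}\,C_{f'f}{}^{d}C_{db}{}^{h}B^{ff'}$, which is precisely $\gls{R}\cdot\gls{m}(\gls{Bb})\cdot e_b=e_b$. The paper flags explicitly that non-degeneracy of $C$ is essential here; without it one cannot strip off the outer triangle and the algebra need not even be unital. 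So when you carry out the computation, plan to use associativity to rearrange and non-degeneracy of $C$ to cancel, rather than relying on cyclic symmetry alone.
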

\begin{proof}
The proof begins by showing that the data determine a symmetric Frobenius algebra. The first Pachner move, shown in figure~\ref{fig:Pach1}, can be written
\begin{align} \label{eq:Pach1}
C_{ab}{}^e\, C_{ecd}= C_{bc}{}^e\, C_{aed}
\end{align}
and is equivalent to associativity of the multiplication. To see this note that using the notation \eqref{eq:multiplicationmap} of a multiplication, $(e_a \cdot e_b) \cdot e_c=C_{ab}{}^e\,C_{ec}{}^{f}\,e_f$ and $e_a \cdot (e_b \cdot e_c)=C_{bc}{}^e\,C_{ae}{}^f \,e_f$; hence, the identity \eqref{eq:Pach1} is $\gls{Bi}(e_a \cdot (e_b \cdot e_c),e_d)=\gls{Bi}((e_a\cdot e_b)\cdot e_c,e_d)$. Since the bilinear form $\gls{Bi}$ is non-degenerate this is equivalent to having an associative multiplication $\gls{m}$. A linear functional can be defined by setting $\gls{eps}(x)= \gls{Bi}(x,\gls{unit})$. The cyclic symmetry \eqref{eq:C_cycle} implies that $\gls{Bi}(x\cdot y,z)=\gls{Bi}(x,y\cdot z)$ and so $\gls{eps}(x\cdot y)=\gls{Bi}(x\cdot y,\gls{unit})=\gls{Bi}(x,y)$, which is non-degenerate and symmetric. 

The move in figure~\ref{fig:Pach2} requires the partition function of the disk \eqref{eq:Z_disk} to  equal $\gls{C}$. This is equivalent to
\begin{align}C_{ab}{}^c&=\gls{R}\,C_{ed}{}^c\,C_{af'}{}^e\,C_{fb}{}^d\,B^{ff'}\notag\\
&=\gls{R}\,C_{f'd}{}^h\,C_{ah}{}^c\,C_{fb}{}^d\,B^{ff'}
\end{align}
using associativity. For non-degenerate $\gls{Ct}$, and rewriting $C_{ab}{}^c=C_{ah}{}^c\delta_b^h$, this is equivalent to
\begin{align}\label{eq:Pachner_13}\delta_b^h&=\gls{R}\,C_{f'd}{}^h\,C_{fb}{}^d\,B^{ff'}\notag\\
&=\gls{R}\,C_{f'f}{}^d\,C_{db}{}^h\,B^{ff'}.
\end{align}
Recognising that $\gls{m}(\gls{Bb})=B^{ff'}C_{f'f}{}^d\,e_d$, expression \eqref{eq:Pachner_13} implies that $\gls{R}.\gls{m}(\gls{Bb})$ must be the unit element for multiplication, and hence $\gls{A}$ is an algebra; it is therefore a symmetric special Frobenius algebra. It is worth noting that the non-degeneracy of $\gls{Ct}$ is necessary here, as without it the algebra need not even be unital.

Conversely, given a symmetric Frobenius algebra with linear functional $\gls{eps}$, this defines a non-degenerate and symmetric bilinear form $\gls{Bi}= \gls{eps} \circ \gls{m}$ with property \eqref{eq:C_cycle}. The fact that the algebra is unital implies that $\gls{Ct}$ is non-degenerate. Finally, associativity and the property $\gls{R}.\gls{m}(\gls{Bb})=\gls{unit}$ guarantee the Pachner moves are satisfied, meaning the state sum model created is an FHK model. 
\end{proof}

For the cases  $\gls{k}=\Rb$ or $\Cb$ of interest in this paper the Frobenius algebras, and hence the state sum models, are easily classified.  The results for the symmetric Frobenius algebras in this section are stated here, with the proof of the classification given in a more general context in theorem~\ref{theo:diagram_semi_simple} of \S \ref{sec:diagram}.

Let $\Mb_{n}(\Cb)$ denote the algebra of $n \times n$ matrices over $\Cb$.
An FHK state sum model over the field $\Cb$ is isomorphic, by a change of basis, to one in which the algebra is a direct sum of matrix algebras, \begin{align} \label{eq:complex_matrix_algebra}
\gls{A} = \bigoplus\limits_{i=1}^N \Mb_{n_i}(\Cb).
\end{align}
The Frobenius form on an element $a = \oplus_i a_i$ is defined using the matrix trace on each factor:
\begin{align} \label{eq:cxfrob}
\gls{eps}(a) = \gls{R} \sum\limits_{i=1}^N n_i \Tr (a_i).
\end{align}

For the real case, the classification uses the division rings  $\Rb$, $\Cb$ and $\Hb$ (the quaternions) regarded as algebras over $\Rb$; these are denoted $\Rb$, $\Cb_{\Rb}$ and $\Hb_{\Rb}$, and the dimension of the division ring $D$ as an $\Rb$-algebra is denoted
$|D|$; thus $|\Rb|=1$, $|\Cb_\Rb|=2$, $|\Hb_\Rb|=4$. The imaginary unit in $\Cb$ is denoted $\hat{\imath}$ and the corresponding units for the quaternions $\hat{\imath}$, $\hat{\jmath}$ and $\hat{k}$.
 The real part of a quaternion is defined as $\Real(t+x\hat{\imath}+y\hat{\jmath}+z\hat{k})= t$ and the conjugate by $\overline{t+x\hat{\imath}+y\hat{\jmath}+z\hat{k}}=t-x\hat{\imath}-y\hat{\jmath}-z\hat{k}$. By abuse of notation we use $\Real(w)$ and $\overline{w}$ to denote the real part and conjugate of a complex number $w$ as well. The $n\times n$ matrices with entries in $D$ are denoted $\Mb_{n}(D)$ and are algebras over $\Rb$.

An FHK state sum model over the field $\Rb$  is isomorphic by a change of basis to one in which 
\begin{align} \label{eq:real_matrix_algebra}
 \gls{A} = \bigoplus\limits_{i=1}^N \Mb_{n_i}(D_i), \quad \text{ with }D_i=\Rb,\Cb_{\Rb},\text{or }\Hb_{\Rb}.
\end{align}
 The Frobenius form is defined by
\begin{align} \label{eq:realfrob}
\gls{eps}(a) =\gls{R} \sum\limits_{i=1}^N  |D_i|\,n_i \Real  \Tr(a_i).
\end{align}
The fact that these formulas do determine Frobenius algebras is proved here.
\begin{lemma} \label{lem:Frobenius_forms}
The equations \eqref{eq:cxfrob} and \eqref{eq:realfrob} determine symmetric Frobenius forms such that $\gls{R}.\gls{m}(\gls{Bb})=\gls{unit}$.
\end{lemma}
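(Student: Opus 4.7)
The plan is to verify the three required properties — symmetry of $\gls{Bi}=\gls{eps}\circ \gls{m}$, non-degeneracy of this bilinear form, and the identity condition $\gls{R}\cdot \gls{m}(\gls{Bb})=\gls{unit}$ — block-by-block on the decomposition $\gls{A}=\bigoplus_i \Mb_{n_i}(D_i)$. All three conditions split cleanly across the direct sum, so it suffices to work on a single simple factor $\Mb_n(D)$ with $D\in\{\Rb,\Cb_\Rb,\Hb_\Rb\}$ and then reassemble.

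Symmetry of $\gls{Bi}$ is immediate from the standard cyclic property $\Tr(ab)=\Tr(ba)$ in the complex case. In the real case with $D=\Hb_\Rb$ one does \emph{not} have $\Tr(ab)=\Tr(ba)$ entry-wise, but the weaker identity $\Real(pq)=\Real(qp)$ for $p,q\in\Hb$ suffices to yield $\Real\Tr(ab)=\Real\Tr(ba)$ after summing diagonal entries. Non-degeneracy in either setting follows from the positive-definiteness of the real-valued pairing $a\mapsto \Real\Tr(aa^*)$ on each block, where $*$ denotes the conjugate transpose (matrix transpose together with entry-wise conjugation in $D$).

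For the identity condition I would compute directly in the basis $\{e_{jk}q_\mu\}$, where $\{e_{jk}\}$ are the matrix units of $\Mb_n(D)$ and $\{q_\mu\}$ is a basis of $D$ consisting of $1$ together with the imaginary units (so $q_0^2=1$ and $q_\mu^2=-1$ otherwise). Using $(e_{jk}q_\mu)(e_{lm}q_\nu)=\delta_{kl}\,e_{jm}(q_\mu q_\nu)$ and writing $\eta_{\mu\nu}:=\Real(q_\mu q_\nu)$, one obtains
\begin{align}
B^{-1}_{(jk\mu),(lm\nu)}=\gls{R}\,|D|\,n\,\delta_{kl}\delta_{jm}\,\eta_{\mu\nu},
\end{align}
where $\eta$ is diagonal with signature $(+,-,\dots,-)$. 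Both the matrix-unit part $\delta_{kl}\delta_{jm}$ and the division-algebra part $\eta$ are self-inverse, so
\begin{align}
B^{(jk\mu),(lm\nu)}=(\gls{R}\,|D|\,n)^{-1}\,\delta_{kl}\delta_{jm}\,\eta^{\mu\nu}.
\end{align}
Expanding $\gls{m}(\gls{Bb})=\sum B^{(jk\mu),(lm\nu)}(e_{jk}q_\mu)(e_{lm}q_\nu)$, the sums over $k=l$ contribute a factor $n$, the surviving $\delta_{jm}e_{jm}$ assembles into $\id_n$, and the division-algebra sum collapses to $\sum_\mu \eta^{\mu\mu}q_\mu^2=1+(|D|-1)=|D|$. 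These factors $n$ and $|D|$ cancel precisely the normalisation in \eqref{eq:realfrob}, leaving $\gls{m}(\gls{Bb})|_{\Mb_n(D)}=\gls{R}^{-1}\,\id_n$; summing across simple factors then yields $\gls{R}\cdot \gls{m}(\gls{Bb})=\gls{unit}$.

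The only delicate point is the non-commutativity of $\Hb$: one has to keep the order of quaternion factors intact when expanding products and rely on $\Real$-part cyclicity rather than full trace cyclicity. Once that bookkeeping is in place, the calculation is uniform across all three choices of $D$ through the single identity $\sum_\mu \eta^{\mu\mu}q_\mu^2=|D|$, which is the structural reason the factor $|D_i|$ was inserted in the definition of \eqref{eq:realfrob}.
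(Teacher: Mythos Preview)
Your argument is correct and matches the paper's proof in substance: both work block-by-block, use the basis of matrix units tensored with a basis of $D$, and verify $\gls{R}\cdot\gls{m}(\gls{Bb})=\gls{unit}$ by the same cancellation of $n$ and $|D|$ against the normalisation. The only cosmetic differences are that the paper writes the $D$-basis contribution via conjugates $w\otimes\overline{w}$ (so that $w\overline{w}=1$ termwise) rather than your signature matrix $\eta_{\mu\nu}$, and proves non-degeneracy by the ``multiply by $\hat\imath,\hat\jmath,\hat k$'' trick rather than positive-definiteness of $\Real\Tr(aa^*)$; both variants are equivalent and equally short.
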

\begin{proof} That \eqref{eq:cxfrob} determines a symmetric Frobenius form follows from the fact that $\Tr(xy)$ is a non-degenerate symmetric bilinear form on $\Mb_{n}(\Cb) \ni x,y$. (For the sake of simplicity, matrix algebra multiplication is denoted $m(x,y)=xy$.)
For \eqref{eq:realfrob} there are three separate cases to handle: $\Mb_{n}(D)$ for $D=\Rb$, $\Cb_{\Rb}$ and $\Hb_{\Rb}$. The bilinear form $\Real \Tr (xy) $ reduces to $\Tr(xy)$ in the first case and this is non-degenerate on $\Mb_{n}(\Rb)$. In the $D=\Cb_\Rb$ case, if $\Real\Tr(xy)=0$ and $\Real\Tr(x(\hat{\imath}y))=0$ then $\Tr(xy)=0$. So $\Real\Tr(xy)=0$ for all $y\in\Mb_{n}(\Cb_\Rb)$ implies that $x=0$. Thus $\Real\Tr(xy)$ is a non-degenerate form. Finally, a similar proof works for $D=\Hb_\Rb$. In all these cases the bilinear form determined by $\Real\Tr$ is symmetric.

Let $\gls{k}=\Cb$. A basis for \eqref{eq:complex_matrix_algebra} is given by elementary matrices $\lbrace e_{lm}^{i}\rbrace_{l,m=1,n_i}^{i=1,N}$ satisfying $\left(e_{lm}^{i}\right)_{rs}=\delta_{lr}\delta_{ms}$. Then
\begin{align}\label{B_form_complex}
\gls{Bb}=\frac{\gls{field_unit}}{\gls{R}}\sum_{i,lm}\frac{\gls{field_unit}}{n_i}\,e_{lm}^{i}\otimes e_{ml}^i,
\end{align}
as can be verified by applying identity \eqref{eq:snake} to the above expression and using equation \eqref{eq:cxfrob}. Let $\gls{unit}=\oplus_i 1_i$; noticing $\sum_{lm}e^{i}_{lm}e^i_{ml}=n_i 1_i$, it is straightforward to conclude that $\gls{m}(\gls{Bb})=\gls{R}^{-1}\gls{unit}$. 

Suppose now that $\gls{k}=\Rb$ and let $\gls{A}$ be as in \eqref{eq:real_matrix_algebra}. Choose as a basis for the $i$-th component of $\gls{A}$ either $\lbrace e_{lm}^i\rbrace$, $\lbrace e_{lm}^i, \hat{\imath}\,e_{lm}^i\rbrace$ or $\lbrace e_{lm}^i, \hat{\imath}\,e_{lm}^i,\hat{\jmath}\,e_{lm}^i,\hat{k}\,e_{lm}^i\rbrace$ according to $D_i=\Rb$, $\Cb_{\Rb}$ or $\Hb_{\Rb}$, respectively. The element $\gls{Bb}$ associated with \eqref{eq:realfrob} will then take the form
\begin{align}
\gls{Bb}=\frac{\gls{field_unit}}{\gls{R}}\sum_{i,lm}\sum_{w_i}\frac{\gls{field_unit}}{|D_i|n_i}\, w_i\,e_{lm}^{i}\otimes_{\Rb} \overline{w_i}\,e_{ml}^i, \hspace{4mm}
w_i=
\begin{cases}
1 &(D_i=\Rb)\\
1,\hat{\imath} &(D_i=\Cb_{\Rb})\\
1,\hat{\imath},\hat{\jmath},\hat{k} &(D_i=\Hb_{\Rb})
\end{cases}.
\end{align}
Since the product $w_i\overline{w_i}=1$ for all $i$ then $\sum_{lm,w_i} w_i\,e_{lm}^{i} \overline{w_i}\,e_{ml}^i=n_i|D_i|1_i$. The identity $\gls{m}(\gls{Bb})=\gls{R}^{-1}\gls{unit}$ is therefore satisfied.
\end{proof}

The partition function for a surface can now be calculated for these examples. Let  $\gls{Sigma_g}$ denote an oriented surface of genus $g$.
Gluing two triangles together gives the partition function \eqref{eq:Pach1} of the disk with four boundary edges labelled with states $a,b,c,d$ which is equal to $\gls{eps}(e_a\cdot e_b\cdot e_c\cdot e_d)$. Gluing these boundary edges to make the sphere $\Sigma_0$ by identifying the states $a,d$ and $b,c$ results in the partition function
\begin{equation}\label{eq:sphere} \gls{Z}(\Sigma_0)=\gls{R}^3\,\gls{eps}(e_a\cdot e_b\cdot e_c\cdot e_d)B^{ad}B^{bc}=\gls{R}\,\gls{eps}(\gls{unit}).\end{equation}
Gluing opposite edges results in the torus 
\begin{equation}\label{eq:torus} \gls{Z}(\Sigma_1)=\gls{R}\,\gls{eps}(e_a\cdot e_b\cdot e_c\cdot e_d)B^{ac}B^{bd}=\gls{R}\,\gls{eps}(\gls{z}),\end{equation}
with $\gls{z}=e_a\cdot e_b\cdot e_c\cdot e_d\,B^{ac}B^{bd}$.
The surface $\gls{Sigma_g}$ for $g>0$ can be constructed from a disk with $4g$ boundary edges as presented in figure~\ref{fig:4gpoly}. This results in the partition function
\begin{align} \label{eq:invariant}
\gls{Z}(\gls{Sigma_g})=\gls{R}\,\gls{eps}\left( \gls{z}^g\right)
\end{align}
valid for all $g$. Although a specific orientation was picked when constructing expression \eqref{eq:invariant} the result is actually independent of orientation. Such a symmetry of the partition function is to be expected as it is easy to show orientation-reversing homeomorphisms exist for closed surfaces. Alternatively, this invariance can be proved directly through the partition function. For example, for the torus the two possible partition functions corresponding to two different orientations are given by expression \eqref{eq:torus} and $\gls{Z}'(\Sigma_1)=\gls{R}\,\gls{eps}(e_d\cdot e_c\cdot e_b\cdot e_a)B^{ac}B^{bd}$. By relabelling $(d,c,b,a) \to (a,b,c,d)$ and using the bilinear form symmetry it is established the two invariants are indeed equal.

\begin{figure}
\centering
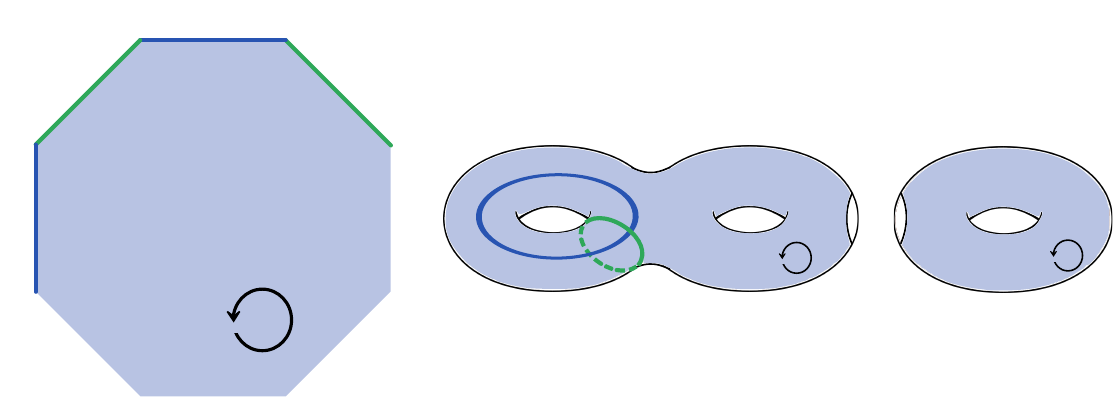  
\caption[Building a genus $g$ surface]{\emph{Building a genus $g$ surface.} $\gls{Sigma_g}$ is constructed from a disk with $4g$ boundary edges (internal edges are omitted). Edges are identified following the pattern shown on the right: $a \leftrightarrow c$, $b \leftrightarrow d$.  On the left, it can be seen how the glued edges give rise to curves on the surface.} 
\label{fig:4gpoly}
\end{figure} 

The classification of FHK state sum models gives an explicit expression for $\gls{Z}(\gls{Sigma_g})$. This is based on the following calculations for the partition function in the case of simple algebras. For $\gls{A}=\Mb_n(\Cb)$, choose as a basis the elementary matrices $\lbrace e_{lm}\rbrace_{l,m=1,n}$. Then for a Frobenius form \eqref{eq:cxfrob} the element $\gls{z}$ is given by $\gls{z}=\gls{R}^{-2}n^{-2}\sum_{lm,rs}e_{lm}e_{rs}e_{ml}e_{sr}=\gls{R}^{-2}n^{-2}\gls{unit}$. This gives the partition function 
\begin{align}\label{eq:partitionfunction}
\gls{Z}(\gls{Sigma_g}, \Mb_n(\Cb))=\gls{R}^{2-2g}n^{2-2g},
\end{align}
a result also found in \cite{Lauda}. The same conclusion holds for $\Mb_{n}(\Rb)$, now with $\gls{R}\in\Rb$. For the case of $\Mb_{n}(\Cb_{\Rb})$, the element $\gls{z}$ again takes the form $\gls{z}=\gls{R}^{-2}n^{-2}\gls{unit}$ but it produces a new partition function
\begin{align}
\gls{Z}(\gls{Sigma_g}, \Mb_n(\Cb_{\Rb}))=2\gls{R}^{2-2g}n^{2-2g}
\end{align}
due to the extra factor of $|\Cb_{\Rb}|=2$ present in the Frobenius form \eqref{eq:realfrob}. Further details of this calculation are explained in the more general example~\ref{ex:semi_simple_canonical}.

Finally, for $\Mb_n(\Hb_{\Rb})$ it can be shown that $\gls{z}=4^{-1}\gls{R}^{-2}n^{-2}\gls{unit}$. Full details can be found in example~\ref{ex:semi_simple_canonical}. The partition function reads
\begin{align}
\gls{Z}(\gls{Sigma_g}, \Mb_n(\Hb_{\Rb}))=2^{2-2g}\gls{R}^{2-2g}n^{2-2g}.
\end{align}
Given the information gathered above, the most general form of an invariant from a symmetric Frobenius algebra can be stated. 
\begin{theorem} \label{lem:inv} 
Let $\gls{A}$ be a symmetric special Frobenius algebra over the field $\gls{k}=\Cb$ or $\Rb$, as in theorem~\ref{theo:state_sum_algebra}. The topological invariant $\gls{Z}(\gls{Sigma_g})$ constructed from $\gls{A}$ and an orientable surface $\gls{Sigma_g}$ is 
\begin{align} \label{eq:gen_inv}
\gls{Z}(\gls{Sigma_g})&=\gls{R}^{2-2g}\sum\limits_{i=1}^N  n_i^{2-2g} \hspace{10mm}\text{ if } \gls{k}=\Cb \text{, or }\\
\label{eq:gen_inv-real}
\gls{Z}(\gls{Sigma_g})&=\gls{R}^{2-2g}\sum\limits_{i=1}^N f(i,g) n_i^{2-2g}, \hspace{1mm}
f(i,g)=\begin{cases} 1 & (D_i = \Rb) \\ 2 & (D_i=\Cb_{\Rb}) \\ 2^{2-2g} & (D_i=\Hb_{\Rb})\end{cases}
\text{ if }\gls{k}=\Rb.
\end{align}
\end{theorem}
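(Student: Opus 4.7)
The plan is to exploit the fact that the symmetric special Frobenius algebra $\gls{A}$ of theorem~\ref{theo:state_sum_algebra} splits as an $\gls{eps}$-orthogonal direct sum of simple matrix algebras, reducing the computation of $\gls{Z}(\gls{Sigma_g}) = \gls{R}\,\gls{eps}(\gls{z}^g)$ to a computation inside each simple summand, and then invoking the explicit evaluations of $\gls{z}$ already carried out in the excerpt.

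First I would show that the distinguished element $\gls{z} = e_a\cdot e_b\cdot e_c\cdot e_d\,B^{ac}B^{bd}$ respects the block decomposition $\gls{A} = \bigoplus_{i=1}^N \Mb_{n_i}(D_i)$. Inspecting \eqref{eq:cxfrob} and \eqref{eq:realfrob}, the Frobenius form pairs elements of distinct summands to zero, so the bilinear form $\gls{Bi}$ is block diagonal and hence so is its inverse $\gls{Bb}$. Combined with the block-diagonal multiplication, this forces $\gls{z} = \sum_i z_i$ with $z_i \in \Mb_{n_i}(D_i)$. Since $z_i z_j = 0$ for $i \neq j$ one has $\gls{z}^g = \sum_i z_i^g$, giving
\begin{align*}
\gls{Z}(\gls{Sigma_g}) = \gls{R}\sum_{i=1}^N \gls{eps}(z_i^g).
\end{align*}

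Second, for each block I would substitute the explicit values of $z_i$ already established in the discussion preceding the theorem and in example~\ref{ex:semi_simple_canonical}: namely, $z_i = \gls{R}^{-2}n_i^{-2}\,1_i$ when $D_i \in \{\Cb,\Rb,\Cb_\Rb\}$, and $z_i = 4^{-1}\gls{R}^{-2}n_i^{-2}\,1_i$ when $D_i = \Hb_\Rb$. Raising to the $g$-th power leaves $z_i^g$ proportional to $1_i$, and applying $\gls{eps}(1_i) = \gls{R}\,|D_i|\,n_i^2$ (read off \eqref{eq:cxfrob}--\eqref{eq:realfrob} using $\Real\Tr 1_i = n_i$, with the convention $|\Cb|=1$ in the complex case) produces
\begin{align*}
\gls{R}\,\gls{eps}(z_i^g) = |D_i|^{1-g}\,\gls{R}^{2-2g}\,n_i^{2-2g}.
\end{align*}

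Third, summing over $i$ recovers the stated formulas. In the complex case the factor $|D_i|^{1-g}$ is identically $1$, giving \eqref{eq:gen_inv}. In the real case $|D_i|^{1-g}$ evaluates to $1$, $2$, and $2^{2-2g}$ for $D_i = \Rb, \Cb_\Rb, \Hb_\Rb$ respectively, reproducing the function $f(i,g)$ of \eqref{eq:gen_inv-real}. The only genuine step is the block-diagonal decomposition of $\gls{z}$ via the orthogonality of the Frobenius form; once this is secured the argument is a substitution, so no serious obstacle is anticipated beyond careful tracking of the factors of $|D_i|$ inherited from the real Frobenius form.
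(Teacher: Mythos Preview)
Your approach is essentially the paper's: decompose $A$ into simple blocks using the orthogonality of the Frobenius form across summands, so that $z=\sum_i z_i$ and $Z(\Sigma_g)=R\sum_i\varepsilon(z_i^g)$, then substitute the block-by-block values of $z_i$ already computed. The paper does not even spell out the block-decomposition step explicitly---it simply collects the simple-algebra partition functions and states the theorem---so your argument is, if anything, more careful on that point.

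There is, however, a bookkeeping slip in the $\Cb_\Rb$ case that makes your unified formula $R\,\varepsilon(z_i^g)=|D_i|^{1-g}R^{2-2g}n_i^{2-2g}$ incorrect. With $z_i=R^{-2}n_i^{-2}\,1_i$ (no factor of $|D_i|^{-1}$, unlike the quaternionic block) and $\varepsilon(1_i)=2Rn_i^2$, one obtains $R\,\varepsilon(z_i^g)=2\,R^{2-2g}n_i^{2-2g}$, i.e.\ a factor of $|D_i|=2$ independent of $g$, not $|D_i|^{1-g}=2^{1-g}$. You then assert that $|D_i|^{1-g}$ ``evaluates to $2$'' for $D_i=\Cb_\Rb$, which holds only at $g=0$; two errors are being made to cancel and they do not actually cancel. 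The remedy is simply not to force a single exponent: the extra $|D_i|^{-1}$ in $z_i$ appears only for $\Hb_\Rb$, so direct substitution in each of the three real cases gives the factors $1$, $2$, $2^{2-2g}$ exactly as in \eqref{eq:gen_inv-real}.
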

Another example of a Frobenius algebra is given by the complex group algebra. Recall an algebra can be built from any finite group $\gls{H}$ by taking formal linear combinations of the group elements. This algebra, denoted $\Cb \gls{H}$, is isomorphic to the algebra of $H$-valued complex functions which has elements $f = \sum_{h \in \gls{H}} f(h)h$, $f(h)\in \Cb$ and product defined according to
\begin{align} \label{eq:prod_group}
(f \cdot f') (h) = \sum_{l \in \gls{H}}f(l)f'(l^{-1}h).
\end{align}
A Frobenius form is $\gls{eps}(f)=\gls{R}\,f(\gls{unit})$. This form is the unique symmetric special Frobenius form such that $\gls{R}.\gls{m}(\gls{Bb})=\gls{unit}$. The Peter-Weyl decomposition~\cite{Dieck} gives an isomorphism with a complex matrix algebra satisfying the conditions of theorem~\ref{theo:state_sum_algebra}. The general form of the invariant associated with the group algebra is therefore
\begin{align}\label{eq:group_inv}
\gls{Z}(\gls{Sigma_g})=\gls{R}^{2-2g}\sum_{i \in I}(\dim i)^{2-2g},
\end{align}  
where each $i$ labels an irreducible group representation, a result that is given for a Lie group in \cite{Witten}.  Expression \eqref{eq:group_inv} agrees with the results of \cite{Fukuma} when $\gls{R}=\gls{field_unit}$.

\section{Unoriented state sum models} \label{sec:unoriented}

In this section we will provide a particular extension of the state sum model construction that allows us to create partition functions not only for oriented surfaces but also for non-orientable ones. Within this section alone, a surface $\Sigma$ is not required to be orientable.

The key idea is understanding the role played by orientation in the original models and therefore grasp how relaxing that condition might be possible. This generalisation was first studied by Karimipour and Mostafazadeh \cite{Karimipour} who extended the work of Fukuma, Hosono and Kawai to all surfaces. 

Let us recall that a naive state sum model -- not necessarily topological -- was built from the association of an amplitude to each triangle, edge and vertex. In particular, a map $\gls{Ct} \colon A \otimes A \otimes A \to \gls{k}$ is to be associated with a triangle, a map $\gls{Bb} \colon A^* \otimes A^* \to \gls{k}$ with each pair of identified edges and a constant $\gls{R} \in \gls{k}$ with every vertex. Explicitly or implicitly in the construction, the existence of a global orientation for the surface is reflected in the properties of $\gls{Bb}$ and $\gls{Ct}$.

The global orientation induces an orientation in each triangle and every triangle is equipped in this way with the same orientation; the induced orientation on each triangle is used to determine the cyclic symmetry of $\gls{Ct}$.
Moreover, this also means $\gls{Bb}$ is always associated to identified edges that belong to triangles with the same orientation.
Finally, for surfaces with boundaries, the global orientation induces an orientation on the boundary.   

We will relax the need for a global orientation through the following definition.

\begin{definition} \label{def:semi-oriented}
Let $T$ be a triangulation of a not necessarily orientable surface. $T$ is called semi-oriented if each of its triangles and each of its boundaries comes equipped with an orientation. 
\end{definition}

\begin{figure}[t!]
\hspace{4mm}
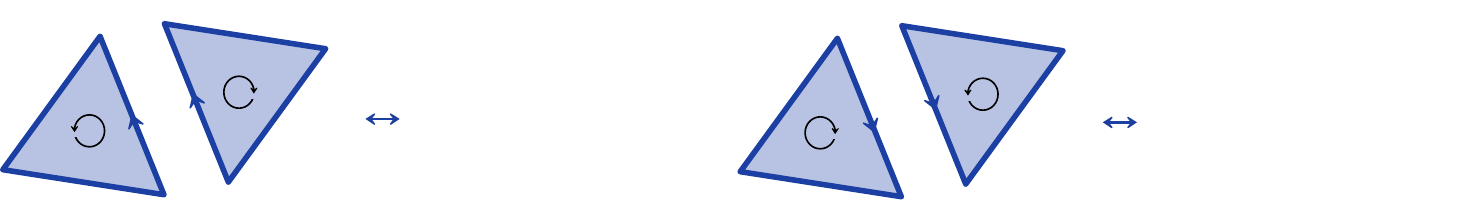 
\caption[Matrix $\gls{Ss}$]{\emph{Matrix $\gls{Ss}$}. The existence of triangles with different orientations gives rise to the inclusion of a new matrix, $\gls{Ss}$, on the model.}
\label{fig:CS}
\end{figure} 

We will construct unoriented state sum models using semi-oriented triangulations. We assume each triangle $t$ is still to be associated with the map $\gls{Ct}$; however, the cyclic symmetry of $\gls{Ct}$ is now determined by the orientation of $t$ which varies from triangle to triangle. As illustrated in figure~\ref{fig:CS}, each pair of identified edges that belong to triangles with the same orientation will continue to be associated with the map $\gls{Bb}$; however, to a pair of identified edges that belongs to triangles with opposite orientations we will associate a new map,
\begin{align}
S \colon A^* \otimes A^* \to \gls{k}.
\end{align}
This map associates basis elements $e^a,e^b \in A^*$ with the matrix element $\gls{Ss}$. The order of labels (as for $\gls{B}$) is assumed not to matter:
\begin{align}
\gls{Ss}=S^{ba}.
\end{align}
The contraction $S_a{}^{b} \equiv B_{ac}S^{cb}$ can be used to define a new linear map $e_a \mapsto S_{a}{}^{b}e_b$ that we shall denote as $\gls{ast} \colon \gls{A} \to \gls{A}$.

Finally, we introduce a new consistency condition for the boundaries of the triangulated surface. Suppose $t$ is a triangle adjacent to a boundary $\partial$. Let the adjacent edges of $t$ and $\partial$ be labeled by $a$ and $b$ respectively.  We say the orientations of $\partial$ and $t$ are compatible if the orientations induced by $t$ and $\partial$ on the adjacent edges are opposite. If the orientations of $t$ and $\partial$ are compatible we include a $\gls{B}$ factor in the model; otherwise, we must include the factor $S^{ab}$. 

The partition function of an unoriented state sum model is still described by equation \eqref{eq:Z_surf} as the triangle edges adjacent to a boundary are considered to be interior ones. Nevertheless, we should note that edge amplitudes $A(e)$ can now read either $A(e)=\gls{B}$ or $A(e)=\gls{Ss}$.

The notion of an unoriented state sum model which is topological is now made precise.

\begin{definition}
An unoriented state sum model is said to be topological if $\gls{Z}(\gls{Sigma})=\gls{Z}(\gls{Sigma}')$ whenever $\gls{Sigma}$ and $\gls{Sigma}'$ are two triangulated not necessarily orientable surfaces on which the model is defined and there is a piecewise-linear homeomorphism $f \colon \gls{Sigma} \to \gls{Sigma}'$.
\end{definition}

Lickorish \cite{Lickorish-moves} gives us an equivalence between homeomorphic piecewise-linear manifolds with boundary and invariance under Pachner moves --  an equivalence that does not involve the notion of orientation. 

\begin{figure}[t!]
\hspace{30mm}
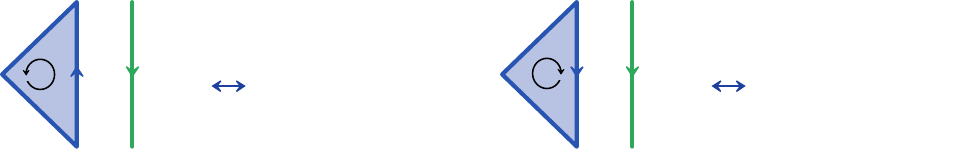 
\caption[Boundary rules]{\emph{Boundary rules.} According to definition~\ref{def:semi-oriented} any boundary $\partial$ on a semi-oriented triangulation comes with an orientation. Let $t$ be an internal triangle adjacent to $\partial$. If the orientations of $\partial$ and $t$ are compatible they are identified using $\gls{B}$; otherwise, $\gls{Ss}$ must be used.}
\label{fig:boundary}
\end{figure} 

A naive state sum model together with a map $\gls{ast}$ that satisfy Pachner moves is exactly the type discussed by Karimipour and Mostafazadeh \cite{Karimipour}. Therefore, this type of model will be known as a KM model. Note that if $M$ is orientable and all local orientations of triangles are chosen to be the same, a KM model must reduce to an FHK model. Therefore, KM models are necessarily FHK models if the map $\gls{ast}$ is disregarded.  

KM models are FHK models with one extra structure: an involution map $\gls{ast} \colon \gls{A} \to \gls{A}$. For our purposes, an involution denotes a $\gls{k}$-linear map which is an anti-homomorphism, $(a \cdot b)^{\gls{ast}} = b^{\gls{ast}} \cdot a^{\gls{ast}}$, squares to the identity, $\gls{ast}\gls{ast}=\text{id}$, and preserves the algebra unit, $\gls{unit}=\gls{unit}^{\gls{ast}}$. The following is the analogous of theorem \ref{theo:state_sum_algebra} for unoriented surfaces, a result that first appeared in \cite{Karimipour} for the case of real algebras. Note that in what follows we need not assume the matrix $\gls{Ss}$ is invertible. 

\begin{theorem} \label{theo:KM} FHK state sum model data together with a linear map $\gls{ast} \colon \gls{A} \to \gls{A}$ determine a KM model if and only if $\gls{ast}$ is an algebra involution and $\gls{eps} \circ \gls{ast} = \gls{eps}$.
\end{theorem}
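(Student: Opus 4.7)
The plan is to build on theorem~\ref{theo:state_sum_algebra}: since the FHK data already provide a symmetric special Frobenius algebra structure, only the extra constraints imposed by $\gls{ast}$ and by homeomorphisms that do not respect a global orientation remain to be checked. By Lickorish's theorem, any two triangulations of a (possibly non-orientable) surface are related by Pachner moves, so beyond the standard $2$-$2$ and $1$-$3$ moves the additional essential operation is the \emph{triangle flip}: reversing the local orientation of a single triangle. Under this flip the amplitude $C_{abc}$ is replaced by $C_{acb}$ (using the cyclic symmetry for the opposite orientation) and each of the three gluing matrices $\gls{B}$ on its edges is replaced by $\gls{Ss}$. The flip move thus reduces to the single algebraic identity
\begin{equation*}
C_{abc}\,B^{aa'}B^{bb'}B^{cc'}=C_{acb}\,S^{aa'}S^{bb'}S^{cc'}\quad \text{for all } a',b',c'.
\end{equation*}
Any semi-oriented Pachner configuration can be converted to the all-same-orientation case by flips, and there evaluated via theorem~\ref{theo:state_sum_algebra}, so it suffices to check this one extra identity together with the oriented moves already handled.

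For the forward direction, contract the flip identity with $B_{a'p}B_{b'q}B_{c'r}$ and unpack $S_a{}^b = B_{ac}S^{cb}$, so that $S^{ab} = B^{ac}(\gls{ast})_{c}{}^{b}$. After elementary manipulation the identity reads $\gls{eps}(e_a\cdot e_b\cdot e_c) = \gls{eps}(e_a^{\gls{ast}}\cdot e_c^{\gls{ast}}\cdot e_b^{\gls{ast}})$ on basis elements, and hence $\gls{eps}(xyz)=\gls{eps}(x^{\gls{ast}}z^{\gls{ast}}y^{\gls{ast}})$ for all $x,y,z\in \gls{A}$. Specialising $x=y=\gls{unit}$ yields $\gls{eps}(z)=\gls{eps}(z^{\gls{ast}})$; then specialising $x=\gls{unit}$ gives $\gls{eps}(yz)=\gls{eps}(z^{\gls{ast}}y^{\gls{ast}})=\gls{eps}(y^{\gls{ast}}z^{\gls{ast}})^{\text{swap}}$, and non-degeneracy of the Frobenius form upgrades this to $(yz)^{\gls{ast}}=z^{\gls{ast}}y^{\gls{ast}}$, i.e.\ $\gls{ast}$ is an algebra anti-homomorphism. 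Unit preservation $\gls{unit}^{\gls{ast}}=\gls{unit}$ follows by applying the anti-homomorphism rule to $\gls{unit}\cdot x$ and using non-degeneracy. Finally, involutivity $\gls{ast}\gls{ast}=\iden$ is extracted from the assumed symmetry $\gls{Ss}=S^{ba}$, which reads $\gls{eps}(e_a\cdot e_b^{\gls{ast}})=\gls{eps}(e_b\cdot e_a^{\gls{ast}})$ and, combined with the already-established anti-homomorphism and $\gls{eps}$-invariance, forces $e_a^{\gls{ast}\gls{ast}}=e_a$.

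For the converse, assume $\gls{ast}$ is an algebra involution and $\gls{eps}\circ\gls{ast}=\gls{eps}$. Define $S_a{}^b$ as the matrix of $\gls{ast}$ and $S^{ab}=B^{ac}(\gls{ast})_c{}^b$; a short calculation using the three hypotheses verifies $\gls{Ss}=S^{ba}$, so the model data are consistent. The triangle-flip identity then holds by unwinding the definitions and using the anti-homomorphism property term by term. Since every semi-oriented Pachner move can be converted by a finite sequence of triangle flips into one in which all participating triangles carry the same orientation, and since such moves reduce to the oriented Pachner moves already validated by theorem~\ref{theo:state_sum_algebra}, the model is topological.

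The main obstacle lies in two places. First, one must confirm that triangle flips together with oriented Pachner moves really do generate all piecewise-linear homeomorphism classes of semi-oriented triangulated surfaces, i.e.\ that local orientation data carry no global obstruction beyond orientability itself; this is a combinatorial verification but needs care. Second, the extraction of the three separate algebraic conditions from the single identity $\gls{eps}(xyz)=\gls{eps}(x^{\gls{ast}}z^{\gls{ast}}y^{\gls{ast}})$ is delicate and must be carried out in the right order---deriving $\gls{eps}$-invariance and anti-homomorphism first, then importing the assumed symmetry of $\gls{Ss}$ to obtain $\gls{ast}\gls{ast}=\iden$---to avoid a circular appeal to properties that are not yet established.
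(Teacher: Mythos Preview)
Your overall strategy---reduce to oriented Pachner moves via single-triangle orientation flips---matches the paper's, but the execution has a genuine gap in the forward direction.

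The triangle flip does \emph{not} reduce to a single identity. When you reverse one triangle's orientation, each of its three edges toggles between $B$-gluing and $S$-gluing, so the move comes in several flavours according to how many of the edges were $S$-glued to begin with. Up to symmetry there are two independent identities: the ``pure'' case $C_{abc}\,B^{aa'}B^{bb'}B^{cc'}=C_{acb}\,S^{aa'}S^{bb'}S^{cc'}$ that you wrote down (this is the paper's figure~\ref{fig:Unor2}), and a ``mixed'' case with one $B$ and two $S$'s on the left (the paper's figure~\ref{fig:Unor1}). The paper extracts the anti-homomorphism property from the mixed case and $\ast^2=\iden$ from the pure one; you have only the pure one. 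Your reduction ``convert every semi-oriented Pachner configuration to the all-same-orientation case by flips'' is circular here, because performing those intermediate flips already requires the mixed-case identity.

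A concrete counterexample shows the lone identity is too weak. Take $A=\Cb$ with $\varepsilon(x)=x$ and let $\ast$ be multiplication by a primitive cube root of unity $\lambda$. Then your flip identity $\varepsilon(xyz)=\varepsilon(x^{\ast}z^{\ast}y^{\ast})$ becomes $xyz=\lambda^{3}xyz$, which holds, and the symmetry $S^{ab}=S^{ba}$ is vacuous in one dimension; yet $\ast$ is neither an anti-homomorphism nor an involution, and $1^{\ast}\ne 1$. The missing mixed-case flip gives $\lambda=\lambda^{2}$, forcing $\lambda=1$. Relatedly, your extraction is circular: setting $x=y=1$ in $\varepsilon(xyz)=\varepsilon(x^{\ast}z^{\ast}y^{\ast})$ yields $\varepsilon(z)=\varepsilon(1^{\ast}z^{\ast}1^{\ast})$, not $\varepsilon(z)=\varepsilon(z^{\ast})$; and ``non-degeneracy upgrades $\varepsilon(yz)=\varepsilon(z^{\ast}y^{\ast})$ to $(yz)^{\ast}=z^{\ast}y^{\ast}$'' would need the identity to hold after multiplication by an arbitrary element, which you do not have. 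Including the mixed flip identity (as the paper does) resolves all of these issues at once.
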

\begin{proof}
It will be sufficient to be concerned with moves involving triangles with incompatible orientations, as the remaining are satisfied by requiring the data to determine an FHK model. 

First, note one can only compare regions of the triangulation which share the same boundary: this means both the algebraic data and induced orientation of each boundary edge must match. Second, recall the assumption that a KM model must be independent of internal choices of orientation. This is enough to guarantee that the partition functions of the two triangulations depicted in figure~\ref{fig:Unor1} must match. Algebraically, this means
\begin{align}
C_{abg}C_{cdk}C_{efh}C_{k'g'h'}S^{gg'}S^{hh'}B^{kk'}&=C_{abg}C_{cdk}C_{efh}C_{k'h'g'}B^{gg'}B^{hh'}S^{kk'}\hspace{10mm} \notag\\
\Leftrightarrow \hspace{25mm} C_{k'g'h'}S^{gg'}S^{hh'}B^{kk'}&=C_{k'h'g'}B^{gg'}B^{hh'}S^{kk'}
\end{align}  
where the equivalence is guaranteed by the non-degeneracy of $\gls{Ct}$. Using the bilinear form to raise and lower indices and the cyclic symmetry of $\gls{C}$ we can manipulate this identity to a particularly suitable form:
\begin{align} \label{eq:ast_antihom}
S_g{}^{g'}S_{h}{}^{h'}C_{g'h'}{}^k=C_{hg}{}^{k'}S_{k'}{}^k.
\end{align}  
Note that $(e_g)^{\gls{ast}}\cdot (e_h)^{\gls{ast}}=S_g{}^{g'}S_{h}{}^{h'}C_{g'h'}{}^k e_k$ and $(e_h \cdot e_g)^{\gls{ast}}=C_{hg}{}^{k'}S_{k'}{}^k e_k$ according to the definition of $\gls{ast}$. Linear independence therefore guarantees that \eqref{eq:ast_antihom} is equivalent to $(e_g)^{\gls{ast}}\cdot (e_h)^{\gls{ast}}=(e_h \cdot e_g)^{\gls{ast}}$, which must be valid for every basis element. Note this identity also automatically guarantees that $\gls{unit}^{\gls{ast}}=\gls{unit}$: the relation $e_h^{\gls{ast}}=(\gls{unit} \cdot e_h)^{\gls{ast}}=\gls{unit}^{\gls{ast}}\cdot e_h^{\gls{ast}}$ holds and an algebra unit must be unique. Therefore, if the relation depicted in figure~\ref{fig:Unor1} is to be satisfied, the map $\gls{ast} \colon \gls{A} \to \gls{A}$ must be an algebra anti-homomorphism.  

\begin{figure}
\centering
\begin{subfigure}[t!]{0.46\textwidth}
                \centering
		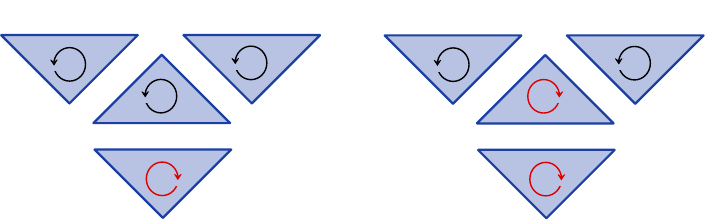 
		\vspace{2mm}
		\caption{}
		\label{fig:Unor1}
\end{subfigure}
\hspace{5mm}
\begin{subfigure}[t!]{0.46\textwidth}
                \centering
		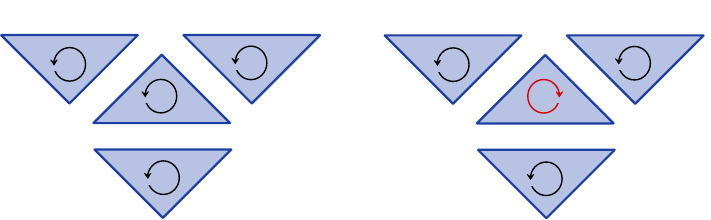
		\vspace{2mm}
		\caption{}
		\label{fig:Unor2}
\end{subfigure}
\caption[Orientation reversal]{\emph{Orientation reversal}. Invariance under Pachner moves has already been established since any KM model gives rise to an FHK model. One needs only establish that partition functions are invariant under orientation reversal of internal triangles.}
\label{fig:unor_moves}
\end{figure} 

The partition functions associated with the triangulations depicted in figure~\ref{fig:Unor2} must also match. This means, again using the non-degeneracy of $\gls{Ct}$, that the identity
\begin{align}
C_{k'g'h'}S^{gg'}S^{hh'}S^{kk'}&=C_{k'h'g'}B^{gg'}B^{hh'}B^{kk'}
\end{align}  
must hold. By contracting both sides with $B_{bh}B_{ck}$ and using the symmetries of both $\gls{Ct}$ and $\gls{Ss}$ one obtains
\begin{align}
S_{b}{}^{h'}S_{c}{}^{k'}C_{h'k'}{}^{g'}S_{g'}{}^{g}&=C_{cb}{}^{g}.
\end{align}  
One straightforwardly recognises the relation between the right hand-side of this relation and the product $e_c \cdot e_b = C_{cb}{}^{g}e_g$. With a little more work one can also identify $((e_b)^{\gls{ast}} \cdot (e_c)^{\gls{ast}})^{\gls{ast}}=S_{b}{}^{h'}S_{c}{}^{k'}C_{h'k'}{}^{g'}S_{g'}{}^{g}e_g$. Since it is already known that $\gls{ast}$ must be an anti-homomorphism it follows that the identity of figure~\ref{fig:Unor2} imposes the relation $(e_c \cdot e_b)^{\gls{ast}\gls{ast}}=e_c \cdot e_b$. Since this must be true for every basis element and $\gls{Ct}$ is non-degenerate one concludes $\gls{ast}$ must square to the identity. Therefore, if $\gls{ast}$ is to determine a KM model it must be an algebra involution -- an algebra anti-automorphism that squares to the identity.

The symmetry requirement $\gls{Ss}=S^{ba}$ can be equivalently written as $S_{a}{}^{c}B_{cb}=S_{b}{}^{c}B_{ca}$. The inner product satisfies $\gls{Bi}=\gls{eps} \circ \gls{m}$ which means the symmetry of $\gls{Ss}$ can be translated as the property $\gls{eps} (e_a^{\gls{ast}} \cdot e_b)=\gls{eps}(e_a \cdot e_b^{\gls{ast}})$. Let $\gls{unit}=\sum_b 1_be_b=\sum_b 1_be_b^{\gls{ast}}=\gls{unit}^{\gls{ast}}$ where $1_b=1_b^{\gls{ast}}$ since $\gls{ast}$ is assumed to be linear. Linearity also allows one to rewrite the symmetry requirement as $\gls{eps} (e_a^{\gls{ast}} \cdot \gls{unit})=\gls{eps}(e_a \cdot \gls{unit})$ or, in other words, $\gls{eps} \circ \gls{ast} = \gls{eps}$. 

The converse of the proof is trivial: any algebra involution $\gls{ast}$ defined through $e_a^{\gls{ast}}=S_{a}{}^b e_b$ that satisfies $\gls{eps} \circ \gls{ast} = \gls{eps}$ gives rise to a map $S \colon A^* \otimes A^* \to \gls{k}$ via $\gls{Ss}=B^{ac}S_{c}{}^b$.  This maps will then necessarily satisfy the moves of figure~\ref{fig:unor_moves}.
\end{proof}

It is necessary to stress that the choice of $\gls{ast}$ structure does not influence the invariants constructed for closed orientable surfaces \cite{Fuchs}. Any differences may only arise for the non-orientable manifolds -- explicit examples will be provided later in this section.

\begin{corollary} 
Let $\gls{Sigma}$ be a closed orientable surface. Then the partition function $\gls{Z}(\gls{Sigma})$ constructed from $\gls{Sigma}$ and a KM model does not depend on the choice of involution $\gls{ast}$.
\end{corollary}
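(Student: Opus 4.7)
The plan is to reduce $\gls{Z}(\gls{Sigma})$ to the FHK partition function by exploiting the freedom in choosing a semi-orientation. Since $\gls{Sigma}$ is closed and orientable, I would first fix a global orientation on it. Given any triangulation of $\gls{Sigma}$, I would then equip it with the \emph{coherent} semi-orientation in which every triangle carries the orientation induced by the global orientation of $\gls{Sigma}$.

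In this coherent semi-orientation, any two adjacent triangles induce opposite orientations on their shared edge, so every interior edge pairs triangles with the \emph{same} orientation in the sense of Section~\ref{sec:unoriented}. Hence the amplitude attached to every interior edge is $\gls{B}$, and no $\gls{Ss}$ factor---and consequently no occurrence of the involution $\gls{ast}$---enters the partition function. The resulting expression is literally the FHK partition function built from $(\gls{Ct}, \gls{Bb}, \gls{R})$ alone, and is manifestly independent of $\gls{ast}$.

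It remains to argue that the value of $\gls{Z}(\gls{Sigma})$ computed in this distinguished coherent semi-orientation agrees with the value obtained in any other semi-orientation of any triangulation. Topological invariance of the KM model identifies the answers for different triangulations, so it suffices to compare two semi-orientations of a fixed triangulation. But the identity established via Figure~\ref{fig:Unor2} in the proof of Theorem~\ref{theo:KM} is precisely local orientation reversal of a single interior triangle; iterating this move over the triangles whose local orientation differs from the coherent one converts any semi-orientation into the coherent one without changing the value of $\gls{Z}$. The only nontrivial point---already guaranteed by Theorem~\ref{theo:KM}---is this invariance under local orientation reversal; here one simply applies it.
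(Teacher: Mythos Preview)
Your argument is correct and follows essentially the same route as the paper: fix a global orientation, observe that the coherent semi-orientation produces no $S$ factors and hence no $\gls{ast}$, and then invoke invariance under local orientation reversal of interior triangles (established in Theorem~\ref{theo:KM}) to pass from any semi-orientation to the coherent one. Your version is somewhat more explicit about the last step, but the strategy is identical.
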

\begin{proof}
Fix an orientation for $\gls{Sigma}$. One is free to choose the orientations of triangles and boundaries in the semi-oriented triangulation to match those induced by the surface orientation. Therefore, the partition function will not depend on the map $S$. On the other hand, any other semi-oriented triangulation can be constructed from this particular choice by flipping the orientation of internal triangles. Since a KM model is invariant under these transformations, $\gls{Z}(\gls{Sigma})$ will not depend on $\gls{ast}$. 
\end{proof}

According to the classification of theorem \ref{theo:state_sum_algebra}, KM models must correspond to semi-simple algebras that are compatible with a choice of $\gls{ast}$ structure. We must therefore clarify the role of the $\gls{ast}$ operation when constructing topological invariants. Unlike an FHK model, where the choice of maps $\gls{Ct}$, $\gls{Bb}$ and constant $\gls{R}$ uniquely determine an algebra $\gls{A}$ and the Frobenius form $\gls{eps}$ it comes equipped with, there are multiple $\gls{ast}$ structures that can turn an FHK into a KM model. Our first step will therefore be to address the classification of involutions in semi-simple algebras. 

To tackle this classification problem we will need one main result: if $\gls{ast}$ and $\bullet$ are two involutions for an algebra $\gls{A}$ then there exists an automorphism $\omega \colon \gls{A} \to \gls{A}$ such that $\bullet = \omega \circ \gls{ast}$ \cite{Garrett, Fuchs}. To see this consider the map $\omega = \bullet \circ \gls{ast}$. This is a linear map that preserves the identity since $\gls{ast}$ and $\bullet$ are both linear and $\gls{unit}^{\gls{ast}}=\gls{unit}^{\bullet}=\gls{unit}$. It preserves multiplication, $\left((a \cdot b)^{\gls{ast}}\right)^{\bullet}=\left(b^{\gls{ast}}\cdot a^{\gls{ast}}\right)^{\bullet}=(a^{\gls{ast}})^{\bullet} \cdot (b^{\gls{ast}})^{\bullet}$, and is therefore a homomorphism. Finally, it has an inverse, $\gls{ast} \circ \bullet$ which allows one to conclude $\omega$ is an automorphism. It is then easy to see that $\bullet =(\bullet \circ \gls{ast}) \circ \gls{ast} = \omega \circ \gls{ast}$.  

On what follows, quaternionic hermitian conjugation $\ddagger$ of a matrix $a \in \Mb_{n}(\Hb_{\Rb})$ is defined as $a^{\ddagger}=\overline{a}^{\tr}$.
\begin{lemma} \label{lem:class_inv}
Let $A=\Mb_{n}(D)$ define a KM model. Then every involution $\bullet$ acts as $a^{\bullet}=sa^{\gls{ast}}s^{-1}$ for some invertible element $s \in \gls{A}$ satisfying $s=\mu s^{\gls{ast}}$ such that:
\begin{itemize} 
\item for $D=\Cb$ or $D=\Rb$, $\gls{ast}$ is the matrix transposition $\tr$ and $\mu=\pm \gls{field_unit}$;
\item for $D=\Cb_{\Rb}$, $\gls{ast}$ is either matrix transposition $\tr$ or hermitian conjugation $\dagger$ and $\mu=\pm \gls{field_unit}$ or $|\mu|=\gls{field_unit}$ respectively;
\item for $D=\Hb_{\Rb}$, $\gls{ast}$ is the quaternionic hermitian conjugation $\ddagger$ and $\mu=\pm \gls{field_unit}$.
\end{itemize}
\end{lemma}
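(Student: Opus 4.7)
The plan is to leverage the observation made just before the lemma: given any two involutions $\bullet$ and $\ast$ on $A$, they differ by a unique algebra automorphism $\omega = \bullet \circ \ast$, so that $\bullet = \omega \circ \ast$. The strategy is therefore to fix a specific reference involution $\ast$ on each $A = \Mb_n(D)$ (as listed in the lemma), verify it qualifies as a KM involution (in particular that $\varepsilon \circ \ast = \varepsilon$), and then classify arbitrary $\bullet$ by classifying the possible $\omega$.

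First, I would check that the proposed $\ast$ is an algebra anti-automorphism squaring to the identity in each case. For $\Mb_n(\Rb)$ and $\Mb_n(\Cb)$, the transpose $\tr$ is manifestly such. For $\Mb_n(\Cb_\Rb)$, both $\tr$ (viewed as a $\Cb$-linear map) and $\dagger = \overline{(\cdot)^\tr}$ (conjugate-$\Cb$-linear) are involutions. For $\Mb_n(\Hb_\Rb)$, $\ddagger$ is an involution, with the anti-homomorphism property following from the fact that quaternionic conjugation itself reverses multiplication. The condition $\varepsilon \circ \ast = \varepsilon$ reduces in every case to $\Real \Tr(a^\ast) = \Real \Tr(a)$, which follows from cyclicity of the trace together with the fact that conjugation on $\Cb$ or $\Hb$ preserves the real part.

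Next, I would invoke the Skolem--Noether theorem, which states that every automorphism of a central simple algebra over a field is inner. This settles the $\Rb$, $\Cb$, and $\Hb_\Rb$ cases immediately, since each of $\Mb_n(\Rb)$, $\Mb_n(\Cb)$, $\Mb_n(\Hb_\Rb)$ is central simple over the appropriate base field, so $\omega(a) = s a s^{-1}$ for some invertible $s$, giving $a^\bullet = s a^\ast s^{-1}$. The main obstacle, and the case requiring extra care, is $\Mb_n(\Cb_\Rb)$ viewed as an $\Rb$-algebra: its centre is $\Cb$, so it is \emph{not} central over $\Rb$ and Skolem--Noether does not apply directly. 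I would handle this by splitting according to how $\bullet$ restricts to the centre (every anti-automorphism restricts to an automorphism of the centre, which on $\Cb$ is either the identity or complex conjugation). If $\bullet$ fixes $\Cb$ pointwise, then $\bullet$ is $\Cb$-linear and I take $\ast = \tr$, so that $\omega = \bullet \circ \tr$ is a $\Cb$-linear automorphism and hence inner by Skolem--Noether applied over $\Cb$. If instead $\bullet$ restricts to complex conjugation on $\Cb$, I take $\ast = \dagger$, so that $\omega = \bullet \circ \dagger$ is once again $\Cb$-linear and hence inner.

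Finally, I would use the involution property $\bullet \circ \bullet = \mathrm{id}$ to extract the constraint on $s$. A direct computation gives $a = (a^\bullet)^\bullet = s (s^\ast)^{-1} a \, s^\ast s^{-1}$, so $s(s^\ast)^{-1}$ commutes with every $a \in A$ and is therefore central, say equal to a scalar $\mu$. This is precisely $s = \mu s^\ast$. Applying $\ast$ once more and substituting back yields the consistency relation $\mu \mu^\ast = 1$. The specific bound on $\mu$ in each case then follows from how $\ast$ acts on the centre: $\ast = \tr$ and $\ast = \ddagger$ both fix the centre pointwise, forcing $\mu^2 = 1$ and so $\mu = \pm 1$; for $\ast = \dagger$ on $\Mb_n(\Cb_\Rb)$, the centre $\Cb$ is acted on by complex conjugation, forcing $\mu \bar \mu = 1$, i.e.\ $|\mu| = 1$.
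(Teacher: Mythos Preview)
Your proof is correct and follows essentially the same approach as the paper: fix a reference involution $\ast$, write $\bullet = \omega \circ \ast$ for an automorphism $\omega$, argue that $\omega$ is inner, and then extract the constraint on $s$ from $\bullet^2 = \iden$. The only organisational difference is in the $\Cb_\Rb$ case: the paper keeps $\ast = \tr$ throughout and absorbs the possible conjugate-linearity into an entrywise $\Cb$-automorphism $\psi$ appearing in $\omega(a) = s\psi(a)s^{-1}$, whereas you split on how $\bullet$ acts on the centre and choose $\ast \in \{\tr,\dagger\}$ adaptively so that $\omega$ is always $\Cb$-linear and Skolem--Noether applies directly --- the content is identical.
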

\begin{proof}
First, the case $D=\Cb$ is studied. Since $\gls{A}$ is simple and its centre is trivial any automorphism $\omega$ is an inner one. Therefore, there exists an invertible element $s \in \gls{A}$ such that 
\begin{align} \label{eq:star_auto}
\omega (a)=sas^{-1}
\end{align}
for all $a \in \gls{A}$. This means that any two involutions $\bullet$, $\gls{ast}$ can be related according to
\begin{align} \label{eq:star_iso}
a^{\bullet}=s a^{\gls{ast}} s^{-1}.
\end{align}
This means that for $D=\Cb$, if a $\gls{ast}$ structure is proved to satisfy $\gls{eps} \circ \gls{ast} = \gls{eps}$ so will all other involutions given the cyclic symmetry of the Frobenius form: $\gls{eps}(a^{\bullet})=\gls{eps}(s a^{\gls{ast}} s^{-1})=\gls{eps}(a^{\gls{ast}})$. This conclusion extends also to the real algebras $\Mb_{n}(\Rb)$ and $\Mb_{n}(\Hb_{\Rb})$: as their centre is isomorphic to $\Rb$ all automorphisms are of the form \eqref{eq:star_auto} and their Frobenius forms are also symmetric. 

A natural choice of involution for $\Mb_{n}(\Rb)$ and $\Mb_{n}(\Cb)$ is the matrix transposition $\tr$. For the quaternion-valued matrices, however, $\tr$ is not an involution because the elements of $\Hb_{\Rb}$ do not commute with each other. Nevertheless, matrix transposition can be combined with quaternion conjugation $a \mapsto \overline{a}$ to obtain the involution $\ddagger$. 

One must check these involutions do respect the condition $\gls{eps} \circ \gls{ast} = \gls{eps}$. Since $\Tr(a)=\Tr(a)^{\tr}$ it is easy to conclude the condition holds for $\Mb_n(\Cb)$ where $\gls{eps}(a)=\gls{R}\,n\Tr(a)$. For $\Mb_{n}(\Rb)$, $\gls{eps}(a)=2R\, n\Real\Tr(a)$ so $\gls{eps} \circ \tr = \gls{eps}$ also in this case. Finally, for $\Mb_{n}(\Hb)$ one has $\gls{eps}(a^{\ddagger})=4R\,n \Real\Tr(a^{\ddagger})=4R\,n\Real\overline{\Tr(a)}=4R\,n\Real\Tr(a)=\gls{eps}(a)$. 

To conclude the analysis of $\Mb_{n}(\Rb)$, $\Mb_{n}(\Cb)$ and $\Mb_{n}(\Hb_{\Rb})$, one needs to verify the condition $s=\pm s^{\gls{ast}}$ is satisfied where $\gls{ast}$ is either $\tr$ or $\ddagger$ as appropriate. Applying the identity \eqref{eq:star_iso} twice one learns that $a^{\bullet\bullet}=s(s^{-1})^{\gls{ast}}(a^{\gls{ast}})^{\gls{ast}}s^{\gls{ast}}s^{-1}$. Using the fact both $\bullet$ and $\gls{ast}$ square to the identity one concludes the element $s(s^{-1})^{\gls{ast}}$ must be central. For the algebras under consideration this means $s(s^{-1})^{\gls{ast}}$ must be proportional to the identity. Therefore, there exists some $\mu \in \gls{k}$ ($\gls{k}=\Rb$ or $\gls{k}=\Cb$ as appropriate) such that $s^{\gls{ast}}=\mu s$. If we apply this relation twice we obtain $s^{\gls{ast}\gls{ast}}=s=\mu^2 s$. In other words, $s^{\gls{ast}}=\pm s$. Note that if $s$ is either real or complex $s^{\gls{ast}}=-s$ can only occur if $n$ is even; otherwise $s$ would not be invertible. 

For $D=\mathbb{C}_{\Rb}$, the centre of the algebra is no longer trivial: $\mathcal{Z}(\Mb_n(\mathbb{C}_{\Rb}))=\mathbb{C}_{\Rb}.\gls{unit}$. This means an automorphism $\omega \colon A \to A$ does not act as in \eqref{eq:star_auto} but instead as 
\begin{align}
a \mapsto s\psi(a)s^{-1},
\end{align}
where $\psi$ acts entry-wise on $a$ as a $\mathbb{C}_{\Rb}$-automorphism. Let $\lambda + \hat{\imath}\mu \in \mathbb{C}_{\Rb}$ where $\hat{\imath}$ is the imaginary unit. Since $\psi$ is $\Rb$-linear by definition it must satisfy $\psi(\lambda + \hat{\imath}\mu)=\lambda + \psi(\hat{\imath})\mu$. Consequently $\psi(\hat{\imath}^2)=\psi(-\gls{field_unit})=-\gls{field_unit}$. But since $\psi$ is an automorphism the relation $\psi(\hat{\imath}^2)=\psi(\hat{\imath})^2$ is verified; in other words $\psi(\hat{\imath})^2=-\gls{field_unit}$. This condition has only two possible solutions: $\psi(\hat{\imath})=\pm\hat{\imath}$. Therefore, $\psi$ can only be the identity map or the complex conjugate: $\psi(a)=a$ or $\psi(a)=\overline{a}$.

One must also investigate which consequences the relation $\bullet=\omega \circ \tr$ has for $s$, since also in this case the transpose operation is a natural choice of involution. Note that according to the definition of $\omega$ one has $a^{\bullet}=s\psi(a^{\tr})s^{-1}$. Applying this identity twice one obtains $a^{\bullet\bullet}=s\psi\left( \left( s\psi(a^{\tr})s^{-1}\right)^{\tr}\right)s^{-1}$. The map $\psi$ acts entry-wise; therefore, $\psi(a^{\tr})=\psi(a)^{\tr}$. Since it is an automorphism, one also knows $\psi(ab)=\psi(a)\psi(b)$. These simplifications hold
\begin{align}\label{eq:non_real_complex}
a=\big(s\psi(s^{-1})^{\tr}\big)a\big(s\psi(s^{-1})^{\tr}\big)^{-1}
\end{align}
where the fact $\gls{ast}$, $\tr$ and $\psi$ all square to the identity was also used. This means $s\psi(s^{-1})^{\tr}$ must be a central element; in other words $s=\mu\psi(s)^{\tr}$ for $\mu \in \Cb_{\Rb}$. Since either $\psi(a)=a$ or $\psi(a)=\overline{a}$, $s$ must either satisfy $s=\pm s^{\tr}$ or $s=\mu s^{\dagger}$ where $|\mu|=\gls{field_unit}$, according to the choice of $\Cb_{\Rb}$-automorphism.
\end{proof}

In light of the classification introduced in lemma \ref{lem:class_inv}, the following definition will prove useful.

\begin{definition}
Let $A=\Mb_{n}(D)$ together with an involution $\bullet$ define a KM model. Let $a^{\bullet}=sa^{\gls{ast}}s^{-1}$ with $s=\mu s^{\gls{ast}}$ for all $a \in A$. If $\gls{ast}=\tr$ and $\mu=+\gls{field_unit}$ ($\mu=-\gls{field_unit}$) the involution $\bullet$ is called symmetric (anti-symmetric). If $\gls{ast}=\ddagger$ and $\mu=+\gls{field_unit}$ ($\mu=-\gls{field_unit}$) the involution $\bullet$ is called hermitian (anti-hermitian). Finally, if $\gls{ast}=\ddagger$, $\bullet$ is referred to as hermitian for all possible $\mu$. 
\end{definition}

Since we are primarily interested in understanding under which circumstances the new structure $\gls{ast}$ will give rise to different partition functions, the analysis done in lemma \ref{lem:class_inv} will suffice. However, for the reader interested in understanding equivalence classes of $\gls{ast}$ structures in the context of KM models that treatment can be found in appendix \ref{app:TBA}. The following lemma tells us how the study we conducted for involutions in simple matrix algebras can be applied to semi-simple ones. This is a result recovered from \cite{Fuchs}.

\begin{lemma}
Let $A=\bigoplus\limits_{i=1}^N A_i$ where $A_i=\Mb_{n_i}(\Cb)$ (if $A$ is complex) or $A_i=\Mb_{n_i}(D_i)$ with $D_i=\Rb$, $\Cb_{\Rb}$ or $\Hb_{\Rb}$ (if $A$ is real). Let $A$, equipped with an involution $\gls{ast}$, determine a KM model. Then, the KM model is isomorphic to one in which $\gls{ast}$ decomposes as a direct sum of involutions, $\gls{ast}=\bigoplus\limits_{i=1}^N \gls{ast}_i$ where $\gls{ast}_i$ is an involution for $A_i$.
\end{lemma}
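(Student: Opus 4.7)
The plan is to exploit the uniqueness of the Wedderburn decomposition together with the constraint $\gls{ast}^2 = \iden$. Each summand $A_i$ is a minimal two-sided ideal of $A$, and the decomposition $A = \bigoplus_i A_i$ is canonical up to reordering. Since $\gls{ast}$ is an algebra anti-automorphism it sends two-sided ideals to two-sided ideals and preserves minimality; hence there is a permutation $\pi$ of $\{1, \ldots, N\}$ with $\gls{ast}(A_i) = A_{\pi(i)}$. The condition $\gls{ast}^2 = \iden$ forces $\pi^2 = \iden$, so $\pi$ is a product of disjoint transpositions and fixed points, and unit-preservation gives $\gls{ast}(1_i) = 1_{\pi(i)}$.

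If $\pi = \iden$ the statement is almost immediate. Each restriction $\gls{ast}_i := \gls{ast}|_{A_i}$ is an anti-automorphism of $A_i$ squaring to the identity and fixing $1_i$, hence an involution on $A_i$. Writing $\gls{eps} = \sum_i \gls{eps}_i$ with $\gls{eps}_i := \gls{eps}|_{A_i}$, the hypothesis $\gls{eps} \circ \gls{ast} = \gls{eps}$ restricted to $A_i$ reads $\gls{eps}_i \circ \gls{ast}_i = \gls{eps}_i$, so by theorem~\ref{theo:KM} each pair $(A_i, \gls{ast}_i)$ is itself a KM model and $\gls{ast} = \bigoplus_i \gls{ast}_i$.

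The remaining work is to reduce to the case $\pi = \iden$. For a transposition $(i,j)$ in $\pi$, $\gls{ast}$ restricts to mutually inverse anti-isomorphisms $\alpha \colon A_i \to A_j$ and $\alpha^{-1} \colon A_j \to A_i$; the equality $\gls{eps} \circ \gls{ast} = \gls{eps}$ specialises to $\gls{eps}_j \circ \alpha = \gls{eps}_i$, which via the rigidity of the admissible Frobenius forms forces $A_j$ to share the division ring and matrix size of $A_i$. I would then construct an explicit KM-model isomorphism that replaces the swap by two factor-preserving involutions: composing $\alpha$ with a fixed involution on $A_j$ drawn from lemma~\ref{lem:class_inv} converts $\alpha$ into a genuine algebra isomorphism, allowing one to rewrite $A_i \oplus A_j$ in coordinates where the involution becomes diagonal. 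Iterating this on every transposition of $\pi$ produces an isomorphic KM model with trivial induced permutation, reducing the problem to the previous paragraph.

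The main obstacle is precisely this last step: one must arrange the change of coordinates so that the Frobenius form is preserved on the nose, so that the output is not merely an isomorphic Frobenius algebra but an isomorphic KM model in the sense required. This compatibility rests on the rigidity of Frobenius forms on simple matrix algebras (fixed up to the global scale $R$ by the algebra type), together with the identity $\gls{eps}_j \circ \alpha = \gls{eps}_i$ established at the start of the paragraph; once verified, the theorem follows by combining the two cases.
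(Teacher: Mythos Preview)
Your approach parallels the paper's: both identify the permutation $\pi$ of simple factors induced by the involution (the paper obtains $\pi$ from the automorphism linking the given involution to a fixed diagonal one) and treat fixed points and transpositions of $\pi$ separately. Your treatment of the fixed-point case is correct and essentially matches the paper's.

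The transposition case, which you rightly single out as the crux, contains a genuine obstruction that your proposed resolution does not overcome. Any algebra automorphism of $A$ permutes the central primitive idempotents $1_1,\ldots,1_N$, so conjugating $\ast$ by an automorphism yields an involution inducing a \emph{conjugate} permutation of those idempotents; a factor-swapping involution therefore remains factor-swapping after any such conjugation and can never become diagonal. Concretely, take $A=\Cb\oplus\Cb$ with the swap $(b,c)\mapsto(c,b)$: it satisfies $\varepsilon\circ\ast=\varepsilon$ and hence defines a KM model, yet the only diagonal involution here is the identity, and one computes $w=e^a\cdot e_a^{\ast}=0$ for the swap versus $w=R^{-1}\cdot 1$ for the identity, so the two KM models already differ on the projective plane. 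The paper's own argument conjugates by a factor-preserving inner automorphism $\omega'(a_i)=t_ia_it_i^{-1}$, which likewise cannot alter $\pi$, and so encounters the same gap; the statement appears to require either an additional hypothesis (for instance that the $A_i$ are pairwise non-isomorphic, forcing $\pi=\iden$) or a weaker conclusion allowing swapped pairs $A_i\oplus A_j$ as indecomposable blocks.
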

\begin{proof}
As with lemma \ref{lem:class_inv}, one exploits the fact that any two involutions $\gls{ast}$, $\bullet$ can be related by an automorphism $\omega$ via $\bullet =\omega \circ \gls{ast}$. A satisfactory classification of simple algebra involutions is already known. Therefore, one needs only to relate each $\gls{ast}$ to the straightforward choice of involution $\bigoplus\limits_{i=1}^N \gls{ast}_i$ where each $\gls{ast}_i$ is as in lemma \ref{lem:class_inv}. In what follows an element $a \in A$ is decomposed as $a=\oplus_i a_i$.

Consider the idempotent decomposition of the identity given by $\gls{unit}=\oplus_{i=1}^N 1_i$ where $1_i$ is the identity for $A_i$ and $1_i \cdot 1_j = \delta_{ij} 1_i$. This decomposition is unique up to reordering. This follows from one, the Artin-Wedderburn theorem \cite{Cohn} that tells us the decomposition of $A$ as $\bigoplus\limits_{i=1}^N \Mb_{n_i}(\Cb)$ or $\bigoplus\limits_{i=1}^N \Mb_{n_i}(D_i)$ determines the $n_i$ and the $D_i$ uniquely up to permutation of the $i$ and two, the fact $\gls{unit}\cdot a=a$ implies $1_i \cdot a_i = a_i$ and therefore $1_i$ must be the (unique) identity for the $A_i$. Note that for an automorphism $\omega \colon A \to A$ the set $\left\lbrace \omega(1_i)\right\rbrace$ is also a set of idempotents: 
\begin{align}
\oplus_{i=1}^N \omega (1_i)&=\omega (\oplus_{i=1}^N 1_i)=\omega (\gls{unit})=\gls{unit} ,\\
\omega(1_i) \cdot \omega (1_j) &= \omega(1_i \cdot 1_j) = \delta_{ij} \,\omega (1_i).
\end{align}
Because the set $\lbrace 1_i\rbrace$ is unique the automorphism must act on idempotents as a permutation $\pi$: $\omega (1_i)=1_{\pi(i)}$. Then $\omega(a_i)=\omega(a_i \cdot 1_i)=\omega(a_i) \cdot 1_{\pi(i)}$. This means the restriction $\omega_i \equiv \omega|_{A_i}$ gives rise to an isomorphism $A_{i} \to A_{\pi(i)}$. Since the $A_i$ are matrix algebras this implies their dimensions  must match: $n_i=n_{\pi(i)}$. Moreover, if $A$ is real having $A_i \simeq A_{\pi(i)}$ also implies $|D_i|=|D_{\pi(i)}|$.

Take as a canonical choice of involution for $A$ the map $\gls{ast}=\bigoplus\limits_{i=1}^N \gls{ast}_i$ where each $\gls{ast}_i$ is a canonical choice for $A_i$ according to lemma \ref{lem:class_inv}. Furthermore, note that for each such choice we have $\gls{ast}_i=\gls{ast}_{\pi(i)}$. Then, any other involution $\bullet$ reads
\begin{align}
(a)^{\bullet}=\bigoplus_{i=1}^N s_ia_i^{\gls{ast}_i}s_i^{-1} \in \bigoplus\limits_{i=1}^N A_{\pi(i)}.
\end{align}
Applying this identity twice along with the relation $a=(a)^{\bullet\star}$ gives rise to
\begin{align}
a_i = (s_{\pi(i)}(s_i^{-1})^{\gls{ast}_{i}})a_i (s_i^{-1})^{\gls{ast}_{i}})^{-1} 
\end{align}
for $i=1,\cdots,N$. This means the element $s_{\pi(i)}(s_i^{-1})^{\gls{ast}_{i}}$ is central in $A_i \simeq A_{\pi(i)}$ or in other words $s_{\pi(i)}=\lambda_is_i^{\gls{ast}_{i}}$ with $\lambda_i \in \Cb$ or $\mathcal{Z}(D_i)$.

Let $\omega' \colon A \to A$ be an algebra automorphism. By showing the maps $\gls{Ct}$ and $\gls{Bb}$ are invariant under the action of $\omega'$ one proves that FHK models are left invariant by the action of any such automorphism. Note that $\gls{C}=\gls{eps} (e_a \cdot e_b \cdot e_c )$ and $B_{ab}=\gls{eps} (e_a \cdot e_b)$ so it is necessary only to show the map $\gls{eps}$ is invariant under the action of $\omega'$. If $\omega'$ is an inner automorphism it is necessarily of the form $\omega'(a)=tat^{-1}$ for some $t \in A$. It is then true that $\gls{eps} \circ \omega' = \gls{eps}$ given the symmetry of the Frobenius form.

One must prove that not only inner automorphisms but also permutations do not affect FHK models. To do so, note one needs only verify $\gls{eps} \circ \omega' =\gls{eps}$ when $\omega'$ is a permutation: $\omega' (a_i)=a_{\pi(i)}$. Recall that each Frobenius form decomposes as $\gls{eps} = \bigoplus_{i=1}^N \varepsilon_i$ so $\gls{eps} \circ \omega' = \bigoplus_{i=1}^N \varepsilon_{\pi(i)}$. Therefore, one must show $\varepsilon_i=\varepsilon_{\pi(i)}$. According to equations \eqref{eq:cxfrob} and \eqref{eq:realfrob} each $\gls{eps}$ depends on $A_i$ only through $n_i$ and $|D_i|$. Since it has already been established that for $A_i \simeq A_{\pi(i)}$ both $n_i=n_{\pi(i)}$ and $|D_i|=|D_{\pi(i)}|$ it is clear $\gls{eps} \circ \omega = \gls{eps}$. 

On the other hand, if $\omega'$ is to be an isomorphism not only of FHK models but also of KM ones then it must preserve involutions. This means that KM models with involutions $\bullet$ and $\#$ will necessarily be equivalent if $\omega' (a^{\bullet})=\omega'(a)^{\#}$. This allows one to regard involutions $\bullet$ and $\#$ as equivalent if they are related through an automorphism $\omega'$ as $\#=\omega' \circ \bullet \circ (\omega')^{-1}$. This freedom allows one to bring the $\bullet$ involution into an equivalent standard form. Choosing such an automorphism of $A$ to act as $\omega'(a_i)=t_ia_it_i^{-1} \in A_{i}$, means $\#$ can be written according to
\begin{align}
a_i^{\#}= (t_{\pi(i)}s_{i}t_i^{\gls{ast}_{i}})a_i^{\gls{ast}_{i}}(t_{\pi(i)}s_{i}t_i^{\gls{ast}_{i}})^{-1}.
\end{align}
Note there are no restrictions on the choice of the $t_i$. If $\pi(i) \neq i$ one can set for example $t_{\pi(i)}=s_i^{-1}$ and $t_{i}=1_i$; this means $\#$ would reduce to the canonical choice of $\gls{ast}$. If $\pi(i)=i$ one immediately has a decomposition $\#= \bigoplus_{i=1}^N\#_i$ which concludes the proof.
\end{proof}

\begin{figure}[t!]
\centering
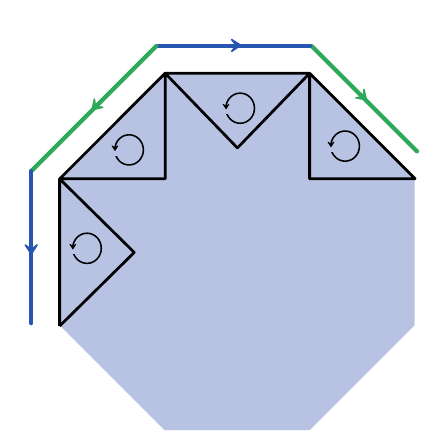 
\caption[Triangulation of a closed non-orientable surface]{\emph{Triangulation of a closed non-orientable surface}. Identified edges are represented with the same colour. For each tuple of states $(a,b,c,d)$ thus identified we obtain a copy of the projective plane.}
\label{fig:plane}
\end{figure} 

As in section \ref{sec:lattice_tft}, we wish to introduce a general formula for computing closed-surface invariants. Let us denote non-orientable surfaces of genus $k$ as $\gls{Sigma_k}$ -- genus for non-orientable surfaces is the number of projective planes that we would have to connect to make the surface. Recall that gluing together two triangles with the same orientation along an edge gives rise to the partition function \eqref{eq:Pach1} of the disk with four boundary edges labelled with states $a,b,c,d$. This partition function can be further identified with $\gls{R}^4\,\gls{eps}(e_a\cdot e_b\cdot e_c\cdot e_d)$. By gluing the boundary edges as in figure \ref{fig:plane}, one makes the projective plane $\Sigma^1$ which results in the partition function
\begin{align}  \label{eq:projective}
\gls{Z}(\Sigma^1)=\gls{R}^{2} \gls{eps} (e_a \cdot e_b \cdot e_c \cdot e_d) S^{ac}S^{bd}=\gls{eps}(\gls{w}).
\end{align}
where $\gls{w}=\gls{R}\,e_a \cdot e_b \cdot e_c \cdot e_d \, S^{ac}S^{bd}$. As highlighted in figure \ref{fig:plane}, a non-orientable surface $\gls{Sigma_k}$ can be constructed from a disk with $4k$ boundary edges
\begin{align}
\gls{Z}(\gls{Sigma_k})=\gls{R}\,\gls{eps}(\gls{w}^k),
\end{align}
where the factor $\gls{R}$ comes from the one vertex shared by all connected $\Sigma^1$ copies. Furthermore, we know that the connected sum of $2k+1$ projective planes is equivalent to the connected sum of $k$ tori and a single projective plane. The algebraic counterpart of this identity in KM models is established through the following lemma.

\begin{lemma} \label{lem:klein_equals_torus}
The identity $\gls{w} \cdot \gls{z}=\gls{w}^3$ holds. Moreover, the element $\gls{w} \in \gls{ZA}$, the centre of $\gls{A}$.
\end{lemma}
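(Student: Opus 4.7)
The plan is to handle the two assertions of the lemma --- centrality of $w$ and the identity $w\cdot z=w^3$ --- separately, using the Frobenius structure together with the involution $\ast$ supplied by the KM model.

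For centrality of $w$, the key observation is that $S^{ij}e_i\otimes e_j = B^{ij}e_i\otimes e_j^\ast$. Writing $c^{(1)}\otimes c^{(2)} = B^{ij}e_i\otimes e_j$ and $s^{(1)}\otimes s^{(2)} = S^{ij}e_i\otimes e_j$, this reads $s^{(1)}\otimes s^{(2)} = c^{(1)}\otimes(c^{(2)})^\ast$. Applying $\mathrm{id}\otimes\ast$ to the standard Frobenius identities $ac^{(1)}\otimes c^{(2)} = c^{(1)}\otimes c^{(2)}a$ and $c^{(1)}a\otimes c^{(2)} = c^{(1)}\otimes ac^{(2)}$, and using that $\ast$ is an anti-homomorphism, I would derive
\begin{align*}
a\,s^{(1)}\otimes s^{(2)} = s^{(1)}\otimes a^\ast s^{(2)}, \qquad s^{(1)}a\otimes s^{(2)} = s^{(1)}\otimes s^{(2)}a^\ast.
\end{align*}
Writing $w = R\,s^{(1)}\tilde s^{(1)}s^{(2)}\tilde s^{(2)}$ with $\tilde s$ a second independent copy of $s$, one applies the first identity through the linear map $u\otimes v \mapsto u\cdot\tilde s^{(1)}\cdot v\cdot\tilde s^{(2)}$ to move $a$ past the pair $(s^{(1)},s^{(2)})$, converting $a$ to $a^\ast$. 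A second application, now through $u\otimes v \mapsto s^{(1)}\cdot u\cdot s^{(2)}\cdot v$, moves $a^\ast$ past $(\tilde s^{(1)},\tilde s^{(2)})$, converting it to $a^{\ast\ast}=a$ on the right of $w$. The net result is $aw=wa$ for all $a\in A$.

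For $w\cdot z = w^3$, the most conceptual route is topological: the classification of closed surfaces yields a homeomorphism $\Sigma^1\#T^2 \cong \Sigma^3$ (on any non-orientable surface a handle may be exchanged for two cross-caps, so $\Sigma^1\#T^2 = \Sigma^1\#K = \Sigma^3$). Choosing a representative of this homeomorphism that preserves an embedded disk and then removing the disk from both sides yields a homeomorphism of once-punctured surfaces. Representing the boundary of the puncture as a single triangle, the state sum recovers the element $w\cdot z\in A$ on one side (two cross-cap elements merged with a handle element via successive pair-of-pants gluings) and the element $w^3$ on the other (three cross-cap elements merged). Topological invariance of the KM model, proved in Theorem~\ref{theo:KM}, then yields $w\cdot z = w^3$.

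The main obstacle is bookkeeping: one must set up a punctured-surface variant of the formula $Z(\Sigma^k) = R\,\varepsilon(w^k)$ that produces $w^k$ as an element of $A$ rather than as the scalar $R\,\varepsilon(w^k)$, and then confirm that the topological equivalence $\Sigma^1\#T^2\cong\Sigma^3$ can be realised by a sequence of Pachner moves compatible with the chosen boundary triangulation. A purely algebraic alternative is also available, starting from the identities derived above together with the special Frobenius relation $c^{(1)}c^{(2)} = R^{-1}$, but the combinatorics of the eight factors appearing in $wz$ makes the topological argument substantially cleaner.
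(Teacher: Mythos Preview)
Your centrality argument is correct and is in substance the same as the paper's: both rest on the identity $x\cdot e_a^{\ast}\otimes e^a = e_a^{\ast}\otimes x^{\ast}\cdot e^a$ (your first displayed relation, in Sweedler notation), applied twice so that $\ast\ast=\mathrm{id}$ transports $a$ from the left of $w$ to the right. The paper additionally simplifies $w$ before doing this, but that is cosmetic.

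For $w\cdot z=w^3$ your approach is genuinely different from the paper's, and the comparison is instructive. The paper does \emph{not} argue topologically; it gives a short algebraic computation. The step you are missing, and which makes the algebra far lighter than the ``eight-factor combinatorics'' you anticipate, is that $w$ collapses to a two-factor expression. From your own identity $a\,s^{(1)}\otimes s^{(2)}=s^{(1)}\otimes a^{\ast}s^{(2)}$ one gets, after multiplying the tensor legs, $e_a\cdot e^b\cdot e_b^{\ast}=e^b\cdot e_a^{\ast}\cdot e_b^{\ast}$; inserting this into $w=R\,e^a e^b e_a^{\ast} e_b^{\ast}$ and using $R\,e^a e_a=R\,m(B)=1$ yields simply $w=e^b\cdot e_b^{\ast}$. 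With this form, $w^3$ has six factors and a few applications of the same identity, together with centrality of $z$ and of $p(x)=e_a x\,e^a$, give $w^3=w\cdot z$ directly.

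Your topological route is the correct conceptual explanation of the identity --- it is precisely the algebraic shadow of $\Sigma^1\#\Sigma_1\cong\Sigma^3$ --- but the obstacle you call ``bookkeeping'' is a genuine gap rather than a formality. Theorem~\ref{theo:KM} establishes invariance of the KM partition function under Pachner moves; to extract $w\cdot z=w^3$ as an equality in $A$ (and not merely $\varepsilon(w\cdot z)=\varepsilon(w^3)$ in $k$) you need the model to assign a well-defined element of $A$ to a once-punctured, possibly non-orientable surface, invariant under PL homeomorphisms fixing the boundary, and you need to check that the specific expressions for $w$ and $z$ really are the elements so assigned and that multiplication corresponds to connected sum. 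None of this is set up in the paper for the unoriented case. It can be done, but it is longer than the algebraic proof once the simplification $w=e^b e_b^{\ast}$ is in hand.
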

\begin{proof}
Note that equation \eqref{eq:snake} is equivalent to the identity $\gls{eps} (y \cdot e_a)e_b \gls{B}=y$ for all $y \in A$. By recalling that $\gls{eps} \circ \gls{ast} = \gls{eps}$ and using the symmetry of $\gls{eps}$ one can further conclude $\gls{eps} (y \cdot e_a^{\gls{ast}})e_b\gls{B}=y^{\gls{ast}}$. By applying this identity to $y\cdot x$ one obtains $\gls{eps} (y \cdot x \cdot e_a^{\gls{ast}})e_b\gls{B}=x ^{\gls{ast}} \cdot y^{\gls{ast}}= x^{\gls{ast}} \cdot \gls{eps} (y \cdot e_a^{\gls{ast}})e_b\gls{B}$. The non-degeneracy of $\gls{eps}$ then guarantees the identity $x \cdot e_a^{\gls{ast}} \otimes e_b\gls{B}= e_a^{\gls{ast}} \otimes x^{\gls{ast}} \cdot e_b \gls{B}$.  

The shorthand notation $e^{a}=\gls{B}e_b$ will be used for the remainder of the proof. A few auxiliary identities that are established elsewhere are needed:
\begin{inparaenum}[1)]
\item for all $x \in A$, $x \cdot e_a \otimes e^a =e_a \otimes e^a \cdot x$ -- see lemma \ref{separable};
\item for all $x \in A$, $p(x)= e_a \cdot x \cdot e^a \in \gls{ZA}$ -- see lemma \ref{lem:projector}; and
\item the element $z$ is central -- see lemma \ref{lem:properties_eta_chi}.
\end{inparaenum}

The expression $\gls{w}=\gls{R}\,e^a \cdot e^b \cdot e_a^{\gls{ast}} \cdot e_b^{\gls{ast}}$ can be simplified. Using the identity $e^b \cdot e_a^{\gls{ast}} \cdot e_b^{\gls{ast}} = e_a \cdot e^b \cdot  e_b^{\gls{ast}}$ and the fact $\gls{R}.\gls{m}(\gls{Bb})=\gls{unit}$ one concludes $\gls{w}=e^b \cdot e_b^{\gls{ast}}$. Therefore, $\gls{w}^3 = (e^a \cdot e_a^{\gls{ast}}) \cdot  (e^b \cdot e_b^{\gls{ast}}) \cdot  ( e^c \cdot e_c^{\gls{ast}} )$. Since one can re-write $e_a^{\gls{ast}} \cdot  e^b \cdot e_b^{\gls{ast}} = e^b  \cdot e_a \cdot e_b^{\gls{ast}}$ one finds $\gls{w}^3=(e^a \cdot e^b \cdot e_a)\cdot (e^c \cdot e_b \cdot e_c^{\gls{ast}})$. By observing $e^a \cdot e^b \cdot e_a$ is a central element, one further concludes that $\gls{w}^3=e^c \cdot (e^a \cdot e^b \cdot e_a \cdot e_b) \cdot e_c^{\gls{ast}}$. Recognising $\gls{z}=e^a \cdot e^b \cdot e_a \cdot e_b$ and knowing it is a central element one obtains $\gls{w}^3=(e^c \cdot e_c^{\gls{ast}}) \cdot \gls{z} = \gls{w} \cdot \gls{z}$. 

Finally, showing that $\gls{w} \in \gls{ZA}$ is straightforward: $y \cdot \gls{w} = y \cdot e^a \cdot e_a^{\gls{ast}} = e^a \cdot y^{\gls{ast}} \cdot e_a^{\gls{ast}} = e^a \cdot e_a^{\gls{ast}} \cdot y^{\gls{ast}\gls{ast}}=\gls{w} \cdot y$ for any $y \in \gls{A}$.
\end{proof}

Having studied the classification of involutions in semi-simple matrix algebras over $\Rb$ and $\Cb$ we are now able to present concrete expressions for $\gls{Z}(\gls{Sigma_k})$. As in section~\S\ref{sec:lattice_tft}, the most general form of the invariant is based on calculations performed for the case of simple algebras. 

For $\gls{A}=\Mb_n(\Cb)$, we know that $\gls{Bb}= (\gls{R}\,n)^{-1}\sum_{lm}e_{lm} \otimes e_{ml}$ if we choose as a basis $\lbrace e_{lm}\rbrace$ the elementary matrices. We also know that an involution $\gls{ast}$ acts as $a^{\gls{ast}}=sa^{\tr}s^{-1}$ where $s$ is either symmetric or antisymmetric. This means that $\gls{w}=(\gls{R}\,n)^{-1}\sum_{lm}se_{lm}^{\tr}s^{-1}e_{ml}$. Since $\tr$ is an involution in its own right we know the identity $\sum_{lm}e_{lm}^{\tr}s^{-1}e_{ml}=(s^{-1})^{\tr}\sum_{lm}e_{lm}^{\tr}e_{ml}$ holds. Furthermore, $\sum_{lm}e_{ml}e_{ml}=\gls{unit}$ and $s(s^{-1})^{\tr}=\pm \gls{unit}$ which means $\gls{w}=\gamma(\gls{R}\,n)^{-1}\gls{unit}$ where $\gamma=\pm \gls{field_unit}$ is determined by the symmetry of $\gls{ast}$. The partition function therefore reads
\begin{align} \label{eq:non_complex}
\gls{Z}\left(\gls{Sigma_k},\Mb_{n}(\Cb),\gls{ast} \right)=\gamma^{2-k} \gls{R}^{2-k}n^{2-k}.
\end{align}
The same conclusion would hold for $\Mb_{n}(\Rb)$ with $\gls{R} \in \Rb$. Note that independently of the choice of $\gls{ast}$ this model cannot distinguish between orientable surfaces of genus $g$ and unorientable surfaces of genus $2g$; in other words it cannot distinguish a torus from a Klein bottle. This is a general feature of models where the element $\gls{w}$ is invertible, since in this case lemma \ref{lem:klein_equals_torus} implies $\gls{z}=\gls{w}^2$. 

The case $\gls{A}=\Mb_{n}(\Cb_{\Rb})$ is our most interesting simple example. As one might expect, if we choose $\gls{ast}=\tr$ as our canonical involution then the partition functions created do not differ from \eqref{eq:non_complex} apart from a factor of $2$. However, if we use $\gls{ast}=\dagger$, the result is very different. Recall that for $\Mb_{n}(\Cb_{\Rb})$ we have $\gls{Bb}=(2\gls{R}\,n)^{-1}\sum_{\omega,lm}\omega e_{lm} \otimes_{\Rb} \overline{\omega} e_{ml}$ with $\omega=1,\hat{\imath}$ as in lemma \ref{lem:Frobenius_forms}. This means $\gls{w}=(2\gls{R}\,n)^{-1}\left(\sum_{\omega} \omega^2\right) \left(\sum_{lm}se_{lm}s^{-1}e_{lm}\right)$. The identity $\gls{w}=0$ follows from the fact $\sum_{\omega} \omega^2 = 0$ independently of the choice of $s$. We note this indistinguishability was to be expected since there is an automorphism relating all hermitian involutions to $s=\text{diag}(1,\cdots,1,-1,\cdots,-1)$ (for more details see appendix~\ref{app:TBA}). In conclusion,
\begin{align}
\gls{Z}\left(\gls{Sigma_k},\Mb_{n}(\Cb_{\Rb}),\gls{ast} \right)=2\gamma^{2-k} \gls{R}^{2-k}n^{2-k}
\end{align}
where $\gamma=-1,0,+1$ according to the choice of $\ast$. This means not all $w$ are invertible and that is therefore possible to distinguish between tori and Klein bottles. 

Finally, we study the case $\gls{A}=\Mb_n(\Hb_{\Rb})$. Similarities with the $\Mb_{n}(\Cb_{\Rb})$ case allow us to conclude $\gls{w}=(4\gls{R}\,n)^{-1}\left(\sum_{\omega} \omega^2\right) \left(\sum_{lm}se_{lm}s^{-1}e_{lm}\right)$. However, in this case $\omega=1,\hat{\imath},\hat{\jmath},\hat{k}$ which means $\sum_{\omega}\omega^2=-2$. On the other hand, $\sum_{lm}se_{lm}s^{-1}e_{lm}=s(s^{-1})^{\ddagger}$. This is enough to conclude the partition function reads
\begin{align}
\gls{Z}\left(\gls{Sigma_k},\Mb_{n}(\Hb_{\Rb}),\gls{ast} \right)= 2^{2-k}\gamma^{2-k}\gls{R}^{2-k}n^{2-k}
\end{align}
where $\gamma=\pm 1$ is again determined by the symmetry of $\gls{ast}$: $s=-\gamma s^{\ddagger}$. 

We are now in a position to state what is the most general form of the partition function that a KM model can associate with a non-orientable surface.

\begin{theorem}
Let $\gls{A}$ and $\gls{ast}$ define a KM model. Then we can write without loss of generality $\gls{ast}=\oplus_{i=1}^N \gls{ast}_i$. The topological invariant $\gls{Z}(\gls{Sigma_k})$ constructed from $\gls{A}$, $\gls{ast}$ and a non-orientable surface $\gls{Sigma_k}$ is
\begin{align}
\gls{Z}(\gls{Sigma_k})=\gls{R}^{2-k}\sum_i \gamma(i)n_i^{2-k}
\end{align}
if $\gls{A}$ is an algebra over $\Cb$. If $\gls{A}$ is instead real, the partition function reads 
\begin{align}
\gls{Z}(\gls{Sigma_k})=\gls{R}^{2-k}\sum_i f(i,k)\gamma(i)^{2-k}n_i^{2-k}, \hspace{2mm}
f(i,k)=
\left\lbrace
\begin{array}{ll}
1 & (\text{if }D=\Rb)\\
2 & (\text{if }D=\Cb_{\Rb})\\
2^{2-k} & (\text{if }D=\Hb_{\Rb})
\end{array}
\right. .
\end{align}
The function $\gamma$ is defined as follows.
\begin{align*}
\gamma(i)=
\left\lbrace
\begin{array}{ll}
+1 & (\gls{ast}_i \text{ is symmetric }) \vee (\gls{ast}_i \text{ is anti-hermitian and }D=\Hb_{\Rb})\\
\,0 & (\gls{ast}_i \text{ is hermitian} \wedge D=\Cb_{\Rb})\\
-1 & (\gls{ast}_i \text{ is anti-symmetric }) \vee (\gls{ast}_i \text{ is hermitian and }D=\Hb_{\Rb})
\end{array}
\right. 
\end{align*}
\end{theorem}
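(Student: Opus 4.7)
The plan is to reduce the computation to the simple-algebra calculations already carried out just before the theorem's statement. The starting identity is the formula $Z(\Sigma^k) = R\,\varepsilon(w^k)$ with $w = R\, e_a\cdot e_b\cdot e_c\cdot e_d\, S^{ac}S^{bd}$, established via the polygon construction of $\Sigma^k$ from a disk with $4k$ boundary edges. So the task is really just to compute $\varepsilon(w^k)$ for an arbitrary semisimple Frobenius algebra equipped with an involution $\ast$.

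First I would invoke the preceding lemma to write $\ast = \bigoplus_{i=1}^N \ast_i$ with respect to the Artin--Wedderburn decomposition $A = \bigoplus_i A_i$, $A_i = \mathbb{M}_{n_i}(D_i)$. Because the Frobenius form also splits as $\varepsilon = \bigoplus_i \varepsilon_i$ (the $R$-multiples of real traces on each block are mutually orthogonal), the bilinear form $B$, the map $S$, and hence the preferred element, all decompose blockwise: $w = \bigoplus_i w_i$ where $w_i \in A_i$ is the preferred element built out of $(A_i,\varepsilon_i,\ast_i)$. Multiplying $k$ copies and applying the Frobenius form then gives
\begin{equation}
Z(\Sigma^k) \;=\; R\,\varepsilon(w^k) \;=\; R\,\sum_{i=1}^N \varepsilon_i(w_i^k),
\end{equation}
which reduces everything to the simple case.

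Next I would plug in the four simple cases, each of which has already been treated in the paragraphs preceding the statement. For $\mathbb{M}_{n_i}(\mathbb{C})$, $\mathbb{M}_{n_i}(\mathbb{R})$ and $\mathbb{M}_{n_i}(\mathbb{H}_{\mathbb{R}})$ one has $w_i = \gamma(i) (R n_i)^{-1} 1_i$ up to the appropriate division-ring factor absorbed into $|D_i|$, while for $\mathbb{M}_{n_i}(\mathbb{C}_{\mathbb{R}})$ the sum $\sum_\omega \omega^2 = 0$ under a hermitian involution kills $w_i$, giving $\gamma(i)=0$, and $\sum_\omega \omega^2 = \pm 2$ (according to the symmetry of $s$) under a transpose-type involution gives $\gamma(i)=\pm 1$. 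Raising $w_i$ to the $k$th power and evaluating $\varepsilon_i$ using the explicit Frobenius forms \eqref{eq:cxfrob} and \eqref{eq:realfrob} then yields a contribution of the form $R^{2-k}\, f(i,k)\, \gamma(i)^{2-k}\, n_i^{2-k}$, with the $|D_i|$-prefactors collapsing into the piecewise function $f(i,k)$ stated in the theorem.

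The only delicate point is bookkeeping the sign $\gamma(i)$ for the quaternionic block. The difference with the other division rings comes from the minus sign in $\sum_{\omega\in\{1,\hat\imath,\hat\jmath,\hat k\}}\omega^{2}=-2$ combined with the relation $s=\mu s^{\ddagger}$ from Lemma~\ref{lem:class_inv}: this flips the correspondence between the algebraic label (hermitian versus anti-hermitian) and the numerical sign of $\gamma(i)$, giving $\gamma=+1$ in the anti-hermitian case and $\gamma=-1$ in the hermitian case, opposite to what one sees for transposition-type involutions. Verifying that these signs do assemble into the piecewise definition of $\gamma$ given in the theorem, and noting that the complex case arises by simply suppressing the $D_i$ dependence (all blocks look like $\mathbb{M}_{n_i}(\mathbb{C})$), concludes the proof.
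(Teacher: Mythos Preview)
Your proposal is correct and follows essentially the same approach as the paper: the theorem is presented there as a summary of the preceding work, assembled from the lemma decomposing $\ast$ as $\bigoplus_i \ast_i$ together with the block-by-block computations of $w_i$ and $Z(\Sigma^k,\Mb_{n_i}(D_i),\ast_i)$ carried out in the paragraphs immediately before the statement. Your explicit observation that $\varepsilon$, $B$, $S$ and hence $w$ split as direct sums, so that $Z(\Sigma^k)=R\sum_i\varepsilon_i(w_i^{k})$, is exactly the glue the paper leaves implicit.
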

Let us recover the group algebra example of section \S \ref{sec:lattice_tft}. For $\Cb \gls{H}$ with involution $-1$, defined as the linear extension of the inverse operation in group elements, we obtain
\begin{align}
\gls{Z}(\gls{Sigma_k})=
\gls{R}^{2-k}\sum_{i \in I}\gamma(i)^{2-k}(\dim i)^{2-k}.
\end{align} 
A Lie group version of the equation above is proposed by Witten \cite{Witten} when dealing with the zero area limit of Yang-Mills theory in two dimensions, for non-orientable surfaces.

\chapter{Planar and spherical state sum models} \label{sec:diagram}

\section{Planar models} \label{sec:planar}

The definition of a naive state sum model for a triangulated surface $\gls{Sigma}$, a two-dimensional orientable manifold, relied on two key components: the assignment of states $a,b,c \in \gls{S}$ to edges of the triangulation $\gls{T}$ and the subsequent allocation of amplitudes to vertices, edges and triangles of $\gls{T}$, that depend on such states. 

Let $v$ denote an internal vertex, $(e,e')$ a pair of identified edges labeled with states $a,b$, and $t$ a triangle associated with states $a,b,c$. The naive state sum model introduced in section \S\ref{sec:lattice_tft} defined amplitudes as $A(v)=\gls{R}$, $A(e,e')=\gls{B}$ and $A(t)=\gls{C}$. However, to make such definitions meaningful extra conditions on $\gls{Ct}$ and $\gls{Bb}$ had to be introduced -- the relation between labeled internal edges and triangles, and such maps had to be made unequivocal. Such conditions were the symmetry requirements encoded in equations \eqref{eq:C_cycle} and \eqref{eq:metric}.

In this chapter we will relax these extra conditions. We will understand how a more general algebraic framework can be used for state sum models if a more sophisticated method to define the model is employed. This new framework uses a diagrammatic calculus to determine the combinatorics of the partition function. 

Within the new framework, we will require not only a triangulation $\gls{T}$ of a surface but also the dual graph that comes associated with it. Recall that the dual of a triangulation can be constructed through barycentric division: a unique dual vertex, a node, is placed in every triangle and dual edges, legs, connect all such nodes by intersecting edges on the triangulation. The dual of a boundary edge is seen as terminating in empty space. Any region of the dual graph that is bounded by legs is called a face.

The dual graph in itself possesses no more information than the original triangulation. However, we introduce to the former a crucial new feature: legs must point up or downwards and these directions are regarded as distinct.  For example, the triangles of figure~\ref{fig:triangle_amplitude_diag} are distinguished. Therefore, a triangulation can be reconstructed uniquely from this trivalent graph $\gls{G}$ but not the converse.    

\begin{figure}[t!]
                \centering
		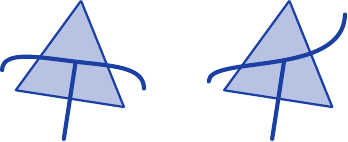 
		\vspace{3mm}
		\caption[Triangle amplitude]{\emph{Triangle amplitude.} The diagram is determined by the dual graph with a choice of legs pointing upwards or downwards.}
		\label{fig:triangle_amplitude_diag}
\end{figure}

Consider the surface $M \subset \Rb^2$ that comes with a trivalent graph $\gls{G}$ as described above. Note that the plane $\Rb^2$ is considered to have a standard orientation, so that $M$ is an oriented manifold. (Without loss of generality we work with an anti-clockwise orientation.) A diagrammatic state sum model on $M$ has a set of amplitudes for each node, leg and face. Each leg on the graph will be associated with a state $a,b,c \in \gls{S}$. The oriented triangle $t$ with edge states $a,b,c$ depicted on the right of figure~\ref{fig:triangle_amplitude_diag} is associated with the amplitude $A(t)=\gls{C} \in \gls{k}$. Two legs $(l,l')$ labelled by $a,b$ are joined together through a matrix $A(l,l')=\gls{B} \in k$. Their graphical representation is depicted below, where the order of the indices is implicitly determined by the orientation of $M$.
\begin{align}
\begin{aligned}
\begingroup%
  \makeatletter%
  \providecommand\color[2][]{%
    \errmessage{(Inkscape) Color is used for the text in Inkscape, but the package 'color.sty' is not loaded}%
    \renewcommand\color[2][]{}%
  }%
  \providecommand\transparent[1]{%
    \errmessage{(Inkscape) Transparency is used (non-zero) for the text in Inkscape, but the package 'transparent.sty' is not loaded}%
    \renewcommand\transparent[1]{}%
  }%
  \providecommand\rotatebox[2]{#2}%
  \ifx\svgwidth\undefined%
    \setlength{\unitlength}{253.67686768bp}%
    \ifx\svgscale\undefined%
      \relax%
    \else%
      \setlength{\unitlength}{\unitlength * \real{\svgscale}}%
    \fi%
  \else%
    \setlength{\unitlength}{\svgwidth}%
  \fi%
  \global\let\svgwidth\undefined%
  \global\let\svgscale\undefined%
  \makeatother%
  \begin{picture}(1,0.16438468)%
    \put(0,0){\includegraphics[width=\unitlength]{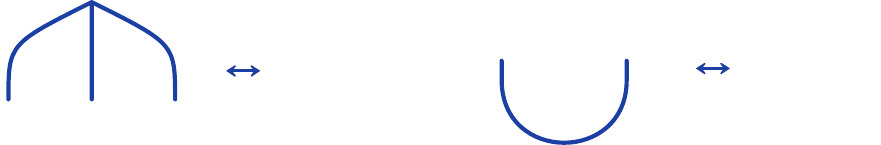}}%
    \put(-0.00112409,0.00774565){\color[rgb]{0,0,0}\makebox(0,0)[lb]{\smash{$a$}}}%
    \put(0.09471793,0.00751313){\color[rgb]{0,0,0}\makebox(0,0)[lb]{\smash{$b$}}}%
    \put(0.1889982,0.00826189){\color[rgb]{0,0,0}\makebox(0,0)[lb]{\smash{$c$}}}%
    \put(0.33045484,0.07295678){\color[rgb]{0,0,0}\makebox(0,0)[lb]{\smash{$C_{abc},$}}}%
    \put(0.5606451,0.11658956){\color[rgb]{0,0,0}\makebox(0,0)[lb]{\smash{$a$}}}%
    \put(0.69907035,0.11853901){\color[rgb]{0,0,0}\makebox(0,0)[lb]{\smash{$b$}}}%
    \put(0.86561706,0.07644772){\color[rgb]{0,0,0}\makebox(0,0)[lb]{\smash{$B^{ab}.$}}}%
  \end{picture}%
\endgroup%

\end{aligned}
\end{align}
The algebraic data are again the maps $\gls{Ct}$ and $\gls{Bb}$ as defined in equations \eqref{eq:C_trilinear} and \eqref{eq:B_element}, and a constant $A(f)=\gls{R} \in k$ now to be associated with each face $f$. The new consistency condition is a replacement of the symmetry requirements \eqref{eq:C_cycle} and \eqref{eq:metric} on $\gls{C}$ and $\gls{B}$, with the one equation
\begin{align}\label{eq:BCequation}
C_{abc}\,B^{cd}=B^{de}C_{eab},
\end{align}
which does not imply $\gls{Bb}$ must be symmetric.

The data $C_{abc}$ and $B^{ab}$ together with the face amplitude $\gls{R}\in \gls{k}$ determine an evaluation of the diagram $\gls{G}$, that we shall denote $|G| \in k$. This means that although the partition function $\gls{Z}(M)$ could be defined through an equation similar to \eqref{eq:Z_surf} it is more natural to simply set
\begin{align} \label{eq:diag-partition}
Z(M)=|G|.
\end{align}
A simple example for the $M$ consisting of two triangles is shown in figure \ref{fig:diag-ssm-glue}.

Either side of \eqref{eq:BCequation} can be taken as the definition of $C_{ab}{}^{d}$, the components of a multiplication map $\gls{m}$ as in equation \eqref{eq:multiplicationmap}. The diagrammatic counterpart is below. Similar expressions are used to define a vertex with two or three legs pointing upwards:
\begin{align}
\begin{aligned}
\hspace{15mm}
\begingroup%
  \makeatletter%
  \providecommand\color[2][]{%
    \errmessage{(Inkscape) Color is used for the text in Inkscape, but the package 'color.sty' is not loaded}%
    \renewcommand\color[2][]{}%
  }%
  \providecommand\transparent[1]{%
    \errmessage{(Inkscape) Transparency is used (non-zero) for the text in Inkscape, but the package 'transparent.sty' is not loaded}%
    \renewcommand\transparent[1]{}%
  }%
  \providecommand\rotatebox[2]{#2}%
  \ifx\svgwidth\undefined%
    \setlength{\unitlength}{275.20251465bp}%
    \ifx\svgscale\undefined%
      \relax%
    \else%
      \setlength{\unitlength}{\unitlength * \real{\svgscale}}%
    \fi%
  \else%
    \setlength{\unitlength}{\svgwidth}%
  \fi%
  \global\let\svgwidth\undefined%
  \global\let\svgscale\undefined%
  \makeatother%
  \begin{picture}(1,0.20128155)%
    \put(0,0){\includegraphics[width=\unitlength]{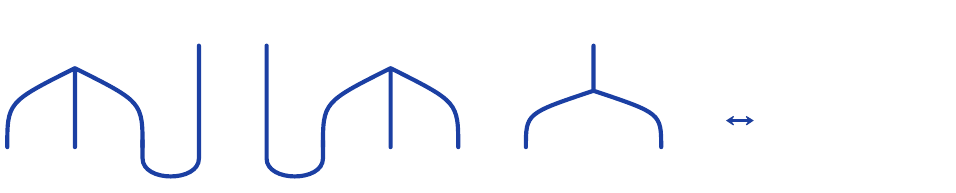}}%
    \put(-0.00103616,0.00299496){\color[rgb]{0,0,0}\makebox(0,0)[lb]{\smash{$a$}}}%
    \put(0.06965996,0.0035841){\color[rgb]{0,0,0}\makebox(0,0)[lb]{\smash{$b$}}}%
    \put(0.20044757,0.17855681){\color[rgb]{0,0,0}\makebox(0,0)[lb]{\smash{$d$}}}%
    \put(0.27114364,0.17796767){\color[rgb]{0,0,0}\makebox(0,0)[lb]{\smash{$d$}}}%
    \put(0.4019313,0.00299496){\color[rgb]{0,0,0}\makebox(0,0)[lb]{\smash{$a$}}}%
    \put(0.47203823,0.0035841){\color[rgb]{0,0,0}\makebox(0,0)[lb]{\smash{$b$}}}%
    \put(0.54332344,0.0035841){\color[rgb]{0,0,0}\makebox(0,0)[lb]{\smash{$a$}}}%
    \put(0.68294807,0.00417325){\color[rgb]{0,0,0}\makebox(0,0)[lb]{\smash{$b$}}}%
    \put(0.612252,0.17796776){\color[rgb]{0,0,0}\makebox(0,0)[lb]{\smash{$d$}}}%
    \put(0.83082062,0.06779968){\color[rgb]{0,0,0}\makebox(0,0)[lb]{\smash{$C_{ab}{}^d$}}}%
    \put(0.2302863,0.07204744){\color[rgb]{0,0,0}\makebox(0,0)[lb]{\smash{$=$}}}%
    \put(0.50190606,0.07170235){\color[rgb]{0,0,0}\makebox(0,0)[lb]{\smash{$=$}}}%
  \end{picture}%
\endgroup%

\label{eq:point_up}
\end{aligned}.
\end{align}

\begin{figure}[t!]
                \centering
		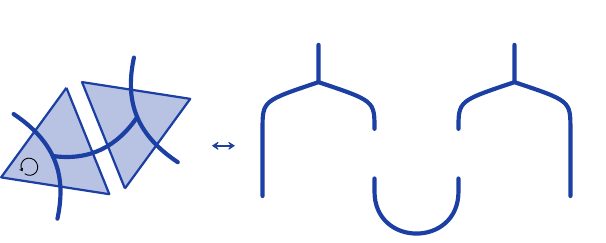
		\vspace{2.4mm}
		\caption[Gluing triangles]{\emph{Gluing triangles.} The fact $B$ is not assumed to be symmetric is translated into a lack of rotational symmetry.}
		\label{fig:diag-ssm-glue}
\end{figure} 

As in section \S\ref{sec:lattice_tft}, we will restrict our attention to non-degenerate data: $R\neq 0$, $B(\cdot,y)=0 \Rightarrow y=0$ and $C(\cdot,\cdot,y)=0 \Rightarrow y=0$. Under these conditions $B^{ab}$ has an inverse $B_{ab}$ defined by \eqref{eq:snake} -- its graphical representation can be found below. 
\begin{align}
\begin{aligned}
\hspace{20mm}
\begingroup%
  \makeatletter%
  \providecommand\color[2][]{%
    \errmessage{(Inkscape) Color is used for the text in Inkscape, but the package 'color.sty' is not loaded}%
    \renewcommand\color[2][]{}%
  }%
  \providecommand\transparent[1]{%
    \errmessage{(Inkscape) Transparency is used (non-zero) for the text in Inkscape, but the package 'transparent.sty' is not loaded}%
    \renewcommand\transparent[1]{}%
  }%
  \providecommand\rotatebox[2]{#2}%
  \ifx\svgwidth\undefined%
    \setlength{\unitlength}{109.32792969bp}%
    \ifx\svgscale\undefined%
      \relax%
    \else%
      \setlength{\unitlength}{\unitlength * \real{\svgscale}}%
    \fi%
  \else%
    \setlength{\unitlength}{\svgwidth}%
  \fi%
  \global\let\svgwidth\undefined%
  \global\let\svgscale\undefined%
  \makeatother%
  \begin{picture}(1,0.33245624)%
    \put(0,0){\includegraphics[width=\unitlength]{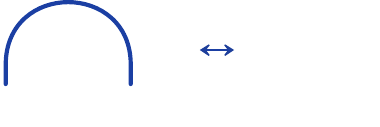}}%
    \put(-0.00260827,0.00753896){\color[rgb]{0,0,0}\makebox(0,0)[lb]{\smash{$a$}}}%
    \put(0.32399568,0.00821179){\color[rgb]{0,0,0}\makebox(0,0)[lb]{\smash{$b$}}}%
    \put(0.70205035,0.17689689){\color[rgb]{0,0,0}\makebox(0,0)[lb]{\smash{$B_{ab}$}}}%
  \end{picture}%
\endgroup%

\end{aligned}
\end{align}
The defining relation \eqref{eq:snake} can then be seen as the following snake identity.
\begin{align}
\begin{aligned}
\hspace{15mm}
\begingroup%
  \makeatletter%
  \providecommand\color[2][]{%
    \errmessage{(Inkscape) Color is used for the text in Inkscape, but the package 'color.sty' is not loaded}%
    \renewcommand\color[2][]{}%
  }%
  \providecommand\transparent[1]{%
    \errmessage{(Inkscape) Transparency is used (non-zero) for the text in Inkscape, but the package 'transparent.sty' is not loaded}%
    \renewcommand\transparent[1]{}%
  }%
  \providecommand\rotatebox[2]{#2}%
  \ifx\svgwidth\undefined%
    \setlength{\unitlength}{204.21323242bp}%
    \ifx\svgscale\undefined%
      \relax%
    \else%
      \setlength{\unitlength}{\unitlength * \real{\svgscale}}%
    \fi%
  \else%
    \setlength{\unitlength}{\svgwidth}%
  \fi%
  \global\let\svgwidth\undefined%
  \global\let\svgscale\undefined%
  \makeatother%
  \begin{picture}(1,0.26728199)%
    \put(0,0){\includegraphics[width=\unitlength]{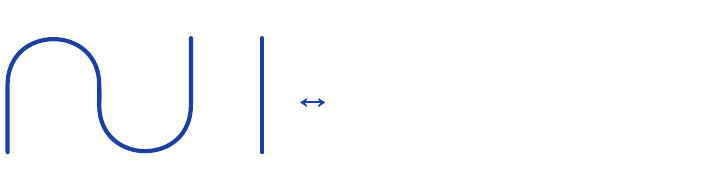}}%
    \put(-0.00139637,0.00403609){\color[rgb]{0,0,0}\makebox(0,0)[lb]{\smash{$a$}}}%
    \put(0.25583694,0.23586379){\color[rgb]{0,0,0}\makebox(0,0)[lb]{\smash{$b$}}}%
    \put(0.35507833,0.23665762){\color[rgb]{0,0,0}\makebox(0,0)[lb]{\smash{$b$}}}%
    \put(0.35746016,0.00483003){\color[rgb]{0,0,0}\makebox(0,0)[lb]{\smash{$a$}}}%
    \put(0.50191845,0.10878811){\color[rgb]{0,0,0}\makebox(0,0)[lb]{\smash{$B_{ac}B^{cb}=\delta_{a}^b$}}}%
    \put(0.30267889,0.10962891){\color[rgb]{0,0,0}\makebox(0,0)[lb]{\smash{$=$}}}%
  \end{picture}%
\endgroup%

\label{fig:inverse_metric}
\end{aligned}
\end{align}
Using this inverse, equation \eqref{eq:BCequation} can equivalently be written as either of the two equations
\begin{align} \label{eq:gen_cycle}
C_{eab}\,B_{dc}B^{de}=\gls{C}=C_{bce}\,B_{ad}B^{ed},
\end{align}
which have the following graphical counterpart.
\begin{align}
\begin{aligned}
\hspace{1mm}
\begingroup%
  \makeatletter%
  \providecommand\color[2][]{%
    \errmessage{(Inkscape) Color is used for the text in Inkscape, but the package 'color.sty' is not loaded}%
    \renewcommand\color[2][]{}%
  }%
  \providecommand\transparent[1]{%
    \errmessage{(Inkscape) Transparency is used (non-zero) for the text in Inkscape, but the package 'transparent.sty' is not loaded}%
    \renewcommand\transparent[1]{}%
  }%
  \providecommand\rotatebox[2]{#2}%
  \ifx\svgwidth\undefined%
    \setlength{\unitlength}{211.79963379bp}%
    \ifx\svgscale\undefined%
      \relax%
    \else%
      \setlength{\unitlength}{\unitlength * \real{\svgscale}}%
    \fi%
  \else%
    \setlength{\unitlength}{\svgwidth}%
  \fi%
  \global\let\svgwidth\undefined%
  \global\let\svgscale\undefined%
  \makeatother%
  \begin{picture}(1,0.19254725)%
    \put(0,0){\includegraphics[width=\unitlength]{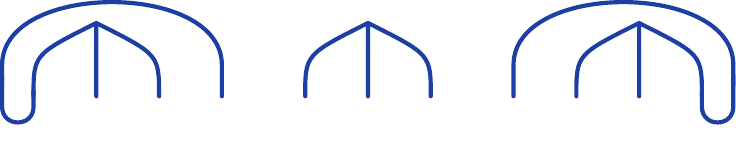}}%
    \put(0.4036248,0.00423107){\color[rgb]{0,0,0}\makebox(0,0)[lb]{\smash{$a$}}}%
    \put(0.48787939,0.00389149){\color[rgb]{0,0,0}\makebox(0,0)[lb]{\smash{$b$}}}%
    \put(0.57358835,0.00402278){\color[rgb]{0,0,0}\makebox(0,0)[lb]{\smash{$c$}}}%
    \put(0.68360709,0.00428548){\color[rgb]{0,0,0}\makebox(0,0)[lb]{\smash{$a$}}}%
    \put(0.77211525,0.0042086){\color[rgb]{0,0,0}\makebox(0,0)[lb]{\smash{$b$}}}%
    \put(0.85756127,0.00447118){\color[rgb]{0,0,0}\makebox(0,0)[lb]{\smash{$c$}}}%
    \put(0.61898256,0.08428064){\color[rgb]{0,0,0}\makebox(0,0)[lb]{\smash{$=$}}}%
    \put(0.12039813,0.00496038){\color[rgb]{0,0,0}\makebox(0,0)[lb]{\smash{$a$}}}%
    \put(0.20465273,0.0046208){\color[rgb]{0,0,0}\makebox(0,0)[lb]{\smash{$b$}}}%
    \put(0.29036168,0.00475209){\color[rgb]{0,0,0}\makebox(0,0)[lb]{\smash{$c$}}}%
    \put(0.33569595,0.08428064){\color[rgb]{0,0,0}\makebox(0,0)[lb]{\smash{$=$}}}%
  \end{picture}%
\endgroup%

\label{fig:diagram_cycle}
\end{aligned}
\end{align}
If $\gls{Bb}$ is symmetric, condition \eqref{eq:gen_cycle} reduces to cyclicity as presented in \eqref{eq:C_cycle}. 

Now the Pachner moves are introduced. A Pachner move preserves the boundary of a triangulation and it is assumed that the corresponding dual edges do not change in a neighbourhood of the boundary, so remain either upward or downward-pointing.
\begin{definition} 
A planar state sum model is a non-degenerate diagrammatic state sum model for any compact $M\subset \Rb^2$ satisfying the Pachner moves.
\end{definition}

The planar state sum models depend on the details of the diagram in the neighbourhood of the boundary. Nonetheless, due to the identities for $\gls{Ct}$ and $\gls{Bb}$, the interior of the graph can be moved by a triangulation-preserving homeomorphism (fixing the boundary) to any convenient graph in order to construct the required algebraic expression. The partition function of a disk is no longer symmetric under cyclic permutations of the boundary edges, but has a more refined mapping property that generalises \eqref{eq:gen_cycle}. Note that this is why the diagrammatic state sum models escape the conclusion of \S\ref{sec:lattice_tft} that $\gls{Bb}$ is symmetric for the naive models. These mappings of boundaries and the boundary data are not studied further in this thesis. It will be assumed that any mapping of surfaces is the identity mapping in a neighbourhood of the boundary. 

The result below is a refinement of theorem~\ref{theo:state_sum_algebra} and its proof develops the properties of the graphical calculus.
\begin{theorem} \label{theo:diagram}
Non-degenerate diagrammatic state sum model data determine a planar state sum if and only if the multiplication map $\gls{m}$, the bilinear form $B$ and the distinguished element $m(B)$ determine on $\gls{A}$ the structure of a special Frobenius algebra with identity element 
\begin{align}
\begin{aligned}
\begingroup%
  \makeatletter%
  \providecommand\color[2][]{%
    \errmessage{(Inkscape) Color is used for the text in Inkscape, but the package 'color.sty' is not loaded}%
    \renewcommand\color[2][]{}%
  }%
  \providecommand\transparent[1]{%
    \errmessage{(Inkscape) Transparency is used (non-zero) for the text in Inkscape, but the package 'transparent.sty' is not loaded}%
    \renewcommand\transparent[1]{}%
  }%
  \providecommand\rotatebox[2]{#2}%
  \ifx\svgwidth\undefined%
    \setlength{\unitlength}{77.2421875bp}%
    \ifx\svgscale\undefined%
      \relax%
    \else%
      \setlength{\unitlength}{\unitlength * \real{\svgscale}}%
    \fi%
  \else%
    \setlength{\unitlength}{\svgwidth}%
  \fi%
  \global\let\svgwidth\undefined%
  \global\let\svgscale\undefined%
  \makeatother%
  \begin{picture}(1,0.64095785)%
    \put(0,0){\includegraphics[width=\unitlength]{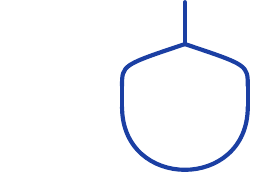}}%
    \put(-0.00369172,0.18591206){\color[rgb]{0,0,0}\makebox(0,0)[lb]{\smash{$1=R$}}}%
    \put(0.98022656,0.18591206){\color[rgb]{0,0,0}\makebox(0,0)[lb]{\smash{.}}}%
  \end{picture}%
\endgroup%

\label{eq:diag_1}
\end{aligned}
\end{align}
\end{theorem}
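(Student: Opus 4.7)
The plan is to mirror the proof of theorem~\ref{theo:state_sum_algebra}, with two modifications. First, the cyclic symmetry of $C$ is replaced by the weaker condition \eqref{eq:BCequation}: the two sides still agree on what $C_{ab}{}^d$ means, but the resulting Frobenius form will no longer be forced to be symmetric. Second, $R$ now sits on faces of the dual graph rather than vertices of the triangulation, which is a cosmetic change since the two are in bijection.

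First I would reduce the two Pachner moves to algebraic conditions. Using \eqref{eq:BCequation} and the snake identity \eqref{fig:inverse_metric} to slide $B$-tensors through the diagrammatic calculus, the 2-2 move of figure~\ref{fig:Pach1} becomes $C_{ab}{}^e C_{ec}{}^d = C_{bc}{}^e C_{ae}{}^d$, which is equivalent by non-degeneracy of $B$ to associativity of $m$. The 1-3 move of figure~\ref{fig:Pach2} has one additional interior dual face on its three-triangle side, contributing an extra factor of $R$; the calculation \eqref{eq:Pachner_13} then goes through essentially verbatim and identifies $1 := R \cdot m(B)$ as a two-sided multiplicative identity, which simultaneously supplies the special Frobenius condition.

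For the Frobenius structure I would define $\varepsilon(x) = B^{-1}(x, 1)$ and establish the Frobenius property $B^{-1}(xy, z) = B^{-1}(x, yz)$ directly from \eqref{eq:BCequation} combined with both snake identities. Setting $z = 1$ then yields $\varepsilon(xy) = B^{-1}(x, y)$; non-degeneracy of $B^{-1}$ ensures $\varepsilon \circ m$ is non-degenerate, making $\varepsilon$ a Frobenius form, and $R \cdot m(B) = 1$ makes the algebra special. The converse -- that a special Frobenius algebra determines diagrammatic data via $C_{abc} = \varepsilon(e_a \cdot e_b \cdot e_c)$, $B = (\varepsilon \circ m)^{-1}$, and the prescribed $R$ -- is a direct verification: \eqref{eq:BCequation} reduces to the Frobenius property itself, and the Pachner moves follow from associativity and unitality. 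The main obstacle is the graphical sliding argument underlying the Frobenius property: without cyclic symmetry of $C$ one must apply \eqref{eq:BCequation} in the correct direction and combine it with both snake identities, rather than relying on the simpler cyclic manipulation used in theorem~\ref{theo:state_sum_algebra}.
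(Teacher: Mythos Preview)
Your proposal is correct and follows the paper's own argument closely: the paper also extracts associativity from the 2-2 move, establishes the Frobenius-type identity $B^{-1}(e_a\cdot e_b,e_c)=B^{-1}(e_a,e_b\cdot e_c)$ directly from \eqref{eq:point_up} and the snake identity, defines $\varepsilon(x)=B^{-1}(x,1)$, and reads off $R\,m(B)=1$ from the 1-3 move (after a preliminary 2-2 move). One small caution: saying the computation \eqref{eq:Pachner_13} goes through ``essentially verbatim'' is slightly optimistic, since in the non-symmetric setting the order of the indices on $B^{ff'}$ is fixed by the orientation of the dual diagram and must be tracked to land on $m(B)$ rather than its transpose --- the paper handles this by carrying out the 1-3 move diagrammatically in \eqref{fig:diag_pach_31}, which is exactly the ``graphical sliding'' you flag as the main obstacle.
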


\begin{proof}
The proof of theorem~\ref{theo:state_sum_algebra} will be followed very closely. The essential difference relies on the translation of Pachner moves into the new diagrammatic model. 

Suppose that $(\gls{Ct},B,\gls{R})$ is the data for a planar state sum model. As before, define $A$ to be the vector space spanned by $S$. Consider the 2-2 move depicted in figure~\ref{fig:Pach1}. Its graphical counterpart is given below.
\begin{align}
\begin{aligned}
\begingroup%
  \makeatletter%
  \providecommand\color[2][]{%
    \errmessage{(Inkscape) Color is used for the text in Inkscape, but the package 'color.sty' is not loaded}%
    \renewcommand\color[2][]{}%
  }%
  \providecommand\transparent[1]{%
    \errmessage{(Inkscape) Transparency is used (non-zero) for the text in Inkscape, but the package 'transparent.sty' is not loaded}%
    \renewcommand\transparent[1]{}%
  }%
  \providecommand\rotatebox[2]{#2}%
  \ifx\svgwidth\undefined%
    \setlength{\unitlength}{344.9278474bp}%
    \ifx\svgscale\undefined%
      \relax%
    \else%
      \setlength{\unitlength}{\unitlength * \real{\svgscale}}%
    \fi%
  \else%
    \setlength{\unitlength}{\svgwidth}%
  \fi%
  \global\let\svgwidth\undefined%
  \global\let\svgscale\undefined%
  \makeatother%
  \begin{picture}(1,0.18036112)%
    \put(0,0){\includegraphics[width=\unitlength]{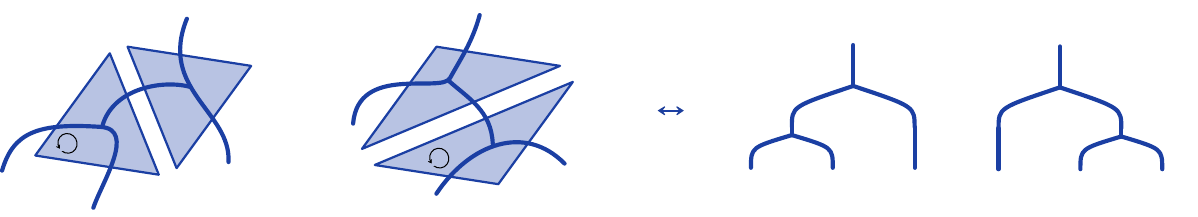}}%
    \put(0.09307965,0.00661993){\color[rgb]{0,0,0}\makebox(0,0)[lb]{\smash{$b$}}}%
    \put(0.04889558,0.11755027){\color[rgb]{0,0,0}\makebox(0,0)[lb]{\smash{$a$}}}%
    \put(0.1649963,0.1434026){\color[rgb]{0,0,0}\makebox(0,0)[lb]{\smash{$d$}}}%
    \put(0.19366894,0.07618639){\color[rgb]{0,0,0}\makebox(0,0)[lb]{\smash{$c$}}}%
    \put(0.37557575,0.00238954){\color[rgb]{0,0,0}\makebox(0,0)[lb]{\smash{$b$}}}%
    \put(0.46347386,0.06913576){\color[rgb]{0,0,0}\makebox(0,0)[lb]{\smash{$c$}}}%
    \put(0.4268105,0.14152239){\color[rgb]{0,0,0}\makebox(0,0)[lb]{\smash{$d$}}}%
    \put(0.32528111,0.1236608){\color[rgb]{0,0,0}\makebox(0,0)[lb]{\smash{$a$}}}%
    \put(0.24417811,0.07286071){\color[rgb]{0,0,0}\makebox(0,0)[lb]{\smash{$=$}}}%
    \put(0.62054953,0.00299521){\color[rgb]{0,0,0}\makebox(0,0)[lb]{\smash{$a$}}}%
    \put(0.68964595,0.00252523){\color[rgb]{0,0,0}\makebox(0,0)[lb]{\smash{$b$}}}%
    \put(0.75733213,0.00299521){\color[rgb]{0,0,0}\makebox(0,0)[lb]{\smash{$c$}}}%
    \put(0.82736865,0.00252523){\color[rgb]{0,0,0}\makebox(0,0)[lb]{\smash{$a$}}}%
    \put(0.89576002,0.00299528){\color[rgb]{0,0,0}\makebox(0,0)[lb]{\smash{$b$}}}%
    \put(0.96368126,0.00323034){\color[rgb]{0,0,0}\makebox(0,0)[lb]{\smash{$c$}}}%
    \put(0.70563718,0.16223006){\color[rgb]{0,0,0}\makebox(0,0)[lb]{\smash{$d$}}}%
    \put(0.87817531,0.16167472){\color[rgb]{0,0,0}\makebox(0,0)[lb]{\smash{$d$}}}%
    \put(0.78519271,0.05830724){\color[rgb]{0,0,0}\makebox(0,0)[lb]{\smash{$=$}}}%
  \end{picture}%
\endgroup%

\label{fig:diagram_mult2}
\end{aligned}
\end{align}
The multiplication map is therefore associative, as in theorem~\ref{theo:state_sum_algebra}. 

Next, the definition of a multiplication through \eqref{eq:point_up} together with equation \eqref{fig:inverse_metric} implies
\begin{align} 
\begin{aligned}
\begingroup%
  \makeatletter%
  \providecommand\color[2][]{%
    \errmessage{(Inkscape) Color is used for the text in Inkscape, but the package 'color.sty' is not loaded}%
    \renewcommand\color[2][]{}%
  }%
  \providecommand\transparent[1]{%
    \errmessage{(Inkscape) Transparency is used (non-zero) for the text in Inkscape, but the package 'transparent.sty' is not loaded}%
    \renewcommand\transparent[1]{}%
  }%
  \providecommand\rotatebox[2]{#2}%
  \ifx\svgwidth\undefined%
    \setlength{\unitlength}{360.69047852bp}%
    \ifx\svgscale\undefined%
      \relax%
    \else%
      \setlength{\unitlength}{\unitlength * \real{\svgscale}}%
    \fi%
  \else%
    \setlength{\unitlength}{\svgwidth}%
  \fi%
  \global\let\svgwidth\undefined%
  \global\let\svgscale\undefined%
  \makeatother%
  \begin{picture}(1,0.10540295)%
    \put(0,0){\includegraphics[width=\unitlength]{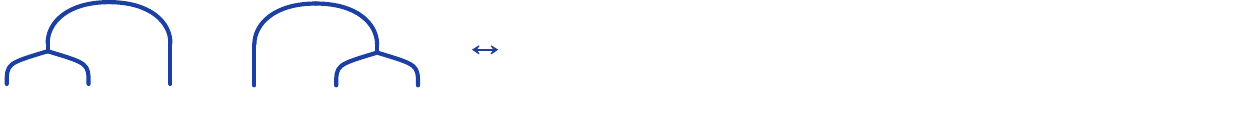}}%
    \put(-0.00079058,0.00273456){\color[rgb]{0,0,0}\makebox(0,0)[lb]{\smash{$a$}}}%
    \put(0.06528623,0.00228511){\color[rgb]{0,0,0}\makebox(0,0)[lb]{\smash{$b$}}}%
    \put(0.13001444,0.00273456){\color[rgb]{0,0,0}\makebox(0,0)[lb]{\smash{$c$}}}%
    \put(0.19699027,0.00228511){\color[rgb]{0,0,0}\makebox(0,0)[lb]{\smash{$a$}}}%
    \put(0.26239285,0.00273462){\color[rgb]{0,0,0}\makebox(0,0)[lb]{\smash{$b$}}}%
    \put(0.32734585,0.00295941){\color[rgb]{0,0,0}\makebox(0,0)[lb]{\smash{$c$}}}%
    \put(0.15665747,0.05562939){\color[rgb]{0,0,0}\makebox(0,0)[lb]{\smash{$=$}}}%
    \put(0.44911432,0.05770264){\color[rgb]{0,0,0}\makebox(0,0)[lb]{\smash{$B^{-1}(e_a \cdot e_b, e_c)=B^{-1}(e_a, e_b \cdot e_c)$}}}%
  \end{picture}%
\endgroup%

\label{eq:associative_bilinear}
\end{aligned}
\end{align}
 This means that a functional $\gls{eps}\colon A\to \gls{k}$ can be defined by $\varepsilon(x)= B^{-1}(x,1)$. However, there are no additional symmetry requirements that $\varepsilon$ must obey. 

To simplify the exposition of the 1-3 Pachner move, a 2-2 move was performed on the two left-most triangles of figure~\ref{fig:Pach2}. The relation
\begin{align}
\begin{aligned}
\begingroup%
  \makeatletter%
  \providecommand\color[2][]{%
    \errmessage{(Inkscape) Color is used for the text in Inkscape, but the package 'color.sty' is not loaded}%
    \renewcommand\color[2][]{}%
  }%
  \providecommand\transparent[1]{%
    \errmessage{(Inkscape) Transparency is used (non-zero) for the text in Inkscape, but the package 'transparent.sty' is not loaded}%
    \renewcommand\transparent[1]{}%
  }%
  \providecommand\rotatebox[2]{#2}%
  \ifx\svgwidth\undefined%
    \setlength{\unitlength}{357.37328835bp}%
    \ifx\svgscale\undefined%
      \relax%
    \else%
      \setlength{\unitlength}{\unitlength * \real{\svgscale}}%
    \fi%
  \else%
    \setlength{\unitlength}{\svgwidth}%
  \fi%
  \global\let\svgwidth\undefined%
  \global\let\svgscale\undefined%
  \makeatother%
  \begin{picture}(1,0.36569759)%
    \put(0,0){\includegraphics[width=\unitlength]{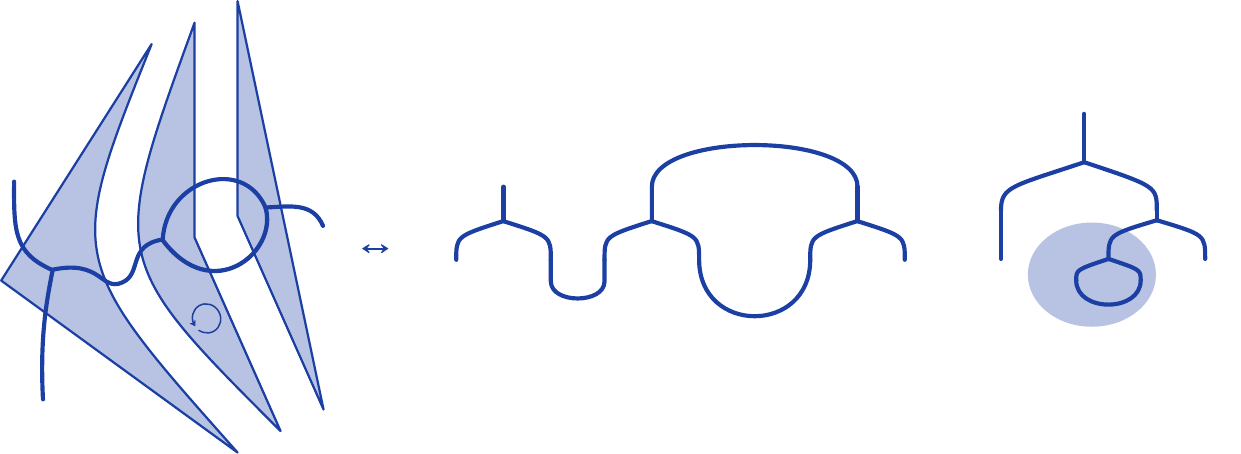}}%
    \put(0.00678601,0.23597322){\color[rgb]{0,0,0}\makebox(0,0)[lb]{\smash{$a$}}}%
    \put(0.0281087,0.00570804){\color[rgb]{0,0,0}\makebox(0,0)[lb]{\smash{$b$}}}%
    \put(0.25449214,0.15658026){\color[rgb]{0,0,0}\makebox(0,0)[lb]{\smash{$c$}}}%
    \put(0.39876076,0.23824156){\color[rgb]{0,0,0}\makebox(0,0)[lb]{\smash{$a$}}}%
    \put(0.36155939,0.11801791){\color[rgb]{0,0,0}\makebox(0,0)[lb]{\smash{$b$}}}%
    \put(0.72313771,0.11711041){\color[rgb]{0,0,0}\makebox(0,0)[lb]{\smash{$c$}}}%
    \put(0.31755289,0.1561265){\color[rgb]{0,0,0}\makebox(0,0)[lb]{\smash{$R$}}}%
    \put(0.86649879,0.29585827){\color[rgb]{0,0,0}\makebox(0,0)[lb]{\smash{$a$}}}%
    \put(0.80026233,0.12028632){\color[rgb]{0,0,0}\makebox(0,0)[lb]{\smash{$b$}}}%
    \put(0.83837099,0.13253546){\color[rgb]{0,0,0}\makebox(0,0)[lb]{\smash{$R$}}}%
    \put(0.96494605,0.11983257){\color[rgb]{0,0,0}\makebox(0,0)[lb]{\smash{$c$}}}%
    \put(0.74998507,0.16928553){\color[rgb]{0,0,0}\makebox(0,0)[lb]{\smash{$=$}}}%
  \end{picture}%
\endgroup%

\label{fig:diag_pach_31}
\end{aligned}
\end{align}
is obtained. It was simplified using the definition of multiplication components and associativity. The 1-3 Pachner move predicts the expression above must equal $C_{bc}{}^a$. Since $\gls{Ct}$ is assumed to be non-degenerate one concludes the highlighted element, $\gls{R}\,\gls{m}(\gls{Bb})$, must satisfy $R\,m(B)=\gls{unit}$. Therefore, $\gls{A}$ is a special Frobenius algebra. 

Conversely, given a special Frobenius algebra with multiplication $m$ and a linear functional $\gls{eps}$, a non-degenerate bilinear form is defined by $B^{-1}= \varepsilon \circ m$, with property \eqref{eq:gen_cycle}. As previously stated, the fact the algebra is unital implies the non-degeneracy of $C$, while associativity and the relation $R\, m(B)=1$ guarantee invariance under Pachner moves. The diagrammatic state sum model created is therefore planar.
\end{proof}

It is worth noting that having $m(B)$ proportional to the algebra unit is a non-trivial restriction on Frobenius algebras. The following arguments show that this condition implies the algebra must be separable, a concept we will define shortly. Note that some presentations of these state sum models \cite{Lauda,Runkel} assume from the outset the algebra is of this type. There are a number of equivalent definitions of the separability condition; the most convenient one for the purpose of this work is as follows \cite{KadisonStolin}, where the vector space $A\otimes A$ is a bimodule over $A$ with the actions $x\triangleright (u\otimes v)=(x\cdot u)\otimes v$ and $(u\otimes v)\triangleleft x=u\otimes(v\cdot x)$.
\begin{definition}[Separable algebra]
An algebra $A$ is called separable if there exists $t \in A \otimes A$ such that $x\triangleright t=t\triangleleft x$ for all $x \in A$ and $m(t)=1 \in A$.
\end{definition}

The relevance of this definition to the state sum models is given in the following lemma.
\begin{lemma}\label{separable} A special Frobenius algebra is a separable algebra.
\end{lemma}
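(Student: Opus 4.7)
The natural candidate is $t = R\, B$, where $B = e_a \otimes e_b \, B^{ab} \in A \otimes A$ is the distinguished element built from the inverse of the bilinear form. The axiom $m(t)=1$ is then immediate: theorem~\ref{theo:diagram} identifies the unit of the special Frobenius algebra as $1 = R\, m(B)$, so $m(t) = R\, m(B) = 1$ with no further argument needed.

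The substantive step is the bimodule invariance $x \triangleright t = t \triangleleft x$, which (after cancelling $R$) reduces to $(x \cdot e_a) \otimes e_b \, B^{ab} = e_a \otimes (e_b \cdot x)\, B^{ab}$ for every $x \in A$. My plan is to verify this algebraically by pairing both tensors with an arbitrary test vector $y \in A$ in the first slot through $B^{-1}$ and showing the two results coincide. On the left, Frobenius associativity of $B^{-1}$, already established as \eqref{eq:associative_bilinear}, rewrites $B^{-1}(y, x \cdot e_a)\, B^{ab} e_b$ as $B^{-1}(y \cdot x, e_a)\, B^{ab} e_b$, and the snake identity \eqref{eq:snake} then collapses the sum to $y \cdot x$. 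On the right, the snake identity applied first gives $B^{-1}(y, e_a)\, e_b\, B^{ab} = y$, and the trailing right-multiplication by $x$ yields the same $y \cdot x$. Since both sides project to $y \cdot x$ for every $y$ and $B^{-1}$ is non-degenerate, the original tensors must agree. Diagrammatically this is nothing but the sliding of a multiplication vertex across the cup $B$, which is the graphical content of the cyclicity relation \eqref{fig:diagram_cycle}.

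The main subtlety to flag is that $B$ is not assumed symmetric in the planar setting, so the naive cyclic manipulations used in section~\ref{sec:lattice_tft} are unavailable. However, the only property the sliding argument actually needs is Frobenius associativity of $B^{-1}$, which holds for any Frobenius algebra regardless of symmetry; specialness enters only in the first step, to guarantee that $m(B)$ is a nonzero scalar multiple of the unit. Thus the proof requires no additional hypotheses beyond those already packaged into the planar state sum data.
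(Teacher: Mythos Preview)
Your proof is correct and follows essentially the same route as the paper's. The paper also takes $t=R\,B$, verifies $m(t)=1$ from the special condition, and establishes $x\triangleright t=t\triangleleft x$ by pairing both sides against an arbitrary $y$ in the first slot; the only cosmetic difference is that the paper writes the pairing as $\varepsilon(y\cdot -)$ rather than $B^{-1}(y,-)$, which are identical since $\varepsilon\circ m=B^{-1}$.
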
 

\begin{proof} Define $\gls{R}\in \gls{k}$ by $\gls{m}(\gls{Bb})=R^{-1}\gls{unit}$. Using the basis $\{e_a\}$ of the Frobenius algebra $\gls{A}$ with Frobenius form $\gls{eps}$, define $\gls{B}=\varepsilon(e_a\cdot e_b)$, $B^{ab}B_{bc}=\delta^a_c$, and set $t=R\,e_a\otimes e_b\,B^{ab}$. Then the identity $\varepsilon (y\cdot e_a)\,e_b\,B^{ab} =y$ for all $y\in A$ follows. Using this identity twice, one finds $\varepsilon(y\cdot x\cdot e_a)\,e_b\,B^{ab}=y\cdot x=\varepsilon(y\cdot e_a)\,e_b\cdot x\,B^{ab}$ which can diagrammatically be depicted as 
\begin{align}
\begin{aligned}
\begingroup%
  \makeatletter%
  \providecommand\color[2][]{%
    \errmessage{(Inkscape) Color is used for the text in Inkscape, but the package 'color.sty' is not loaded}%
    \renewcommand\color[2][]{}%
  }%
  \providecommand\transparent[1]{%
    \errmessage{(Inkscape) Transparency is used (non-zero) for the text in Inkscape, but the package 'transparent.sty' is not loaded}%
    \renewcommand\transparent[1]{}%
  }%
  \providecommand\rotatebox[2]{#2}%
  \ifx\svgwidth\undefined%
    \setlength{\unitlength}{201.36540557bp}%
    \ifx\svgscale\undefined%
      \relax%
    \else%
      \setlength{\unitlength}{\unitlength * \real{\svgscale}}%
    \fi%
  \else%
    \setlength{\unitlength}{\svgwidth}%
  \fi%
  \global\let\svgwidth\undefined%
  \global\let\svgscale\undefined%
  \makeatother%
  \begin{picture}(1,0.22623053)%
    \put(0,0){\includegraphics[width=\unitlength]{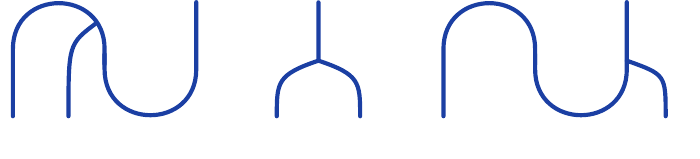}}%
    \put(-0.00141611,0.00836089){\color[rgb]{0,0,0}\makebox(0,0)[lb]{\smash{$y$}}}%
    \put(0.31426321,0.11159446){\color[rgb]{0,0,0}\makebox(0,0)[lb]{\smash{$=$}}}%
    \put(0.07804143,0.00836089){\color[rgb]{0,0,0}\makebox(0,0)[lb]{\smash{$x$}}}%
    \put(0.37600721,0.00836089){\color[rgb]{0,0,0}\makebox(0,0)[lb]{\smash{$y$}}}%
    \put(0.49519352,0.00836089){\color[rgb]{0,0,0}\makebox(0,0)[lb]{\smash{$x$}}}%
    \put(0.61437983,0.00836089){\color[rgb]{0,0,0}\makebox(0,0)[lb]{\smash{$y$}}}%
    \put(0.54684092,0.11165569){\color[rgb]{0,0,0}\makebox(0,0)[lb]{\smash{$=$}}}%
    \put(0.93221,0.00836089){\color[rgb]{0,0,0}\makebox(0,0)[lb]{\smash{$x$}}}%
    \put(0.99241506,0.1207863){\color[rgb]{0,0,0}\makebox(0,0)[lb]{\smash{.}}}%
  \end{picture}%
\endgroup%

\end{aligned}
\end{align}
Then, the non-degeneracy of $\varepsilon$ guarantees that  $x \triangleright t=t \triangleleft x$ for all $x\in A$: 
\begin{align}
\begin{aligned}
\begingroup%
  \makeatletter%
  \providecommand\color[2][]{%
    \errmessage{(Inkscape) Color is used for the text in Inkscape, but the package 'color.sty' is not loaded}%
    \renewcommand\color[2][]{}%
  }%
  \providecommand\transparent[1]{%
    \errmessage{(Inkscape) Transparency is used (non-zero) for the text in Inkscape, but the package 'transparent.sty' is not loaded}%
    \renewcommand\transparent[1]{}%
  }%
  \providecommand\rotatebox[2]{#2}%
  \ifx\svgwidth\undefined%
    \setlength{\unitlength}{113.8421875bp}%
    \ifx\svgscale\undefined%
      \relax%
    \else%
      \setlength{\unitlength}{\unitlength * \real{\svgscale}}%
    \fi%
  \else%
    \setlength{\unitlength}{\svgwidth}%
  \fi%
  \global\let\svgwidth\undefined%
  \global\let\svgscale\undefined%
  \makeatother%
  \begin{picture}(1,0.2656821)%
    \put(0,0){\includegraphics[width=\unitlength]{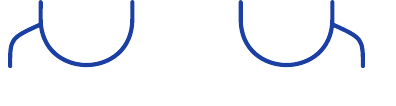}}%
    \put(0.42615874,0.16886726){\color[rgb]{0,0,0}\makebox(0,0)[lb]{\smash{$=$}}}%
    \put(0.88995869,0.00724001){\color[rgb]{0,0,0}\makebox(0,0)[lb]{\smash{$x$}}}%
    \put(0.97428595,0.20400362){\color[rgb]{0,0,0}\makebox(0,0)[lb]{\smash{.}}}%
    \put(-0.00250484,0.00724001){\color[rgb]{0,0,0}\makebox(0,0)[lb]{\smash{$x$}}}%
  \end{picture}%
\endgroup%

\label{eq:prop_sep}
\end{aligned}
\end{align}
Also, $m(t)=R\, m(B)=1$.
\end{proof}

For a field $k$ of characteristic zero, separability for an algebra is equivalent to it being both finite dimensional and semisimple \cite{KadisonStolin,Aguiar}. Therefore, if $k=\Rb$ or $\Cb$ these Frobenius algebras are easily classified. 

Consider the complex algebra $A=\Mb_{n}(\Cb)$ with Frobenius form $\varepsilon(a)=\Tr(xa)$ for some fixed invertible element $\gls{x} \in A$. This determines the non-degenerate bilinear form $B^{-1}(a,b)=\Tr(xab)$. Let $\lbrace e_{lm} \rbrace_{l,m=1,n}$ be the basis of elementary matrices such that $(e_{lm})_{rs}=\delta_{lr}\delta_{ms}$. Then $B$ must be given by
\begin{align}
B=\sum_{lm} e_{lm}\, x^{-1} \otimes e_{ml} \in A \otimes A.
\end{align}
The defining equation \eqref{eq:snake} is satisfied since the cyclicity of the trace guarantees $\sum_{lm}\Tr(xae_{lm}x^{-1})e_{ml}=\sum_{lm}\Tr(ae_{lm})e_{ml}=a$ for all $a \in A$. Moreover, the distinguished element satisfies $m(B)=\Tr(x^{-1})1$. This identity follows from noticing that $p(a)=\sum_{lm} e_{lm}ae_{ml}=\Tr(a)1$, where the map $p$ is proportional to a projector $\gls{A} \to A$ with the centre of $A$, $\gls{ZA}$, as its image. (The properties of $p$ are established in a more general context through lemma~\ref{lem:projector}.) Thus our example will define a planar state sum model if $\gls{R}^{-1}=\Tr(\gls{x}^{-1})$. This particular example will be used to prove the theorem below.

\begin{theorem} \label{theo:diagram_semi_simple}
A planar state sum model over the field $k = \Cb$ or $\Rb$ is isomorphic by a change of basis to one in which the algebra is a direct sum of
matrix algebras over $\Cb$ or division rings $\Rb,\Cb_{\Rb},\Hb_{\Rb}$ and the Frobenius form is determined by a fixed invertible element $x =\oplus_i\, x_i \in A$. For a complex algebra
\begin{align} \label{eq:complex_semi}
A = \bigoplus\limits_{i=1}^N \Mb_{n_i}(\Cb), 
\end{align}
the functional takes the form
\begin{align}
\gls{eps}(a) = \sum\limits_{i=1}^N \Tr (x_ia_i).
\end{align}
The element $x$ must satisfy the relations $R\Tr(x^{-1}_i)=1$ for all $i=1,\cdots,N$. For a real algebra
\begin{align} \label{eq:real_semi}
A = \bigoplus\limits_{i=1}^N \Mb_{n_i}(D_i) \text{ with }D_i=\Rb,\Cb_{\Rb},\Hb_{\Rb}
\end{align}
the Frobenius form is given by
\begin{align}
\varepsilon(a) = \sum\limits_{i=1}^N \Real  \Tr(x_ia_i).
\end{align}
The element $x$ must satisfy the relations
\begin{align}
R^{-1}=
\begin{cases}
\Tr(x_i^{-1}) & (D_i = \Rb) \\
2\Tr(x_i^{-1}) &  (D_i = \Cb_{\Rb}) \\
4 \Real\Tr(x_i^{-1}) &  (D_i = \Hb_{\Rb})
\end{cases}
\end{align}
 for all $i=1,\cdots,N$.
\end{theorem}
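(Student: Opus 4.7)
The strategy is to combine Theorem~\ref{theo:diagram}, Lemma~\ref{separable}, and the Artin--Wedderburn theorem. By Theorem~\ref{theo:diagram} a planar state sum model over $\gls{k}$ is the same data as a special Frobenius algebra $(\gls{A},\gls{m},\gls{eps})$ with $\gls{R}\cdot \gls{m}(\gls{Bb})=\gls{unit}$, and by Lemma~\ref{separable} any such $\gls{A}$ is separable. Because $\gls{k}=\Rb$ or $\Cb$ has characteristic zero, separability is equivalent to being finite-dimensional and semisimple \cite{KadisonStolin}, so Artin--Wedderburn delivers the matrix decompositions \eqref{eq:complex_semi} and \eqref{eq:real_semi}, unique up to permutation of the simple summands.

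Next I would classify the admissible Frobenius forms. The decomposition $\gls{unit}=\sum_i 1_i$ into central idempotents, together with non-degeneracy of $\gls{eps}\circ\gls{m}$, forces $\gls{eps}=\bigoplus_i \varepsilon_i$ with each $\varepsilon_i$ a non-degenerate linear functional on the simple factor $A_i$. For $A_i=\Mb_{n_i}(\Cb)$, the trace pairing $(x,a)\mapsto \Tr(xa)$ is a non-degenerate bilinear form that identifies $A_i$ with its dual, so there is a unique $x_i\in A_i$ with $\varepsilon_i(a)=\Tr(x_i a)$, and non-degeneracy of $\varepsilon_i\circ \gls{m}$ amounts precisely to $x_i$ being invertible. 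Over $\Rb$ the same argument applied to the $\Rb$-bilinear pairing $(x,a)\mapsto \Real\Tr(xa)$ on $\Mb_{n_i}(D_i)$ --- whose non-degeneracy was verified in the proof of Lemma~\ref{lem:Frobenius_forms} --- produces the required invertible $x_i$.

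Finally I would convert the specialness condition $\gls{R}\cdot \gls{m}(\gls{Bb})=\gls{unit}$ into the stated trace constraints by computing $\gls{m}(\gls{Bb})$ on each summand. The complex case is exactly the worked example immediately preceding the theorem: $\gls{Bb}_i=\sum_{lm} e_{lm}\,x_i^{-1}\otimes e_{ml}$ gives $\gls{m}(\gls{Bb}_i)=\Tr(x_i^{-1})\,1_i$. For the real factors I would use the $\Rb$-basis $\{w\,e_{lm}\}$ in which $w$ runs over the standard $\Rb$-basis of $D_i$; cyclicity of $\Real\Tr$ identifies the dual basis as $\{\overline{w}\,e_{ml}\,x_i^{-1}\}$ up to reordering and reduces the computation of $\gls{m}(\gls{Bb}_i)$ to evaluating the conjugation sum $\sum_{w} w\,q\,\overline{w}$ at $q=\Tr(x_i^{-1})\in D_i$.

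The main obstacle is this final step in the quaternionic case, where non-commutativity of $\Hb_\Rb$ means a short explicit calculation gives $\sum_{w\in\{1,\hat{\imath},\hat{\jmath},\hat{k}\}} w\,q\,\overline{w}=4\,\Real(q)$ for $q\in\Hb_\Rb$; this is the algebraic origin of both the factor $4$ and the $\Real$ appearing in the condition $R^{-1}=4\Real\Tr(x_i^{-1})$. The simpler analogues $\sum_w w\,q\,\overline{w}=q$ for $D_i=\Rb$ and $=2q$ for $D_i=\Cb_\Rb$ produce the remaining two formulas and close the proof.
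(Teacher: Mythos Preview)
Your proposal is correct and follows essentially the same route as the paper: reduce to a semisimple algebra via Theorem~\ref{theo:diagram} and Lemma~\ref{separable}, apply Artin--Wedderburn, parametrise the Frobenius form on each simple factor by an invertible $x_i$, and then compute $\gls{m}(\gls{Bb})$ blockwise in the basis $\{w\,e_{lm}\}$ to obtain the trace constraints. The only cosmetic difference is that the paper obtains the form $\varepsilon_i(a)=\Tr(x_i a)$ by quoting the classification of Frobenius forms from \cite{Lauda,Kock}, whereas you derive it directly from non-degeneracy of the trace pairing; your conjugation-sum identity $\sum_w w\,q\,\overline w = 4\,\Real(q)$ for $q\in\Hb_\Rb$ is exactly the calculation the paper performs to isolate the quaternionic factor.
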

\begin{proof}
The classification of Frobenius forms on an algebra~\cite{Lauda,Kock} shows that any two Frobenius forms $\gls{eps}$, $\tilde{\varepsilon}$ are related by an invertible element $\gls{x} \in \gls{A}$ as $\varepsilon(a)=\tilde{\varepsilon}(xa)$. Thus, for the complex case, one can write
\begin{align}
\varepsilon(a)=\sum_i\Tr(x_ia_i)
\end{align}
using the decomposition $x=\oplus_i \, x_i$ and lemma \ref{lem:Frobenius_forms}. Let the unit element be decomposed as $\gls{unit}=\oplus_i \, 1_i$; from the example of a simple matrix algebra previously studied, one concludes $R\, m(B_i)=1_i$ with $1_i$ the unit element in $\Mb_{n_i}(\Cb)$. Consequently, setting $\gls{R}\, \gls{m}(\gls{Bb})=1$ gives the relation $R\Tr(x_i^{-1})=1$. 

As established in \S\ref{sec:lattice_tft}, $\Real\Tr$ is a Frobenius functional for a matrix algebra over a real division ring. Thus, for an algebra \eqref{eq:real_semi}, one can write
\begin{align}
\varepsilon(a)=\sum_i\Real\Tr(x_ia_i).
\end{align}
It is easy to verify the bilinear form $B$ associated with this Frobenius functional satisfies
\begin{align} \label{eq:B_form}
B=\sum_{i,\,w_i,\,lm}\,w_i\,e_{lm}^{i}\,x_i^{-1}\otimes \, \overline{w_i} \, e_{ml}^i
\end{align}
using the basis defined in lemma~\ref{lem:Frobenius_forms}; one then finds 
\begin{align}
m(B)=\sum_{i,\,w_i} w_i\Tr(x_i^{-1}) \overline{w_i}\, 1_i \hspace{1mm}.
\end{align}
For the identity $R\, m(B)=1$ to hold one must have $R^{-1}=\sum_{w_i}w_i\Tr(x_i^{-1})\, \overline{w_i}$ for all $i$. If $D_i=\Rb$ or $\Cb_{\Rb}$, then $\overline{w_i}$ and $\Tr(x_i^{-1})$ commute, which means the expression reduces to $R^{-1}=\Tr(x_i^{-1})$ and $R^{-1}=2\Tr(x_i^{-1})$ respectively. If $D_i=\Hb_{\Rb}$, the expression reduces to $R^{-1}=4\Real\Tr(x_i^{-1})$ -- the non-real components of the trace are automatically cancelled.

\end{proof}

As one might expect, the study of state sum models done in  \S\ref{sec:lattice_tft} for the disk can be regarded as a special case of theorem \ref{theo:diagram_semi_simple}.

\begin{corollary} \label{cor:FHK}
An FHK state sum model on the disk over the field $\gls{k} = \Cb$ or $\Rb$ is a planar state sum model in the conditions of theorem~\ref{theo:diagram_semi_simple} where the Frobenius form is symmetric. If the algebra is of the form \eqref{eq:complex_semi} then $\gls{x}=\gls{R}\oplus_i n_i1_i$; if it is of the form \eqref{eq:real_semi} then $x=R\oplus_i|D_i|n_i1_i$. The data $\gls{A}$ and $R$ therefore uniquely determine the Frobenius form of an FHK model.
\end{corollary}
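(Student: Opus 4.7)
The plan is to combine theorem~\ref{theo:state_sum_algebra} with theorem~\ref{theo:diagram_semi_simple}. First, by theorem~\ref{theo:state_sum_algebra}, the data of an FHK state sum model on the disk produce a symmetric special Frobenius algebra with unit $\gls{unit} = \gls{R} \cdot \gls{m}(\gls{Bb})$. A symmetric special Frobenius algebra is a fortiori a special Frobenius algebra, so theorem~\ref{theo:diagram} ensures that the same data also define a planar state sum model, with the Frobenius form symmetric by construction. This already gives half of the corollary; the remaining content is the explicit identification of the element $\gls{x}$.

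Next, I would identify $\gls{x}$ by direct comparison. Theorem~\ref{theo:diagram_semi_simple} gives, after a change of basis, $\gls{eps}(a) = \sum_i \Tr(x_i a_i)$ in the complex case and $\gls{eps}(a) = \sum_i \Real \Tr(x_i a_i)$ in the real case. On the other hand, the explicit FHK classification (equations~\eqref{eq:cxfrob} and~\eqref{eq:realfrob}) gives the same Frobenius form as $\gls{eps}(a) = \gls{R} \sum_i n_i \Tr(a_i)$ or $\gls{eps}(a) = \gls{R} \sum_i |D_i| n_i \Real \Tr(a_i)$. Equating the two expressions on each simple factor and using the non-degeneracy of the relevant trace pairing (on $\Mb_{n_i}(\Cb)$ as a $\Cb$-space, or on $\Mb_{n_i}(D_i)$ as an $\Rb$-space) reads off $x_i = \gls{R}\, n_i \, 1_i$ in the complex case and $x_i = \gls{R} |D_i| n_i \, 1_i$ in the real case.

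It then remains to verify that these values are compatible with the normalisation constraints demanded by theorem~\ref{theo:diagram_semi_simple}. For instance, in the complex case $\gls{R}\,\Tr(x_i^{-1}) = \gls{R} \cdot (\gls{R} n_i)^{-1} n_i = \gls{field_unit}$; the analogous one-line check works for each of the three possibilities $D_i = \Rb, \Cb_{\Rb}, \Hb_{\Rb}$ in the real case, with the factors $1, 2, 4$ in the normalisation conditions cancelling the factor $|D_i|$ in $x_i$. Uniqueness of the Frobenius form given $\gls{A}$ and $\gls{R}$ is then immediate: the matching argument above leaves no freedom in $x$, hence none in $\gls{eps}$.

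The argument is essentially bookkeeping once theorem~\ref{theo:diagram_semi_simple} is in hand; there is no genuinely hard step. The one place where I would be a bit careful is the real case with $D_i = \Cb_{\Rb}$: here the centre of $\Mb_{n_i}(\Cb_{\Rb})$ is two-dimensional over $\Rb$, so one must confirm that matching $\Real \Tr(x_i a_i)$ against $2\gls{R} n_i \Real \Tr(a_i)$ for \emph{all} $a_i \in \Mb_{n_i}(\Cb_{\Rb})$ (including $a_i = \hat{\imath} n_i^{-1}\, 1_i$) pins $x_i$ down to the real scalar $\gls{R}|D_i|n_i \, 1_i$ and not to a scalar with a spurious imaginary part.
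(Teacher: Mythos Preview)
Your first paragraph is fine. The problem is in how you identify $x$ and argue uniqueness: you match the general form $\Tr(x_i a_i)$ from theorem~\ref{theo:diagram_semi_simple} against the explicit formulas \eqref{eq:cxfrob} and \eqref{eq:realfrob}. But those formulas were \emph{stated} in \S\ref{sec:lattice_tft} with the proof of the classification explicitly deferred to theorem~\ref{theo:diagram_semi_simple} --- that is, to here. Using them as inputs is circular. Your matching argument shows only that the particular form \eqref{eq:cxfrob} corresponds to $x_i = R n_i 1_i$; it does not show that every symmetric special Frobenius form on $A$ with the given $R$ must be \eqref{eq:cxfrob}. The uniqueness claim at the end (``the matching argument above leaves no freedom in $x$'') is therefore unsupported: you have matched one example, not ruled out others.

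The paper avoids this by never invoking \eqref{eq:cxfrob} or \eqref{eq:realfrob}. It argues directly: symmetry of $\varepsilon$ means $\varepsilon(ab) = \varepsilon(ba)$ for all $a,b$, which forces the element $x$ from theorem~\ref{theo:diagram_semi_simple} to be central; hence $x = \oplus_i \lambda_i 1_i$ with each $\lambda_i$ a scalar in the centre of the relevant block. The normalisation constraints of theorem~\ref{theo:diagram_semi_simple} (e.g.\ $R\,\Tr(x_i^{-1}) = 1$ in the complex case, or $R^{-1} = |D_i|\,\lambda_i^{-1} n_i$ in the real case) then determine $\lambda_i$ uniquely, giving both the explicit $x$ and uniqueness in one stroke. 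Your $D_i = \Cb_\Rb$ worry is absorbed the same way: the constraint $R^{-1} = 2\lambda_i^{-1} n_i$ with $R \in \Rb$ already forces $\lambda_i \in \Rb$. The missing step in your argument is exactly this ``symmetric $\Rightarrow$ $x$ central'' observation, after which the normalisation constraints do all the work and no appeal to \eqref{eq:cxfrob}, \eqref{eq:realfrob} is needed.
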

\begin{proof}
This is a special case of theorem~\ref{theo:diagram_semi_simple} where $\gls{eps}$ must be symmetric. This means $x$ must be a central element and can, therefore, be written as $x=\oplus_i \lambda_i 1_i$. The constants $\lambda_i$ must be in $\Cb$ if the underlying field is $\Cb$ or if $D_i=\Cb_{\Rb}$; otherwise, they must be real numbers (recall that only real numbers commute with all the quaternions). Each of these constants must then satisfy $R^{-1}=\lambda_i^{-1}n_i$ in the complex case or $R^{-1}=\lambda_i^{-1}|D_i|n_i$ in the real one. In other words $x=R\oplus_in_i1_i$ or $x=R\oplus_i|D_i|n_i1_i$, respectively.
\end{proof}
\section{Spherical models}\label{sec:spherical}

The main objective of our treatment is the creation of partition functions for closed surfaces. Within the naive framework this extension of the formalism from the disk to any Riemann surface appeared as rather natural. We possessed a mechanism, the matrix $\gls{Bb}$, to identify internal edges and we just extended its use to the identification of disk boundaries -- thus creating topological invariants for closed surfaces. The introduction of the diagrammatic calculus, however, will show us how, although intuitive, this reasoning is not the only valid one and should be treated more carefully. We start our extension of planar models with the example of the sphere. 

Suppose that $M$ is a subset of the sphere, $M\subset\Sigma_0$, with a chosen orientation. Here, we describe the sphere as the completion of $\Rb^2$ with a `point at infinity' $p$. To be more precise, we are using the description of the sphere that relates it to $\Rb^2$ through a stereographic projection \cite{MDG}. The point $p$ could be chosen for example to be the North pole of the sphere.

If $M$ has a non-trivial boundary then it is topologically equivalent to $M$ seen as an $\Rb^2$ subset and this equivalence is independent of where the point $p$ is chosen to lie. A planar model for $M \subset \Sigma_0$ can therefore be defined as the planar model for $M$ seen as a subset of $\Rb^2$ through an orientation-preserving isomorphism of $\Sigma_0-\{p\}$ to $\Rb^2$, with the point $p$ chosen not to lie in the dual graph of the triangulation of $M$. A new condition, however, emerges: the freedom to move $p$ around the sphere corresponds to the spherical move \cite{BarrettWestbury}
\begin{equation}\label{eq:blob}
\begin{aligned}
\begingroup%
  \makeatletter%
  \providecommand\color[2][]{%
    \errmessage{(Inkscape) Color is used for the text in Inkscape, but the package 'color.sty' is not loaded}%
    \renewcommand\color[2][]{}%
  }%
  \providecommand\transparent[1]{%
    \errmessage{(Inkscape) Transparency is used (non-zero) for the text in Inkscape, but the package 'transparent.sty' is not loaded}%
    \renewcommand\transparent[1]{}%
  }%
  \providecommand\rotatebox[2]{#2}%
  \ifx\svgwidth\undefined%
    \setlength{\unitlength}{141.73972168bp}%
    \ifx\svgscale\undefined%
      \relax%
    \else%
      \setlength{\unitlength}{\unitlength * \real{\svgscale}}%
    \fi%
  \else%
    \setlength{\unitlength}{\svgwidth}%
  \fi%
  \global\let\svgwidth\undefined%
  \global\let\svgscale\undefined%
  \makeatother%
  \begin{picture}(1,0.58091433)%
    \put(0,0){\includegraphics[width=\unitlength]{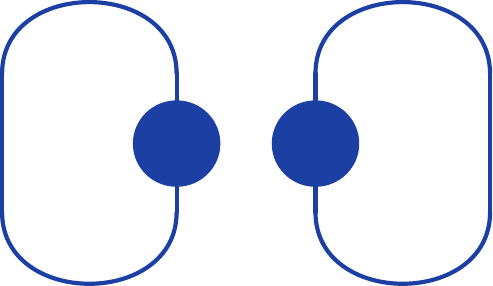}}%
    \put(0.47174085,0.2609378){\color[rgb]{0,0,0}\makebox(0,0)[lb]{\smash{$=$}}}%
  \end{picture}%
\endgroup%

\end{aligned}
\end{equation}
where $\begin{aligned}
\begingroup%
  \makeatletter%
  \providecommand\color[2][]{%
    \errmessage{(Inkscape) Color is used for the text in Inkscape, but the package 'color.sty' is not loaded}%
    \renewcommand\color[2][]{}%
  }%
  \providecommand\transparent[1]{%
    \errmessage{(Inkscape) Transparency is used (non-zero) for the text in Inkscape, but the package 'transparent.sty' is not loaded}%
    \renewcommand\transparent[1]{}%
  }%
  \providecommand\rotatebox[2]{#2}%
  \ifx\svgwidth\undefined%
    \setlength{\unitlength}{8.40000381bp}%
    \ifx\svgscale\undefined%
      \relax%
    \else%
      \setlength{\unitlength}{\unitlength * \real{\svgscale}}%
    \fi%
  \else%
    \setlength{\unitlength}{\svgwidth}%
  \fi%
  \global\let\svgwidth\undefined%
  \global\let\svgscale\undefined%
  \makeatother%
  \begin{picture}(1,1.61712173)%
    \put(0,0){\includegraphics[width=\unitlength]{blob.pdf}}%
  \end{picture}%
\endgroup%
\end{aligned}$ consists of a diagram that is the same on both sides of the equation. Alternatively, this move can be also understood as making the arc on the left-hand side larger until it passes the point at infinity on the sphere, when it then re-enters the planar diagram as an arc on the right-hand side.

A sufficient condition that guarantees  \eqref{eq:blob} holds for any matrix representing $\begin{aligned}\end{aligned}$ is 
\begin{equation} 
B_{ca}B^{cb}=B_{ac}B^{bc}.\label{eq:spherical}
\end{equation} 
The meaning of \eqref{eq:spherical} is easier to understand in the context of Frobenius algebras. 
\begin{definition}\label{Nakayama}
A Frobenius algebra has an automorphism $\gls{sigma} \colon \gls{A} \to A$ determined uniquely by the relation $\gls{eps}(x \cdot y)=\varepsilon(\sigma(y) \cdot x)$ for all $x,y \in A$. This map is known as the Nakayama automorphism.
\end{definition}

\begin{lemma}
Let $A$ be a Frobenius algebra. Then the following are equivalent:
\begin{enumerate} [label=A\arabic{*}), ref=(A\arabic{*})]
\item Equation \eqref{eq:spherical} \label{item:equation}
\item $\sigma^2=\iden$ \label{item:sigma}
\item $B^{-1}$ decomposes into the direct sum of a symmetric bilinear form and an antisymmetric one.\label{item:B}
\end{enumerate}
\end{lemma}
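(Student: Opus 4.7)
The plan is to first translate the defining relation of the Nakayama automorphism into component form, obtaining an explicit expression for $\sigma$ in terms of $B$ and $B^{-1}$. From this expression, the equivalence \ref{item:equation} $\Leftrightarrow$ \ref{item:sigma} will reduce to a short index manipulation, and \ref{item:sigma} $\Leftrightarrow$ \ref{item:B} will follow from the standard eigenspace decomposition of an involution.

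First I would write $\sigma(e_a) = \sigma_a{}^b e_b$ and evaluate both sides of $\varepsilon(x\cdot y)=\varepsilon(\sigma(y)\cdot x)$ on basis elements. Since $\varepsilon\circ m = B^{-1}$, this yields $B_{ab}=\sigma_b{}^c B_{ca}$. Contracting with $B^{ae}$ and using the snake identity $B_{ca}B^{ae}=\delta_c^e$ gives the clean formula $\sigma_b{}^e = B_{ab}B^{ae}$. Applying this formula twice,
\begin{align*}
\sigma_a{}^b \sigma_b{}^c = B_{da}B^{db}\,B_{eb}B^{ec} = B_{da}\delta_e^d B^{ec} = B_{da}B^{dc},
\end{align*}
so $\sigma^2=\iden$ is equivalent to $B_{da}B^{dc}=\delta_a^c$. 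On the other hand, the snake identity says $B_{ad}B^{dc}=\delta_a^c$, so \ref{item:sigma} amounts to $B_{da}B^{dc}=B_{ad}B^{dc}$. Relabelling indices (contract both sides with an arbitrary $B_{cb}$, say) shows this is exactly condition \ref{item:equation}. The short computation in both directions establishes \ref{item:equation} $\Leftrightarrow$ \ref{item:sigma}.

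For \ref{item:sigma} $\Rightarrow$ \ref{item:B}, assume $\sigma^2=\iden$. Since $\gls{k}=\Rb$ or $\Cb$, the involution $\sigma$ diagonalises and yields the eigenspace decomposition $A=A^+\oplus A^-$ with $\sigma|_{A^\pm}=\pm\iden$. I would then verify the three properties directly from $B^{-1}(x,y)=B^{-1}(\sigma(y),x)$: for $x,y\in A^+$ one gets $B^{-1}(x,y)=B^{-1}(y,x)$ (symmetric); for $x,y\in A^-$ one gets $B^{-1}(x,y)=-B^{-1}(y,x)$ (antisymmetric); and for $x\in A^+, y\in A^-$, applying the Nakayama relation twice gives $B^{-1}(x,y)=-B^{-1}(x,y)$, hence $B^{-1}(x,y)=0$, i.e.\ the two subspaces are $B^{-1}$-orthogonal. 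For \ref{item:B} $\Rightarrow$ \ref{item:sigma}, I would use the uniqueness of the Nakayama automorphism stated in Definition~\ref{Nakayama}: define $\tilde\sigma$ by $\tilde\sigma|_{A^+}=\iden$ and $\tilde\sigma|_{A^-}=-\iden$, check that $B^{-1}(x,y)=B^{-1}(\tilde\sigma(y),x)$ holds on each of the four subspaces $A^\pm\otimes A^\pm$ using the prescribed (anti)symmetry and the orthogonality, and conclude $\tilde\sigma=\sigma$, whence $\sigma^2=\iden$.

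The main potential obstacle is the bookkeeping in the first step: the planar framework does not assume $B$ is symmetric, so one must be scrupulous about index order in distinguishing $B_{ac}B^{cb}$ from $B_{ca}B^{bc}$, and in checking that $\sigma$ defined as above is genuinely an algebra automorphism (which, however, is already built into Definition~\ref{Nakayama} and need not be re-derived). Once the formula $\sigma_b{}^e=B_{ab}B^{ae}$ is in hand, everything else reduces to straightforward linear algebra over a field of characteristic zero.
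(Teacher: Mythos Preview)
Your treatment of \ref{item:sigma} $\Leftrightarrow$ \ref{item:B} via the eigenspace decomposition of the involution is correct and is essentially what the paper does. The problem is exactly where you flagged the obstacle: the index bookkeeping in the computation of $\sigma^2$.

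In the display you use $B^{db}B_{eb}=\delta_e^d$, but that is not the snake identity. The snake identity \eqref{eq:snake} contracts \emph{adjacent} indices, $B_{ec}B^{cd}=\delta_e^d$; your expression contracts the second index of $B$ with the second index of $B^{-1}$, which in matrix language is $B(B^{-1})^{\tr}$ and equals the identity only when $B^{-1}$ is symmetric. As a result your conclusion ``$\sigma^2=\iden \Leftrightarrow B_{da}B^{dc}=\delta_a^c$'' actually characterises the stronger condition $\sigma=\iden$ (i.e.\ $(B^{-1})^{\tr}=B^{-1}$), and the subsequent contraction with $B_{cb}$ collapses both sides to $B_{ba}=B_{ab}$, not to \eqref{eq:spherical}. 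So the argument as written proves a tautology rather than \ref{item:equation} $\Leftrightarrow$ \ref{item:sigma}.

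The cleanest repair is the one the paper uses: pass to matrix form. With $\sigma_b{}^e=B_{ab}B^{ae}$ one has $\sigma=(B^{-1})^{\tr}B$ as a matrix, so $\sigma^2=\iden$ becomes $(B^{-1})^{\tr}B=\bigl((B^{-1})^{\tr}B\bigr)^{-1}=B^{-1}B^{\tr}$, which written in components is precisely $B_{ca}B^{cb}=B_{ac}B^{bc}$, i.e.\ \eqref{eq:spherical}. Equivalently, derive $(\sigma^{-1})_b{}^e=B_{ba}B^{ea}$ directly from the Nakayama relation with the roles of $x$ and $y$ swapped, and observe that $\sigma_b{}^e=(\sigma^{-1})_b{}^e$ is \eqref{eq:spherical} after relabelling.
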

\begin{proof}
Note that equation \eqref{eq:spherical} can be rewritten as $(B^{-1}\gls{Bb}^{\tr})^2=\gls{unit}$ in matrix notation. The definition of $\sigma$ then implies that $\varepsilon(e_a \cdot e_b)=\varepsilon(\sigma(e_b)\cdot e_a)$ or, equivalently, $\gls{B}=\sigma_b{}^c B_{ca}$. By contracting both sides with $B^{ad}$ one can conclude that $\sigma_{b}{}^d=B_{ab}B^{ad}$ or, as matrices, $\sigma=B^{-1}B^{\tr}$. The equivalence between \ref{item:equation} and \ref{item:sigma} is then immediate.

Suppose $\gls{Bb}^{-1}$ is as in \ref{item:B}. Then the vectors $v$ that lie in the symmetric or antisymmetric subspaces of $\gls{A}$ satisfy $B^{-1}v=\pm (B^{-1})^{\tr}v$. If $B^{\tr}$ is applied to this equation the identity $B^{\tr}B^{-1}v=\pm v$ is obtained. Since it must be true for all $v$ it is also equivalent to  $(B^{\tr}B^{-1})^2=\gls{unit}$, which implies \ref{item:equation}. On the other hand, if \eqref{eq:spherical} is satisfied then $(B^{\tr}B^{-1})^2=1$. The eigenspaces with eigenvalues $\pm1$ give the direct sum decomposition of  \ref{item:equation}.
\end{proof}
 
For the case of triangulations of $M=\Sigma_0$ (with no boundary) the condition \eqref{eq:spherical} is not required. In these cases, $\begin{aligned}\end{aligned}$ in  \eqref{eq:blob} equals the identity matrix and so equation  \eqref{eq:blob} holds for any special Frobenius algebra. To see this note that $\begin{aligned}\end{aligned}$ is always of the form
\begin{align}
\begin{aligned}
\begingroup%
  \makeatletter%
  \providecommand\color[2][]{%
    \errmessage{(Inkscape) Color is used for the text in Inkscape, but the package 'color.sty' is not loaded}%
    \renewcommand\color[2][]{}%
  }%
  \providecommand\transparent[1]{%
    \errmessage{(Inkscape) Transparency is used (non-zero) for the text in Inkscape, but the package 'transparent.sty' is not loaded}%
    \renewcommand\transparent[1]{}%
  }%
  \providecommand\rotatebox[2]{#2}%
  \ifx\svgwidth\undefined%
    \setlength{\unitlength}{79.22917192bp}%
    \ifx\svgscale\undefined%
      \relax%
    \else%
      \setlength{\unitlength}{\unitlength * \real{\svgscale}}%
    \fi%
  \else%
    \setlength{\unitlength}{\svgwidth}%
  \fi%
  \global\let\svgwidth\undefined%
  \global\let\svgscale\undefined%
  \makeatother%
  \begin{picture}(1,0.90779951)%
    \put(0,0){\includegraphics[width=\unitlength]{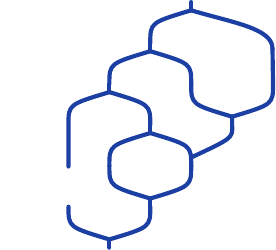}}%
    \put(0.16805466,0.19137014){\color[rgb]{0,0,0}\makebox(0,0)[lb]{\smash{$\cdots$}}}%
    \put(-0.00359913,0.40341325){\color[rgb]{0,0,0}\makebox(0,0)[lb]{\smash{$R^F$}}}%
  \end{picture}%
\endgroup%

\end{aligned}
\end{align}
where the only varying factor is the number of nodes ($F$ is the number of faces). Using associativity \eqref{fig:diagram_mult2} we can then re-write $\begin{aligned}\end{aligned}$ as
\begin{align}
\begin{aligned}
\begingroup%
  \makeatletter%
  \providecommand\color[2][]{%
    \errmessage{(Inkscape) Color is used for the text in Inkscape, but the package 'color.sty' is not loaded}%
    \renewcommand\color[2][]{}%
  }%
  \providecommand\transparent[1]{%
    \errmessage{(Inkscape) Transparency is used (non-zero) for the text in Inkscape, but the package 'transparent.sty' is not loaded}%
    \renewcommand\transparent[1]{}%
  }%
  \providecommand\rotatebox[2]{#2}%
  \ifx\svgwidth\undefined%
    \setlength{\unitlength}{166.46094971bp}%
    \ifx\svgscale\undefined%
      \relax%
    \else%
      \setlength{\unitlength}{\unitlength * \real{\svgscale}}%
    \fi%
  \else%
    \setlength{\unitlength}{\svgwidth}%
  \fi%
  \global\let\svgwidth\undefined%
  \global\let\svgscale\undefined%
  \makeatother%
  \begin{picture}(1,0.40757266)%
    \put(0,0){\includegraphics[width=\unitlength]{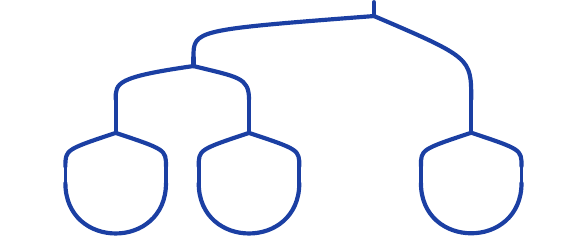}}%
    \put(-0.00171305,0.13953155){\color[rgb]{0,0,0}\makebox(0,0)[lb]{\smash{$R^F$}}}%
    \put(0.55096905,0.09147223){\color[rgb]{0,0,0}\makebox(0,0)[lb]{\smash{$\cdots$}}}%
    \put(0.93544376,0.13953155){\color[rgb]{0,0,0}\makebox(0,0)[lb]{\smash{$.$}}}%
  \end{picture}%
\endgroup%

\end{aligned}
\end{align}
Using the identity \eqref{eq:diag_1} we conclude that, as a matrix, $\begin{aligned}\end{aligned}$ is simply the identity. 

Through chapters \S\ref{ch:spin} to \S\ref{ch:spin_defects} only surfaces without boundary are considered and so the spherical condition \eqref{eq:spherical} is not needed. However, the status of the spherical condition is addressed in a more general framework in \S\ref{sec:cat}.

\begin{definition} A state sum model for a triangulation of $\Sigma_0$ is said to be spherical if it is determined by the data of a planar state sum model.
\end{definition}
The partition function of a sphere can be calculated from any triangulation. The result
\begin{align} \label{eq:diag_sphere_inv}
\gls{Z}(\Sigma_0)=\gls{R}\,\gls{eps}(\gls{unit})=\begin{cases}R\Tr(\gls{x}) &(\gls{k}=\Cb)\\R\Real\Tr(x)&(k=\Rb)\end{cases}
\end{align}
follows from the classification given by theorem~\ref{theo:diagram_semi_simple}. For $k=\Cb$, this result can also be written as $Z(\Sigma_0)=N\Tr(x)/\Tr(x^{-1})$. 


\chapter{Spin state sum models}\label{ch:spin}

\section{Spin geometry of Riemann surfaces} \label{sec:topo}
Thus far we have been interested in discussing models concerned only with purely topological properties of a surface. The main results of this work, however, encompass a generalisation of such models concerned with further properties of two-dimensional manifolds: their spin geometry. 

In physics, the notion of spin comes associated with the construction of the spin bundle. We start from a more primitive concept, a spin structure, that is defined using the notion of DeRham cohomology \cite{MDG}. 

Consider a real surface $M$ and its space of $r$-forms, $\Omega^r(M)$, $r=0,1,2$. Using the exterior derivative $d \colon \Omega^r(M) \to \Omega^{r+1}(M)$ we can define two special classes of $r$-forms. An $r$-form $\omega$ is closed if $d\omega=0$. The set of all such forms is denoted $\mathcal{Z}^r(M)$. A special subset of $\mathcal{Z}^r(M)$ is formed by exact $r$-forms, $r$-forms $\omega$ that can be written as $\omega=d\omega'$ for some $\omega' \in \Omega^{r-1}(M)$ -- we can see they are closed because $d^2=0$. The set of exact $r$-forms is denoted $\mathcal{B}^{r}(M)$.
\begin{definition}
The DeRham cohomology spaces $H^r(M)$, $r=0,1,2$ of a real surface $M$ are the quotient vector spaces
\begin{align}
H^r(M)=\mathcal{Z}^r(M)/\mathcal{B}^r(M).
\end{align}
\end{definition}
Above, the vector spaces are $\Rb$-valued. It is an easy exercise to see $H^0(M) = \Rb$ if $M$ is connected, and that $H^{2}(M)=\Rb$. 

By Poincar\'{e} duality \cite{Nakahara}, the vector space $H^{1}(M)$ can also be regarded as the linear space $\text{Hom}(H_1(M),\Rb)$. This space $H_1(M)$ is generated by closed curves $c \in M$ under the following equivalence relation: two curves are regarded as equivalent if they differ by a boundary, a curve $c'$ that satisfies $c'=\partial N$ for some two-dimensional $N \subset M$.     

We are now ready to present the notion of a spin structure. The following definition is recovered from \cite{Cimasoni}. A $\gls{k}$-valued cohomology is denoted as $H^r(M,k)$.

\begin{definition} \label{def:spin_structure}
Consider a surface $\Sigma$ and let $P_{SO} \to \Sigma$ be the principal $SO(2)$-bundle associated to its tangent bundle -- the frame bundle. A spin structure on $\gls{Sigma}$ is a cohomology class $\gls{s} \in H^1(P_{SO},\Zb_2)$ whose restriction to each fibre $F$ gives the generator of the cyclic group $H^1(F,\Zb_2)$. 
\end{definition} 

Apart from definition \ref{def:spin_structure} there are several other equivalent ways of describing spin structures in two dimensions, the most common of them being definition \ref{def:spin-intro}. In particular, we will rely on the relation between spin structures, quadratic forms and immersion maps all of which are explored throughout this section.

Let $c$ be a framed and closed curve in $\Sigma$ -- in other words, an element $c \in H_1(P_{SO})$. If $c$ is a boundary in $P_{SO}$, i.e. if it bounds a framed disk in $\Sigma$, then according to definition \ref{def:spin_structure} we must have $s(c)=0$. On the other hand, consider $c$ to be a tangentially-framed circle. This curve is not a boundary since this framing cannot be extended to the disk bounded by $c$ \cite{Kallel}. A spin structure maps the curve $c$ to $s(c)$, in this case also the generator for the space $H^1(F,\Zb_2) \simeq \Zb_2$. Therefore, we must have $s(c)=1$.  

\begin{figure}
\centering
\begin{subfigure}[t!]{0.4\textwidth}
                \centering
		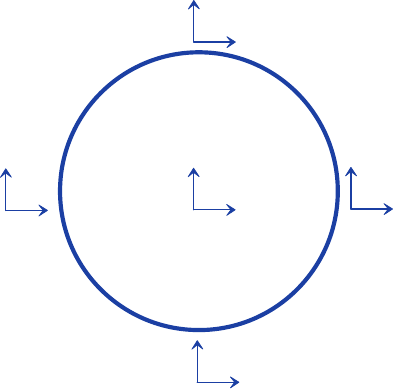
		\caption[A boundary]{If $c$ is a boundary, $s(c)=0$.}
		\label{fig:rwn}
\end{subfigure}
\hspace{5mm}
\begin{subfigure}[t!]{0.4\textwidth}
                \centering
		\vspace{4mm}
		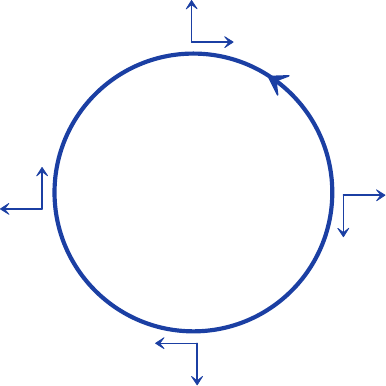
		\caption[A tangentially-framed curve]{If $c$ bounds a disk but is tangentially-framed, $s(c)=1$.}
		\label{fig:rwn2}
\end{subfigure}
\caption[Framed embedded circles in $\Sigma$]{Framed embedded circles in $\Sigma$.}
\label{fig:cycles}
\end{figure} 

Consider the closed curves below that we treat as tangentially-framed. 
$$
\centering
\begingroup%
  \makeatletter%
  \providecommand\color[2][]{%
    \errmessage{(Inkscape) Color is used for the text in Inkscape, but the package 'color.sty' is not loaded}%
    \renewcommand\color[2][]{}%
  }%
  \providecommand\transparent[1]{%
    \errmessage{(Inkscape) Transparency is used (non-zero) for the text in Inkscape, but the package 'transparent.sty' is not loaded}%
    \renewcommand\transparent[1]{}%
  }%
  \providecommand\rotatebox[2]{#2}%
  \ifx\svgwidth\undefined%
    \setlength{\unitlength}{113.2bp}%
    \ifx\svgscale\undefined%
      \relax%
    \else%
      \setlength{\unitlength}{\unitlength * \real{\svgscale}}%
    \fi%
  \else%
    \setlength{\unitlength}{\svgwidth}%
  \fi%
  \global\let\svgwidth\undefined%
  \global\let\svgscale\undefined%
  \makeatother%
  \begin{picture}(1,0.33083957)%
    \put(0,0){\includegraphics[width=\unitlength]{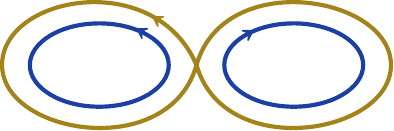}}%
  \end{picture}%
\endgroup%

$$
On one hand we have an immersed circle with one intersection point; on the other we have a curve consisting of two disconnected circles. One cannot distinguish between one curve and the other as elements of $H_1(P_{SO},\Zb_2)$. In this sense, we can regard any curve as a collection of disconnected circles -- a convention we adopt for the remainder of this section and that extends to curves in $H_1(\Sigma,\Zb_2)$.

For a surface of genus $g$, $\gls{Sigma_g}$, any curve can then be seen as generated by one of $2g+1$ circles where $2g$ of these correspond to the surface generating loops, and the remaining is the tangentially-framed circle that bounds a disk that we denote as $z$. (Note that the framings of the generating loops must be tangential.) We know $\gls{s}(z)=1$ but the value of the spin structure at each of the generating loops is not a priori determined. Therefore, there are $2^{2g}$ distinct spin structures. 



Instead of working with $s \in H^{1}(P_{SO},\Zb_2)$ it is possible to use an equivalent description based on quadratic forms \cite{Kirby}. The following definition is recovered from \cite{Cimasoni}.

\begin{definition}
Let $V$ be a finite dimensional vector space over the field $\Zb_2$, and let $b \colon V \times V \to \Zb_2$ be a fixed bilinear form. A function $q \colon V \to \Zb_2$ is a quadratic form on $(V,b)$ if
\begin{align} \label{eq:quad_b}
q(x+y)=q(x)+q(y)+b(x,y)
\end{align}
for all $x,y \in V$.
\end{definition}

Johnson \cite{Johnson} showed the space of spin structures $s \in H^{1}(P_{SO},\Zb_2)$ is isomorphic to the space of quadratic forms $q \in H^1(\Sigma,\Zb_2)$. This result relies on understanding how curves in $\Sigma$ can be lifted to curves in its frame bundle $P_{SO}$. 

Suppose $c_1,c_2 \in H_1(\Sigma,\Zb_2)$ are a disjoint union of $m$ and $n$ cycles: $c_1=\sum_{i=1}^m \alpha_i$ and $c_2=\sum_{i=1}^n \beta_i$. Let $c_1 \sim c_2$; then, the equivalence relation extends to the lifts of $c_1, c_2$ to $H_1(P_{SO},\Zb_2)$, denoted as $\tilde{c_1}, \tilde{c}_2$, in the following way:
\begin{align} \label{eq:lift}
\sum_{i=1}^m \tilde{\alpha}_i+m \,z \sim \sum_{i=1}^n \tilde{\beta}_i + n\,z \, ,
\end{align}
where $z$ denotes the tangentially framed circle and the $\tilde{\alpha}_i,\tilde{\beta}_i$ are also tangentially-framed. Expression \eqref{eq:lift} is intuitive when the number of disjoint cycles is the same for $c_1$ and $c_2$: the equivalence transformations between such curves lift straightforwardly from $\Sigma$ to $P_{SO}$ and we naturally have $\sum_{i=1}^m \tilde{\alpha}_i \sim \sum_{i=1}^n \tilde{\beta}_i$. In other words, for $m=n$ the boundary $c_1 - c_2 \in \Sigma$ lifts to a boundary in $P_{SO}$. However, the transformations that allow us to regard curves $c_1,c_2$ for which $m \neq n$ as equivalent, behave non-trivially under the lift. This type of transformation is shown below for a specific choice of $c_1,c_2$ with $n=m+1$ (the orientation of the curves represent their tangential frame).  
\begin{align}
\begin{aligned}
\hspace{1.5cm}
\begingroup%
  \makeatletter%
  \providecommand\color[2][]{%
    \errmessage{(Inkscape) Color is used for the text in Inkscape, but the package 'color.sty' is not loaded}%
    \renewcommand\color[2][]{}%
  }%
  \providecommand\transparent[1]{%
    \errmessage{(Inkscape) Transparency is used (non-zero) for the text in Inkscape, but the package 'transparent.sty' is not loaded}%
    \renewcommand\transparent[1]{}%
  }%
  \providecommand\rotatebox[2]{#2}%
  \ifx\svgwidth\undefined%
    \setlength{\unitlength}{412.85774246bp}%
    \ifx\svgscale\undefined%
      \relax%
    \else%
      \setlength{\unitlength}{\unitlength * \real{\svgscale}}%
    \fi%
  \else%
    \setlength{\unitlength}{\svgwidth}%
  \fi%
  \global\let\svgwidth\undefined%
  \global\let\svgscale\undefined%
  \makeatother%
  \begin{picture}(1,0.22060416)%
    \put(0,0){\includegraphics[width=\unitlength]{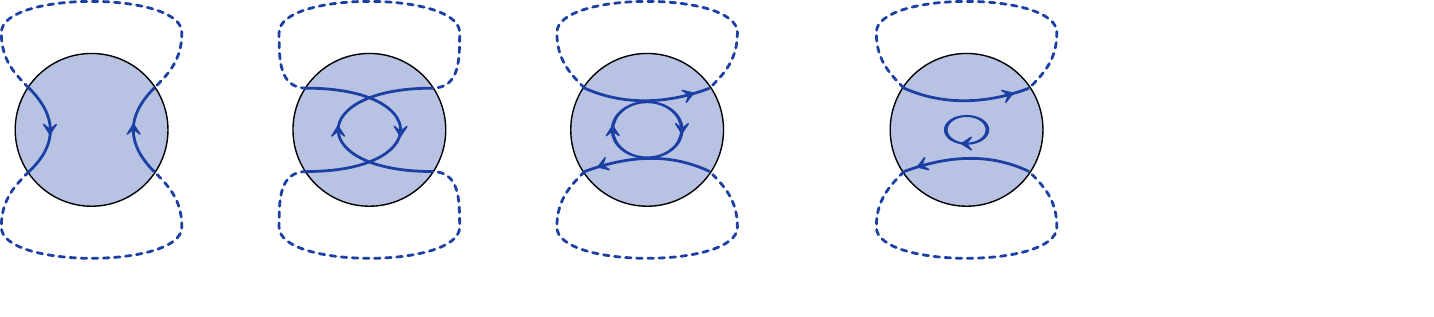}}%
    \put(0.02997954,0.0040779){\color[rgb]{0,0,0}\makebox(0,0)[lb]{\smash{$\tilde{c}_1=\tilde{\alpha}_1$}}}%
    \put(0.59191641,0.0040779){\color[rgb]{0,0,0}\makebox(0,0)[lb]{\smash{$\tilde{c}_2=\tilde{\beta}_1 + \tilde{\beta}_2 + z$}}}%
    \put(0.14624235,0.12034071){\color[rgb]{0,0,0}\makebox(0,0)[lb]{\smash{$\sim$}}}%
    \put(0.34001366,0.12034071){\color[rgb]{0,0,0}\makebox(0,0)[lb]{\smash{$\sim$}}}%
    \put(0.56285071,0.12034071){\color[rgb]{0,0,0}\makebox(0,0)[lb]{\smash{$\sim$}}}%
  \end{picture}%
\endgroup%

\end{aligned}
\end{align}
As we can see the identity $\tilde{\alpha}_1 + z = \tilde{\beta}_1 + \tilde{\beta}_2 + 2z$ holds since the coefficients are taken $\mo 2$. Crucially in this case the boundary $c_1 - c_2 \in \Sigma$ lifts to $z \in P_{SO}$ -- not to a boundary. It is shown in \cite{Johnson} that iterating this type of equivalence we can link $\tilde{c}_1$ and $\tilde{c}_2$ when $m \neq n$. 

Let $c$ be an embedded curve composed of $m$ cycles. Given a spin structure $s$, it is natural to define a map $q_{s} \colon H_1(\Sigma,\Zb_2) \to \Zb_2$ as $q_s(c)=s(\tilde{c})+m \mo 2$ -- the equivalence relation \eqref{eq:lift} determines $q_s$ is well-defined in $H^1(\Sigma,\Zb_2)$. Then, a theorem due to Johnson \cite{Johnson} asserts that $q_{s}$ is a well-defined quadratic form satisfying \eqref{eq:quad_b} for $b(x,y)=x \cdot y$, the intersection form. Moreover, $q_s$ and $s$ are in one-to-one correspondence.

To classify $q_s \in H^1(\Sigma,\Zb_2)$ we will use the results of Pinkall, where distinct spin structures are associated with equivalence classes of immersions \cite{Pinkall}. Recall that an immersion is a map $i \colon M \to M'$ having a derivative that is injective at every point. Thus an immersion is locally an embedding. The equivalence relation of interest is regular homotopy \cite{Kallel}.

\begin{definition}
A regular homotopy from $i_0$ to $i_1$ is a family of smooth immersions $i_t$, $t\in[0,1]$, that defines a smooth map $H(x,t)=i_t(x)\colon M\times[0,1]\to M'$.  
\end{definition}

It is shown in \cite{Pinkall} that the immersions of a smooth surface $\gls{Sigma_g}$ into $\Rb^3$ fall into $2^{2g}$ regular homotopy equivalence classes. Each immersion $i\colon\Sigma\to\Rb^3$ determines an induced spin structure on $\gls{Sigma}$ by pulling-back the unique spin structure on $\Rb^3$. The induced spin structure is invariant under a regular homotopy (since the homotopy is differentiable). As we have seen, there are $2^{2g}$ spin structures on an oriented surface $\Sigma_g$ and these classify the equivalence classes of immersions uniquely. 

It is important to note that the notion of spin structure as defined through \ref{def:spin_structure} is not preserved by the action of diffeomorphisms. In other words, even if there exists no regular homotopy $i(\Sigma) \to i(\Sigma')$ there might exist a diffeomorphism $f \colon \Sigma \to \Sigma$ such that $i \circ f (\Sigma)$ and $i(\Sigma')$ are regularly homotopic \cite{Pinkall}. It will be appropriate therefore to work with a structure that is differomorphism-invariant: the Arf invariant.  

To any $\Zb_2$-valued quadratic form $q_s$ we can associate the Arf invariant, here defined as the integer \cite{Cimasoni}
\begin{align} \label{eq:Arf_inv}
\text{Arf}(q_{s})=\frac{1}{|H_1(\Sigma_g,\Zb_2)|^{\frac{1}{2}}}\sum_{\alpha \in H_1(\Sigma_g,\Zb_2)}(-1)^{q_s(\alpha)}.
\end{align}
Conveniently, this form of the Arf invariant takes the values $\pm 1$ and will therefore be referred to as well as the parity of a spin structure: $P(\gls{s})=\text{Arf}(q_{s})$. To show that indeed $\text{Arf}(q_s)=\pm 1$ one would show $\text{Arf}^2(q_s)=1$ by using the properties of a quadratic form \cite[Lecture~9]{Saveliev}. If $P(s)=1$ the spin structure is said to be even, otherwise it is referred to as odd. There is an alternative version of the Arf invariant that takes values $0,1$ and that we will denote as $\text{arf}(q_s)$: $\text{Arf}(q_s)=(-1)^{\text{arf}(q_s)}$. 

The description of spin structures we have just made will be used to understand how the planar models of chaper~\S\ref{sec:diagram} can be extended from $M \subset \Rb^2$ to $\Sigma \subset \Rb^3$. In particular, the role of regular homotopy and its interplay with diffeomorphism-invariance are explored.

\section{Models with crossings} \label{sec:crossing}

As with the spherical case, we face a challenge on trying to extend the planar calculus formalism to closed surfaces $\Sigma$. To accomplish this generalisation we will regard surfaces as subsets of $\Rb^3$. The maps used to perform such an inclusion will be immersions $\Sigma \looparrowright \Rb^3$. 

The diagrammatic method is extended to these surfaces in the following way. The graph $\gls{G}$ constructed from the dual of a triangulation $\gls{T}$ of an oriented surface $\gls{Sigma}$ can be considered as a ribbon graph by taking the ribbon to be a suitable neighbourhood of the graph (called a regular neighbourhood \cite{hirsch-srn}) in the surface. This ribbon graph is therefore immersed in $\Rb^3$ (see figure~\ref{fig:torus-embedding}). The state sum model partition function is given by a suitable evaluation of this ribbon graph. Recall that for planar models the evaluation $|G|$ was subject to an equivalence relation: invariance under Pachner moves. This notion of equivalence is extended to ribbon graphs by assuming that two graphs must be regarded as equivalent if they are related by regular homotopy.  

\begin{figure}[t!]
                \centering		
		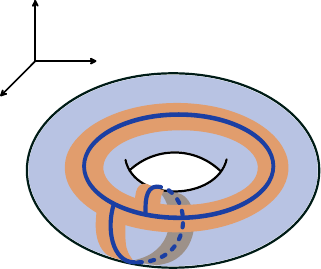
		\caption[Torus embedding]{\emph{Torus embedding.} An example of an immersion is an embedding. Above we have one such map $\Sigma_1 \to \Rb^3$ where a regular neighbourhood of a possible dual diagram $G$ has been included to generate a ribbon graph. The part of the graph that lies in the hidden side of the torus has been dashed whilst the corresponding regular neighbourhood has been shaded.}
		\label{fig:torus-embedding}
\end{figure} 

The concept of regular homotopy is explored here for the case of smooth surfaces and immersions, for which there is a well-developed literature. As is standard in knot theory, the graphs can be described by the diagrams that result from a projection of $\Rb^3$ to $\Rb^2$ and the equivalence is a set of Reidemeister-like moves on diagrams. Then it is noted that the diagrams and their moves in fact also make sense as piecewise-linear diagrams, which is  more natural for triangulations. A future challenge is to develop the theory using the piecewise-linear formulation of regular homotopy \cite{HaefligerPoenaru} from the beginning. 

Surfaces and curves immersed in $\Rb^3$ are studied in \cite{Pinkall}, from which several key results are used. Let $i\colon\Sigma\to\Rb^3$ be a surface immersion and $G\subset\Sigma$ the graph constructed from the dual to a triangulation of $\Sigma$. Then $\gamma=i_{|G}\colon G\to\Rb^3$ is an immersion of the graph $G$ and in the generic case this is an embedding -- this means there is an arbitrarily small regular homotopy to an embedding.  If there is a regular homotopy $\gamma_t$ between two embedded graphs $\gamma_0$ and $\gamma_1$, then the regular homotopy can be adjusted so that $\gamma_t$ is an embedding except at a finite set of values of $t$, where there is one intersection point. As $t$ varies through one of these values, one segment of an edge of the graph passes through another (see figure \ref{fig:regular-homotopy}).

\begin{figure}[t!]
                	\centering		
		\includegraphics{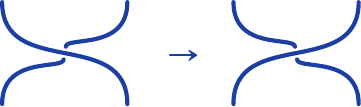}
		\caption[Regular homotopy]{\emph{Regular homotopy.} Immersions of graphs in $\Rb^3$ allow for intersections. Regular homotopy thus allows a diagram undercrossing to be transformed into an overcrossing.}
		\label{fig:regular-homotopy}
\end{figure} 

The graph $\gamma$ will be described by a diagram obtained by projecting $\Rb^3$ to $\Rb^2$. It is assumed that this projection is generic, so that the graph is immersed in $\Rb^2$ with transverse self-intersections of edges. Since regular homotopy allows the edges to pass through each other, there is no need to record whether the crossings are over- or undercrossings. Diagrams are thus obtained from the usual diagrams of knot theory by setting over- and undercrossings equal.

The graph $\gamma$ has a ribbon structure obtained by taking a suitably small regular neighbourhood $K$ of $\gamma$ in $\Sigma$, thus $\gamma\subset K\subset \Sigma$.
The formalism is simplified if the projection to $\Rb^2$ preserves the ribbon structure of the graph. As is standard in knot theory \cite{kauffman-regular-isotopy}, an embedded ribbon graph can be adjusted by a regular homotopy so that the projection of the ribbon to $\Rb^2$ is an orientation-preserving immersion. This is called `blackboard framing'. Then using blackboard-framed knots throughout, it is not necessary to include the ribbon in the planar diagrams (see figure~\ref{fig:torus-diagrams}). 

The state sum model is defined from the diagram in the plane by augmenting the formalism for a planar state sum with a crossing map $\gls{lambda} \colon \gls{A} \otimes A \to A \otimes A$ where one edge of the graph crosses another as shown.  
\begin{align}
\begin{aligned}
\begingroup%
  \makeatletter%
  \providecommand\color[2][]{%
    \errmessage{(Inkscape) Color is used for the text in Inkscape, but the package 'color.sty' is not loaded}%
    \renewcommand\color[2][]{}%
  }%
  \providecommand\transparent[1]{%
    \errmessage{(Inkscape) Transparency is used (non-zero) for the text in Inkscape, but the package 'transparent.sty' is not loaded}%
    \renewcommand\transparent[1]{}%
  }%
  \providecommand\rotatebox[2]{#2}%
  \ifx\svgwidth\undefined%
    \setlength{\unitlength}{106.83815918bp}%
    \ifx\svgscale\undefined%
      \relax%
    \else%
      \setlength{\unitlength}{\unitlength * \real{\svgscale}}%
    \fi%
  \else%
    \setlength{\unitlength}{\svgwidth}%
  \fi%
  \global\let\svgwidth\undefined%
  \global\let\svgscale\undefined%
  \makeatother%
  \begin{picture}(1,0.53023692)%
    \put(0,0){\includegraphics[width=\unitlength]{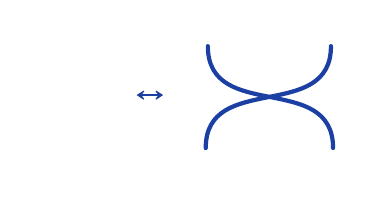}}%
    \put(-0.00266907,0.25241022){\color[rgb]{0,0,0}\makebox(0,0)[lb]{\smash{$\lambda_{ab}{}^{cd}$}}}%
    \put(0.53352247,0.00771467){\color[rgb]{0,0,0}\makebox(0,0)[lb]{\smash{$a$}}}%
    \put(0.87850342,0.00905148){\color[rgb]{0,0,0}\makebox(0,0)[lb]{\smash{$b$}}}%
    \put(0.53753359,0.47170068){\color[rgb]{0,0,0}\makebox(0,0)[lb]{\smash{$c$}}}%
    \put(0.86914347,0.47036364){\color[rgb]{0,0,0}\makebox(0,0)[lb]{\smash{$d$}}}%
  \end{picture}%
\endgroup%

\end{aligned}
\end{align}

An example of a planar diagram for a torus to which a disk has been removed, triangulated using two triangles, is shown in figure \ref{fig:torus-diagrams}. The middle diagram shows a projection of the graph that is blackboard-framed and the final diagram reflects this.

\begin{figure}[t!]
                \centering		
		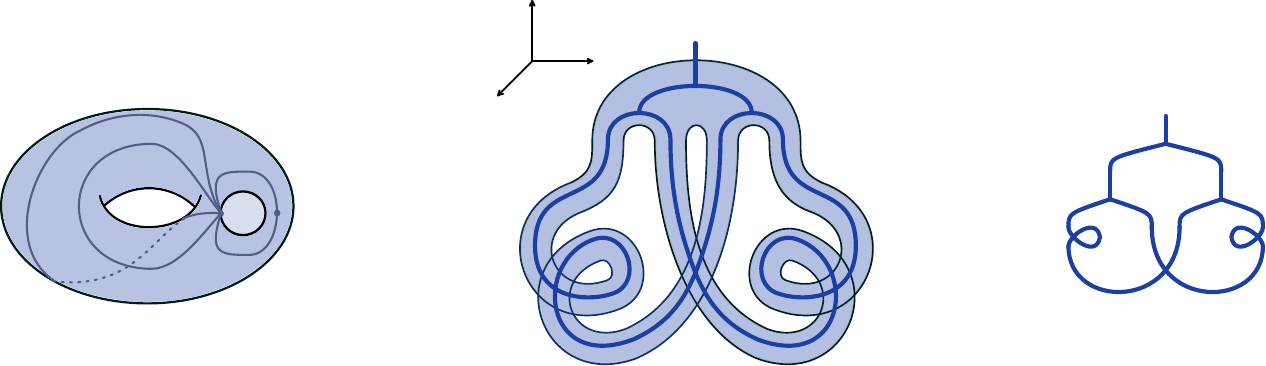
		\vspace{2mm}
		\caption[An immersion $\Sigma_1-D \looparrowright \Rb^3$]{\emph{An immersion $\Sigma_1-D \looparrowright \Rb^3$.} The surface $\Sigma_1-D$ is depicted together with a possible (degenerate) triangulation. A diagrammatic state sum model for an immersed $\Sigma_1-D$ is then created from the dual diagram in $\Rb^2$. The projection $\Rb^3 \to \Rb^2$ gives rise to a blackboard framing: the ribbon information need not be recorded.}\label{fig:torus-diagrams}
\end{figure} 

The ribbon structure is preserved under the equivalence relation of regular homotopy. The usual Reidemeister moves for knots do not preserve the ribbon structure, so one has to use a modified set of moves for ribbon knots, described in \cite{kauffman-regular-isotopy,freyd-yetter}. The moves for graphs are described in \cite{Yetter,Kauffman-handbook} and the extension from ribbon knots to ribbon graphs is described in \cite{Reshetikhin-Turaev}.
A planar state sum model that is invariant under these moves is called a spin state sum model. 

A diagram with $n$ downward- and $m$ upward-pointing legs defines a map $\otimes^n \gls{A} \to \otimes^m A$ where $\otimes^0 A= \gls{k}$. Therefore, diagrams should be read bottom-to-top and the use of explicit indices has been dropped.

\begin{definition} \label{def:spin-model} 
A spin state sum model is determined by planar state sum model data $(\gls{Ct},\gls{Bb},\gls{R})$ together with a crossing map $\gls{lambda}$. The additional axioms the map $\lambda$ obeys are
 \begin{enumerate}[label=B\arabic{*}), ref=(B\arabic{*})]
\setcounter{enumi}{0}
\item \label{fig:axiom_form} compatibility with $B$, \hskip3.35cm 
		$\vcenter{\hbox{
\begingroup%
  \makeatletter%
  \providecommand\color[2][]{%
    \errmessage{(Inkscape) Color is used for the text in Inkscape, but the package 'color.sty' is not loaded}%
    \renewcommand\color[2][]{}%
  }%
  \providecommand\transparent[1]{%
    \errmessage{(Inkscape) Transparency is used (non-zero) for the text in Inkscape, but the package 'transparent.sty' is not loaded}%
    \renewcommand\transparent[1]{}%
  }%
  \providecommand\rotatebox[2]{#2}%
  \ifx\svgwidth\undefined%
    \setlength{\unitlength}{120.86936035bp}%
    \ifx\svgscale\undefined%
      \relax%
    \else%
      \setlength{\unitlength}{\unitlength * \real{\svgscale}}%
    \fi%
  \else%
    \setlength{\unitlength}{\svgwidth}%
  \fi%
  \global\let\svgwidth\undefined%
  \global\let\svgscale\undefined%
  \makeatother%
  \begin{picture}(1,0.21629127)%
    \put(0,0){\includegraphics[width=\unitlength]{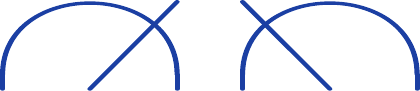}}%
    \put(0.46480483,0.06210709){\color[rgb]{0,0,0}\makebox(0,0)[lb]{\smash{$=$}}}%
  \end{picture}%
\endgroup%
}}$
\item compatibility with $C$, \hskip3.15cm \label{fig:axiom_mult} 
		$\vcenter{\hbox{ 
\begingroup%
  \makeatletter%
  \providecommand\color[2][]{%
    \errmessage{(Inkscape) Color is used for the text in Inkscape, but the package 'color.sty' is not loaded}%
    \renewcommand\color[2][]{}%
  }%
  \providecommand\transparent[1]{%
    \errmessage{(Inkscape) Transparency is used (non-zero) for the text in Inkscape, but the package 'transparent.sty' is not loaded}%
    \renewcommand\transparent[1]{}%
  }%
  \providecommand\rotatebox[2]{#2}%
  \ifx\svgwidth\undefined%
    \setlength{\unitlength}{124.33878174bp}%
    \ifx\svgscale\undefined%
      \relax%
    \else%
      \setlength{\unitlength}{\unitlength * \real{\svgscale}}%
    \fi%
  \else%
    \setlength{\unitlength}{\svgwidth}%
  \fi%
  \global\let\svgwidth\undefined%
  \global\let\svgscale\undefined%
  \makeatother%
  \begin{picture}(1,0.3232729)%
    \put(0,0){\includegraphics[width=\unitlength]{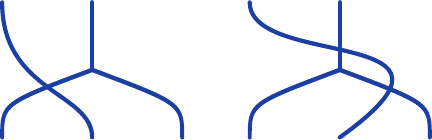}}%
    \put(0.45951805,0.05690025){\color[rgb]{0,0,0}\makebox(0,0)[lb]{\smash{$=$}}}%
  \end{picture}%
\endgroup%
}}$
\item the Reidemeister II move ,\hskip3.35cm  \label{fig:axiom_square} 
 		$\vcenter{\hbox{ 
\begingroup%
  \makeatletter%
  \providecommand\color[2][]{%
    \errmessage{(Inkscape) Color is used for the text in Inkscape, but the package 'color.sty' is not loaded}%
    \renewcommand\color[2][]{}%
  }%
  \providecommand\transparent[1]{%
    \errmessage{(Inkscape) Transparency is used (non-zero) for the text in Inkscape, but the package 'transparent.sty' is not loaded}%
    \renewcommand\transparent[1]{}%
  }%
  \providecommand\rotatebox[2]{#2}%
  \ifx\svgwidth\undefined%
    \setlength{\unitlength}{75.75253296bp}%
    \ifx\svgscale\undefined%
      \relax%
    \else%
      \setlength{\unitlength}{\unitlength * \real{\svgscale}}%
    \fi%
  \else%
    \setlength{\unitlength}{\svgwidth}%
  \fi%
  \global\let\svgwidth\undefined%
  \global\let\svgscale\undefined%
  \makeatother%
  \begin{picture}(1,0.55428761)%
    \put(0,0){\includegraphics[width=\unitlength]{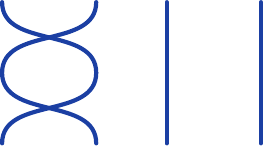}}%
    \put(0.43014569,0.2438102){\color[rgb]{0,0,0}\makebox(0,0)[lb]{\smash{$=$
}}}%
  \end{picture}%
\endgroup%
}}$
\item the Reidemeister III move ,\hskip2.2cm  \label{fig:axiom_Reid_3}
 		$\vcenter{\hbox{ 
\begingroup%
  \makeatletter%
  \providecommand\color[2][]{%
    \errmessage{(Inkscape) Color is used for the text in Inkscape, but the package 'color.sty' is not loaded}%
    \renewcommand\color[2][]{}%
  }%
  \providecommand\transparent[1]{%
    \errmessage{(Inkscape) Transparency is used (non-zero) for the text in Inkscape, but the package 'transparent.sty' is not loaded}%
    \renewcommand\transparent[1]{}%
  }%
  \providecommand\rotatebox[2]{#2}%
  \ifx\svgwidth\undefined%
    \setlength{\unitlength}{124.40087891bp}%
    \ifx\svgscale\undefined%
      \relax%
    \else%
      \setlength{\unitlength}{\unitlength * \real{\svgscale}}%
    \fi%
  \else%
    \setlength{\unitlength}{\svgwidth}%
  \fi%
  \global\let\svgwidth\undefined%
  \global\let\svgscale\undefined%
  \makeatother%
  \begin{picture}(1,0.32278936)%
    \put(0,0){\includegraphics[width=\unitlength]{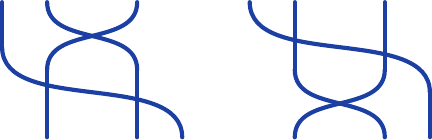}}%
    \put(0.44945526,0.14184983){\color[rgb]{0,0,0}\makebox(0,0)[lb]{\smash{$=$}}}%
  \end{picture}%
\endgroup%
}}$
\item  \label{fig:axiom_left_right} the ribbon condition,\hskip4.2cm  
		$\vcenter{\hbox{ 
\begingroup%
  \makeatletter%
  \providecommand\color[2][]{%
    \errmessage{(Inkscape) Color is used for the text in Inkscape, but the package 'color.sty' is not loaded}%
    \renewcommand\color[2][]{}%
  }%
  \providecommand\transparent[1]{%
    \errmessage{(Inkscape) Transparency is used (non-zero) for the text in Inkscape, but the package 'transparent.sty' is not loaded}%
    \renewcommand\transparent[1]{}%
  }%
  \providecommand\rotatebox[2]{#2}%
  \ifx\svgwidth\undefined%
    \setlength{\unitlength}{75.31722412bp}%
    \ifx\svgscale\undefined%
      \relax%
    \else%
      \setlength{\unitlength}{\unitlength * \real{\svgscale}}%
    \fi%
  \else%
    \setlength{\unitlength}{\svgwidth}%
  \fi%
  \global\let\svgwidth\undefined%
  \global\let\svgscale\undefined%
  \makeatother%
  \begin{picture}(1,0.59036449)%
    \put(0,0){\includegraphics[width=\unitlength]{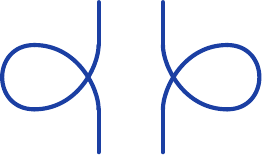}}%
    \put(0.44899274,0.26275822){\color[rgb]{0,0,0}\makebox(0,0)[lb]{\smash{$=$}}}%
  \end{picture}%
\endgroup%
}} \hspace{3mm}.$
\end{enumerate}
\end{definition}
Either side of axiom \ref{fig:axiom_left_right} defines a map, $\gls{phi} \colon \gls{A} \to A$. Either diagram in axiom  \ref{fig:axiom_left_right} is called a curl.

There are two issues to settle: the possible dependence of the state sum model on the triangulation of the surface, and on the immersion $i$. The former is the easiest to resolve: since any planar state sum model is invariant under Pachner moves the following lemma is automatically verified.

\begin{lemma}\label{lem:spin-triangulation} The partition function of a spin state sum model is independent of the triangulation of the surface.
\end{lemma}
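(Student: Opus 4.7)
The plan is to reduce the statement to the planar invariance already established in Theorem~\ref{theo:diagram}. Any two triangulations of a compact surface $\Sigma$ are connected by a finite sequence of Pachner 2-2 and 1-3 moves (and their inverses), so it is enough to show that the partition function is unchanged under a single such move. A Pachner move is local: it modifies the triangulation only inside a small closed disk $D\subset\Sigma$, while the rest of the triangulation is untouched. Consequently the dual ribbon graph $G$ is altered only inside the corresponding regular neighbourhood $K_{D}=K\cap D$, and the part of $G$ outside $D$ appears identically in both the ``before'' and ``after'' diagrams.

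Next I would argue that, possibly after a small regular homotopy of the immersion $i$ supported near $D$, the projection of $i(K_{D})$ to $\mathbb{R}^{2}$ can be arranged to lie inside a planar disk $D'$ that is disjoint from the projection of $G\setminus K_{D}$ and contains no self-crossings of $K_{D}$. Since $D$ is itself a disk, $i|_{D}$ is regularly homotopic to an embedding of $D$ into an arbitrarily small neighbourhood of a chosen point in $\mathbb{R}^{3}$, whose generic projection to $\mathbb{R}^{2}$ is an orientation-preserving embedding; this preserves the blackboard framing. Such a regular homotopy leaves the evaluation of the ribbon graph unchanged by the very definition of a spin state sum model, so without loss of generality the Pachner move takes place inside a crossing-free planar region.

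Once the move is localised this way, the evaluation of the full diagram factors as the composition of an outer evaluation (unaffected by the move) with the planar evaluation of the part of the diagram living in $D'$. Inside $D'$ there are only nodes, legs, and face amplitudes, with no crossings, so the inner diagram is precisely of the type covered by the planar state sum calculus of Section~\ref{sec:planar}. By Theorem~\ref{theo:diagram}, the data $(C,B,R)$ determine a special Frobenius algebra, and hence the inner evaluations agree on the two sides of either Pachner move. Composing with the unchanged outer part, the partition functions of the two triangulations coincide.

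The main obstacle is the localisation step: one must check carefully that the small regular homotopy used to push $i(K_{D})$ into a crossing-free planar disk is admissible, that is, that it can be realised by the Reidemeister-like moves on blackboard-framed ribbon graph diagrams underlying axioms \ref{fig:axiom_form}--\ref{fig:axiom_left_right}. This is standard in general position arguments for immersions of surfaces, but it is where all the compatibility axioms of $\lambda$ with $C$ and $B$ are genuinely used; everything else reduces to the already-known planar case.
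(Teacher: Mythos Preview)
Your proof is correct and follows the same underlying idea as the paper's, which is far terser: the paper simply notes that a spin state sum model is built on planar state sum model data---already Pachner-invariant by definition---and declares the lemma ``automatically verified'' without discussing the localisation step at all. Your treatment of sliding crossings away from the Pachner disk via the compatibility axioms \ref{fig:axiom_form}--\ref{fig:axiom_Reid_3} makes explicit what the paper leaves implicit.
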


The main issue is the dependence of the partition function on the immersion. Consider a standard immersion $i_0$ that is an embedding of the closed oriented surface of genus $g$ into $\Rb^3$. A triangulation of the surface $\gls{Sigma}$ can be constructed by identifying the edges of a $4g$-sided polygon, as in figure \ref{fig:4gpoly}, and dividing it into triangles without introducing any new vertices. Let $S\subset\Sigma$ be the subset obtained by removing a disk neighbourhood of the vertex of the polygon from $\Sigma$.  The embedding is such that $S$ projects to $\Rb^2$ by the immersion shown in figure \ref{fig:surface-diagram}. The dual graph to the triangulation is shown in the figure \ref{fig:dual-graph} with all of the graph vertices consolidated into one (associativity tells us this can be done unequivocally). 

As we have seen, the usual notion of spin structure is defined for oriented smooth manifolds using the tangent bundle. Each immersed curve $c$ on the manifold lifts to a curve in the frame bundle $P_{SO}$ and the spin structure $s\in H^1(P_{SO},\Zb_2)$ assigns to this an element $\gls{s}(c)\in\Zb_2$. A spin structure on $\Sigma$ is determined uniquely by a spin structure on the subset $S\subset\Sigma$ obtained by removing a disk. The surface $S$ can be embedded in $\Rb^3$ so that the projection to $\Rb^2$ is an immersion as in figure \ref{fig:surface-diagram}, or a modification of it by putting a curl in any of the $2g$ ribbon loops. The spin structure is read off from this diagram as a quadratic form \eqref{eq:quad_b}: $q_s(c)$ is the Whitney degree mod 2 for the projection of $c$ to $\Rb^2$.  For example, for the embedding $i_0$ each circle $c$ in figure \ref{fig:dual-graph} has no curls and so $q_s(c)=0$.  It is worth noting that this explicit construction of $q_s$ does not require a smooth structure and makes sense also for a piecewise-linear surface.

\begin{figure}
\centering
\begin{subfigure}[t!]{0.47\textwidth}
		\centering
		\includegraphics{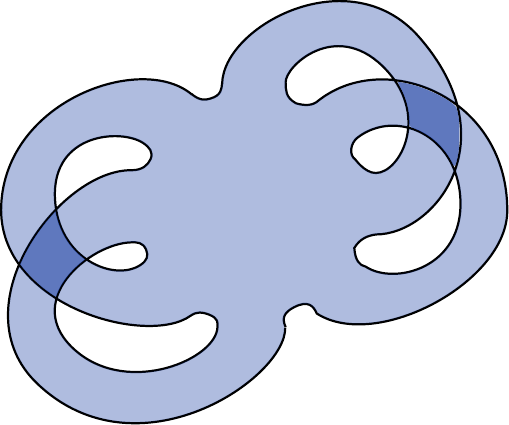}
		\vspace{4mm}
		\caption[Standard projection]{\emph{Standard projection.} The projection of $S$ into $\Rb^2$ for the standard embedding of $\Sigma_2$. Dark-shaded regions represent areas of intersection.}
                \label{fig:surface-diagram}
\end{subfigure} 
\hspace{5mm}
\begin{subfigure}[t!]{0.47\textwidth}
		\centering
		\vspace{9mm}
		\includegraphics{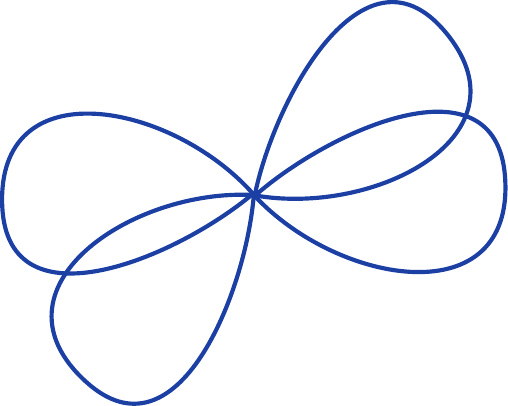}
		\vspace{4mm}
		\caption[Standard diagram]{\emph{Standard diagram} $\gamma_g$. The dual graph of $\Sigma_2$ resulting from the standard embedding in $\Rb^3$, denoted $\gamma_2$. Thickening to a ribbon graph results in figure \ref{fig:surface-diagram}. The standard diagram for a surface $\gls{Sigma_g}$ is denoted $\gamma_g$. }
                \label{fig:dual-graph}
\end{subfigure} 
\caption[Standard immersions and projections of $\Sigma_2$]{Standard immersions and projections of $\Sigma_2$}
\end{figure}

\begin{theorem} \label{spin-embedding} The partition function of a spin state sum model on $\Sigma$ depends on the immersion $i\colon \Sigma\to\Rb^3$  only via the spin structure induced on $\Sigma$.
\end{theorem}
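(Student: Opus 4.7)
The plan is to combine two ingredients: invariance of the diagrammatic evaluation under regular homotopy of ribbon graphs in $\Rb^3$, and Pinkall's theorem identifying regular-homotopy classes of immersions $\Sigma_g \looparrowright \Rb^3$ with spin structures on $\Sigma_g$. First I would fix a triangulation $T$ of $\Sigma$ with dual graph $G$ and regular neighbourhood $K \subset \Sigma$; since Lemma~\ref{lem:spin-triangulation} shows $Z$ is independent of $T$, any dependence must lie in $i$ alone, with the evaluation being the diagrammatic value of the blackboard-framed projection of $i|_K$ to $\Rb^2$.

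The core step is to check that $|i(G)|$ is unchanged by any regular homotopy of the ribbon immersion $i|_K$ in $\Rb^3$. A generic such homotopy decomposes into: planar isotopies and triangulation-type rearrangements away from crossings, handled by the planar state sum axioms of Theorem~\ref{theo:diagram}; the framed Reidemeister~II and~III moves, which are axioms \ref{fig:axiom_square} and \ref{fig:axiom_Reid_3}; vertex-crossing slides, which are exactly the compatibilities \ref{fig:axiom_form} and \ref{fig:axiom_mult} of $\lambda$ with $B$ and with $C$; and moves involving ribbon self-twists, controlled by the ribbon condition \ref{fig:axiom_left_right}. This is the ribbon-graph generalisation of the classical framed-tangle calculus \cite{Reshetikhin-Turaev,Kauffman-handbook}, which I would invoke rather than rederive.

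Given this invariance, the conclusion is quick. By Pinkall \cite{Pinkall}, two immersions $\Sigma_g \looparrowright \Rb^3$ are regularly homotopic if and only if the induced spin structures coincide, with exactly $2^{2g}$ classes matching the $2^{2g}$ spin structures of $\Sigma_g$. Thus if $i_0$ and $i_1$ induce the same spin structure on $\Sigma$, a regular homotopy $i_t$ between them restricts to a regular homotopy of the ribbon $i_t|_K$, under which $Z$ is invariant; hence $Z(\Sigma,i_0)=Z(\Sigma,i_1)$ whenever the induced spin structures agree, which is the statement of the theorem.

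The hard part will be justifying the core step rigorously: one must be confident that axioms \ref{fig:axiom_form}--\ref{fig:axiom_left_right} together with the planar axioms really generate the full equivalence relation coming from regular homotopy of \emph{trivalent} ribbon graphs, not merely of framed links. The most delicate point is the interplay between curls and the graph structure: axiom \ref{fig:axiom_left_right} only equates left- and right-handed curls (defining $\varphi$), and does not in itself cancel opposite curls, so the fact that two curls acquired along a homotopy evaluate consistently must be shown to follow from combining \ref{fig:axiom_left_right} with \ref{fig:axiom_square} and the Frobenius axioms. I would also need to bridge between the smooth setting in which Pinkall's theorem and the move calculus are phrased, and the piecewise-linear triangulated setting, by observing that the ribbon neighbourhood of $G$ can be smoothed without altering its diagrammatic evaluation.
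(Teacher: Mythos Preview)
Your argument is correct but takes a different route from the paper. You argue top-down: the axioms \ref{fig:axiom_form}--\ref{fig:axiom_left_right} (together with the crossing being over/under-blind) generate regular-homotopy invariance of ribbon graphs via the framed-tangle calculus, and then Pinkall's classification says two immersions of $\Sigma$ inducing the same spin structure are regularly homotopic, so restricting to $K$ gives equal evaluations. The paper instead argues constructively, without invoking Pinkall: it moves the ribbon graph so that a neighbourhood of the consolidated vertex matches the standard diagram $\gamma_g$, then treats each of the $2g$ ribbon loops as an immersed circle in $\Rb^2$ and applies the Whitney--Graustein theorem to reduce it to the standard loop plus a number of curls; finally the identity $\varphi^2=\iden$ (which follows from \ref{fig:axiom_form}, \ref{fig:axiom_square}, \ref{fig:axiom_Reid_3} via the Whitney trick, addressing exactly the ``delicate point'' you flagged) cancels curls pairwise, leaving zero or one curl per loop---and that mod-$2$ data is precisely the quadratic form $q_s$. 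Your approach is logically cleaner but leans on Pinkall as a black box and on completeness of the trivalent ribbon-graph move calculus; the paper's approach needs only Whitney--Graustein for planar curves, and, crucially, outputs the explicit standard form ($\gamma_g$ decorated by $0/1$ curls) that is immediately reused to compute the partition function in theorem~\ref{theo:main}.
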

\begin{proof} Consider  a ribbon graph $K\subset\gls{Sigma}$ and an immersion $i\colon \Sigma\to \Rb^3$. The immersion of the ribbon graph is moved by regular homotopy to $j\colon K\to\Rb^3$ that is blackboard-framed with respect to the projection $P\colon\Rb^3\to\Rb^2$, $P(x,y,z)=(x,y)$. Further, a neighbourhood of the consolidated vertex in the diagram can be moved to match a neighbourhood of the vertex in figure \ref{fig:dual-graph}.  Then each ribbon loop of $K$ can be moved independently, keeping the neighbourhood of the vertex fixed. 

According to the  Whitney-Graustein theorem \cite{kauffman-regular-isotopy}, a complete invariant of an immersed circle in $\Rb^2$ under regular homotopy (in $\Rb^2$) is the Whitney degree, which is the integer that measures the number of windings of the tangent vector to the circle. This regular homotopy extends to a regular homotopy of the ribbon graph in $\Rb^2$. Then it lifts to a regular homotopy of the ribbon graph $j(K)$ in $\Rb^3$, by keeping the $z$-coordinate constant in the homotopy. Therefore each loop of $K$ is regular-homotopic to the corresponding loop of figure \ref{fig:dual-graph} but with a number of curls. The curls can be cancelled in pairs using the ribbon condition (\ref{fig:axiom_left_right} is equivalent to the condition $\gls{phi}^2=\iden$). Each curve will have either one or zero curls; this is the data in the induced spin structure.
\end{proof}

\begin{figure}[t!]
		\centering
		\includegraphics[width=\textwidth]{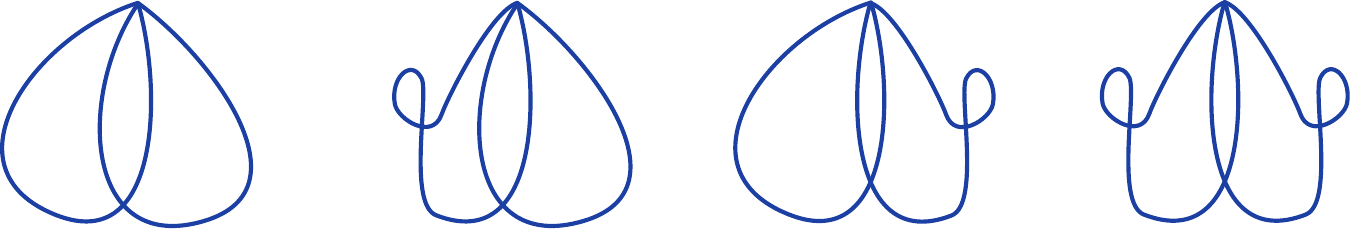}
		\caption[Torus spin models]{\emph{Torus spin models.} Dual graph diagrams for four immersions of the torus that are inequivalent under regular homotopy. The labels $c_1$ and $c_2$ are in correspondence with the left and right cycles in each diagram, respectively.}
                \label{fig:torus-spin-diagrams}
\end{figure} 

For example, one diagram for each of the four equivalence classes for the torus are shown in figure \ref{fig:torus-spin-diagrams}. The corresponding spin structures have $(q_s(c_1),q_s(c_2))=(0,0),(1,0),(0,1),(1,1)$ for the two embedded cycles $c_1$, $c_2$ forming a basis of $H_1(\Sigma_1,\Zb_2)$.

Lemma \ref{lem:spin-triangulation} and theorem \ref{spin-embedding} imply the partition function is a diffeomorphic-invariant of a surface with spin structure. This means the invariant to be associated with a surface $\Sigma$ can depend on the spin structure $s$ only through the parity, $P(s)$. Let $f\colon\Sigma'\to\Sigma$ be a diffeomorphism and $i\colon\Sigma\to\Rb^3$ an immersion inducing a spin structure $s$. Then the immersion $i\circ f$ induces the spin structure $f^*s$ on $\Sigma'$ and $P(s)=P(f^{\ast}s)$.

There is a particular class of spin state sum models that do not depend on the immersion.  The data for these examples satisfy one additional axiom, $\varphi=\iden$. These models are called curl-free.
 Diagrammatically, this is 
\emph{
\begin{enumerate}[label=B\arabic{*}), ref=(B\arabic{*})]
\setcounter{enumi}{5}
\item  \label{ax:RI} the Reidemeister I move (RI),\hskip1cm
		$\vcenter{\hbox{
\begingroup%
  \makeatletter%
  \providecommand\color[2][]{%
    \errmessage{(Inkscape) Color is used for the text in Inkscape, but the package 'color.sty' is not loaded}%
    \renewcommand\color[2][]{}%
  }%
  \providecommand\transparent[1]{%
    \errmessage{(Inkscape) Transparency is used (non-zero) for the text in Inkscape, but the package 'transparent.sty' is not loaded}%
    \renewcommand\transparent[1]{}%
  }%
  \providecommand\rotatebox[2]{#2}%
  \ifx\svgwidth\undefined%
    \setlength{\unitlength}{50.54191895bp}%
    \ifx\svgscale\undefined%
      \relax%
    \else%
      \setlength{\unitlength}{\unitlength * \real{\svgscale}}%
    \fi%
  \else%
    \setlength{\unitlength}{\svgwidth}%
  \fi%
  \global\let\svgwidth\undefined%
  \global\let\svgscale\undefined%
  \makeatother%
  \begin{picture}(1,0.93768129)%
    \put(0,0){\includegraphics[width=\unitlength]{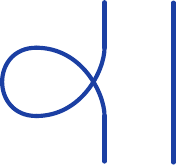}}%
    \put(0.69298967,0.40858098){\color[rgb]{0,0,0}\makebox(0,0)[lb]{\smash{$=$}}}%
  \end{picture}%
\endgroup%
}}\hspace{3mm}.$
\end{enumerate}
}

\begin{corollary}\label{lem:embedding} The partition function of a curl-free state sum model for a closed surface is independent of the immersion $i$.
\end{corollary}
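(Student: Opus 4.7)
The plan is to build directly on Theorem~\ref{spin-embedding}. That theorem already shows the partition function depends on the immersion $i \colon \Sigma \to \Rb^3$ only through the induced spin structure, which was read off the standard diagram $\gamma_g$ as the parity of the number of curls dressing each of its $2g$ loops. My task is therefore to argue that, in a curl-free model, these curls make no contribution and so the remaining dependence on $i$ also disappears.

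First I would recall the key step in the proof of Theorem~\ref{spin-embedding}: after blackboard-framing the ribbon graph $K \subset \Sigma$ and fixing a neighbourhood of the consolidated vertex, each of the $2g$ loops can be regular-homotopied independently in $\Rb^2$. By the Whitney--Graustein theorem, each loop is then equivalent to the corresponding loop of $\gamma_g$ decorated with some integer number of curls. Axiom~\ref{fig:axiom_left_right} (equivalently, $\gls{phi}^2 = \iden$) lets one cancel curls only in pairs, which is why only the parity was retained and the partition function depended on the immersion precisely through the spin structure.

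Next I would invoke the additional axiom~\ref{ax:RI}. In the curl-free setting $\gls{phi} = \iden$, so a single curl evaluates to the identity map $\gls{A} \to \gls{A}$ and can be deleted outright rather than only cancelled against a second curl. Applying this move to every curl on every loop reduces the diagram obtained from an arbitrary immersion $i$ to the standard diagram $\gamma_g$ itself, with no decoration left. The partition function is then $|\gamma_g|$ independently of $i$, which is the claim.

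There is no real obstacle: the substantive work was carried out in Theorem~\ref{spin-embedding}, and the curl-free axiom is designed precisely to collapse the remaining $\Zb_2$-worth of information per loop, so that not even the parity of the induced spin structure is remembered.
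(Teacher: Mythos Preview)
Your proposal is correct and follows essentially the same route as the paper's own proof: invoke Theorem~\ref{spin-embedding} to reduce any immersion to the standard diagram $\gamma_g$ decorated by curls, and then use axiom~\ref{ax:RI} ($\gls{phi}=\iden$) to erase those curls individually. The paper's version is simply a one-line compression of what you wrote.
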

\begin{proof} According to theorem~\ref{spin-embedding} a spin diagram for $\Sigma_g$ differs from the standard diagram $\gamma_g$ only by the number of curls in each curve taken $\mo 2$. Since axiom \ref{ax:RI} imposes $\varphi=\id$ the diagram cannot depend on the choice of immersion. 
\end{proof}

This result together with lemma \ref{lem:spin-triangulation} imply the partition function of a curl-free model is a topological invariant. Let $f\colon\Sigma'\to\gls{Sigma}$ be a diffeomorphism. If $\Sigma$ is a triangulated surface and $i\colon\Sigma\to\Rb^3$ is an immersion, then $f$ induces a triangulation and an immersion for $\Sigma'$ such that their dual graph diagrams in the plane coincide.


\section{Algebraic structure of spin models}\label{sec:spinssm}

To calculate examples of spin models, an explicit formula is needed for the partition functions $\gls{Z}(\gls{Sigma},\gls{s})$ that is manifestly an invariant of the surface with spin structure. To establish this non-trivial result (theorem \ref{theo:main}), the algebraic consequences of the axioms for the spin models are studied. 

The two following results involving spin diagrams will prove useful throughout this section. Obvious generalisations are also of interest in chapter \S\ref{ch:spin_defects} -- by abuse of notation we will refer to them as a consequence of lemmas  
\ref{lem:closed_below} and \ref{lem:varphi}.
\begin{lemma} \label{lem:closed_below}
Consider a diagram $\gls{G}$ of the form $\begin{aligned}\tiny
\begingroup%
  \makeatletter%
  \providecommand\color[2][]{%
    \errmessage{(Inkscape) Color is used for the text in Inkscape, but the package 'color.sty' is not loaded}%
    \renewcommand\color[2][]{}%
  }%
  \providecommand\transparent[1]{%
    \errmessage{(Inkscape) Transparency is used (non-zero) for the text in Inkscape, but the package 'transparent.sty' is not loaded}%
    \renewcommand\transparent[1]{}%
  }%
  \providecommand\rotatebox[2]{#2}%
  \ifx\svgwidth\undefined%
    \setlength{\unitlength}{14.83815481bp}%
    \ifx\svgscale\undefined%
      \relax%
    \else%
      \setlength{\unitlength}{\unitlength * \real{\svgscale}}%
    \fi%
  \else%
    \setlength{\unitlength}{\svgwidth}%
  \fi%
  \global\let\svgwidth\undefined%
  \global\let\svgscale\undefined%
  \makeatother%
  \begin{picture}(1,1.60712311)%
    \put(0,0){\includegraphics[width=\unitlength]{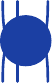}}%
    \put(0.42430097,0.15569849){\color[rgb]{0,0,0}\makebox(0,0)[lb]{\smash{...}}}%
    \put(0.42430097,1.44965992){\color[rgb]{0,0,0}\makebox(0,0)[lb]{\smash{...}}}%
  \end{picture}%
\endgroup%
\normalsize\end{aligned}$ with any number of upward and downward pointing legs, such that the shaded region does not have open edges. Then $G$ is said to be semi-closed, and it satisfies
\begin{align}
\begin{aligned}
\begingroup%
  \makeatletter%
  \providecommand\color[2][]{%
    \errmessage{(Inkscape) Color is used for the text in Inkscape, but the package 'color.sty' is not loaded}%
    \renewcommand\color[2][]{}%
  }%
  \providecommand\transparent[1]{%
    \errmessage{(Inkscape) Transparency is used (non-zero) for the text in Inkscape, but the package 'transparent.sty' is not loaded}%
    \renewcommand\transparent[1]{}%
  }%
  \providecommand\rotatebox[2]{#2}%
  \ifx\svgwidth\undefined%
    \setlength{\unitlength}{175.318046bp}%
    \ifx\svgscale\undefined%
      \relax%
    \else%
      \setlength{\unitlength}{\unitlength * \real{\svgscale}}%
    \fi%
  \else%
    \setlength{\unitlength}{\svgwidth}%
  \fi%
  \global\let\svgwidth\undefined%
  \global\let\svgscale\undefined%
  \makeatother%
  \begin{picture}(1,0.38260835)%
    \put(0,0){\includegraphics[width=\unitlength]{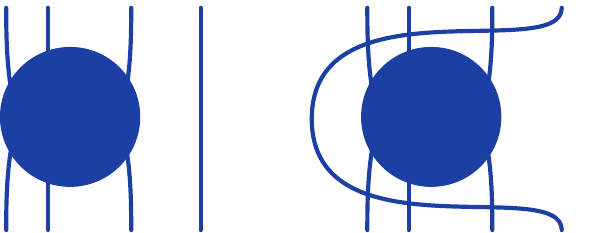}}%
    \put(0.39808054,0.16441109){\color[rgb]{0,0,0}\makebox(0,0)[lb]{\smash{$=$}}}%
    \put(0.10147673,0.34693658){\color[rgb]{0,0,0}\makebox(0,0)[lb]{\smash{$\cdots$}}}%
    \put(0.10147673,0.00470141){\color[rgb]{0,0,0}\makebox(0,0)[lb]{\smash{$\cdots$}}}%
    \put(0.69468435,0.34693644){\color[rgb]{0,0,0}\makebox(0,0)[lb]{\smash{$\cdots$}}}%
    \put(0.69468435,0.00470128){\color[rgb]{0,0,0}\makebox(0,0)[lb]{\smash{$\cdots$}}}%
    \put(0.99128815,0.16441109){\color[rgb]{0,0,0}\makebox(0,0)[lb]{\smash{.}}}%
  \end{picture}%
\endgroup%

\label{eq:closed_below}
\end{aligned}
\end{align}
\end{lemma}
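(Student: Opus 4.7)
The plan is to prove the identity by structural induction on the internal complexity of the shaded region of $\gls{G}$, measured by the combined number of $C$-nodes, $B$-identifications, and internal crossings it contains. The essential tool is that axioms \ref{fig:axiom_form} and \ref{fig:axiom_mult} guarantee the crossing map $\gls{lambda}$ is compatible with both $\gls{Bb}$ and $\gls{Ct}$, so that any external strand may be pulled through such internal structures without changing the evaluation.

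For the base case, when the shaded region contains only vertical strands, the two sides of \eqref{eq:closed_below} are equal by iterated application of the Reidemeister II move (axiom \ref{fig:axiom_square}) on adjacent parallel strands, which is immediate. For the inductive step, I would pick an internal feature of the blob lying closest to the strand being relocated. By axiom \ref{fig:axiom_form}, the strand can be pulled past a $B$-identification; by axiom \ref{fig:axiom_mult}, it can be pulled past a $C$-node. Whichever feature is encountered, applying the corresponding axiom displaces that feature outside the shaded region, leaving behind a sub-diagram that is still semi-closed but has strictly smaller internal complexity. The inductive hypothesis then applies. Intermediate rearrangements of crossings are handled by the Reidemeister III move (axiom \ref{fig:axiom_Reid_3}) so that the external strand can be brought adjacent to the internal feature to be eliminated.

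The main obstacle is that the internal layout of the blob may involve crossings, nodes, and identifications interleaved in complicated ways, so the choice of induction ordering matters. A clean way around this is to put $G$ in a generic vertical position, stratifying its interior by horizontal slices, each slice containing at most one of: a crossing, a $B$-identification, or a $C$-node. The strand is then slid past one slice at a time, each step justified by exactly one of axioms \ref{fig:axiom_form}, \ref{fig:axiom_mult}, \ref{fig:axiom_square} or \ref{fig:axiom_Reid_3}.

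The conclusion follows by iterating the inductive step until the strand has traversed the entire semi-closed region and emerges on the opposite side, which is precisely the right-hand side of \eqref{eq:closed_below}. The hypothesis that the shaded region has no open lateral edges is used crucially: it ensures that at each stage the remaining blob is semi-closed, so the inductive hypothesis applies, and no loose internal edges obstruct the passage of the external strand.
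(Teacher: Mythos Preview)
Your proposal is correct and takes essentially the same approach as the paper: decompose the shaded region into its elementary constituents (identity strands, $B$, $B^{-1}$, $C$, and $\lambda$) and verify that the external strand may be slid past each using the compatibility axioms \ref{fig:axiom_form}--\ref{fig:axiom_Reid_3}. Your horizontal-slicing induction is simply a more explicit formalization of the paper's terse case-by-case check; the only cosmetic difference is that the paper handles the $B$/$B^{-1}$ cases via \ref{fig:axiom_square} alone, whereas you invoke \ref{fig:axiom_form}.
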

\begin{proof}
Semi-closed diagrams have one key property: the shaded region of $\gls{G}$, although possibly quite complex, is composed of five types of map only: the identity, $\gls{Bb}$ and $B^{-1}$, $\gls{Ct}$ and finally $\gls{lambda}$. Furthermore, axiom \ref{fig:axiom_square} guarantees the identity, $B$ and $B^{-1}$ all satisfy \eqref{eq:closed_below}. It thus remains only to verify \eqref{eq:closed_below} is true for the cases of $C$ and $\lambda$:
\begin{align}
\begin{aligned}
\begingroup%
  \makeatletter%
  \providecommand\color[2][]{%
    \errmessage{(Inkscape) Color is used for the text in Inkscape, but the package 'color.sty' is not loaded}%
    \renewcommand\color[2][]{}%
  }%
  \providecommand\transparent[1]{%
    \errmessage{(Inkscape) Transparency is used (non-zero) for the text in Inkscape, but the package 'transparent.sty' is not loaded}%
    \renewcommand\transparent[1]{}%
  }%
  \providecommand\rotatebox[2]{#2}%
  \ifx\svgwidth\undefined%
    \setlength{\unitlength}{309.73549012bp}%
    \ifx\svgscale\undefined%
      \relax%
    \else%
      \setlength{\unitlength}{\unitlength * \real{\svgscale}}%
    \fi%
  \else%
    \setlength{\unitlength}{\svgwidth}%
  \fi%
  \global\let\svgwidth\undefined%
  \global\let\svgscale\undefined%
  \makeatother%
  \begin{picture}(1,0.1331405)%
    \put(0,0){\includegraphics[width=\unitlength]{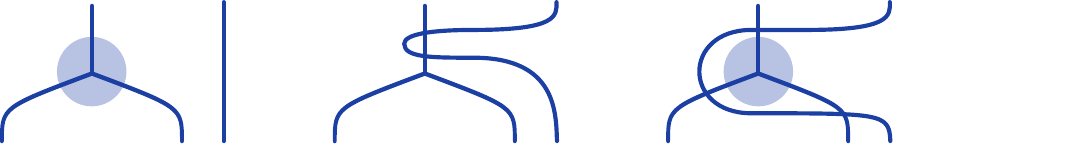}}%
    \put(0.23378422,0.05365489){\color[rgb]{0,0,0}\makebox(0,0)[lb]{\smash{$\overset{\text{\ref{fig:axiom_square}}}{=}$}}}%
    \put(0.86658223,0.05365489){\color[rgb]{0,0,0}\makebox(0,0)[lb]{\smash{,}}}%
    \put(0.5437261,0.05365489){\color[rgb]{0,0,0}\makebox(0,0)[lb]{\smash{$\overset{\text{\ref{fig:axiom_mult}}}{=}$}}}%
  \end{picture}%
\endgroup%

\end{aligned}
\end{align}
\begin{align}
\begin{aligned}
\hspace{4mm}
\begingroup%
  \makeatletter%
  \providecommand\color[2][]{%
    \errmessage{(Inkscape) Color is used for the text in Inkscape, but the package 'color.sty' is not loaded}%
    \renewcommand\color[2][]{}%
  }%
  \providecommand\transparent[1]{%
    \errmessage{(Inkscape) Transparency is used (non-zero) for the text in Inkscape, but the package 'transparent.sty' is not loaded}%
    \renewcommand\transparent[1]{}%
  }%
  \providecommand\rotatebox[2]{#2}%
  \ifx\svgwidth\undefined%
    \setlength{\unitlength}{270.11731763bp}%
    \ifx\svgscale\undefined%
      \relax%
    \else%
      \setlength{\unitlength}{\unitlength * \real{\svgscale}}%
    \fi%
  \else%
    \setlength{\unitlength}{\svgwidth}%
  \fi%
  \global\let\svgwidth\undefined%
  \global\let\svgscale\undefined%
  \makeatother%
  \begin{picture}(1,0.1526891)%
    \put(0,0){\includegraphics[width=\unitlength]{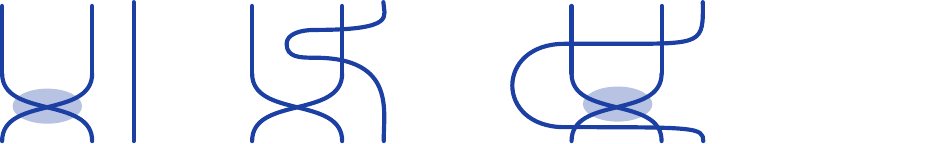}}%
    \put(0.17230698,0.06154533){\color[rgb]{0,0,0}\makebox(0,0)[lb]{\smash{$\overset{\text{\ref{fig:axiom_square}}}{=}$}}}%
    \put(0.79425896,0.06154533){\color[rgb]{0,0,0}\makebox(0,0)[lb]{\smash{.}}}%
    \put(0.43885783,0.06154533){\color[rgb]{0,0,0}\makebox(0,0)[lb]{\smash{$\overset{\text{\ref{fig:axiom_Reid_3}}}{=}$}}}%
  \end{picture}%
\endgroup%

\end{aligned}
\end{align}
\end{proof}
\begin{lemma} \label{lem:varphi}
If $\gls{phi}$ is the curl map associated with a spin state sum model then it satisfies:
\begin{enumerate}[label=C\arabic{*}), ref=(C\arabic{*})]
\item \label{it:phi_one} $\varphi^2=\iden$;
\item \label{it:phi_two} $\varphi$ is an automorphism of $\gls{A}$;
\item \label{it:phi_four} $\varphi$ is compatible with $B$, $\begin{aligned} 
\begingroup%
  \makeatletter%
  \providecommand\color[2][]{%
    \errmessage{(Inkscape) Color is used for the text in Inkscape, but the package 'color.sty' is not loaded}%
    \renewcommand\color[2][]{}%
  }%
  \providecommand\transparent[1]{%
    \errmessage{(Inkscape) Transparency is used (non-zero) for the text in Inkscape, but the package 'transparent.sty' is not loaded}%
    \renewcommand\transparent[1]{}%
  }%
  \providecommand\rotatebox[2]{#2}%
  \ifx\svgwidth\undefined%
    \setlength{\unitlength}{125.725662bp}%
    \ifx\svgscale\undefined%
      \relax%
    \else%
      \setlength{\unitlength}{\unitlength * \real{\svgscale}}%
    \fi%
  \else%
    \setlength{\unitlength}{\svgwidth}%
  \fi%
  \global\let\svgwidth\undefined%
  \global\let\svgscale\undefined%
  \makeatother%
  \begin{picture}(1,0.27169109)%
    \put(0,0){\includegraphics[width=\unitlength]{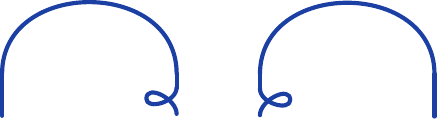}}%
    \put(0.46823697,0.13511637){\color[rgb]{0,0,0}\makebox(0,0)[lb]{\smash{$=$}}}%
  \end{picture}%
\endgroup%
 \end{aligned}\,$;
\item \label{it:phi_three} $\varphi$ is compatible with $\lambda$, $\hspace{1mm}\begin{aligned} 
\begingroup%
  \makeatletter%
  \providecommand\color[2][]{%
    \errmessage{(Inkscape) Color is used for the text in Inkscape, but the package 'color.sty' is not loaded}%
    \renewcommand\color[2][]{}%
  }%
  \providecommand\transparent[1]{%
    \errmessage{(Inkscape) Transparency is used (non-zero) for the text in Inkscape, but the package 'transparent.sty' is not loaded}%
    \renewcommand\transparent[1]{}%
  }%
  \providecommand\rotatebox[2]{#2}%
  \ifx\svgwidth\undefined%
    \setlength{\unitlength}{138.12807236bp}%
    \ifx\svgscale\undefined%
      \relax%
    \else%
      \setlength{\unitlength}{\unitlength * \real{\svgscale}}%
    \fi%
  \else%
    \setlength{\unitlength}{\svgwidth}%
  \fi%
  \global\let\svgwidth\undefined%
  \global\let\svgscale\undefined%
  \makeatother%
  \begin{picture}(1,0.34489058)%
    \put(0,0){\includegraphics[width=\unitlength]{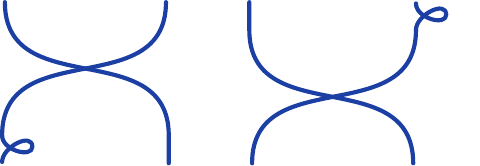}}%
    \put(0.40976992,0.14927837){\color[rgb]{0,0,0}\makebox(0,0)[lb]{\smash{$=$}}}%
    \put(0.98894255,0.14927837){\color[rgb]{0,0,0}\makebox(0,0)[lb]{\smash{.}}}%
  \end{picture}%
\endgroup%
 \end{aligned}$
\end{enumerate}
\end{lemma}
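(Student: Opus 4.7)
The plan is to establish each property by diagrammatic manipulation using the axioms \ref{fig:axiom_form}--\ref{fig:axiom_left_right} together with Lemma~\ref{lem:closed_below}. The key leverage is the ribbon condition \ref{fig:axiom_left_right}, which provides two equivalent pictures for $\varphi$ (a left and a right curl) and so allows one to pick whichever orientation is most convenient at each step.

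For \ref{it:phi_one}, I would stack two copies of $\varphi$ on the same strand, use \ref{fig:axiom_left_right} to present one as a right-curl and the other as a left-curl, and then cancel the two oppositely-signed crossings that appear in the resulting sub-diagram by two applications of Reidemeister II \ref{fig:axiom_square}. For \ref{it:phi_four}, I would slide the crossing defining a curl across a leg of $B$ using \ref{fig:axiom_form}; this transports a curl on one leg to a curl on the other, and the ribbon condition sets up the correct curl orientations for both equalities in the statement. For \ref{it:phi_three}, the curl is pushed across a crossing by two successive applications of the Reidemeister III move \ref{fig:axiom_Reid_3}: each application moves the central crossing past one of the two crossings making up the curl loop, at which point \ref{fig:axiom_left_right} rewrites the remaining diagram into the right-hand side of \ref{it:phi_three}.

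For \ref{it:phi_two}, unitality $\varphi(1)=1$ follows by placing a curl on the diagrammatic representation \eqref{eq:diag_1} of $1$ and absorbing it via the semi-closed manipulation of Lemma~\ref{lem:closed_below}. Multiplicativity $\varphi(a\cdot b)=\varphi(a)\cdot\varphi(b)$ is obtained by placing a curl on the output strand of a multiplication vertex and pushing the crossing through using axiom \ref{fig:axiom_mult}; the single crossing inside the curl splits into a pair of crossings distributed on the two input legs, and \ref{fig:axiom_left_right} recognises each of these as a curl on the corresponding input.

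The main obstacle will be keeping track of crossing handedness throughout. The ribbon condition identifies one specific right-curl with one specific left-curl, but several variants (mirrors, or loops with opposite internal orientation) can arise during the manipulations, and each must be traced back to an available axiom rather than an unjustified reflection. In particular the multiplicativity argument in \ref{it:phi_two} requires verifying that the two crossings produced by \ref{fig:axiom_mult} are each of a form that \ref{fig:axiom_left_right} recognises as $\varphi$, and the sliding argument for \ref{it:phi_three} needs careful bookkeeping to ensure that the two applications of \ref{fig:axiom_Reid_3} act on the intended pair of crossings.
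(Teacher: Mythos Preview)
Your overall strategy matches the paper's, but the specific move-sequences you propose for \ref{it:phi_one} and \ref{it:phi_three} will not go through as stated. For \ref{it:phi_one}, two applications of Reidemeister~II do not suffice: the two crossings coming from a left curl stacked on a right curl do not bound a bigon (in Gauss-code terms the pattern along the strand is $aabb$, not the $abba$ that RII cancels), so a Reidemeister~III move is needed first to rearrange them. This is precisely the Whitney trick, and it is how the paper proceeds (one use of \ref{fig:axiom_Reid_3}, then \ref{fig:axiom_square} twice, plus the snake identity). For \ref{it:phi_three} you refer to ``the two crossings making up the curl loop'', but a curl contains exactly one crossing; together with the $\lambda$-crossing that is only two crossings, one short of what RIII requires. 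The paper's fix is to insert a cancelling pair via \ref{fig:axiom_square}, apply \ref{fig:axiom_Reid_3} once, and then remove a pair by \ref{fig:axiom_square} again.

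Your sketch of multiplicativity in \ref{it:phi_two} is also too compressed: a single use of \ref{fig:axiom_mult} pushes the curl's crossing below $m$, but the two inputs then both cross the \emph{same} loop, still attached to the output; turning this into two independent curls $\varphi\otimes\varphi$ takes several further moves (the paper uses \ref{fig:axiom_form}, \ref{fig:axiom_mult}, \ref{fig:axiom_square} and \ref{fig:axiom_Reid_3} in combination). Your alternative route to $\varphi(1)=1$ via Lemma~\ref{lem:closed_below} is legitimate and genuinely different from the paper --- it amounts to the observation that $\lambda(1\otimes a)=a\otimes1$, so the curl on $1$ untwists trivially --- but you should make that intermediate step explicit; the paper instead deduces unitality from multiplicativity together with \ref{it:phi_one}. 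Your argument for \ref{it:phi_four} is correct and essentially the same as the paper's.
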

\begin{proof}
The axioms \ref{fig:axiom_form}, \ref{fig:axiom_square} and \ref{fig:axiom_Reid_3} imply, via the Whitney trick \cite{kauffman-regular-isotopy}, that \ref{it:phi_one} holds.
\begin{align}
\begin{aligned}
\hspace{15mm}
\begingroup%
  \makeatletter%
  \providecommand\color[2][]{%
    \errmessage{(Inkscape) Color is used for the text in Inkscape, but the package 'color.sty' is not loaded}%
    \renewcommand\color[2][]{}%
  }%
  \providecommand\transparent[1]{%
    \errmessage{(Inkscape) Transparency is used (non-zero) for the text in Inkscape, but the package 'transparent.sty' is not loaded}%
    \renewcommand\transparent[1]{}%
  }%
  \providecommand\rotatebox[2]{#2}%
  \ifx\svgwidth\undefined%
    \setlength{\unitlength}{399.12283229bp}%
    \ifx\svgscale\undefined%
      \relax%
    \else%
      \setlength{\unitlength}{\unitlength * \real{\svgscale}}%
    \fi%
  \else%
    \setlength{\unitlength}{\svgwidth}%
  \fi%
  \global\let\svgwidth\undefined%
  \global\let\svgscale\undefined%
  \makeatother%
  \begin{picture}(1,0.21520661)%
    \put(0,0){\includegraphics[width=\unitlength]{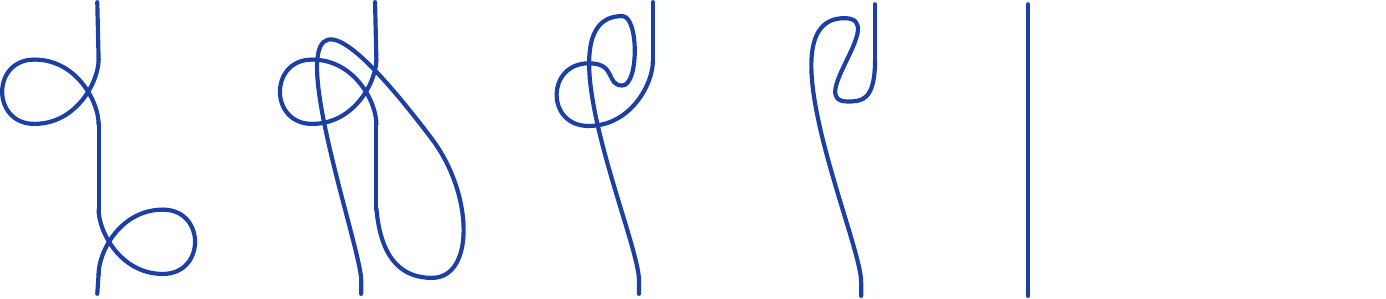}}%
    \put(0.16134951,0.09486876){\color[rgb]{0,0,0}\makebox(0,0)[lb]{\smash{$\overset{\text{\ref{fig:axiom_Reid_3}}}{=}$}}}%
    \put(0.36083463,0.09336553){\color[rgb]{0,0,0}\makebox(0,0)[lb]{\smash{$\overset{\text{\ref{fig:axiom_square}}}{=}$}}}%
    \put(0.51116435,0.09186211){\color[rgb]{0,0,0}\makebox(0,0)[lb]{\smash{$\overset{\text{\ref{fig:axiom_square}}}{=}$}}}%
    \put(0.67151599,0.09186211){\color[rgb]{0,0,0}\makebox(0,0)[lb]{\smash{$\overset{\text{\eqref{eq:snake}}}{=}$}}}%
  \end{picture}%
\endgroup%

\end{aligned}
\end{align}
The diagrammatic proof of \ref{it:phi_two} can be found below. First it is established that $\varphi (a \cdot b) = \varphi (a) \cdot \varphi (b)$,
\begin{align}	
\begin{aligned}
\hspace{15mm}	
\begingroup%
  \makeatletter%
  \providecommand\color[2][]{%
    \errmessage{(Inkscape) Color is used for the text in Inkscape, but the package 'color.sty' is not loaded}%
    \renewcommand\color[2][]{}%
  }%
  \providecommand\transparent[1]{%
    \errmessage{(Inkscape) Transparency is used (non-zero) for the text in Inkscape, but the package 'transparent.sty' is not loaded}%
    \renewcommand\transparent[1]{}%
  }%
  \providecommand\rotatebox[2]{#2}%
  \ifx\svgwidth\undefined%
    \setlength{\unitlength}{457.16326676bp}%
    \ifx\svgscale\undefined%
      \relax%
    \else%
      \setlength{\unitlength}{\unitlength * \real{\svgscale}}%
    \fi%
  \else%
    \setlength{\unitlength}{\svgwidth}%
  \fi%
  \global\let\svgwidth\undefined%
  \global\let\svgscale\undefined%
  \makeatother%
  \begin{picture}(1,0.19025318)%
    \put(0,0){\includegraphics[width=\unitlength]{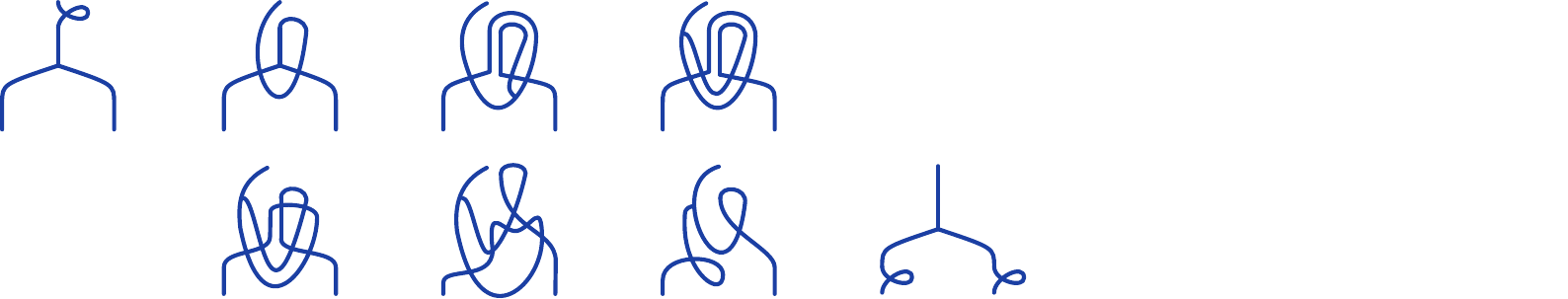}}%
    \put(0.09032261,0.15136911){\color[rgb]{0,0,0}\makebox(0,0)[lb]{\smash{$\overset{\text{\ref{fig:axiom_mult}}}{=}$}}}%
    \put(0.23906596,0.15136911){\color[rgb]{0,0,0}\makebox(0,0)[lb]{\smash{$\overset{\substack{\text{\ref{fig:axiom_form}}\\+\\\text{\ref{fig:axiom_mult}}}}{=}$}}}%
    \put(0.3790597,0.15136911){\color[rgb]{0,0,0}\makebox(0,0)[lb]{\smash{$\overset{\substack{\text{\ref{fig:axiom_form}}\\+\\\text{\ref{fig:axiom_mult}}}}{=}$}}}%
    \put(0.51905345,0.04719354){\color[rgb]{0,0,0}\makebox(0,0)[lb]{\smash{$=$}}}%
    \put(0.23906598,0.0463738){\color[rgb]{0,0,0}\makebox(0,0)[lb]{\smash{$\overset{\text{\ref{fig:axiom_Reid_3}}}{=}$}}}%
    \put(0.37905965,0.0463738){\color[rgb]{0,0,0}\makebox(0,0)[lb]{\smash{$\overset{\text{\ref{fig:axiom_square}}}{=}$}}}%
    \put(0.09907222,0.0463738){\color[rgb]{0,0,0}\makebox(0,0)[lb]{\smash{$\overset{\text{\ref{fig:axiom_square}}}{=}$}}}%
  \end{picture}%
\endgroup%

\end{aligned}
\end{align} 
and second one shows $\gls{phi}$ preserves the algebra unit. Recall that $\gls{unit}=\gls{R}.\gls{m}(\gls{Bb})$:
\begin{align}
\begin{aligned}
\hspace{15mm}
\begingroup%
  \makeatletter%
  \providecommand\color[2][]{%
    \errmessage{(Inkscape) Color is used for the text in Inkscape, but the package 'color.sty' is not loaded}%
    \renewcommand\color[2][]{}%
  }%
  \providecommand\transparent[1]{%
    \errmessage{(Inkscape) Transparency is used (non-zero) for the text in Inkscape, but the package 'transparent.sty' is not loaded}%
    \renewcommand\transparent[1]{}%
  }%
  \providecommand\rotatebox[2]{#2}%
  \ifx\svgwidth\undefined%
    \setlength{\unitlength}{230.65449808bp}%
    \ifx\svgscale\undefined%
      \relax%
    \else%
      \setlength{\unitlength}{\unitlength * \real{\svgscale}}%
    \fi%
  \else%
    \setlength{\unitlength}{\svgwidth}%
  \fi%
  \global\let\svgwidth\undefined%
  \global\let\svgscale\undefined%
  \makeatother%
  \begin{picture}(1,0.21473586)%
    \put(0,0){\includegraphics[width=\unitlength]{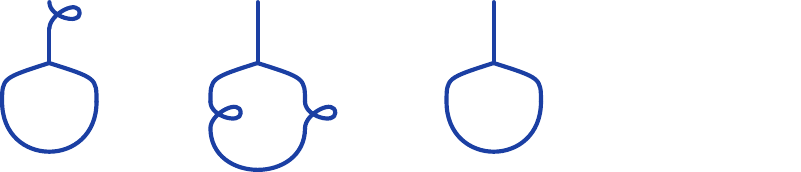}}%
    \put(0.1657063,0.09065617){\color[rgb]{0,0,0}\makebox(0,0)[lb]{\smash{$\overset{\text{\ref{it:phi_two}}}{=}$}}}%
    \put(0.46051954,0.09065617){\color[rgb]{0,0,0}\makebox(0,0)[lb]{\smash{$\overset{\text{\ref{it:phi_one}}}{=}$}}}%
  \end{picture}%
\endgroup%
 
\end{aligned}.
\end{align}
Since $\varphi$ is an invertible linear map, having established it is also a homomorphism is enough to guarantee $\varphi$ is indeed an automorphism of $A$. Property~\ref{it:phi_four} on the other hand, stems directly from axiom~\ref{fig:axiom_form}. 
\begin{align}
\begin{aligned}
\centering
\begingroup%
  \makeatletter%
  \providecommand\color[2][]{%
    \errmessage{(Inkscape) Color is used for the text in Inkscape, but the package 'color.sty' is not loaded}%
    \renewcommand\color[2][]{}%
  }%
  \providecommand\transparent[1]{%
    \errmessage{(Inkscape) Transparency is used (non-zero) for the text in Inkscape, but the package 'transparent.sty' is not loaded}%
    \renewcommand\transparent[1]{}%
  }%
  \providecommand\rotatebox[2]{#2}%
  \ifx\svgwidth\undefined%
    \setlength{\unitlength}{305.725662bp}%
    \ifx\svgscale\undefined%
      \relax%
    \else%
      \setlength{\unitlength}{\unitlength * \real{\svgscale}}%
    \fi%
  \else%
    \setlength{\unitlength}{\svgwidth}%
  \fi%
  \global\let\svgwidth\undefined%
  \global\let\svgscale\undefined%
  \makeatother%
  \begin{picture}(1,0.14280989)%
    \put(0,0){\includegraphics[width=\unitlength]{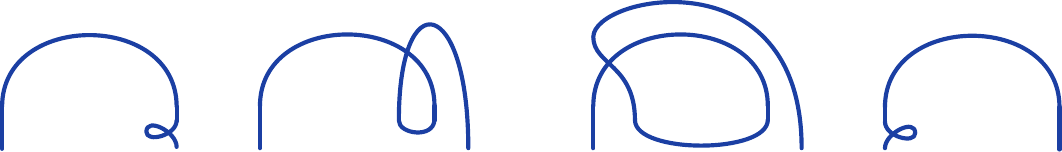}}%
    \put(0.1925563,0.05556483){\color[rgb]{0,0,0}\makebox(0,0)[lb]{\smash{$=$}}}%
    \put(0.46731247,0.05556483){\color[rgb]{0,0,0}\makebox(0,0)[lb]{\smash{$\overset{\text{\ref{fig:axiom_form}}}{=}$}}}%
    \put(0.78131948,0.05556483){\color[rgb]{0,0,0}\makebox(0,0)[lb]{\smash{$=$}}}%
  \end{picture}%
\endgroup%
 
\end{aligned}
\end{align}
Finally, one shows that \ref{it:phi_three} holds.
\begin{align}
\begin{aligned}
\hspace{20mm}
\begingroup%
  \makeatletter%
  \providecommand\color[2][]{%
    \errmessage{(Inkscape) Color is used for the text in Inkscape, but the package 'color.sty' is not loaded}%
    \renewcommand\color[2][]{}%
  }%
  \providecommand\transparent[1]{%
    \errmessage{(Inkscape) Transparency is used (non-zero) for the text in Inkscape, but the package 'transparent.sty' is not loaded}%
    \renewcommand\transparent[1]{}%
  }%
  \providecommand\rotatebox[2]{#2}%
  \ifx\svgwidth\undefined%
    \setlength{\unitlength}{387.28432236bp}%
    \ifx\svgscale\undefined%
      \relax%
    \else%
      \setlength{\unitlength}{\unitlength * \real{\svgscale}}%
    \fi%
  \else%
    \setlength{\unitlength}{\svgwidth}%
  \fi%
  \global\let\svgwidth\undefined%
  \global\let\svgscale\undefined%
  \makeatother%
  \begin{picture}(1,0.12397439)%
    \put(0,0){\includegraphics[width=\unitlength]{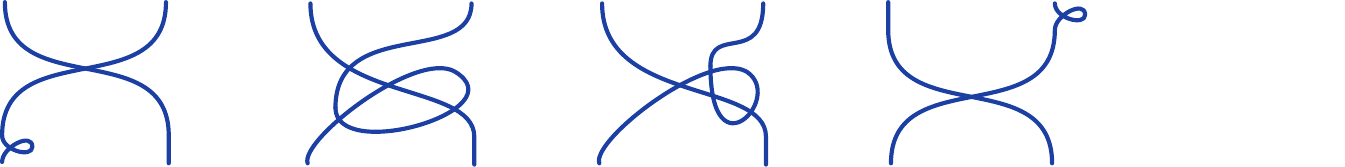}}%
    \put(0.1668044,0.05420772){\color[rgb]{0,0,0}\makebox(0,0)[lb]{\smash{$\overset{\text{\ref{fig:axiom_square}}}{=}$}}}%
    \put(0.61092256,0.05420772){\color[rgb]{0,0,0}\makebox(0,0)[lb]{\smash{$\overset{\text{\ref{fig:axiom_square}}}{=}$}}}%
    \put(0.39402764,0.05420772){\color[rgb]{0,0,0}\makebox(0,0)[lb]{\smash{$\overset{\text{\ref{fig:axiom_Reid_3}}}{=}$}}}%
  \end{picture}%
\endgroup%
 
\end{aligned}
\end{align}
\end{proof}
The first objective is to build the diagrammatic counterpart of expression \eqref{eq:invariant}, assigning $\gls{Z}(\gls{Sigma_g},\gls{s})$ to an orientable surface with spin structure. As explained in section \S\ref{sec:crossing} each diagram can be brought to a standard format (see figure~\ref{fig:flower_diagram}) where each $\begin{aligned}\end{aligned}$ stands for $\gls{phi}$ or the identity according to what immersion $\Sigma_g \looparrowright \Rb^3$ was chosen. 

\begin{figure}[t!]
		\centering
		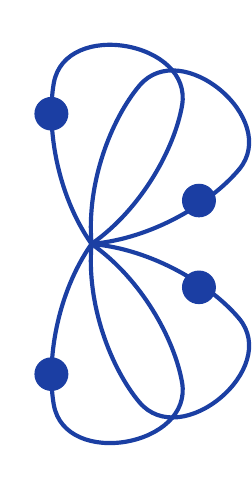
		\caption[Spin diagrams]{\emph{Spin diagrams.} A diagram for a closed surface $\gls{Sigma_g}$ of genus 1 or higher can be brought to the standard form above where $\begin{aligned}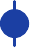\end{aligned}$ stands for either $\gls{phi}$ or the identity map.}
                \label{fig:flower_diagram}
\end{figure} 

It is necessary to construct the analogue of the element $\gls{z}=e_a\cdot e_b\cdot e_c \cdot e_d\,B^{ac}\,B^{bd}$, introduced in equation~\eqref{eq:torus}, in the spin model. Note that the geometrical interpretation we gave $z$ was that of the algebra element to associate with $\Sigma_1-D$. Therefore, its correct generalisation to the spin setting must take into account all possible diagrams associated with $\Sigma_1-D$ with a spin structure. Denoted $\eta_1,\eta_2,\eta_3$ and $\gls{chi}$ they are preferred elements of the algebra -- the building blocks of the spin partition functions:
\begin{align}
\begin{aligned}
\begingroup%
  \makeatletter%
  \providecommand\color[2][]{%
    \errmessage{(Inkscape) Color is used for the text in Inkscape, but the package 'color.sty' is not loaded}%
    \renewcommand\color[2][]{}%
  }%
  \providecommand\transparent[1]{%
    \errmessage{(Inkscape) Transparency is used (non-zero) for the text in Inkscape, but the package 'transparent.sty' is not loaded}%
    \renewcommand\transparent[1]{}%
  }%
  \providecommand\rotatebox[2]{#2}%
  \ifx\svgwidth\undefined%
    \setlength{\unitlength}{362.65819022bp}%
    \ifx\svgscale\undefined%
      \relax%
    \else%
      \setlength{\unitlength}{\unitlength * \real{\svgscale}}%
    \fi%
  \else%
    \setlength{\unitlength}{\svgwidth}%
  \fi%
  \global\let\svgwidth\undefined%
  \global\let\svgscale\undefined%
  \makeatother%
  \begin{picture}(1,0.18258887)%
    \put(0,0){\includegraphics[width=\unitlength]{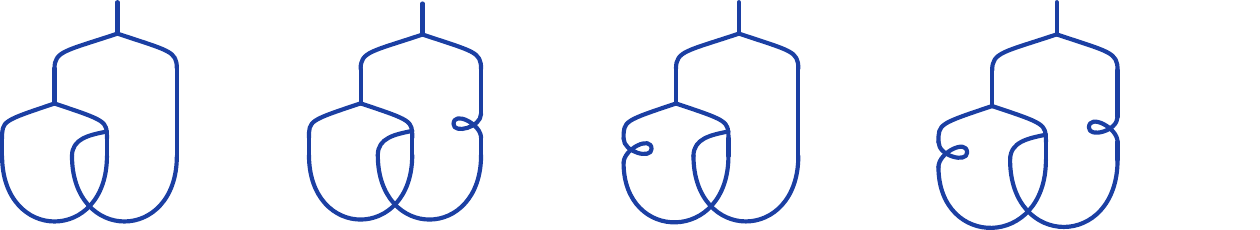}}%
    \put(0.15278445,0.06919336){\color[rgb]{0,0,0}\makebox(0,0)[lb]{\smash{$=\eta_1\, ,$}}}%
    \put(0.39510125,0.06807921){\color[rgb]{0,0,0}\makebox(0,0)[lb]{\smash{$=\eta_2 \, ,$ }}}%
    \put(0.6463314,0.06919336){\color[rgb]{0,0,0}\makebox(0,0)[lb]{\smash{$=\eta_3 \, ,$}}}%
    \put(0.89867568,0.06919336){\color[rgb]{0,0,0}\makebox(0,0)[lb]{\smash{$=\chi \, .$}}}%
  \end{picture}%
\endgroup%

\end{aligned}
\end{align}
A useful preliminary is the study of the two possible diagrams we can associate with the cylinder topology and which would correspond to its two different spin structures. These maps $\gls{A} \to A$ are depicted below and denoted $p$ and $n$ respectively. 
\begin{align}
\begin{aligned}
\begingroup%
  \makeatletter%
  \providecommand\color[2][]{%
    \errmessage{(Inkscape) Color is used for the text in Inkscape, but the package 'color.sty' is not loaded}%
    \renewcommand\color[2][]{}%
  }%
  \providecommand\transparent[1]{%
    \errmessage{(Inkscape) Transparency is used (non-zero) for the text in Inkscape, but the package 'transparent.sty' is not loaded}%
    \renewcommand\transparent[1]{}%
  }%
  \providecommand\rotatebox[2]{#2}%
  \ifx\svgwidth\undefined%
    \setlength{\unitlength}{178.94314151bp}%
    \ifx\svgscale\undefined%
      \relax%
    \else%
      \setlength{\unitlength}{\unitlength * \real{\svgscale}}%
    \fi%
  \else%
    \setlength{\unitlength}{\svgwidth}%
  \fi%
  \global\let\svgwidth\undefined%
  \global\let\svgscale\undefined%
  \makeatother%
  \begin{picture}(1,0.4009363)%
    \put(0,0){\includegraphics[width=\unitlength]{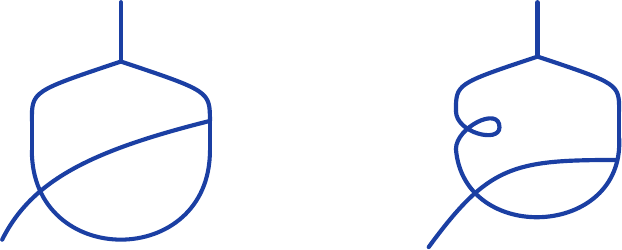}}%
  \end{picture}%
\endgroup%

\end{aligned}
\end{align} 
We now define two linear subspaces of $\gls{A}$: $\mathcal{Z}_{\lambda}(A)$, the set of elements $a \in A$ satisfying $\gls{m}(b \otimes a)=m \circ \gls{lambda}(b \otimes a)$ for all $b \in A$, and $\overline{\mathcal{Z}}_{\lambda}(A)$, the set of elements $a \in A$ obeying equation $m(b \otimes a)=m \circ \lambda(\varphi(b) \otimes a)$ instead. In diagrammatic terms, the elements $a$ of $\mathcal{Z}_{\lambda}(A)$ or $\overline{\mathcal{Z}}_{\lambda}(A)$ satisfy either
\begin{align} \label{eq:Z_bar_lambda}
\begin{aligned}
\begingroup%
  \makeatletter%
  \providecommand\color[2][]{%
    \errmessage{(Inkscape) Color is used for the text in Inkscape, but the package 'color.sty' is not loaded}%
    \renewcommand\color[2][]{}%
  }%
  \providecommand\transparent[1]{%
    \errmessage{(Inkscape) Transparency is used (non-zero) for the text in Inkscape, but the package 'transparent.sty' is not loaded}%
    \renewcommand\transparent[1]{}%
  }%
  \providecommand\rotatebox[2]{#2}%
  \ifx\svgwidth\undefined%
    \setlength{\unitlength}{120.6953125bp}%
    \ifx\svgscale\undefined%
      \relax%
    \else%
      \setlength{\unitlength}{\unitlength * \real{\svgscale}}%
    \fi%
  \else%
    \setlength{\unitlength}{\svgwidth}%
  \fi%
  \global\let\svgwidth\undefined%
  \global\let\svgscale\undefined%
  \makeatother%
  \begin{picture}(1,0.46966653)%
    \put(0,0){\includegraphics[width=\unitlength]{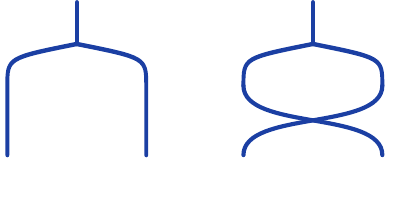}}%
    \put(0.42847433,0.23218979){\color[rgb]{0,0,0}\makebox(0,0)[lb]{\smash{$=$}}}%
    \put(-0.00236261,0.00682892){\color[rgb]{0,0,0}\makebox(0,0)[lb]{\smash{$b$}}}%
    \put(0.32905042,0.00682892){\color[rgb]{0,0,0}\makebox(0,0)[lb]{\smash{$a$}}}%
    \put(0.56103955,0.00682892){\color[rgb]{0,0,0}\makebox(0,0)[lb]{\smash{$b$}}}%
    \put(0.89245259,0.00682892){\color[rgb]{0,0,0}\makebox(0,0)[lb]{\smash{$a$}}}%
  \end{picture}%
\endgroup%
\end{aligned} \text{ , or} \hspace{5mm} 
\begin{aligned}
\begingroup%
  \makeatletter%
  \providecommand\color[2][]{%
    \errmessage{(Inkscape) Color is used for the text in Inkscape, but the package 'color.sty' is not loaded}%
    \renewcommand\color[2][]{}%
  }%
  \providecommand\transparent[1]{%
    \errmessage{(Inkscape) Transparency is used (non-zero) for the text in Inkscape, but the package 'transparent.sty' is not loaded}%
    \renewcommand\transparent[1]{}%
  }%
  \providecommand\rotatebox[2]{#2}%
  \ifx\svgwidth\undefined%
    \setlength{\unitlength}{120.6953125bp}%
    \ifx\svgscale\undefined%
      \relax%
    \else%
      \setlength{\unitlength}{\unitlength * \real{\svgscale}}%
    \fi%
  \else%
    \setlength{\unitlength}{\svgwidth}%
  \fi%
  \global\let\svgwidth\undefined%
  \global\let\svgscale\undefined%
  \makeatother%
  \begin{picture}(1,0.53594914)%
    \put(0,0){\includegraphics[width=\unitlength]{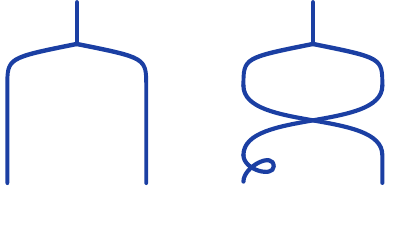}}%
    \put(0.42847433,0.29847239){\color[rgb]{0,0,0}\makebox(0,0)[lb]{\smash{$=$}}}%
    \put(-0.00236261,0.00682892){\color[rgb]{0,0,0}\makebox(0,0)[lb]{\smash{$b$}}}%
    \put(0.32905042,0.00682892){\color[rgb]{0,0,0}\makebox(0,0)[lb]{\smash{$a$}}}%
    \put(0.56103955,0.00682892){\color[rgb]{0,0,0}\makebox(0,0)[lb]{\smash{$b$}}}%
    \put(0.89245259,0.00682892){\color[rgb]{0,0,0}\makebox(0,0)[lb]{\smash{$a$}}}%
  \end{picture}%
\endgroup%
\end{aligned} .
\end{align}
\begin{lemma} \label{lem:projector}
The map $\gls{R}.p$ is a projector $A \to A$ with image $\mathcal{Z}_{\lambda}(A)$. The map $R.n$ is a projector $A \to A$ with image $\overline{\mathcal{Z}}_{\lambda}(A)$. Further, $\gls{phi} \circ p = p \circ \varphi = p$ and $\varphi \circ  n = n \circ \varphi$.  
\end{lemma}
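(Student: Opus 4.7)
The plan is to prove the four claims diagrammatically, leveraging the axioms of a spin state sum model and the semi-closed diagram identity from lemma~\ref{lem:closed_below}. I will treat the $p$ case in full and describe how the $n$ case follows by inserting a curl.

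First, I would verify that $R\,p$ is idempotent. Stacking two cylinder diagrams for $p$ produces a semi-closed diagram (the two "loops" of the two cylinders form a region with only internal legs), so lemma~\ref{lem:closed_below} lets me replace one of the stacked loops by the identity. What remains is a single cylinder together with a factor $R^{-1}.m(B)=1$ coming from the special Frobenius condition of theorem~\ref{theo:diagram}. Hence $R\,p\circ R\,p = R\,p$.

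Next, to identify the image of $R\,p$ with $\mathcal{Z}_\lambda(A)$ I would argue in two directions. For any $a\in A$, applying the multiplication $m(b\otimes\cdot)$ to $R\,p(a)$ and using axiom \ref{fig:axiom_mult} to slide the external leg $b$ past the internal loop of $p$ produces the same diagram as $m\circ\lambda(b\otimes R\,p(a))$, since the loop of $p$ has no curl and the closed portion above can be freely re-routed via axioms \ref{fig:axiom_form}--\ref{fig:axiom_Reid_3}; this shows $R\,p(a)\in\mathcal{Z}_\lambda(A)$. Conversely, given $a\in\mathcal{Z}_\lambda(A)$, I would write $a = R\,m(B)\cdot a$ using unitality, then use the defining relation \eqref{eq:Z_bar_lambda} (the $\mathcal{Z}_\lambda$ version) to slide $a$ around the loop of the $p$-cylinder; together with the snake identity \eqref{fig:inverse_metric} this yields $R\,p(a)=a$. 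So $\mathcal{Z}_\lambda(A)$ is precisely the image of $R\,p$.

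For $\varphi\circ p = p\circ\varphi = p$, I would insert a curl on the outgoing or incoming leg of the cylinder diagram for $p$ and use lemma~\ref{lem:varphi}\ref{it:phi_four} and axiom \ref{fig:axiom_square} to slide the curl onto the internal loop; since $p$'s loop is untwisted, the resulting two curls on the loop combine via the Whitney trick (lemma~\ref{lem:varphi}\ref{it:phi_one}) into the identity, recovering $p$. The same argument applied to $n$, whose loop already carries a curl, pushes any external curl onto the loop; lemma~\ref{lem:varphi}\ref{it:phi_one} again shows that the three curls collapse to a single one, giving $\varphi\circ n = n\circ\varphi$ (but not $n$, since one curl remains). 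For $n$ I would repeat the projector and image arguments verbatim, the only modification being that one now uses the $\overline{\mathcal{Z}}_\lambda$ version of \eqref{eq:Z_bar_lambda}, with the $\varphi$ appearing naturally when sliding an external state past the curled loop via lemma~\ref{lem:varphi}\ref{it:phi_three}.

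The main obstacle I anticipate is bookkeeping the ribbon-framing information during the slide-past steps: each time a leg is moved past the internal loop of $p$ or $n$, axioms \ref{fig:axiom_square} and \ref{fig:axiom_Reid_3} must be applied in the correct order, and one must be careful that intermediate diagrams remain semi-closed in the sense of lemma~\ref{lem:closed_below} so the identification with the identity map is legitimate. Once the semi-closed hypothesis is confirmed, every remaining manipulation is a straightforward application of the algebra axioms.
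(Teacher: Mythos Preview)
Your identification of the image of $R\,p$ with $\mathcal{Z}_\lambda(A)$ via the two-directional argument is exactly the paper's route, and the same for $R\,n$ and $\overline{\mathcal{Z}}_\lambda(A)$. Two points, however, do not hold up as written.

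First, the direct idempotency argument misreads lemma~\ref{lem:closed_below}. That lemma lets an external strand be pulled through a semi-closed region; it does not let you ``replace one of the stacked loops by the identity''. The paper sidesteps this entirely: once you have shown that $p(a)\in\mathcal{Z}_\lambda(A)$ for every $a$, and that $R\,p$ fixes $\mathcal{Z}_\lambda(A)$ pointwise, idempotency is immediate by setting $b=R\,p(a)$ in the second statement. Since you already prove both of those facts, your separate idempotency step is unnecessary and its justification is not needed.

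Second, and more substantively, your mechanism for the curl identities is not right. For $\varphi\circ p=p$ you invoke \ref{it:phi_four} and axiom~\ref{fig:axiom_square} to ``slide the curl onto the internal loop'' and then claim two curls cancel; but sliding one curl onto an untwisted loop gives one curl, not two. The paper's proof uses the separability identity~\eqref{eq:prop_sep} together with axiom~\ref{fig:axiom_mult} and then, crucially, property~\ref{it:phi_three} (compatibility of $\varphi$ with the crossing $\lambda$) to transfer the curl through the crossing of $p$; it is this step that produces the configuration where \ref{it:phi_one} applies. You do not invoke \ref{it:phi_three} at all. The error becomes visible in your treatment of $n$: your ``three curls collapse to a single one'' would yield $\varphi\circ n = n$, which is false in general --- the lemma asserts only $\varphi\circ n = n\circ\varphi$. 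The paper proves this weaker statement by a sequence using \ref{it:phi_two}, \ref{it:phi_one}, \ref{it:phi_two}, \ref{it:phi_three} that moves the curl from the output leg to the input leg without absorbing it into the loop.
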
 
\begin{proof}
First, one must note that for all $a \in A$, $p(a) \in \mathcal{Z}_{\lambda}(A)$. 
\begin{align}
\hspace{8mm}
\begingroup%
  \makeatletter%
  \providecommand\color[2][]{%
    \errmessage{(Inkscape) Color is used for the text in Inkscape, but the package 'color.sty' is not loaded}%
    \renewcommand\color[2][]{}%
  }%
  \providecommand\transparent[1]{%
    \errmessage{(Inkscape) Transparency is used (non-zero) for the text in Inkscape, but the package 'transparent.sty' is not loaded}%
    \renewcommand\transparent[1]{}%
  }%
  \providecommand\rotatebox[2]{#2}%
  \ifx\svgwidth\undefined%
    \setlength{\unitlength}{457.28384906bp}%
    \ifx\svgscale\undefined%
      \relax%
    \else%
      \setlength{\unitlength}{\unitlength * \real{\svgscale}}%
    \fi%
  \else%
    \setlength{\unitlength}{\svgwidth}%
  \fi%
  \global\let\svgwidth\undefined%
  \global\let\svgscale\undefined%
  \makeatother%
  \begin{picture}(1,0.29939582)%
    \put(0,0){\includegraphics[width=\unitlength]{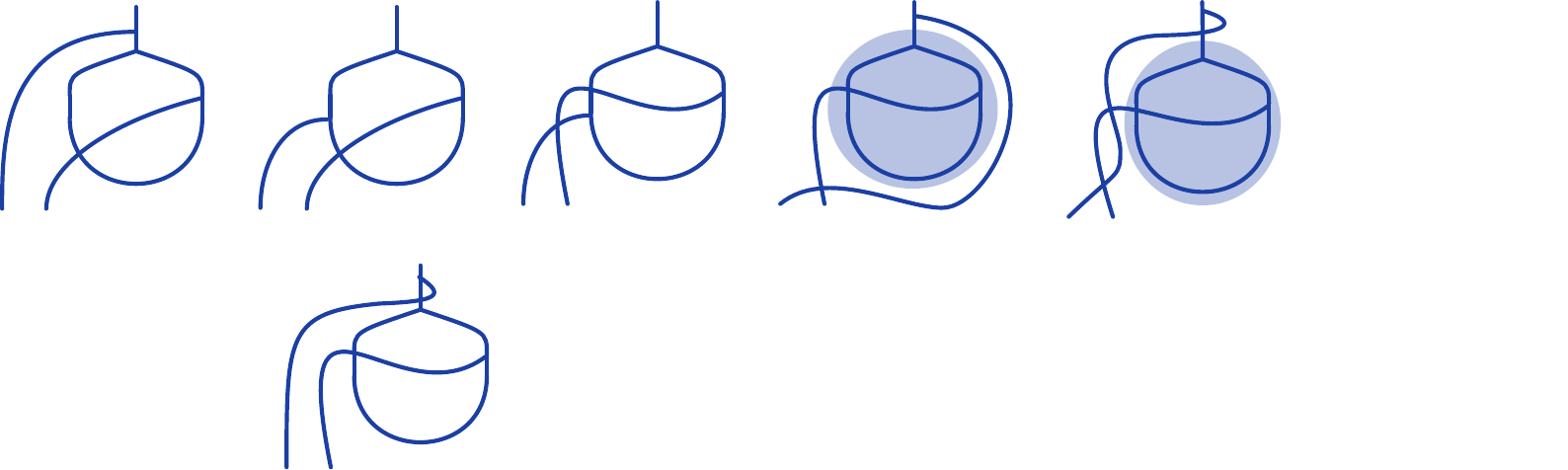}}%
    \put(0.1378184,0.21229814){\color[rgb]{0,0,0}\makebox(0,0)[lb]{\smash{$\overset{\text{\eqref{fig:diagram_mult2}}}{=}$}}}%
    \put(0.30401715,0.21229814){\color[rgb]{0,0,0}\makebox(0,0)[lb]{\smash{$\overset{\text{\ref{fig:axiom_mult}}}{=}$}}}%
    \put(0.47021589,0.21229814){\color[rgb]{0,0,0}\makebox(0,0)[lb]{\smash{$\overset{\text{\eqref{fig:diagram_mult2}}}{=}$}}}%
    \put(0.65390923,0.21229814){\color[rgb]{0,0,0}\makebox(0,0)[lb]{\smash{$\overset{\text{\eqref{eq:closed_below}}}{=}$}}}%
    \put(0.13700807,0.04434989){\color[rgb]{0,0,0}\makebox(0,0)[lb]{\smash{$\overset{\text{\ref{fig:axiom_square}}}{=}$}}}%
  \end{picture}%
\endgroup%

\end{align}
One can then further conclude that if $a \in \mathcal{Z}_{\lambda}(A)$ then $R.p(a)=a$. 
\begin{align}
\hspace{13mm}      
\begingroup%
  \makeatletter%
  \providecommand\color[2][]{%
    \errmessage{(Inkscape) Color is used for the text in Inkscape, but the package 'color.sty' is not loaded}%
    \renewcommand\color[2][]{}%
  }%
  \providecommand\transparent[1]{%
    \errmessage{(Inkscape) Transparency is used (non-zero) for the text in Inkscape, but the package 'transparent.sty' is not loaded}%
    \renewcommand\transparent[1]{}%
  }%
  \providecommand\rotatebox[2]{#2}%
  \ifx\svgwidth\undefined%
    \setlength{\unitlength}{434.92403564bp}%
    \ifx\svgscale\undefined%
      \relax%
    \else%
      \setlength{\unitlength}{\unitlength * \real{\svgscale}}%
    \fi%
  \else%
    \setlength{\unitlength}{\svgwidth}%
  \fi%
  \global\let\svgwidth\undefined%
  \global\let\svgscale\undefined%
  \makeatother%
  \begin{picture}(1,0.1455194)%
    \put(0,0){\includegraphics[width=\unitlength]{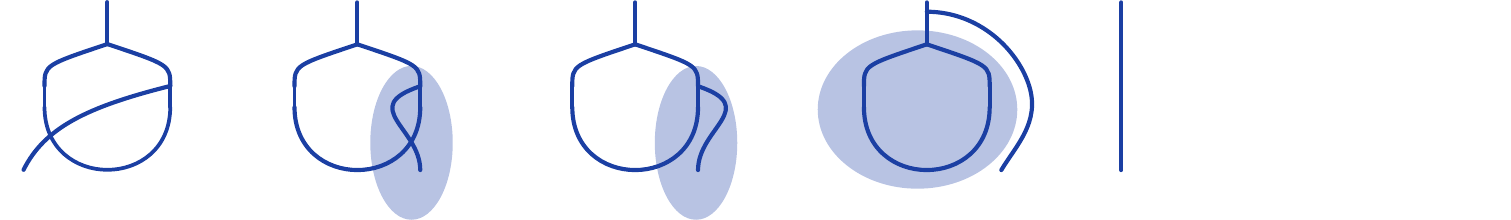}}%
    \put(-0.00065565,0.07557731){\color[rgb]{0,0,0}\makebox(0,0)[lb]{\smash{$R$}}}%
    \put(0.16770678,0.07541553){\color[rgb]{0,0,0}\makebox(0,0)[lb]{\smash{$R$}}}%
    \put(0.3506765,0.07541553){\color[rgb]{0,0,0}\makebox(0,0)[lb]{\smash{$R$}}}%
    \put(0.49782864,0.07541553){\color[rgb]{0,0,0}\makebox(0,0)[lb]{\smash{$\overset{\text{\eqref{fig:diagram_mult2}}}{=}$}}}%
    \put(0.54706334,0.07541553){\color[rgb]{0,0,0}\makebox(0,0)[lb]{\smash{$R$}}}%
    \put(0.01038074,0.00751945){\color[rgb]{0,0,0}\makebox(0,0)[lb]{\smash{$a$}}}%
    \put(0.27255262,0.00735767){\color[rgb]{0,0,0}\makebox(0,0)[lb]{\smash{$a$}}}%
    \put(0.4573618,0.00735767){\color[rgb]{0,0,0}\makebox(0,0)[lb]{\smash{$a$}}}%
    \put(0.65742749,0.00735767){\color[rgb]{0,0,0}\makebox(0,0)[lb]{\smash{$a$}}}%
    \put(0.73771439,0.00735767){\color[rgb]{0,0,0}\makebox(0,0)[lb]{\smash{$a$}}}%
    \put(0.69237624,0.07541553){\color[rgb]{0,0,0}\makebox(0,0)[lb]{\smash{$\overset{\text{\eqref{eq:diag_1}}}{=}$}}}%
    \put(0.30566185,0.07541553){\color[rgb]{0,0,0}\makebox(0,0)[lb]{\smash{$\overset{\eqref{eq:Z_bar_lambda}}{=}$}}}%
    \put(0.12258427,0.07557731){\color[rgb]{0,0,0}\makebox(0,0)[lb]{\smash{$\overset{\text{\ref{fig:axiom_form}}}{=}$}}}%
  \end{picture}%
\endgroup%

\end{align}
These two properties are enough to establish $R.p$ as a projector onto $\mathcal{Z}_{\lambda}(A)$. Notice that since $p(a) \in \mathcal{Z}_{\lambda}(A)$ for all $a \in A$ and $R.p(b)=b$ for all $b \in \mathcal{Z}_{\lambda}(A) \subset A$ by setting $b=R.p(a)$ we learn $R^2.p^2(a)=R.p(a)$. In other words, the map $R.p$ is idempotent and, therefore, a projector. Also, recall that if $\text{Im}(R.p)=\mathcal{Z}_{\lambda}(A)$ as claimed, for all $a \in \mathcal{Z}_{\lambda}(A)$ there should exist $b \in \gls{A}$ such that $\gls{R}.p(b)=a$. Since for all $a \in \mathcal{Z}_{\lambda}(A)$ the identity $R.p(a)=a$ holds the choice $b=a$ concludes the proof. It is now shown that for all $a \in A$ the element $n(a)$ belongs to $\overline{\mathcal{Z}}_{\lambda}(A)$. 
\begin{align}
\begingroup%
  \makeatletter%
  \providecommand\color[2][]{%
    \errmessage{(Inkscape) Color is used for the text in Inkscape, but the package 'color.sty' is not loaded}%
    \renewcommand\color[2][]{}%
  }%
  \providecommand\transparent[1]{%
    \errmessage{(Inkscape) Transparency is used (non-zero) for the text in Inkscape, but the package 'transparent.sty' is not loaded}%
    \renewcommand\transparent[1]{}%
  }%
  \providecommand\rotatebox[2]{#2}%
  \ifx\svgwidth\undefined%
    \setlength{\unitlength}{363.49643232bp}%
    \ifx\svgscale\undefined%
      \relax%
    \else%
      \setlength{\unitlength}{\unitlength * \real{\svgscale}}%
    \fi%
  \else%
    \setlength{\unitlength}{\svgwidth}%
  \fi%
  \global\let\svgwidth\undefined%
  \global\let\svgscale\undefined%
  \makeatother%
  \begin{picture}(1,0.32357305)%
    \put(0,0){\includegraphics[width=\unitlength]{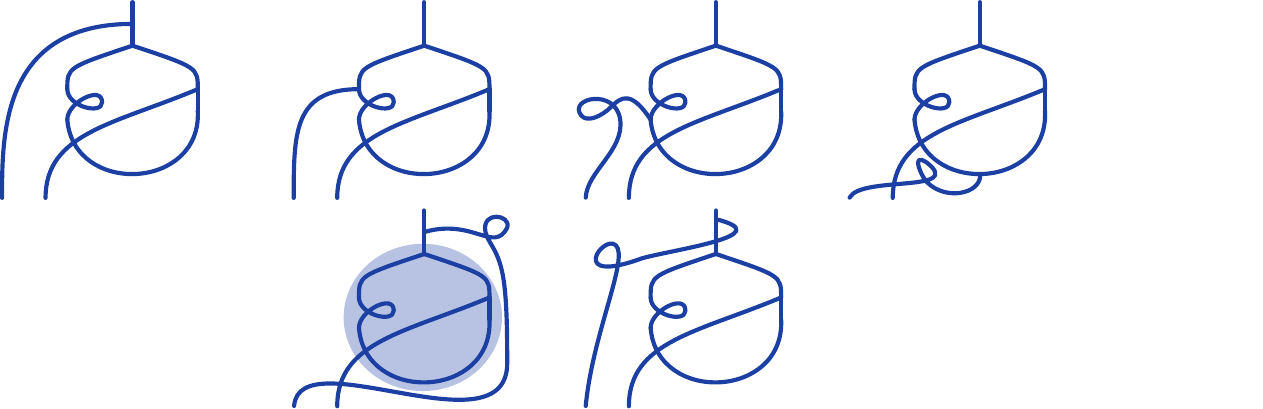}}%
    \put(0.17103941,0.23383433){\color[rgb]{0,0,0}\makebox(0,0)[lb]{\smash{$\overset{\text{\eqref{fig:diagram_mult2}}}{=}$}}}%
    \put(0.40212833,0.23383433){\color[rgb]{0,0,0}\makebox(0,0)[lb]{\smash{$\overset{\text{\ref{it:phi_two}}}{=}$}}}%
    \put(0.64422149,0.23383433){\color[rgb]{0,0,0}\makebox(0,0)[lb]{\smash{$\overset{\text{\ref{it:phi_three}}}{=}$}}}%
    \put(0.4131325,0.06877088){\color[rgb]{0,0,0}\makebox(0,0)[lb]{\smash{$\overset{\text{\eqref{eq:closed_below}}}{=}$}}}%
    \put(0.17103941,0.06877088){\color[rgb]{0,0,0}\makebox(0,0)[lb]{\smash{$\overset{\text{\eqref{fig:diagram_mult2}}}{=}$}}}%
  \end{picture}%
\endgroup%

\label{fig:nproof}	
\end{align}
Furthermore, it is established that if $a \in \overline{\mathcal{Z}}_{\lambda}(A)$ then $R.n(a)=a$. 
\begin{align}
\hspace{10mm}
\begingroup%
  \makeatletter%
  \providecommand\color[2][]{%
    \errmessage{(Inkscape) Color is used for the text in Inkscape, but the package 'color.sty' is not loaded}%
    \renewcommand\color[2][]{}%
  }%
  \providecommand\transparent[1]{%
    \errmessage{(Inkscape) Transparency is used (non-zero) for the text in Inkscape, but the package 'transparent.sty' is not loaded}%
    \renewcommand\transparent[1]{}%
  }%
  \providecommand\rotatebox[2]{#2}%
  \ifx\svgwidth\undefined%
    \setlength{\unitlength}{443.6767334bp}%
    \ifx\svgscale\undefined%
      \relax%
    \else%
      \setlength{\unitlength}{\unitlength * \real{\svgscale}}%
    \fi%
  \else%
    \setlength{\unitlength}{\svgwidth}%
  \fi%
  \global\let\svgwidth\undefined%
  \global\let\svgscale\undefined%
  \makeatother%
  \begin{picture}(1,0.16922011)%
    \put(0,0){\includegraphics[width=\unitlength]{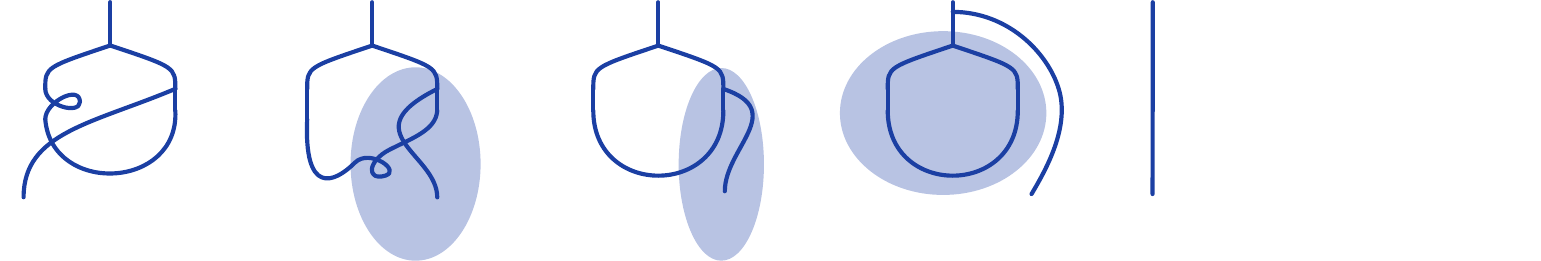}}%
    \put(-0.00064271,0.09769345){\color[rgb]{0,0,0}\makebox(0,0)[lb]{\smash{$R$}}}%
    \put(0.17391667,0.09807918){\color[rgb]{0,0,0}\makebox(0,0)[lb]{\smash{$R$}}}%
    \put(0.35946736,0.09853453){\color[rgb]{0,0,0}\makebox(0,0)[lb]{\smash{$R$}}}%
    \put(0.31744249,0.09015578){\color[rgb]{0,0,0}\makebox(0,0)[lb]{\smash{$\overset{\text{\eqref{eq:Z_bar_lambda}}}{=}$}}}%
    \put(0.55045692,0.09807918){\color[rgb]{0,0,0}\makebox(0,0)[lb]{\smash{$R$}}}%
    \put(0.49935033,0.09015578){\color[rgb]{0,0,0}\makebox(0,0)[lb]{\smash{$\overset{\text{\eqref{fig:diagram_mult2}}}{=}$}}}%
    \put(0.69844493,0.09015578){\color[rgb]{0,0,0}\makebox(0,0)[lb]{\smash{$\overset{\text{\eqref{eq:diag_1}}}{=}$}}}%
    \put(0.01081869,0.01269146){\color[rgb]{0,0,0}\makebox(0,0)[lb]{\smash{$a$}}}%
    \put(0.27777397,0.01262191){\color[rgb]{0,0,0}\makebox(0,0)[lb]{\smash{$a$}}}%
    \put(0.46509115,0.01262191){\color[rgb]{0,0,0}\makebox(0,0)[lb]{\smash{$a$}}}%
    \put(0.66418564,0.01262191){\color[rgb]{0,0,0}\makebox(0,0)[lb]{\smash{$a$}}}%
    \put(0.74274593,0.01262191){\color[rgb]{0,0,0}\makebox(0,0)[lb]{\smash{$a$}}}%
    \put(0.12621805,0.09022539){\color[rgb]{0,0,0}\makebox(0,0)[lb]{\smash{$\overset{\substack{\text{\ref{fig:axiom_form}}\\+\\\text{\ref{it:phi_four}}}}{=}$}}}%
  \end{picture}%
\endgroup%

\label{fig:nproof2}	
\end{align}
An analysis similar to the one conducted for $p$ allows us to conclude that $R.n$ is a projector onto $\overline{\mathcal{Z}}_{\lambda}(A)$. The proofs $\gls{phi} \circ p = p \circ \varphi$ and $p \circ \varphi = p$ are accomplished by direct composition:
\begin{align}
\hspace{15mm}
&
\begingroup%
  \makeatletter%
  \providecommand\color[2][]{%
    \errmessage{(Inkscape) Color is used for the text in Inkscape, but the package 'color.sty' is not loaded}%
    \renewcommand\color[2][]{}%
  }%
  \providecommand\transparent[1]{%
    \errmessage{(Inkscape) Transparency is used (non-zero) for the text in Inkscape, but the package 'transparent.sty' is not loaded}%
    \renewcommand\transparent[1]{}%
  }%
  \providecommand\rotatebox[2]{#2}%
  \ifx\svgwidth\undefined%
    \setlength{\unitlength}{382.07838031bp}%
    \ifx\svgscale\undefined%
      \relax%
    \else%
      \setlength{\unitlength}{\unitlength * \real{\svgscale}}%
    \fi%
  \else%
    \setlength{\unitlength}{\svgwidth}%
  \fi%
  \global\let\svgwidth\undefined%
  \global\let\svgscale\undefined%
  \makeatother%
  \begin{picture}(1,0.17574442)%
    \put(0,0){\includegraphics[width=\unitlength]{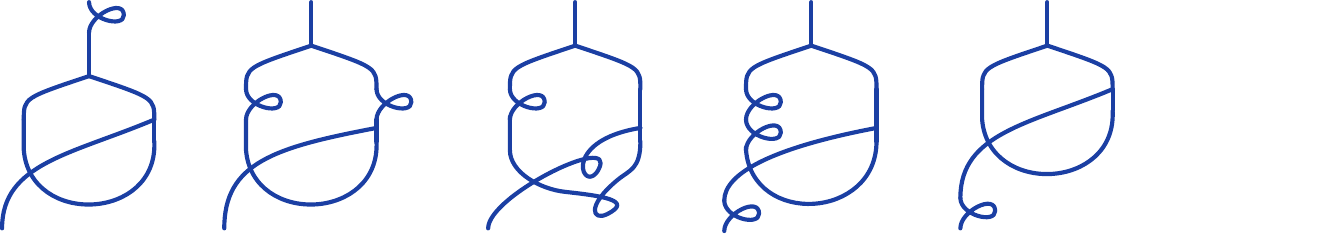}}%
    \put(0.12993345,0.06943194){\color[rgb]{0,0,0}\makebox(0,0)[lb]{\smash{$\overset{\text{\ref{it:phi_two}}}{=}$}}}%
    \put(0.31837646,0.06943194){\color[rgb]{0,0,0}\makebox(0,0)[lb]{\smash{$\overset{\text{\ref{it:phi_two}}}{=}$}}}%
    \put(0.49635041,0.06943194){\color[rgb]{0,0,0}\makebox(0,0)[lb]{\smash{$\overset{\text{\ref{it:phi_three}}}{=}$}}}%
    \put(0.67432443,0.06943194){\color[rgb]{0,0,0}\makebox(0,0)[lb]{\smash{$\overset{\text{\ref{it:phi_one}}}{=}$}}}%
    \put(0.8732365,0.06943194){\color[rgb]{0,0,0}\makebox(0,0)[lb]{\smash{,}}}%
  \end{picture}%
\endgroup%
\\
\hspace{3mm}
&
\begingroup%
  \makeatletter%
  \providecommand\color[2][]{%
    \errmessage{(Inkscape) Color is used for the text in Inkscape, but the package 'color.sty' is not loaded}%
    \renewcommand\color[2][]{}%
  }%
  \providecommand\transparent[1]{%
    \errmessage{(Inkscape) Transparency is used (non-zero) for the text in Inkscape, but the package 'transparent.sty' is not loaded}%
    \renewcommand\transparent[1]{}%
  }%
  \providecommand\rotatebox[2]{#2}%
  \ifx\svgwidth\undefined%
    \setlength{\unitlength}{336.79296502bp}%
    \ifx\svgscale\undefined%
      \relax%
    \else%
      \setlength{\unitlength}{\unitlength * \real{\svgscale}}%
    \fi%
  \else%
    \setlength{\unitlength}{\svgwidth}%
  \fi%
  \global\let\svgwidth\undefined%
  \global\let\svgscale\undefined%
  \makeatother%
  \begin{picture}(1,0.26184664)%
    \put(0,0){\includegraphics[width=\unitlength]{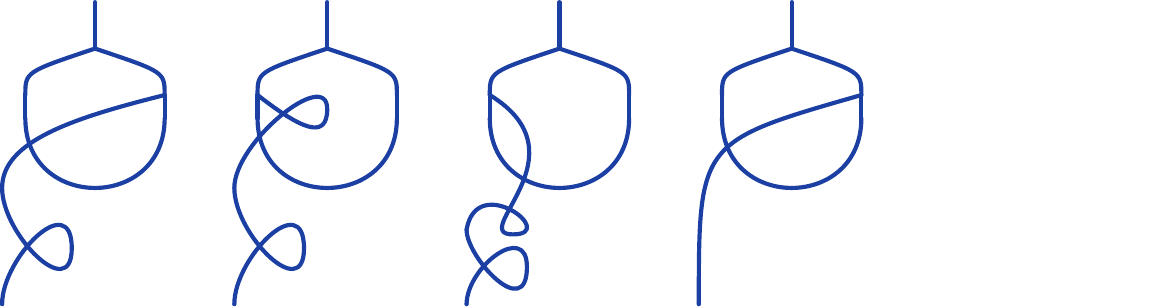}}%
    \put(0.15439751,0.15439746){\color[rgb]{0,0,0}\makebox(0,0)[lb]{\smash{$\overset{\substack{\text{\eqref{eq:prop_sep}}\\+\\\text{\ref{fig:axiom_mult}}}}{=}$}}}%
    \put(0.35630197,0.15439746){\color[rgb]{0,0,0}\makebox(0,0)[lb]{\smash{$\overset{\text{\ref{it:phi_three}}}{=}$}}}%
    \put(0.55820642,0.15439746){\color[rgb]{0,0,0}\makebox(0,0)[lb]{\smash{$\overset{\text{\ref{it:phi_one}}}{=}$}}}%
    \put(0.77198761,0.15439746){\color[rgb]{0,0,0}\makebox(0,0)[lb]{\smash{.}}}%
  \end{picture}%
\endgroup%
	
\end{align}
Showing the identity $\gls{phi} \circ n = n \circ \varphi$ holds concludes the proof of the lemma.
\begin{align}
\hspace{15mm}
\begingroup%
  \makeatletter%
  \providecommand\color[2][]{%
    \errmessage{(Inkscape) Color is used for the text in Inkscape, but the package 'color.sty' is not loaded}%
    \renewcommand\color[2][]{}%
  }%
  \providecommand\transparent[1]{%
    \errmessage{(Inkscape) Transparency is used (non-zero) for the text in Inkscape, but the package 'transparent.sty' is not loaded}%
    \renewcommand\transparent[1]{}%
  }%
  \providecommand\rotatebox[2]{#2}%
  \ifx\svgwidth\undefined%
    \setlength{\unitlength}{369.27670313bp}%
    \ifx\svgscale\undefined%
      \relax%
    \else%
      \setlength{\unitlength}{\unitlength * \real{\svgscale}}%
    \fi%
  \else%
    \setlength{\unitlength}{\svgwidth}%
  \fi%
  \global\let\svgwidth\undefined%
  \global\let\svgscale\undefined%
  \makeatother%
  \begin{picture}(1,0.18017007)%
    \put(0,0){\includegraphics[width=\unitlength]{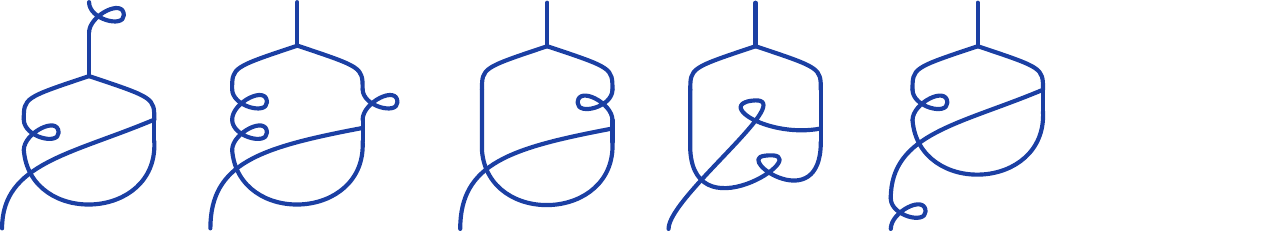}}%
    \put(0.32495956,0.08705956){\color[rgb]{0,0,0}\makebox(0,0)[lb]{\smash{$\overset{\text{\ref{it:phi_one}}}{=}$}}}%
    \put(0.12998381,0.08705956){\color[rgb]{0,0,0}\makebox(0,0)[lb]{\smash{$\overset{\text{\ref{it:phi_two}}}{=}$}}}%
    \put(0.48731069,0.08661759){\color[rgb]{0,0,0}\makebox(0,0)[lb]{\smash{$\overset{\text{\ref{it:phi_two}}}{=}$}}}%
    \put(0.64979048,0.08661759){\color[rgb]{0,0,0}\makebox(0,0)[lb]{\smash{$\overset{\text{\ref{it:phi_three}}}{=}$}}}%
  \end{picture}%
\endgroup%

\end{align}
\end{proof}
Curl-free models are in one-to-one correspondence with models for which $\mathcal{Z}_{\lambda}(A)=\overline{\mathcal{Z}}_{\lambda}(A)$.
\begin{lemma}
The spaces $\mathcal{Z}_{\lambda}(A)$, $\overline{\mathcal{Z}}_{\lambda}(A)$ coincide if and only if $\varphi=\iden$.
\end{lemma}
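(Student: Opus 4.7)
For the forward implication, if $\gls{phi} = \iden$ then the defining equations in \eqref{eq:Z_bar_lambda} for $\mathcal{Z}_{\lambda}(\gls{A})$ and $\overline{\mathcal{Z}}_{\lambda}(\gls{A})$ coincide verbatim, and the two subspaces are automatically equal.

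For the converse, the plan is to test the hypothesis $\mathcal{Z}_{\lambda}(\gls{A}) = \overline{\mathcal{Z}}_{\lambda}(\gls{A})$ against the algebra unit $\gls{unit} \in \gls{A}$. The technical heart is the diagrammatic identity
\begin{align}
\gls{m} \circ \gls{lambda}(a \otimes \gls{unit}) = \gls{phi}(a) \quad \text{for all } a \in \gls{A},
\label{eq:curl-unit-plan}
\end{align}
which I would establish by replacing $\gls{unit}$ with its realisation $\gls{R}\cdot \gls{m}(\gls{Bb})$, so that the right input of $\gls{lambda}$ becomes a cap $\gls{Bb}$ feeding a multiplication vertex from below. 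Axiom \ref{fig:axiom_mult} then pushes $\gls{lambda}$ down past this vertex, splitting it into two successive crossings in which the strand $a$ passes under each leg of the cap; after using \ref{fig:axiom_form} to tidy the resulting cap configuration, the residual link of $a$ with the $\gls{Bb}$-$\gls{m}$ loop is precisely the curl diagram defining $\gls{phi}(a)$ via the ribbon condition \ref{fig:axiom_left_right}.

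Granted \eqref{eq:curl-unit-plan}, the converse follows from two short observations. First, $\gls{unit}$ always belongs to $\overline{\mathcal{Z}}_{\lambda}(\gls{A})$: for all $b \in \gls{A}$,
\begin{align}
\gls{m} \circ \gls{lambda}(\gls{phi}(b) \otimes \gls{unit}) = \gls{phi}(\gls{phi}(b)) = b = \gls{m}(b \otimes \gls{unit}),
\end{align}
using \eqref{eq:curl-unit-plan} together with property \ref{it:phi_one} that $\gls{phi}^2 = \iden$. Second, $\gls{unit} \in \mathcal{Z}_{\lambda}(\gls{A})$ amounts to requiring $b = \gls{m}(b \otimes \gls{unit}) = \gls{m} \circ \gls{lambda}(b \otimes \gls{unit}) = \gls{phi}(b)$ for all $b$, i.e., $\gls{phi} = \iden$. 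Combining these, the hypothesis $\mathcal{Z}_{\lambda}(\gls{A}) = \overline{\mathcal{Z}}_{\lambda}(\gls{A})$ forces $\gls{unit} \in \mathcal{Z}_{\lambda}(\gls{A})$ via the first observation, and then the second observation gives $\gls{phi} = \iden$.

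The main obstacle is the identity \eqref{eq:curl-unit-plan}. Its geometric content---that transporting the closed $\gls{Bb}$-$\gls{m}$ loop representing $\gls{unit}$ across a strand produces exactly one curl---is transparent, but translating it into a verified sequence of axiom applications \ref{fig:axiom_form}, \ref{fig:axiom_mult}, and \ref{fig:axiom_left_right} requires careful bookkeeping of strand positions through the two crossings that the $\gls{lambda}$-over-$\gls{m}$ slide generates.
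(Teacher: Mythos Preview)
Your overall strategy---test membership of the unit $1$ in each subspace---matches the paper exactly, but your central identity $m \circ \lambda(a \otimes 1) = \varphi(a)$ is false. The correct statement is $\lambda(a \otimes 1) = 1 \otimes a$ for every $a \in A$, hence $m \circ \lambda(a \otimes 1) = a$. You can see this immediately in the graded examples of \S\ref{sec:graded_algebras}: for a bicharacter crossing $\lambda(a_h \otimes b_j) = \tilde\lambda(h,j)\, b_j \otimes a_h$ one has $\lambda(a_h \otimes 1) = \tilde\lambda(h,1)\, 1 \otimes a_h = 1 \otimes a_h$, while $\varphi(a_h) = \tilde\lambda(h,h)\,\sigma^{-1}(a_h)$ is generically different. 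Diagrammatically, pushing $\lambda$ past the multiplication vertex in $1 = R\,m(B)$ via \ref{fig:axiom_mult} produces \emph{two} crossings of $a$ with the legs of the cap, and \ref{fig:axiom_form} together with \ref{fig:axiom_square} then removes both, leaving $1 \otimes a$ with no residual curl. The geometric picture you invoke---that moving a closed loop across a strand deposits a curl---is not right: a curl is a \emph{self}-crossing of a single strand, whereas here $a$ passes under a disjoint arch and emerges unchanged.

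The error reverses your two observations. In fact $1 \in \mathcal{Z}_\lambda(A)$ unconditionally, since $m \circ \lambda(b \otimes 1) = b = m(b \otimes 1)$; whereas $1 \in \overline{\mathcal{Z}}_\lambda(A)$ requires $m \circ \lambda(\varphi(b) \otimes 1) = \varphi(b)$ to equal $m(b \otimes 1) = b$ for all $b$, i.e., $\varphi = \iden$. With the roles thus corrected your argument goes through and coincides with the paper's.
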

\begin{proof}
Showing that $\varphi=\text{id}$ implies $\mathcal{Z}_{\lambda}(A)=\overline{\mathcal{Z}}_{\lambda}(A)$ is straightforward; it follows from the identity $p=n$ for curl-free models. The reverse is more delicate. First note that independently of the choice of crossing $\gls{lambda}(a \otimes 1)= \gls{unit} \otimes a$ for all $a \in \gls{A}$; this follows from axioms \ref{fig:axiom_mult} and \ref{fig:axiom_square} and the description of the algebra unit as $1=\gls{R}.\gls{m}(\gls{Bb})$. Notice, therefore, that $m(a \otimes 1)=a$ and $m \circ  \lambda (a \otimes 1)=a$ for all $a \in A$ -- in other words, $1 \in \gls{ZA}$. A necessary condition to have $\mathcal{Z}_{\lambda}(A)=\overline{\mathcal{Z}}_{\lambda}(A)$ is then $1 \in \overline{\mathcal{Z}}_{\lambda}(A)$. However, $m \circ \lambda (\varphi(a) \otimes 1) = \varphi(a)$ and having $1 \in \overline{\mathcal{Z}}_{\lambda}(A)$ would imply $\varphi(a)=a$ for all $a \in A$. 
\end{proof}
It will now be easy to verify the identity $\eta_1=\eta_2=\eta_3$ is satisfied; the notation $\gls{eta}$ is used for any of these maps. To see how the result holds note that one of the relations, $\eta_1=\eta_2$, is trivial -- it follows from $p \circ \varphi=p$. The proof for the remaining equation, $\eta_3=\eta_1$ is depicted below. 
\begin{align}
\hspace{15mm}
\begingroup%
  \makeatletter%
  \providecommand\color[2][]{%
    \errmessage{(Inkscape) Color is used for the text in Inkscape, but the package 'color.sty' is not loaded}%
    \renewcommand\color[2][]{}%
  }%
  \providecommand\transparent[1]{%
    \errmessage{(Inkscape) Transparency is used (non-zero) for the text in Inkscape, but the package 'transparent.sty' is not loaded}%
    \renewcommand\transparent[1]{}%
  }%
  \providecommand\rotatebox[2]{#2}%
  \ifx\svgwidth\undefined%
    \setlength{\unitlength}{399.58114557bp}%
    \ifx\svgscale\undefined%
      \relax%
    \else%
      \setlength{\unitlength}{\unitlength * \real{\svgscale}}%
    \fi%
  \else%
    \setlength{\unitlength}{\svgwidth}%
  \fi%
  \global\let\svgwidth\undefined%
  \global\let\svgscale\undefined%
  \makeatother%
  \begin{picture}(1,0.17869774)%
    \put(0,0){\includegraphics[width=\unitlength]{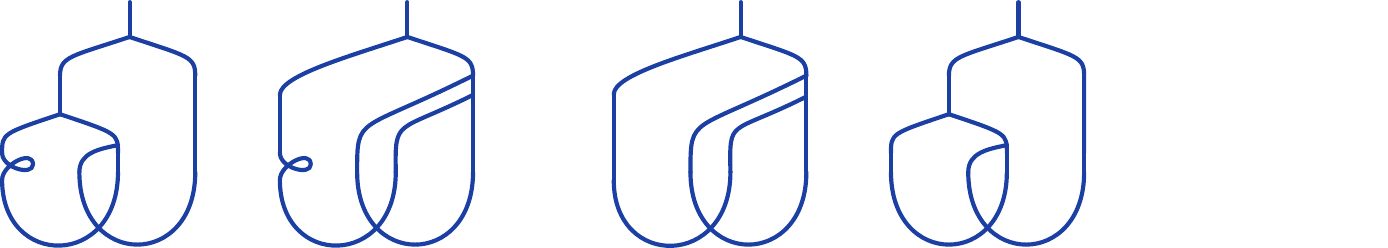}}%
    \put(0.59392048,0.06701521){\color[rgb]{0,0,0}\makebox(0,0)[lb]{\smash{$\overset{\text{\eqref{fig:diagram_mult2}}}{=}$}}}%
    \put(0.15345923,0.06701521){\color[rgb]{0,0,0}\makebox(0,0)[lb]{\smash{$\overset{\text{\eqref{fig:diagram_mult2}}}{=}$}}}%
    \put(0.35366891,0.06701521){\color[rgb]{0,0,0}\makebox(0,0)[lb]{\smash{$\overset{(p \circ \varphi = p)}{=}$}}}%
  \end{picture}%
\endgroup%

\end{align}
The two non-equivalent generalisations of $\gls{z}$ have the following properties. 
\begin{proposition} \label{lem:properties_eta_chi}
The elements $\eta$ and $\gls{chi}$ belong to $\mathcal{Z}(A) \cap \mathcal{Z}_{\lambda}(A)$ and satisfy $\eta^2=\chi^2$.
\end{proposition}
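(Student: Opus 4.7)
The plan is to establish the three claims in order: membership in $\mathcal{Z}_{\lambda}(A)$, centrality in $\mathcal{Z}(A)$, and the identity $\eta^2 = \chi^2$. The first two follow relatively quickly from the projector structure and topological sliding of an arbitrary element through the handle diagram, while the third requires careful diagrammatic manipulation of curls and is where the main technical effort lies.

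First I would show $\eta, \chi \in \mathcal{Z}_{\lambda}(A)$. Reading the defining diagrams for $\eta_1, \eta_2, \eta_3$ and $\chi$ from top to bottom, each begins with a multiplication vertex fed by a $\lambda$ crossing, which is precisely the configuration of the map $R \cdot p$ appearing in lemma \ref{lem:projector}. The portion of each diagram below this cylinder-shaped head is semi-closed and hence evaluates to a well-defined element of $A$ by lemma \ref{lem:closed_below}; denoting these elements $u_i, v \in A$, we obtain $\eta_i = R \cdot p(u_i)$ and $\chi = R \cdot p(v)$. Since $\mathrm{Im}(R \cdot p) = \mathcal{Z}_{\lambda}(A)$ by lemma \ref{lem:projector}, membership is immediate.

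Second, to verify $\eta, \chi \in \mathcal{Z}(A)$ I would fix an arbitrary $a \in A$ and transform $a \cdot \eta$ into $\eta \cdot a$ diagrammatically. Starting from $a \cdot \eta$, I would invoke $\eta \in \mathcal{Z}_{\lambda}(A)$ from the previous step to rewrite the multiplication with a $\lambda$ crossing, and then transport $a$ around the handle structure of $\eta$ by iterating the Reidemeister III move \ref{fig:axiom_Reid_3} together with the compatibility axioms \ref{fig:axiom_mult} and \ref{fig:axiom_square}. The semi-closed identity \eqref{eq:closed_below} ensures that $a$ passes unobstructed through the closed bottom portion of the diagram, ultimately emerging on the opposite side as $\eta \cdot a$. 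The argument for $\chi$ is analogous: the extra curl commutes with the sliding of $a$ via compatibility of $\varphi$ with $\lambda$ (lemma \ref{lem:varphi}, \ref{it:phi_three}).

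Third and most involved, the identity $\eta^2 = \chi^2$ encodes the fact that the two corresponding spin structures on $\Sigma_2 - D$ lie in the same parity class. Diagrammatically, $\chi^2 = m(\chi \otimes \chi)$ is the genus-two handle diagram carrying one curl on each of its four generating loops, whereas $\eta^2$ carries no curls at all. I would pair up the curls across the two handles and slide each pair together through the multiplication vertex where the handles are joined, using that $\varphi$ is an algebra automorphism (lemma \ref{lem:varphi}, \ref{it:phi_two}) compatible with $\lambda$ and $B$ (\ref{it:phi_three}, \ref{it:phi_four}). Once a pair has been brought to a common strand, $\varphi^2 = \iden$ (\ref{it:phi_one}) cancels them, and repeating the procedure on the second pair leaves precisely the diagram for $\eta^2$. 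The main obstacle will be choreographing the movements of the curls across several crossings and semi-closed regions without disturbing the rest of the diagram; the Whitney-trick style computation appearing in the proof of lemma \ref{lem:varphi}, \ref{it:phi_one} provides the template for these manipulations.
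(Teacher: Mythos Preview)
Your arguments for $\mathcal{Z}_{\lambda}(A)$- and $\mathcal{Z}(A)$-membership are essentially correct, though ordered oppositely to the paper (which proves centrality first by a direct slide, then obtains $\mathcal{Z}_{\lambda}(A)$ from the semi-closed identity \eqref{eq:closed_below}); recognising $\eta,\chi$ as lying in the image of $R\cdot p$ is a clean shortcut.

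The genuine gap is in your plan for $\eta^2=\chi^2$. You propose to pair the four curls of $\chi\cdot\chi$ across the two handles, slide each pair to a common strand through the joining multiplication vertex, and cancel via $\varphi^2=\iden$. This cannot work as described: the curls in each $\chi$ sit on the two \emph{internal} loops, and since $\varphi\circ p=p$ any curl reaching the output leg is already invisible; more fundamentally, if the moves you list could bring the two loop-curls of a single handle together, the same manoeuvre applied to one factor would force $\chi=\eta$, which is false in general. The genus-two identity needs a mechanism unavailable at genus one. The paper supplies it not by cancelling curls but by a non-trivial \emph{interchange} of the two $\chi$ factors --- one handle is threaded through the other --- and this depends essentially on the projector $n$ and the space $\overline{\mathcal{Z}}_{\lambda}(A)$: the property $n(a)\in\overline{\mathcal{Z}}_{\lambda}(A)$ is invoked several times, together with the Nakayama relation $n=\sigma\circ n\circ\sigma$, to pass one factor through the other. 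Only after this rearrangement does $p\circ\varphi=p$ strip the remaining curls. None of $n$, $\overline{\mathcal{Z}}_{\lambda}(A)$, or $\sigma$ appears in your proposal.
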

\begin{proof}
One is able to easily conclude that $\gls{eta}$ and $\gls{chi}$ are central elements. In the diagrams below $\begin{aligned}  \end{aligned}$ stands for either the identity map or $\gls{phi}$, depending on which element, $\eta$ or $\chi$, we are interested in. Since the identity map also satisfies properties \ref{it:phi_one}--\ref{it:phi_three}, we will use this notation to signify the corresponding properties of $\begin{aligned}  \end{aligned}$.
$$
\begingroup%
  \makeatletter%
  \providecommand\color[2][]{%
    \errmessage{(Inkscape) Color is used for the text in Inkscape, but the package 'color.sty' is not loaded}%
    \renewcommand\color[2][]{}%
  }%
  \providecommand\transparent[1]{%
    \errmessage{(Inkscape) Transparency is used (non-zero) for the text in Inkscape, but the package 'transparent.sty' is not loaded}%
    \renewcommand\transparent[1]{}%
  }%
  \providecommand\rotatebox[2]{#2}%
  \ifx\svgwidth\undefined%
    \setlength{\unitlength}{597.34615375bp}%
    \ifx\svgscale\undefined%
      \relax%
    \else%
      \setlength{\unitlength}{\unitlength * \real{\svgscale}}%
    \fi%
  \else%
    \setlength{\unitlength}{\svgwidth}%
  \fi%
  \global\let\svgwidth\undefined%
  \global\let\svgscale\undefined%
  \makeatother%
  \begin{picture}(1,0.14285221)%
    \put(0,0){\includegraphics[width=\unitlength]{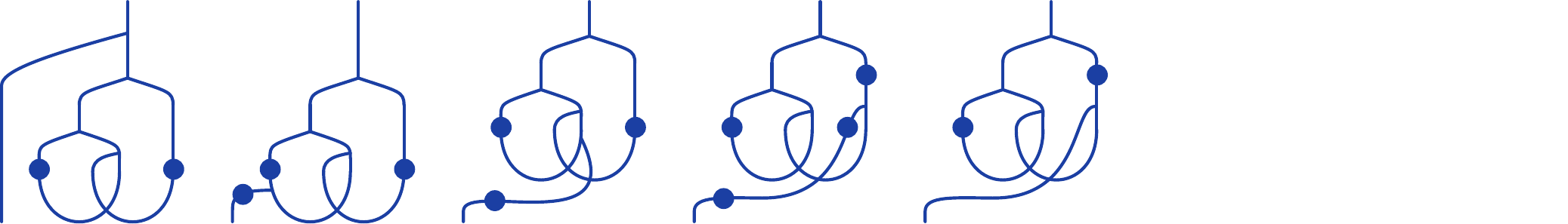}}%
    \put(0.12153767,0.06145372){\color[rgb]{0,0,0}\makebox(0,0)[lb]{\smash{$\overset{\substack{\text{\eqref{fig:diagram_mult2}}\\ + \\ \text{\ref{it:phi_two}}}}{=}$}}}%
    \put(0.26885594,0.06145372){\color[rgb]{0,0,0}\makebox(0,0)[lb]{\smash{$\overset{\text{\ref{fig:axiom_mult}}}{=}$}}}%
    \put(0.4161742,0.06145372){\color[rgb]{0,0,0}\makebox(0,0)[lb]{\smash{$\overset{\substack{\text{\eqref{fig:diagram_mult2}}\\ + \\ \text{\ref{it:phi_two}}}}{=}$}}}%
    \put(0.56349247,0.06145372){\color[rgb]{0,0,0}\makebox(0,0)[lb]{\smash{$\overset{\substack{\text{\ref{it:phi_three}}\\ + \\ \text{\ref{it:phi_one}}}}{=}$}}}%
  \end{picture}%
\endgroup%

$$
$$
\hspace{10mm}
\begingroup%
  \makeatletter%
  \providecommand\color[2][]{%
    \errmessage{(Inkscape) Color is used for the text in Inkscape, but the package 'color.sty' is not loaded}%
    \renewcommand\color[2][]{}%
  }%
  \providecommand\transparent[1]{%
    \errmessage{(Inkscape) Transparency is used (non-zero) for the text in Inkscape, but the package 'transparent.sty' is not loaded}%
    \renewcommand\transparent[1]{}%
  }%
  \providecommand\rotatebox[2]{#2}%
  \ifx\svgwidth\undefined%
    \setlength{\unitlength}{393.8515625bp}%
    \ifx\svgscale\undefined%
      \relax%
    \else%
      \setlength{\unitlength}{\unitlength * \real{\svgscale}}%
    \fi%
  \else%
    \setlength{\unitlength}{\svgwidth}%
  \fi%
  \global\let\svgwidth\undefined%
  \global\let\svgscale\undefined%
  \makeatother%
  \begin{picture}(1,0.21639661)%
    \put(0,0){\includegraphics[width=\unitlength]{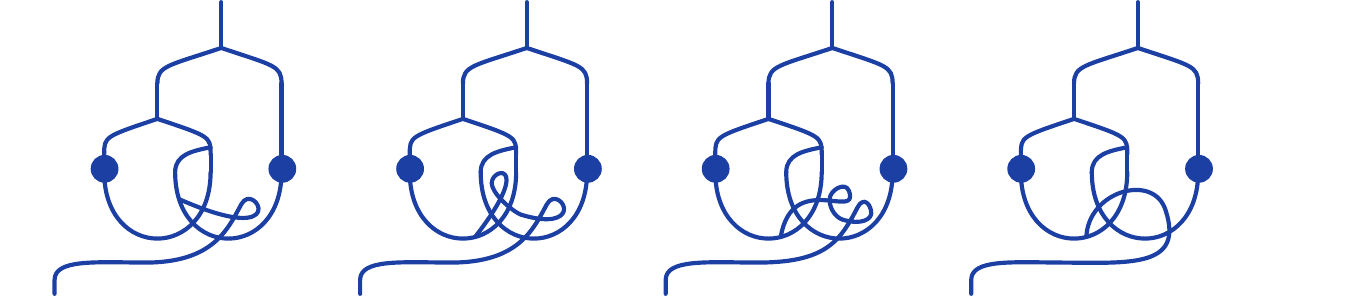}}%
    \put(0.22271041,0.09295011){\color[rgb]{0,0,0}\makebox(0,0)[lb]{\smash{$\overset{\substack{\text{\eqref{fig:diagram_mult2}}\\+\\ \text{\ref{fig:axiom_mult}}}}{=}$}}}%
    \put(0.44614484,0.09295011){\color[rgb]{0,0,0}\makebox(0,0)[lb]{\smash{$\overset{\text{\ref{it:phi_three}}}{=}$}}}%
    \put(0.66957928,0.09295011){\color[rgb]{0,0,0}\makebox(0,0)[lb]{\smash{$\overset{\text{\ref{it:phi_one}}}{=}$}}}%
    \put(-0.00072402,0.09295011){\color[rgb]{0,0,0}\makebox(0,0)[lb]{\smash{$\overset{\substack{\text{\eqref{eq:diag_1}}\\+\\ \text{\ref{fig:axiom_mult}}}}{=}$}}}%
  \end{picture}%
\endgroup%
 
$$
$$
\hspace{10mm}
\begingroup%
  \makeatletter%
  \providecommand\color[2][]{%
    \errmessage{(Inkscape) Color is used for the text in Inkscape, but the package 'color.sty' is not loaded}%
    \renewcommand\color[2][]{}%
  }%
  \providecommand\transparent[1]{%
    \errmessage{(Inkscape) Transparency is used (non-zero) for the text in Inkscape, but the package 'transparent.sty' is not loaded}%
    \renewcommand\transparent[1]{}%
  }%
  \providecommand\rotatebox[2]{#2}%
  \ifx\svgwidth\undefined%
    \setlength{\unitlength}{559.05657163bp}%
    \ifx\svgscale\undefined%
      \relax%
    \else%
      \setlength{\unitlength}{\unitlength * \real{\svgscale}}%
    \fi%
  \else%
    \setlength{\unitlength}{\svgwidth}%
  \fi%
  \global\let\svgwidth\undefined%
  \global\let\svgscale\undefined%
  \makeatother%
  \begin{picture}(1,0.15244995)%
    \put(0,0){\includegraphics[width=\unitlength]{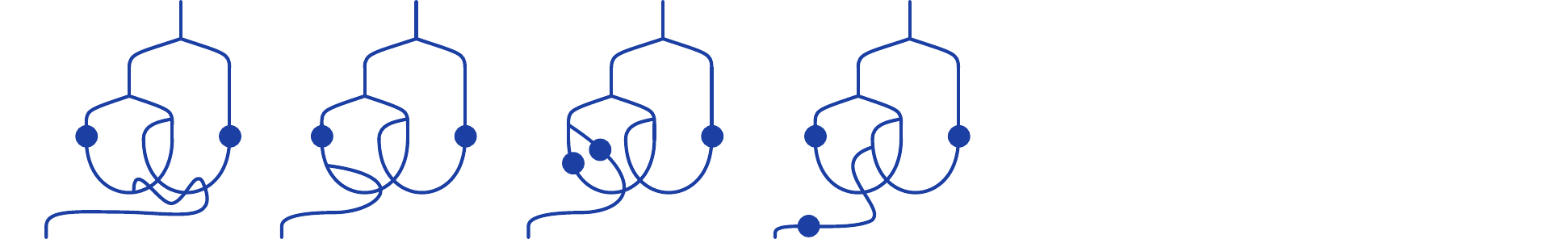}}%
    \put(0.15827084,0.05832781){\color[rgb]{0,0,0}\makebox(0,0)[lb]{\smash{$\overset{\substack{\text{\eqref{eq:diag_1}}\\ + \\ \text{\ref{fig:axiom_square}}}}{=}$}}}%
    \put(0.46593201,0.05832781){\color[rgb]{0,0,0}\makebox(0,0)[lb]{\smash{$\overset{\substack{\text{\eqref{fig:diagram_mult2}}\\ + \\ \text{\ref{it:phi_three}}}}{=}$}}}%
    \put(-0.00049917,0.0583278){\color[rgb]{0,0,0}\makebox(0,0)[lb]{\smash{$\overset{\text{\ref{fig:axiom_Reid_3}}}{=}$}}}%
    \put(0.30852397,0.05832781){\color[rgb]{0,0,0}\makebox(0,0)[lb]{\smash{$\overset{\text{\ref{it:phi_two}}}{=}$}}}%
  \end{picture}%
\endgroup%

$$
$$
\hspace{10mm}
\begingroup%
  \makeatletter%
  \providecommand\color[2][]{%
    \errmessage{(Inkscape) Color is used for the text in Inkscape, but the package 'color.sty' is not loaded}%
    \renewcommand\color[2][]{}%
  }%
  \providecommand\transparent[1]{%
    \errmessage{(Inkscape) Transparency is used (non-zero) for the text in Inkscape, but the package 'transparent.sty' is not loaded}%
    \renewcommand\transparent[1]{}%
  }%
  \providecommand\rotatebox[2]{#2}%
  \ifx\svgwidth\undefined%
    \setlength{\unitlength}{485.3984375bp}%
    \ifx\svgscale\undefined%
      \relax%
    \else%
      \setlength{\unitlength}{\unitlength * \real{\svgscale}}%
    \fi%
  \else%
    \setlength{\unitlength}{\svgwidth}%
  \fi%
  \global\let\svgwidth\undefined%
  \global\let\svgscale\undefined%
  \makeatother%
  \begin{picture}(1,0.17559313)%
    \put(0,0){\includegraphics[width=\unitlength]{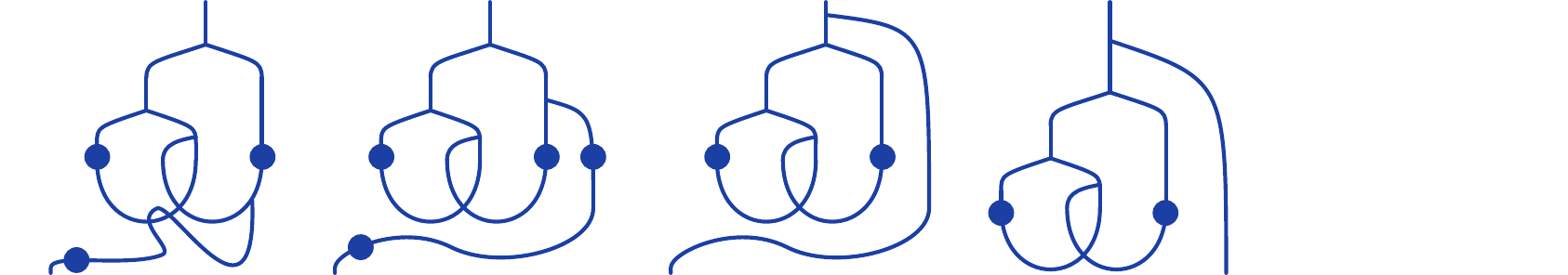}}%
    \put(0.60098019,0.06717893){\color[rgb]{0,0,0}\makebox(0,0)[lb]{\smash{$\overset{\text{\eqref{eq:snake}}}{=}$}}}%
    \put(-0.00058747,0.06717893){\color[rgb]{0,0,0}\makebox(0,0)[lb]{\smash{$\overset{\substack{\text{\ref{fig:axiom_mult}} \\ + \\ \text{\eqref{eq:prop_sep}}}}{=}$}}}%
    \put(0.18070689,0.06717893){\color[rgb]{0,0,0}\makebox(0,0)[lb]{\smash{$\overset{\substack{\text{\ref{fig:axiom_square}}\\ + \\ \text{\ref{it:phi_two}}}}{=}$}}}%
    \put(0.39496387,0.06717893){\color[rgb]{0,0,0}\makebox(0,0)[lb]{\smash{$\overset{\substack{\text{\ref{it:phi_one}}\\ + \\ \text{\eqref{fig:diagram_mult2}}}}{=}$}}}%
  \end{picture}%
\endgroup%
 
$$
In addition, because $\eta$ and $\chi$ are both determined by a semi-closed diagram we are able to conclude $\eta, \chi \in \mathcal{Z}_{\lambda}(A)$.
\begin{align}
\begin{aligned}
\begingroup%
  \makeatletter%
  \providecommand\color[2][]{%
    \errmessage{(Inkscape) Color is used for the text in Inkscape, but the package 'color.sty' is not loaded}%
    \renewcommand\color[2][]{}%
  }%
  \providecommand\transparent[1]{%
    \errmessage{(Inkscape) Transparency is used (non-zero) for the text in Inkscape, but the package 'transparent.sty' is not loaded}%
    \renewcommand\transparent[1]{}%
  }%
  \providecommand\rotatebox[2]{#2}%
  \ifx\svgwidth\undefined%
    \setlength{\unitlength}{248.88428188bp}%
    \ifx\svgscale\undefined%
      \relax%
    \else%
      \setlength{\unitlength}{\unitlength * \real{\svgscale}}%
    \fi%
  \else%
    \setlength{\unitlength}{\svgwidth}%
  \fi%
  \global\let\svgwidth\undefined%
  \global\let\svgscale\undefined%
  \makeatother%
  \begin{picture}(1,0.30387842)%
    \put(0,0){\includegraphics[width=\unitlength]{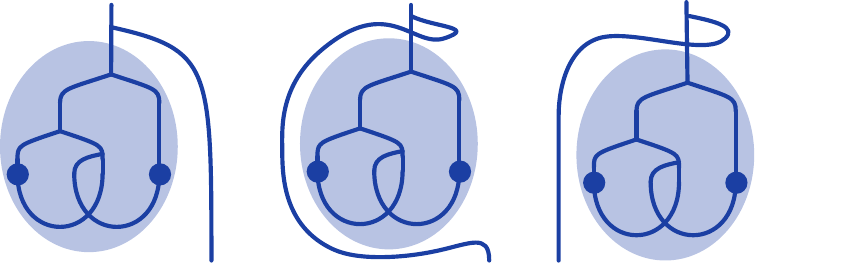}}%
    \put(0.25714724,0.10530414){\color[rgb]{0,0,0}\makebox(0,0)[lb]{\smash{$\overset{\text{\eqref{eq:closed_below}}}{=}$}}}%
    \put(0.57858176,0.10530414){\color[rgb]{0,0,0}\makebox(0,0)[lb]{\smash{$\overset{\text{\eqref{eq:snake}}}{=}$}}}%
  \end{picture}%
\endgroup%

\end{aligned}
\end{align}
In other words, $\gls{eta}$ and $\gls{chi}$ belong to $\gls{ZA} \cap \mathcal{Z}_{\lambda}(A)$. 

The last and most lengthy part of the proof comes from determining the non-trivial identity $\chi^2=\eta^2$. Behind this proof is the idea that we should commute the two factors of $\chi$ in a non-trivial way. To highlight this permutation the two factors are presented in different colours. Note that the properties of a planar model guarantee that any map $\begin{aligned}\end{aligned} \colon \gls{A} \to A$ satisfies $\begin{aligned}\end{aligned} = \gls{sigma} \circ \begin{aligned}\end{aligned} \circ \sigma$.
$$
\begingroup%
  \makeatletter%
  \providecommand\color[2][]{%
    \errmessage{(Inkscape) Color is used for the text in Inkscape, but the package 'color.sty' is not loaded}%
    \renewcommand\color[2][]{}%
  }%
  \providecommand\transparent[1]{%
    \errmessage{(Inkscape) Transparency is used (non-zero) for the text in Inkscape, but the package 'transparent.sty' is not loaded}%
    \renewcommand\transparent[1]{}%
  }%
  \providecommand\rotatebox[2]{#2}%
  \ifx\svgwidth\undefined%
    \setlength{\unitlength}{405.5234375bp}%
    \ifx\svgscale\undefined%
      \relax%
    \else%
      \setlength{\unitlength}{\unitlength * \real{\svgscale}}%
    \fi%
  \else%
    \setlength{\unitlength}{\svgwidth}%
  \fi%
  \global\let\svgwidth\undefined%
  \global\let\svgscale\undefined%
  \makeatother%
  \begin{picture}(1,0.16576165)%
    \put(0,0){\includegraphics[width=\unitlength]{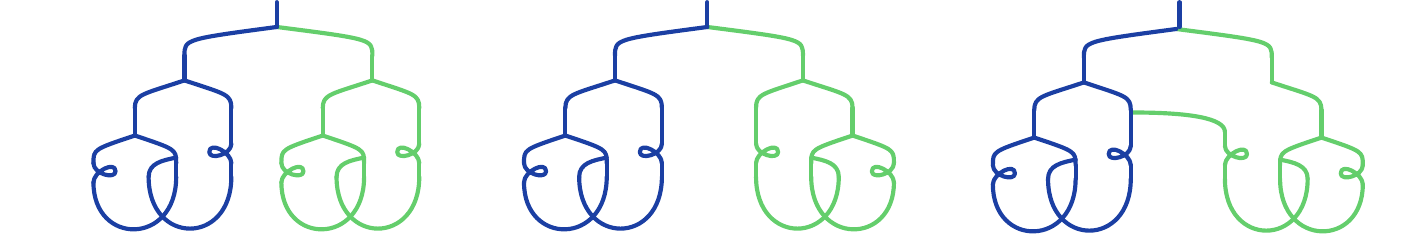}}%
    \put(-0.00070318,0.04796858){\color[rgb]{0,0,0}\makebox(0,0)[lb]{\smash{$\chi^2=$}}}%
    \put(0.31493825,0.04796858){\color[rgb]{0,0,0}\makebox(0,0)[lb]{\smash{$\overset{\eqref{fig:diagram_mult2}}{=}$}}}%
    \put(0.65030728,0.04796858){\color[rgb]{0,0,0}\makebox(0,0)[lb]{\smash{$\overset{\eqref{fig:diagram_mult2}}{=}$}}}%
  \end{picture}%
\endgroup%
	
$$
$$
\hspace{5mm}
\begingroup%
  \makeatletter%
  \providecommand\color[2][]{%
    \errmessage{(Inkscape) Color is used for the text in Inkscape, but the package 'color.sty' is not loaded}%
    \renewcommand\color[2][]{}%
  }%
  \providecommand\transparent[1]{%
    \errmessage{(Inkscape) Transparency is used (non-zero) for the text in Inkscape, but the package 'transparent.sty' is not loaded}%
    \renewcommand\transparent[1]{}%
  }%
  \providecommand\rotatebox[2]{#2}%
  \ifx\svgwidth\undefined%
    \setlength{\unitlength}{413.9765625bp}%
    \ifx\svgscale\undefined%
      \relax%
    \else%
      \setlength{\unitlength}{\unitlength * \real{\svgscale}}%
    \fi%
  \else%
    \setlength{\unitlength}{\svgwidth}%
  \fi%
  \global\let\svgwidth\undefined%
  \global\let\svgscale\undefined%
  \makeatother%
  \begin{picture}(1,0.18754548)%
    \put(0,0){\includegraphics[width=\unitlength]{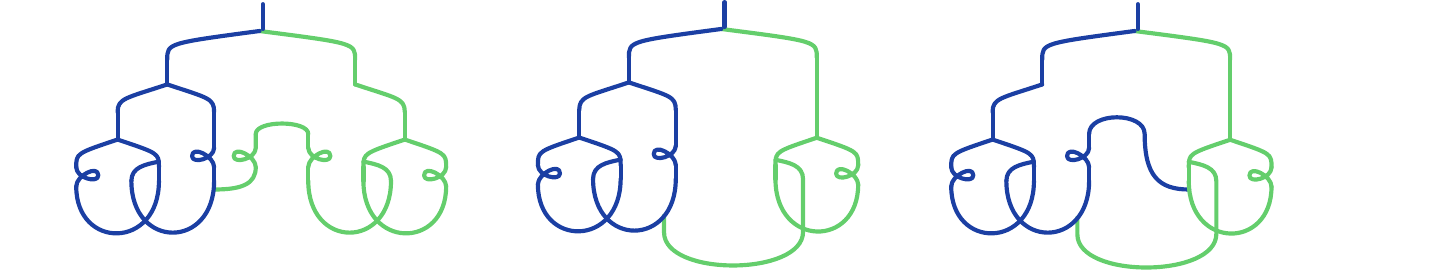}}%
    \put(-0.00068882,0.07076484){\color[rgb]{0,0,0}\makebox(0,0)[lb]{\smash{$\overset{\text{\ref{it:phi_two}}}{=}$}}}%
    \put(0.31816981,0.07076484){\color[rgb]{0,0,0}\makebox(0,0)[lb]{\smash{$\overset{\text{\ref{it:phi_one}}}{=}$}}}%
    \put(0.60804129,0.07076484){\color[rgb]{0,0,0}\makebox(0,0)[lb]{\smash{$\overset{\text{\eqref{fig:diagram_mult2}}}{=}$}}}%
  \end{picture}%
\endgroup%
	
$$
$$
\hspace{5mm}
\begingroup%
  \makeatletter%
  \providecommand\color[2][]{%
    \errmessage{(Inkscape) Color is used for the text in Inkscape, but the package 'color.sty' is not loaded}%
    \renewcommand\color[2][]{}%
  }%
  \providecommand\transparent[1]{%
    \errmessage{(Inkscape) Transparency is used (non-zero) for the text in Inkscape, but the package 'transparent.sty' is not loaded}%
    \renewcommand\transparent[1]{}%
  }%
  \providecommand\rotatebox[2]{#2}%
  \ifx\svgwidth\undefined%
    \setlength{\unitlength}{593.4453125bp}%
    \ifx\svgscale\undefined%
      \relax%
    \else%
      \setlength{\unitlength}{\unitlength * \real{\svgscale}}%
    \fi%
  \else%
    \setlength{\unitlength}{\svgwidth}%
  \fi%
  \global\let\svgwidth\undefined%
  \global\let\svgscale\undefined%
  \makeatother%
  \begin{picture}(1,0.13882929)%
    \put(0,0){\includegraphics[width=\unitlength]{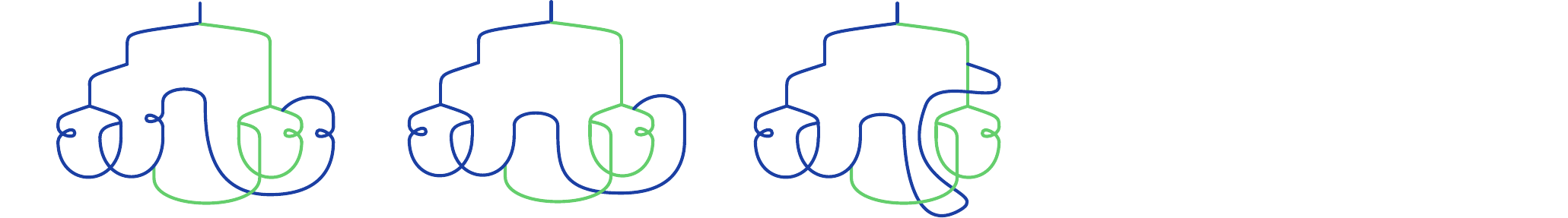}}%
    \put(-0.00048051,0.05736527){\color[rgb]{0,0,0}\makebox(0,0)[lb]{\smash{$\overset{\substack{\text{\ref{fig:axiom_mult}}\\ + \\ \text{\ref{it:phi_two}}}}{=}$}}}%
    \put(0.22194942,0.05736527){\color[rgb]{0,0,0}\makebox(0,0)[lb]{\smash{$\overset{\substack{\text{\ref{it:phi_three}}\\ + \\ \text{\ref{it:phi_one}}}}{=}$}}}%
    \put(0.44437935,0.05736527){\color[rgb]{0,0,0}\makebox(0,0)[lb]{\smash{$\overset{\substack{\text{\eqref{fig:diagram_mult2}} \\ + \\ \text{\eqref{eq:closed_below}}}}{=}$}}}%
  \end{picture}%
\endgroup%
	
$$
$$
\hspace{5mm}
\begingroup%
  \makeatletter%
  \providecommand\color[2][]{%
    \errmessage{(Inkscape) Color is used for the text in Inkscape, but the package 'color.sty' is not loaded}%
    \renewcommand\color[2][]{}%
  }%
  \providecommand\transparent[1]{%
    \errmessage{(Inkscape) Transparency is used (non-zero) for the text in Inkscape, but the package 'transparent.sty' is not loaded}%
    \renewcommand\transparent[1]{}%
  }%
  \providecommand\rotatebox[2]{#2}%
  \ifx\svgwidth\undefined%
    \setlength{\unitlength}{452.2734375bp}%
    \ifx\svgscale\undefined%
      \relax%
    \else%
      \setlength{\unitlength}{\unitlength * \real{\svgscale}}%
    \fi%
  \else%
    \setlength{\unitlength}{\svgwidth}%
  \fi%
  \global\let\svgwidth\undefined%
  \global\let\svgscale\undefined%
  \makeatother%
  \begin{picture}(1,0.18111726)%
    \put(0,0){\includegraphics[width=\unitlength]{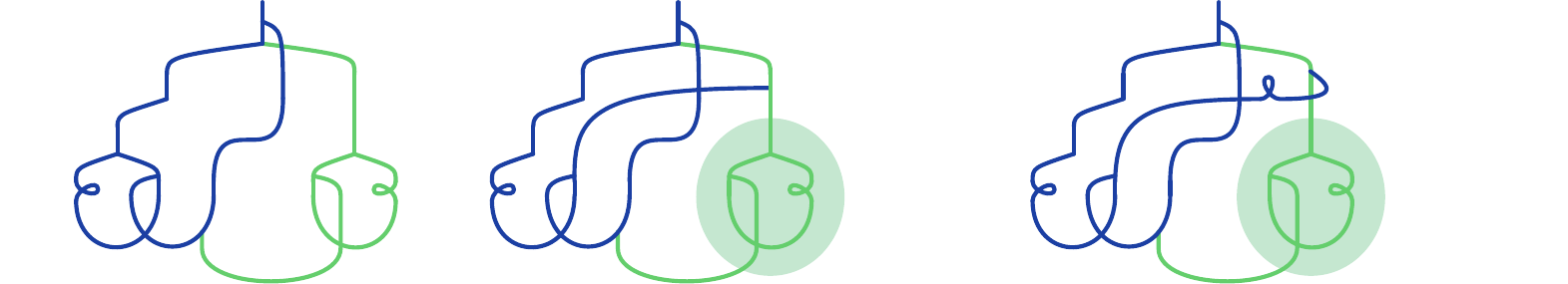}}%
    \put(-0.0006305,0.06477273){\color[rgb]{0,0,0}\makebox(0,0)[lb]{\smash{$\overset{\substack{\text{\eqref{fig:diagram_mult2}} \\ + \\ \text{\ref{fig:axiom_square}}}}{=}$}}}%
    \put(0.26469572,0.06477273){\color[rgb]{0,0,0}\makebox(0,0)[lb]{\smash{$\overset{\text{\eqref{fig:diagram_mult2}}}{=}$}}}%
    \put(0.54771035,0.06477273){\color[rgb]{0,0,0}\makebox(0,0)[lb]{\smash{$\overset{n(a) \in \overline{\mathcal{Z}}_{\lambda}(A)}{=}$}}}%
  \end{picture}%
\endgroup%
	
$$
$$
\hspace{5mm}
\begingroup%
  \makeatletter%
  \providecommand\color[2][]{%
    \errmessage{(Inkscape) Color is used for the text in Inkscape, but the package 'color.sty' is not loaded}%
    \renewcommand\color[2][]{}%
  }%
  \providecommand\transparent[1]{%
    \errmessage{(Inkscape) Transparency is used (non-zero) for the text in Inkscape, but the package 'transparent.sty' is not loaded}%
    \renewcommand\transparent[1]{}%
  }%
  \providecommand\rotatebox[2]{#2}%
  \ifx\svgwidth\undefined%
    \setlength{\unitlength}{431.1796875bp}%
    \ifx\svgscale\undefined%
      \relax%
    \else%
      \setlength{\unitlength}{\unitlength * \real{\svgscale}}%
    \fi%
  \else%
    \setlength{\unitlength}{\svgwidth}%
  \fi%
  \global\let\svgwidth\undefined%
  \global\let\svgscale\undefined%
  \makeatother%
  \begin{picture}(1,0.20853145)%
    \put(0,0){\includegraphics[width=\unitlength]{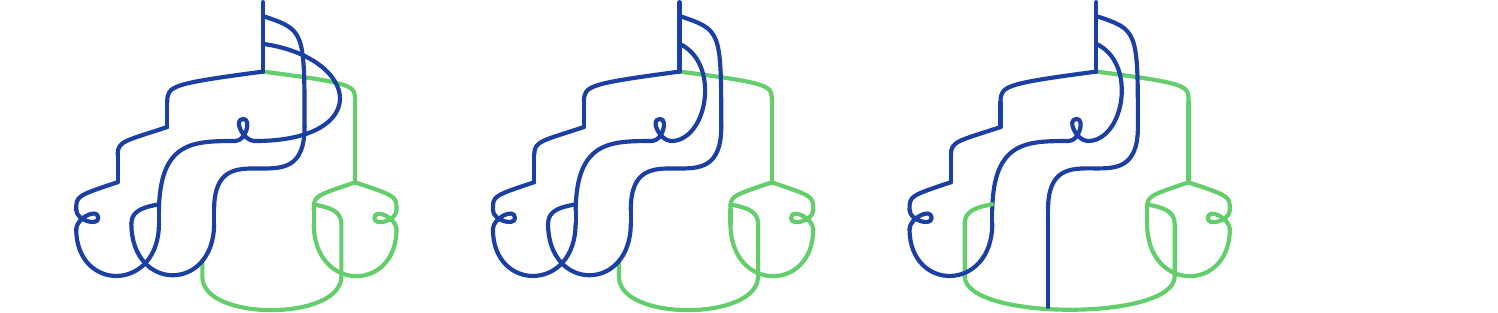}}%
    \put(-0.00066134,0.06794148){\color[rgb]{0,0,0}\makebox(0,0)[lb]{\smash{$\overset{\substack{\text{\eqref{fig:diagram_mult2}} \\ + \\ \text{\ref{it:phi_three}}}}{=}$}}}%
    \put(0.27764491,0.06794148){\color[rgb]{0,0,0}\makebox(0,0)[lb]{\smash{$\overset{\substack{\text{\ref{fig:axiom_Reid_3}} \\ + \\ \text{\ref{fig:axiom_square}}}}{=}$}}}%
    \put(0.55595115,0.06794148){\color[rgb]{0,0,0}\makebox(0,0)[lb]{\smash{$\overset{\text{\eqref{fig:diagram_mult2}}}{=}$}}}%
  \end{picture}%
\endgroup%
	
$$
$$
\begingroup%
  \makeatletter%
  \providecommand\color[2][]{%
    \errmessage{(Inkscape) Color is used for the text in Inkscape, but the package 'color.sty' is not loaded}%
    \renewcommand\color[2][]{}%
  }%
  \providecommand\transparent[1]{%
    \errmessage{(Inkscape) Transparency is used (non-zero) for the text in Inkscape, but the package 'transparent.sty' is not loaded}%
    \renewcommand\transparent[1]{}%
  }%
  \providecommand\rotatebox[2]{#2}%
  \ifx\svgwidth\undefined%
    \setlength{\unitlength}{521.3828125bp}%
    \ifx\svgscale\undefined%
      \relax%
    \else%
      \setlength{\unitlength}{\unitlength * \real{\svgscale}}%
    \fi%
  \else%
    \setlength{\unitlength}{\svgwidth}%
  \fi%
  \global\let\svgwidth\undefined%
  \global\let\svgscale\undefined%
  \makeatother%
  \begin{picture}(1,0.17225598)%
    \put(0,0){\includegraphics[width=\unitlength]{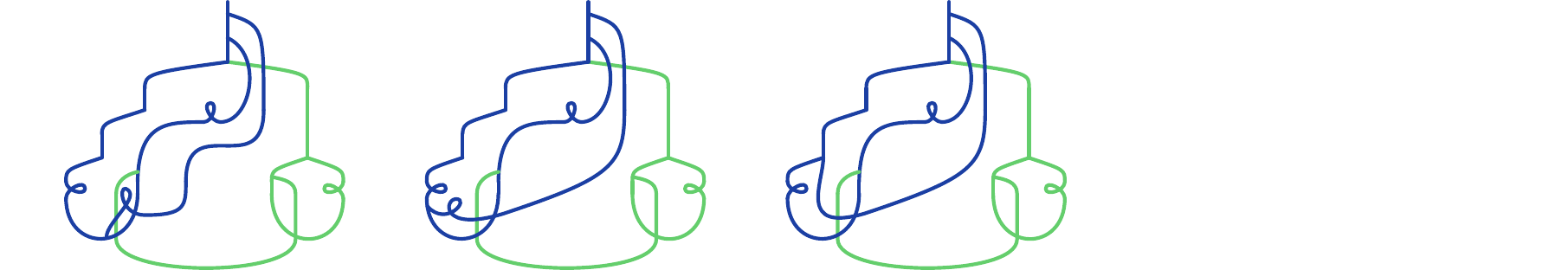}}%
    \put(0.45976745,0.05598913){\color[rgb]{0,0,0}\makebox(0,0)[lb]{\smash{$\overset{\substack{\text{\ref{it:phi_three}} \\ + \\ \text{\eqref{eq:prop_sep}}}}{=}$}}}%
    \put(0.22961026,0.05598913){\color[rgb]{0,0,0}\makebox(0,0)[lb]{\smash{$\overset{\substack{\text{\ref{it:phi_two}} \\ + \\ \text{\ref{it:phi_one}}}}{=}$}}}%
    \put(-0.00054692,0.05598913){\color[rgb]{0,0,0}\makebox(0,0)[lb]{\smash{$\overset{\substack{\text{\ref{fig:axiom_mult}} \\ + \\ \text{\eqref{fig:diagram_mult2}}}}{=}$}}}%
  \end{picture}%
\endgroup%
	
\hspace{20mm}
$$
$$
\begingroup%
  \makeatletter%
  \providecommand\color[2][]{%
    \errmessage{(Inkscape) Color is used for the text in Inkscape, but the package 'color.sty' is not loaded}%
    \renewcommand\color[2][]{}%
  }%
  \providecommand\transparent[1]{%
    \errmessage{(Inkscape) Transparency is used (non-zero) for the text in Inkscape, but the package 'transparent.sty' is not loaded}%
    \renewcommand\transparent[1]{}%
  }%
  \providecommand\rotatebox[2]{#2}%
  \ifx\svgwidth\undefined%
    \setlength{\unitlength}{430.2890625bp}%
    \ifx\svgscale\undefined%
      \relax%
    \else%
      \setlength{\unitlength}{\unitlength * \real{\svgscale}}%
    \fi%
  \else%
    \setlength{\unitlength}{\svgwidth}%
  \fi%
  \global\let\svgwidth\undefined%
  \global\let\svgscale\undefined%
  \makeatother%
  \begin{picture}(1,0.20872319)%
    \put(0,0){\includegraphics[width=\unitlength]{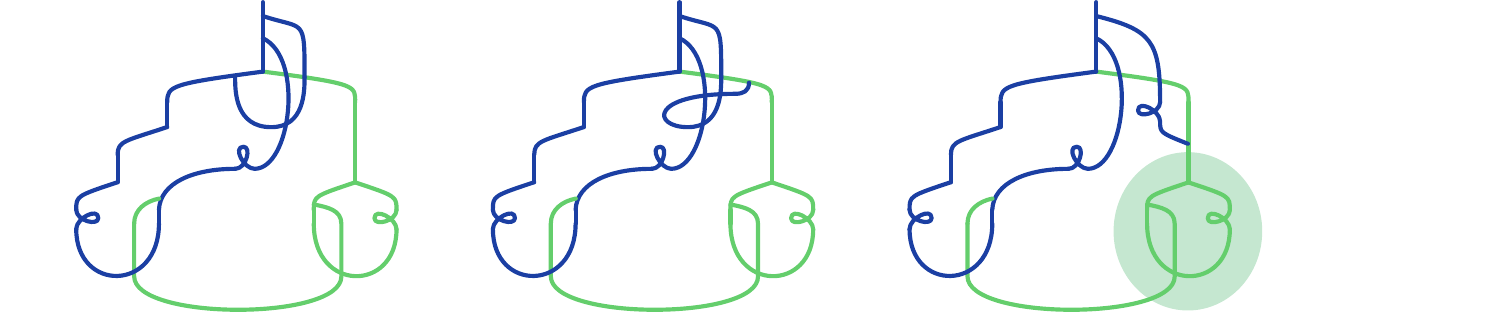}}%
    \put(-0.00066271,0.06784223){\color[rgb]{0,0,0}\makebox(0,0)[lb]{\smash{$\overset{\substack{\text{\ref{fig:axiom_mult}}\\ + \\ \text{\ref{it:phi_three}}}}{=}$}}}%
    \put(0.27821958,0.06784223){\color[rgb]{0,0,0}\makebox(0,0)[lb]{\smash{$\overset{\substack{\text{\eqref{fig:diagram_mult2}}\\ + \\ \text{\ref{fig:axiom_mult}}}}{=}$}}}%
    \put(0.55710188,0.06784223){\color[rgb]{0,0,0}\makebox(0,0)[lb]{\smash{$\overset{\text{\ref{fig:axiom_square}}}{=}$}}}%
  \end{picture}%
\endgroup%
	
\hspace{20mm}
$$
$$
\begingroup%
  \makeatletter%
  \providecommand\color[2][]{%
    \errmessage{(Inkscape) Color is used for the text in Inkscape, but the package 'color.sty' is not loaded}%
    \renewcommand\color[2][]{}%
  }%
  \providecommand\transparent[1]{%
    \errmessage{(Inkscape) Transparency is used (non-zero) for the text in Inkscape, but the package 'transparent.sty' is not loaded}%
    \renewcommand\transparent[1]{}%
  }%
  \providecommand\rotatebox[2]{#2}%
  \ifx\svgwidth\undefined%
    \setlength{\unitlength}{553.8203125bp}%
    \ifx\svgscale\undefined%
      \relax%
    \else%
      \setlength{\unitlength}{\unitlength * \real{\svgscale}}%
    \fi%
  \else%
    \setlength{\unitlength}{\svgwidth}%
  \fi%
  \global\let\svgwidth\undefined%
  \global\let\svgscale\undefined%
  \makeatother%
  \begin{picture}(1,0.16216687)%
    \put(0,0){\includegraphics[width=\unitlength]{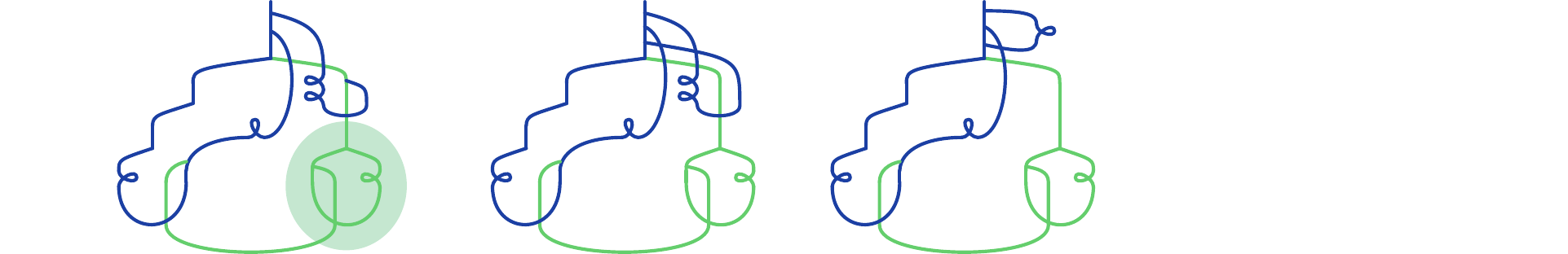}}%
    \put(-0.00051489,0.05270982){\color[rgb]{0,0,0}\makebox(0,0)[lb]{\smash{$\overset{n(a) \in \overline{\mathcal{Z}}_{\lambda}(A)}{=}$}}}%
    \put(0.2667198,0.05270982){\color[rgb]{0,0,0}\makebox(0,0)[lb]{\smash{$\overset{\substack{\text{\ref{fig:axiom_mult}}\\ + \\ \text{\eqref{fig:diagram_mult2}}}}{=}$}}}%
    \put(0.49061914,0.05270982){\color[rgb]{0,0,0}\makebox(0,0)[lb]{\smash{$\overset{\substack{\text{\ref{it:phi_one}}\\ + \\ \text{\ref{fig:axiom_square}}}}{=}$}}}%
  \end{picture}%
\endgroup%
	
\hspace{20mm}
$$
$$
\begingroup%
  \makeatletter%
  \providecommand\color[2][]{%
    \errmessage{(Inkscape) Color is used for the text in Inkscape, but the package 'color.sty' is not loaded}%
    \renewcommand\color[2][]{}%
  }%
  \providecommand\transparent[1]{%
    \errmessage{(Inkscape) Transparency is used (non-zero) for the text in Inkscape, but the package 'transparent.sty' is not loaded}%
    \renewcommand\transparent[1]{}%
  }%
  \providecommand\rotatebox[2]{#2}%
  \ifx\svgwidth\undefined%
    \setlength{\unitlength}{607.0078125bp}%
    \ifx\svgscale\undefined%
      \relax%
    \else%
      \setlength{\unitlength}{\unitlength * \real{\svgscale}}%
    \fi%
  \else%
    \setlength{\unitlength}{\svgwidth}%
  \fi%
  \global\let\svgwidth\undefined%
  \global\let\svgscale\undefined%
  \makeatother%
  \begin{picture}(1,0.21404896)%
    \put(0,0){\includegraphics[width=\unitlength]{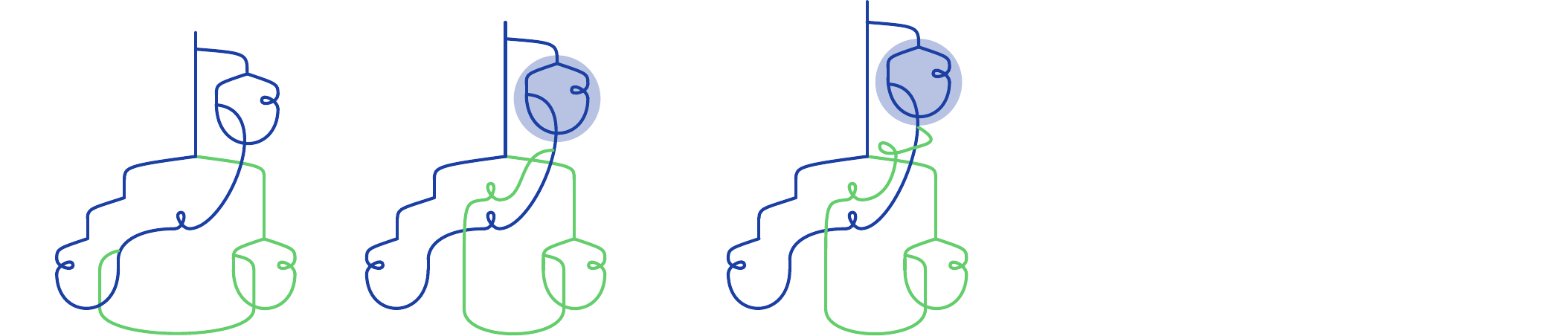}}%
    \put(-0.00046977,0.04828579){\color[rgb]{0,0,0}\makebox(0,0)[lb]{\smash{$\overset{\substack{\text{\ref{fig:axiom_left_right}}\\ + \\ \text{\eqref{fig:diagram_mult2}}}}{=}$}}}%
    \put(0.19722126,0.04828579){\color[rgb]{0,0,0}\makebox(0,0)[lb]{\smash{$\overset{\substack{\text{\ref{it:phi_two}}\\ +\\ \text{\ref{fig:axiom_mult}}}}{=}$}}}%
    \put(0.39491229,0.04828579){\color[rgb]{0,0,0}\makebox(0,0)[lb]{\smash{$\overset{\substack{ n=\sigma \circ n \circ \sigma \\ + \\ n(a) \in \overline{\mathcal{Z}}_{\lambda}(A)}}{=}$}}}%
  \end{picture}%
\endgroup%
	
\hspace{20mm}
$$
$$
\begingroup%
  \makeatletter%
  \providecommand\color[2][]{%
    \errmessage{(Inkscape) Color is used for the text in Inkscape, but the package 'color.sty' is not loaded}%
    \renewcommand\color[2][]{}%
  }%
  \providecommand\transparent[1]{%
    \errmessage{(Inkscape) Transparency is used (non-zero) for the text in Inkscape, but the package 'transparent.sty' is not loaded}%
    \renewcommand\transparent[1]{}%
  }%
  \providecommand\rotatebox[2]{#2}%
  \ifx\svgwidth\undefined%
    \setlength{\unitlength}{517.3515625bp}%
    \ifx\svgscale\undefined%
      \relax%
    \else%
      \setlength{\unitlength}{\unitlength * \real{\svgscale}}%
    \fi%
  \else%
    \setlength{\unitlength}{\svgwidth}%
  \fi%
  \global\let\svgwidth\undefined%
  \global\let\svgscale\undefined%
  \makeatother%
  \begin{picture}(1,0.25114332)%
    \put(0,0){\includegraphics[width=\unitlength]{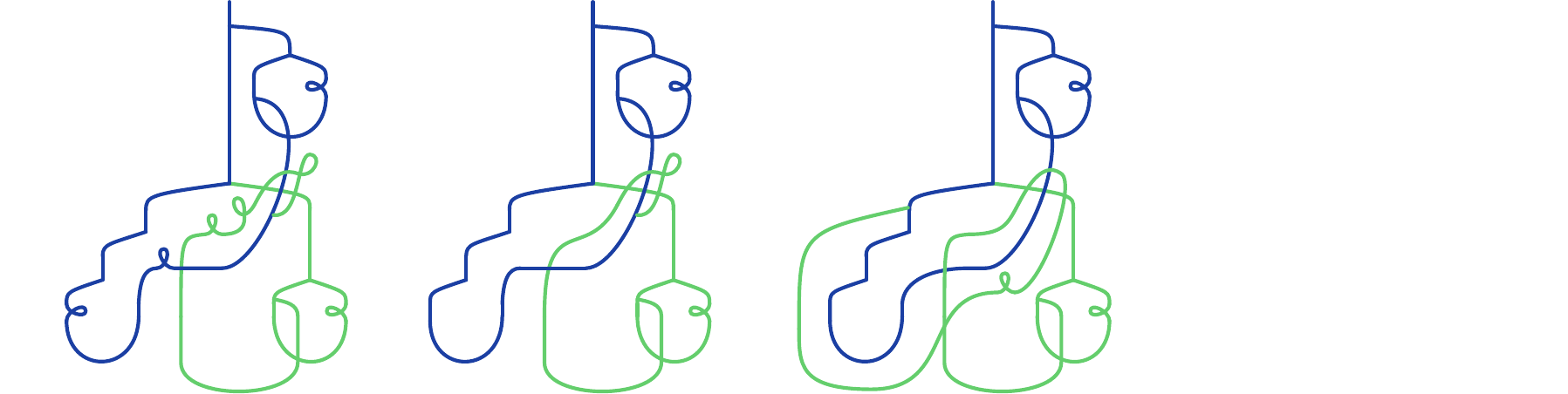}}%
    \put(-0.00055118,0.05665364){\color[rgb]{0,0,0}\makebox(0,0)[lb]{\smash{$\overset{\substack{\text{\ref{it:phi_three}} \\ + \\ \text{\ref{fig:axiom_mult}}}}{=}$}}}%
    \put(0.23139941,0.05665364){\color[rgb]{0,0,0}\makebox(0,0)[lb]{\smash{$\overset{\substack{\text{\ref{it:phi_four}}\\ + \\ \text{\ref{it:phi_one}}}}{=}$}}}%
    \put(0.46334999,0.05665364){\color[rgb]{0,0,0}\makebox(0,0)[lb]{\smash{$\overset{\substack{\text{\ref{fig:axiom_mult}}\\ + \\ \text{\ref{it:phi_three}}}}{=}$}}}%
  \end{picture}%
\endgroup%
	
\hspace{20mm}
$$
$$
\begingroup%
  \makeatletter%
  \providecommand\color[2][]{%
    \errmessage{(Inkscape) Color is used for the text in Inkscape, but the package 'color.sty' is not loaded}%
    \renewcommand\color[2][]{}%
  }%
  \providecommand\transparent[1]{%
    \errmessage{(Inkscape) Transparency is used (non-zero) for the text in Inkscape, but the package 'transparent.sty' is not loaded}%
    \renewcommand\transparent[1]{}%
  }%
  \providecommand\rotatebox[2]{#2}%
  \ifx\svgwidth\undefined%
    \setlength{\unitlength}{576.1015625bp}%
    \ifx\svgscale\undefined%
      \relax%
    \else%
      \setlength{\unitlength}{\unitlength * \real{\svgscale}}%
    \fi%
  \else%
    \setlength{\unitlength}{\svgwidth}%
  \fi%
  \global\let\svgwidth\undefined%
  \global\let\svgscale\undefined%
  \makeatother%
  \begin{picture}(1,0.22553208)%
    \put(0,0){\includegraphics[width=\unitlength]{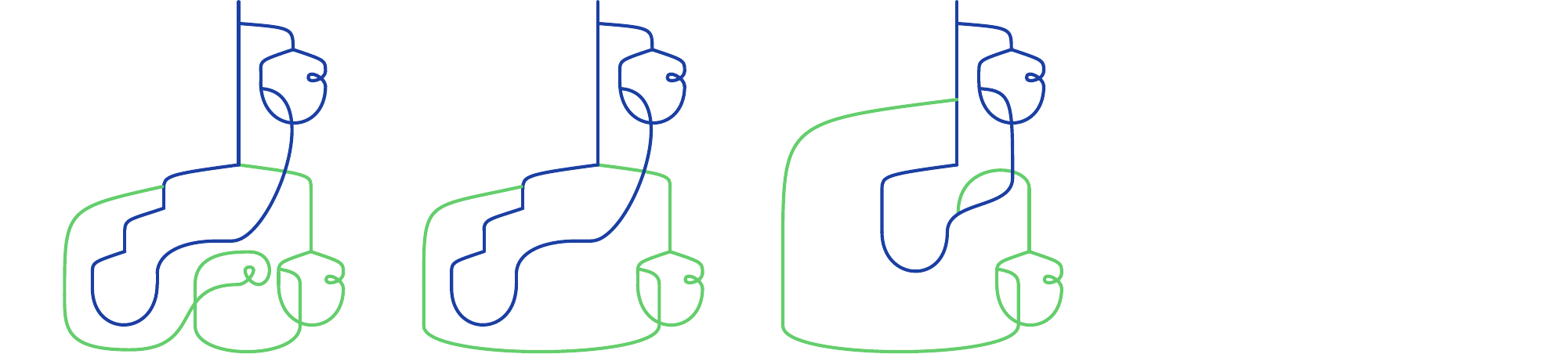}}%
    \put(-0.00049498,0.05087617){\color[rgb]{0,0,0}\makebox(0,0)[lb]{\smash{$\overset{\substack{\text{\ref{fig:axiom_Reid_3}}\\ + \\ \text{\ref{fig:axiom_square}}}}{=}$}}}%
    \put(0.22863129,0.05087617){\color[rgb]{0,0,0}\makebox(0,0)[lb]{\smash{$\overset{\substack{\text{\ref{it:phi_three}}\\ + \\ \text{\ref{it:phi_one}}}}{=}$}}}%
    \put(0.45775756,0.05087617){\color[rgb]{0,0,0}\makebox(0,0)[lb]{\smash{$\overset{\substack{\text{\eqref{fig:diagram_mult2}}\\ + \\ \text{\eqref{eq:prop_sep}}}}{=}$}}}%
  \end{picture}%
\endgroup%
	
\hspace{20mm}
$$
$$
\begingroup%
  \makeatletter%
  \providecommand\color[2][]{%
    \errmessage{(Inkscape) Color is used for the text in Inkscape, but the package 'color.sty' is not loaded}%
    \renewcommand\color[2][]{}%
  }%
  \providecommand\transparent[1]{%
    \errmessage{(Inkscape) Transparency is used (non-zero) for the text in Inkscape, but the package 'transparent.sty' is not loaded}%
    \renewcommand\transparent[1]{}%
  }%
  \providecommand\rotatebox[2]{#2}%
  \ifx\svgwidth\undefined%
    \setlength{\unitlength}{573.8203125bp}%
    \ifx\svgscale\undefined%
      \relax%
    \else%
      \setlength{\unitlength}{\unitlength * \real{\svgscale}}%
    \fi%
  \else%
    \setlength{\unitlength}{\svgwidth}%
  \fi%
  \global\let\svgwidth\undefined%
  \global\let\svgscale\undefined%
  \makeatother%
  \begin{picture}(1,0.22784865)%
    \put(0,0){\includegraphics[width=\unitlength]{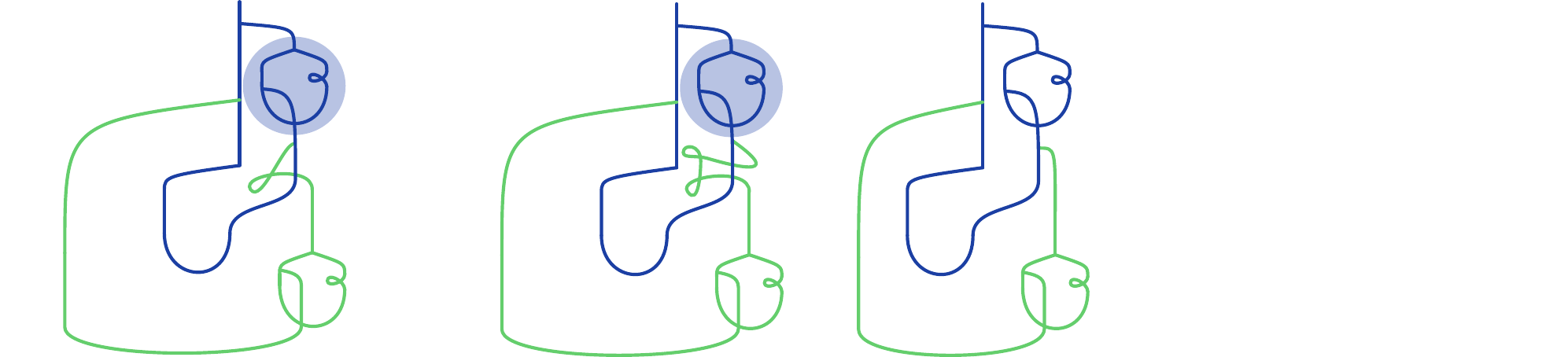}}%
    \put(0.22954023,0.05110423){\color[rgb]{0,0,0}\makebox(0,0)[lb]{\smash{$\overset{\substack{n= \sigma \circ n \circ \sigma\\ + \\ n(a) \in \overline{\mathcal{Z}}_{\lambda}(A)}}{=}$}}}%
    \put(-0.00049694,0.05110423){\color[rgb]{0,0,0}\makebox(0,0)[lb]{\smash{$\overset{\text{\ref{fig:axiom_mult}}}{=}$}}}%
    \put(0.50837316,0.05110423){\color[rgb]{0,0,0}\makebox(0,0)[lb]{\smash{$\overset{\substack{\text{\ref{it:phi_one}}\\ + \\ \text{\ref{fig:axiom_square}}}}{=}$}}}%
  \end{picture}%
\endgroup%
	
\hspace{20mm}
$$
$$
\begingroup%
  \makeatletter%
  \providecommand\color[2][]{%
    \errmessage{(Inkscape) Color is used for the text in Inkscape, but the package 'color.sty' is not loaded}%
    \renewcommand\color[2][]{}%
  }%
  \providecommand\transparent[1]{%
    \errmessage{(Inkscape) Transparency is used (non-zero) for the text in Inkscape, but the package 'transparent.sty' is not loaded}%
    \renewcommand\transparent[1]{}%
  }%
  \providecommand\rotatebox[2]{#2}%
  \ifx\svgwidth\undefined%
    \setlength{\unitlength}{434.4609375bp}%
    \ifx\svgscale\undefined%
      \relax%
    \else%
      \setlength{\unitlength}{\unitlength * \real{\svgscale}}%
    \fi%
  \else%
    \setlength{\unitlength}{\svgwidth}%
  \fi%
  \global\let\svgwidth\undefined%
  \global\let\svgscale\undefined%
  \makeatother%
  \begin{picture}(1,0.16562412)%
    \put(0,0){\includegraphics[width=\unitlength]{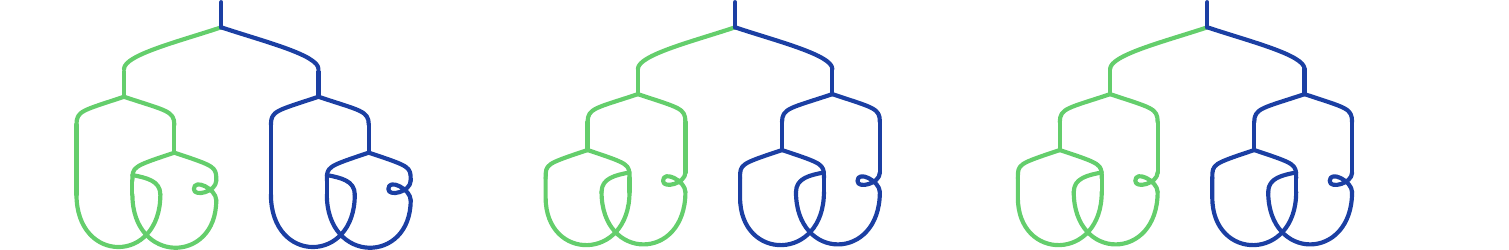}}%
    \put(-0.00065634,0.0462452){\color[rgb]{0,0,0}\makebox(0,0)[lb]{\smash{$\overset{\text{\eqref{fig:diagram_mult2}}}{=} $}}}%
    \put(0.28475481,0.0462452){\color[rgb]{0,0,0}\makebox(0,0)[lb]{\smash{$\overset{\text{\eqref{fig:diagram_mult2}}}{=}$}}}%
    \put(0.59778641,0.0462452){\color[rgb]{0,0,0}\makebox(0,0)[lb]{\smash{$\overset{p = p \circ \varphi}{=}$}}}%
    \put(0.92002482,0.0462452){\color[rgb]{0,0,0}\makebox(0,0)[lb]{\smash{$=\eta^2$}}}%
  \end{picture}%
\endgroup%
	
\hspace{20mm}
$$
\end{proof}
The partition function for a surface with spin structure can now be presented. Each handle contributes the element $\gls{chi}$ or $\gls{eta}$ depending on the spin structure; the partition function is thus
\begin{align}\label{eq:Z_general}
\gls{Z}(\gls{Sigma_g},\gls{s})=\gls{R}\,\gls{eps}(\gls{eta}^{g-l}\gls{chi}^{l}).
\end{align}
However, the properties described in proposition~\ref{lem:properties_eta_chi} mean all that matters is $l\,\mo 2$ which reflects the fact the partition function is a homeomorphism invariant. The only homeomorphism invariant of a spin structure is the Arf invariant of the quadratic form, in this case $\text{arf}(q)=l\,\mo 2\in\Zb_2$. It is most convenient to express this invariant of the spin structure as the parity $P(s)=(-1)^{\text{arf}(q)}$.   

 These results are collected together to give the main result for this section.
\begin{theorem} \label{theo:main}
Let $(\gls{Ct},\gls{Bb},R,\gls{lambda})$ be a spin state sum model. Then,
\begin{align} \label{eq:spin_partition}
Z(\Sigma_g,s)=\left\lbrace
\begin{array}{lcl}
R\,\varepsilon(\eta^g)&& (s \text{ is even})\\
R\,\varepsilon(\chi\eta^{g-1})&& (s \text{ is odd}).
\end{array}
\right. 
\end{align}
\end{theorem}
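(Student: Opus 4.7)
The plan is to combine the algebraic identity $\eta^2=\chi^2$ from proposition~\ref{lem:properties_eta_chi} with the standard diagrammatic form for $\Sigma_g$ and the identification of the parity $P(s)$ with the Arf invariant of the induced quadratic form. Starting from an immersion $i\colon\Sigma_g\looparrowright\Rb^3$, by theorem~\ref{spin-embedding} the partition function depends on $i$ only through the induced spin structure. Using the standard diagram of figure~\ref{fig:flower_diagram}, one can represent a choice of spin structure by placing a curl (the map $\gls{phi}$) on $l$ of the $g$ loops and the identity on the remaining $g-l$; the evaluation of this diagram factors as $\gls{R}\,\gls{eps}(\eta^{g-l}\chi^l)$, exactly equation~\eqref{eq:Z_general}.

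First, I would invoke proposition~\ref{lem:properties_eta_chi}: since $\eta,\chi\in\gls{ZA}$, powers and products of $\eta$ and $\chi$ commute freely inside $\gls{eps}$, and since $\eta^2=\chi^2$ one has $\chi^{2k}=\eta^{2k}$ for all $k\ge 0$. Writing $l=2k+r$ with $r\in\{0,1\}$, this gives
\begin{align}
\eta^{g-l}\chi^{l}=\eta^{g-l}\chi^{2k}\chi^{r}=\eta^{g-l+2k}\chi^{r}=\eta^{g-r}\chi^{r},
\end{align}
so the partition function depends on $l$ only through $l\,\mo 2$. Hence $Z(\gls{Sigma_g},s)=\gls{R}\,\gls{eps}(\gls{eta}^g)$ if $l$ is even and $Z(\gls{Sigma_g},s)=R\,\varepsilon(\eta^{g-1}\chi)$ if $l$ is odd.

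Second, I would match $l\,\mo 2$ with the parity $P(s)$. Recall from \S\ref{sec:topo} that the spin structure induced by the immersion corresponds to the quadratic form $q_s(c)=$ (Whitney degree of the projection of $c$) $\mo 2$, and that on the standard generating cycles $c_1,\dots,c_{2g}$ a loop carrying $k$ curls projects with Whitney degree $k$. Thus, after collecting curls into the appropriate pair of cycles forming each handle, one obtains
\begin{align}
\operatorname{arf}(q)=\sum_{i=1}^{g}q_s(c_{2i-1})q_s(c_{2i})\equiv l \mo 2,
\end{align}
so that $P(s)=(-1)^{\operatorname{arf}(q)}=(-1)^{l}$. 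The two cases $P(s)=+1$ (even) and $P(s)=-1$ (odd) then yield exactly \eqref{eq:spin_partition}.

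The only genuinely non-trivial ingredient is the identity $\eta^2=\chi^2$, which is already established in proposition~\ref{lem:properties_eta_chi}; the remaining work is bookkeeping. The main obstacle I anticipate is verifying carefully that an arbitrary configuration of curls distributed across the $2g$ handle loops in figure~\ref{fig:flower_diagram} produces a diagram whose evaluation agrees with $\gls{R}\,\gls{eps}(\eta^{g-l}\chi^{l})$ (rather than some more delicate product intertwined by $\gls{lambda}$), but this reduces quickly: centrality of $\eta,\chi$ from proposition~\ref{lem:properties_eta_chi}, together with lemma~\ref{lem:closed_below} applied to the handle regions, allows one to pull the factors apart so that each handle independently contributes either $\eta$ or $\chi$ to a single commutative product inside $\varepsilon$.
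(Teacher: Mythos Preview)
Your proposal is correct and follows essentially the same route as the paper: reduce to the standard diagram via theorem~\ref{spin-embedding}, factor the evaluation as $R\,\varepsilon(\eta^{g-l}\chi^{l})$, collapse using $\eta^2=\chi^2$ from proposition~\ref{lem:properties_eta_chi}, and identify $l\bmod 2$ with $\operatorname{arf}(q_s)$.

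One imprecision worth tightening: the standard diagram has $2g$ loops, two per handle, not $g$; a curl may sit on each independently. The integer $l$ counts \emph{handles} for which both loops carry a curl, and this only makes sense after invoking the identity $\eta_1=\eta_2=\eta_3$ (proved just before proposition~\ref{lem:properties_eta_chi}), which shows that a handle with a curl on zero or one of its two loops still yields $\eta$. Your later formula $\operatorname{arf}(q)=\sum_i q_s(c_{2i-1})q_s(c_{2i})$ has this right, but the earlier sentence ``placing a curl on $l$ of the $g$ loops'' does not. The paper, incidentally, does not compute $\operatorname{arf}$ via the symplectic-basis formula as you do; it argues more abstractly that the partition function is a diffeomorphism invariant and the Arf invariant is the \emph{only} diffeomorphism invariant of a spin structure, so $l\bmod 2$ must coincide with it. Your computation is a perfectly good (and slightly more concrete) substitute.
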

Note that $Z(\Sigma_0)$ is independent of the choice of $\lambda$, as is to be expected. According to the classification of planar state sum models given in theorem~\ref{theo:diagram_semi_simple}, $\eta \in \gls{ZA}$ implies that $\eta=\oplus_i \,\eta_i\,1_i$ for some constants $\eta_i \in \Rb, \Cb_{\Rb}$ or $\Cb$. The expression for $\chi$ will therefore be $\chi=\oplus_i \,\text{sgn}_i\,\eta_i\,1_i$, where $\text{sgn}_i=\pm 1$, since $\chi$ is also a central element and $\eta^2=\chi^2$. In particular this means simple matrix algebras can at most attribute different signs to spin structures of different parity.

An algebraic condition that guarantees topologically-inequivalent spin structures cannot be distinguished is $\eta=\chi$. It is now shown that the canonical crossing map gives rise to spin state sum models that fall into this class.
\begin{corollary} \label{cor:canonical_crossing}
Let $\lambda \colon \gls{A} \otimes A \to A \otimes A$ be such that $a \otimes b \mapsto b \otimes a$. Then $\chi=\eta$, implying the partition function does not depend on the spin structure.
\end{corollary}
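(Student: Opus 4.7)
The plan is to show that with the canonical swap $\lambda(a\otimes b)=b\otimes a$ the associated curl map satisfies $\varphi=\mathrm{id}$; once this is established, $\chi=\eta$ follows because the two distinguished elements differ only by the insertion of a single curl, and theorem~\ref{theo:main} delivers the claim.

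First I would verify that the swap satisfies the spin-model axioms \ref{fig:axiom_form}--\ref{fig:axiom_left_right}. Axioms \ref{fig:axiom_form} and \ref{fig:axiom_mult} hold because interchanging tensor factors commutes with both the bilinear form $B$ and the multiplication $C$ after a relabelling of indices. Axiom \ref{fig:axiom_square} is the tautology $\lambda^2=\mathrm{id}$, and \ref{fig:axiom_Reid_3} is the Yang--Baxter identity, which holds trivially for a symmetric swap. For \ref{fig:axiom_left_right}, both the left- and right-curl diagrams reduce to the same expression once crossings are replaced by swaps, so the axiom is automatically satisfied and its common value defines $\varphi$.

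Next I would evaluate $\varphi$ from the curl diagram. Expressed algebraically, the curl is the composition $(\mathrm{id}\otimes B^{-1})\circ(\lambda\otimes\mathrm{id})\circ(\mathrm{id}\otimes B)$ applied to the input. Substituting the swap for $\lambda$ reroutes the input strand so that it simply passes the $B$-strand and is then contracted with $B^{-1}$; the snake identity \eqref{eq:snake} collapses the remaining $B\,B^{-1}$ pair to the identity. Concretely, $\varphi(e_a)=B^{ij}\,\lambda(e_a\otimes e_i)\triangleright(\mathrm{id}\otimes B_{\cdot\,j})=B^{ij}\,e_i\,B_{aj}=\delta_a^{\,i}\,e_i=e_a$, so $\varphi=\mathrm{id}$ on $A$.

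Finally, the diagram defining $\chi$ is obtained from the one defining $\eta$ by inserting a single curl on the handle loop, so $\varphi=\mathrm{id}$ forces $\chi=\eta$. Theorem~\ref{theo:main} then gives
\[
Z(\Sigma_g,s)=R\,\varepsilon(\eta^g)
\]
for every spin structure $s$, independently of its parity $P(s)$. The only genuine step is the vanishing of the curl, which is forced by the unbraided nature of the swap together with the snake identity; all remaining pieces are a direct rewriting of theorem~\ref{theo:main}.
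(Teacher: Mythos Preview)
Your argument has a genuine gap at the curl computation. You claim $\varphi(e_a)=B^{ij}B_{aj}\,e_i=\delta_a^{\,i}e_i=e_a$, but the step $B^{ij}B_{aj}=\delta_a^{\,i}$ is \emph{not} the snake identity \eqref{eq:snake}; that identity reads $B_{ac}B^{cb}=\delta_a^{\,b}$, with the contraction on adjacent indices. The combination $B^{ij}B_{aj}$ is instead the matrix $B(B^{-1})^{\tr}$, which equals the identity only when $B$ is symmetric. In the planar/spin framework of this chapter $B$ is \emph{not} assumed symmetric (theorem~\ref{theo:diagram}), and in fact the paper identifies the curl for the canonical swap with the Nakayama automorphism $\sigma$ (cf.\ \eqref{eq:graded_group_Naka} with $\tilde\lambda\equiv 1$), so $\varphi=\sigma\neq\iden$ in general. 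Your verification of axiom~\ref{fig:axiom_left_right} is similarly too quick: the left and right curls are $\sigma$ and $\sigma^{-1}$, and their equality is the non-trivial requirement $\sigma^2=\iden$.

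The paper's route is therefore genuinely different from the ``$\varphi=\iden$'' shortcut you attempt. Instead of showing the curl is trivial, it shows that the curl acts trivially \emph{on the image of} $n$: using that $\sigma$ is inner, $\sigma(a)=x\cdot a\cdot x^{-1}$, and that $n(a)\in\overline{\mathcal Z}_\lambda(A)$ satisfies $n(a)\cdot b=\sigma(b)\cdot n(a)$, one computes $\varphi\circ n(a)=x\cdot n(a)\cdot x^{-1}=x\,\sigma(x^{-1})\cdot n(a)=n(a)$. Since $\eta$ and $\chi$ differ precisely by replacing $n$ with $\varphi\circ n$ in their diagrams, this yields $\eta=\chi$. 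Your proof would be correct only in the special case of a symmetric Frobenius form (the FHK setting), where $\sigma=\iden$ and the shortcut is valid.
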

\begin{proof}
For a crossing of the form above it is easy to conclude $\gls{phi}=\gls{sigma}$ where $\sigma$ represents the Nakayama automorphism associated with the Frobenius form $\varepsilon$ as in definition~\ref{Nakayama}. The set $\overline{\mathcal{Z}}_{\lambda}(A)$ coincides in this case with the set of elements $a \in A$ satisfying $a\cdot b=\sigma(b) \cdot a$ for all $b \in A$. Recall that if an algebra $A$ satisfies the conditions of theorem~\ref{theo:diagram} then $\sigma$ is an inner automorphism: $\sigma(a)=\gls{x}\cdot a \cdot x^{-1}$. Then it is possible to conclude $\varphi \circ n=n$:
\begin{align} 
\gls{phi} \circ n(a)=\gls{sigma} \circ n (a)=\gls{x} \cdot n(a) \cdot x^{-1}=x \sigma(x^{-1}) \cdot n(a)=n(a).
\end{align}
The diagrammatic form of $\gls{eta}$ and $\gls{chi}$ implies that $\eta=\chi$ if the maps $n$ and $\varphi \circ n$ coincide. Then theorem \ref{theo:main} implies that the partition function does not distinguish spin parity. 
\end{proof}

We are now ready to start discussing examples. Below we finally understand how FHK models fit within the more general framework.  

\begin{example}
An FHK state sum model as defined in \S\ref{sec:lattice_tft} is a curl-free state sum model where the choice of crossing is canonical. In other words, the map $\gls{lambda} \colon \gls{A} \otimes A \to A \otimes A$ takes $a \otimes b \mapsto b \otimes a$. 
\end{example}

We also introduce the most general form of an invariant constructed from a planar model and the canonical crossing. Note that for such a spin model the fact $\varphi^2=\iden$ is equivalent to having the Nakayama automorphism satisfy $\sigma^2=\iden$.

\begin{example} \label{ex:semi_simple_canonical}
Let $A$ be a complex algebra in the conditions of theorem~\ref{theo:diagram_semi_simple}. Then a spin model determined by $A$ and the canonical crossing generates the partition function
\begin{align}
\label{eq:Z_canonical_complex}
\gls{Z}(\gls{Sigma_g})= \gls{R}^{1-g}\sum_{i=1}^N \Tr(x_i)^{1-g}, \hspace{5mm} x_i^2=R\Tr(x_i)1_i.
\end{align}
To see this let us first recall the bilinear form $B$ can be written as in equation \eqref{B_form_complex}. The associated expression for $\eta$ will then be
\begin{align}
\eta = \sum_i \sum_{lmnp} e^i_{lm}x_i^{-1}e^i_{np}x^{-1}_ie^i_{ml}e^i_{pn}.
\end{align}
Using the identities $\sum_{lm}e^{i}_{lm}y_ie^{i}_{ml}=\Tr_i(y_i)1_i$ and $\sum_{np}\Tr_i(y_ie^i_{np})e^i_{pn}=y_i$ valid for all $y \in A_i$ we can conclude the expression above simplifies to $\eta=\sum_i x_i^{-2}$. Since $\sigma^2=\iden$ and $R\Tr(x_i^{-1})=1_i$, the $x_i$ must satisfy $x_i^2=R\Tr(x_i)1$. Therefore, $\eta=\frac{1}{R}\oplus_i \frac{1_i}{\Tr(x_i)}$. Expression \eqref{eq:spin_partition} for the partition function delivers the claimed result. 

If we chose $\gls{A}$ to be a real algebra, the invariant would read instead
\begin{align}
\label{eq:Z_canonical_real}
\gls{Z}(\gls{Sigma_g})= \gls{R}^{1-g}\sum_{i=1}^N f(i,g)\Real\Tr(x_i)^{1-g}, \hspace{5mm} x_i^2=|D_i|R\Real\Tr(x_i)1_i
\end{align}
where $f(i,g)=1$ if $D_i=\Rb$, $\Hb_{\Rb}$ and $f(i,g)=2^g$ if $D_i=\Cb_{\Rb}$.

To see this we use the form of $B$ given by expression \eqref{eq:B_form} to determine $\eta$:
\begin{align}
\gls{eta} = \bigoplus_i \eta_i = \bigoplus_{i} \sum_{w_i t_i} \sum_{lmnp} w_ie^i_{lm}x_i^{-1}t_ie^i_{np}x^{-1}_ie^i_{ml}e^i_{pn}\overline{w_it_i}.
\end{align}
It is easier to treat each $\eta_i$ separately. If $D_i = \Rb$, $\Cb_{\Rb}$ the $w_i,t_i$ commute with all the elements of the algebra and we conclude $\eta_i = |D_i|^2 x_i^{-2}$ following the techniques used in the complex case.

If $D_i=\Hb_{\Rb}$, however, a little more work is needed -- the index $i$ is suppressed on algebra elements, for simplicity. We can use the identity $\sum_{lm}e_{lm}y_ie_ml=\Tr_i(y)1_i$ to learn 
\begin{align}
\eta_i = \sum_{w t} \sum_{np} w \Tr_i\left(x^{-1}t e_{np}x^{-1}\right) e_{pn}\overline{wt}.
\end{align}
By noticing $\sum_{w} wy_i\overline{w}=|D_i|\Real(y_i)$ we have 
\begin{align}
\eta_i = \sum_{t} |D_i| \sum_{np}\Real \left(\Tr_i\left( x_i^{-1}te_{np}x_i^{-1} \right)e_{pn} \right) \overline{t}.
\end{align}
If we rewrite $x_i^{-1}=\sum_{s} sx_i^{s}$, we can then replace $\sum_{np}\Tr_i\left( x_i^{s}x_i^{-1}tse_{np} \right)e_{pn}$ with $x_i^{s}x_i^{-1}ts$. In other words,
\begin{align}
\eta_i = \sum_{ts} |D_i| \Real \left( x_i^{s}x_i^{-1}ts \right) \overline{t}.
\end{align}
If we rewrite $ts=w \Leftrightarrow  t=\sign(s)ws$ where $\overline{s}=\sign(s)s$, the sum in $t$ can be re-labeled using $w$. Note as well that $\Real \left( x_i^{s}x_i^{-1}w \right)= \sign(w)x_i^{s}x_i^{w}$:
\begin{align}
\eta_i = \sum_{ws} |D_i| x_i^{s}x_i^{w}\sign(s)\sign(w)\overline{sw}= |D_i|x_i^{-2}.
\end{align}
As before, the identity $x_i^2=|D_i|R\Real\Tr(x_i)1_i$ is a consequence of $\gls{sigma}^2=\iden$. This means $\eta= \frac{1}{R}\oplus_i\frac{|D_i|^{\alpha_i}}{\Real\Tr(x_i)}1_i$ with $\alpha_i=1$ for $D_i=\Rb$, $\Cb_{\Rb}$ and $\alpha_i=0$ for $D_i=\Hb_{\Rb}$. Expression \eqref{eq:spin_partition} delivers the claimed result. 
\end{example}

It is an easy exercise to verify that expressions \eqref{eq:Z_canonical_complex} and \eqref{eq:Z_canonical_real} reduce to \eqref{eq:gen_inv} and \eqref{eq:gen_inv-real} respectively if the Frobenius algebra is symmetric. 


\section{Group graded algebras} \label{sec:graded_algebras}
The treatment thus far introduced does not guarantee the existence of true spin models -- models that are not simply topological. In this section we will address this issue by explicitly constructing such spin models. This construction is rather extensive: we aim to display how the inclusion of a crossing allows us to create a large amount of new invariants.  

Examples~\ref{ex:non_symmetric_Frobenius}-\ref{ex:non_abelian} cover a varied range of algebras that give rise to planar models (see theorem~\ref{theo:diagram_semi_simple}) and that can be equipped with a crossing. In particular, we highlight example ~\ref{ex:non_symmetric_Frobenius} of a spin model constructed from a $\Zb_2$-graded algebra (or superalgebra), and examples~\ref{ex:H_awesome}-\ref{ex:non_abelian} where spin parity can be distinguished for group algebras $\Cb G$ when $G$ is non-trivial.

Instead of tackling the classification of crossing maps in its full generality we will confine our study to a special class of crossings. The reasoning behind this approach is as follows. One, the general properties of spin models have already been established through our diagrammatic construction and thus a complete classification of crossings would not bring additional abstract insight. Two, our main goal is to work with a rich source of new examples so that the question of existence can be settled.    

The crossings we will be dealing with arise from a particular class of (semi-simple) algebras: $\gls{H}$-graded algebras $\gls{A}=\bigoplus_{h \in H} A_h$ where $H$ is a finite group. Crossing maps can then be constructed from bicharacters \cite{Bahturin}. A bicharacter $\gls{bi} \colon H \times H \to \gls{k}$ is defined by 
\begin{align} 
\tilde{\lambda}(h,jl)&=\tilde{\lambda}(h,j)\tilde{\lambda}(h,l), \label{eq:graded_mult}\\
\gls{field_unit}&=\tilde{\lambda}(h,j)\tilde{\lambda}(j,h). \label{eq:graded_inverse}
\end{align}   
The candidate for a crossing map $\gls{lambda}$ is then determined by setting 
\begin{equation}
\label{eq:bi_def}
\lambda(a_h \otimes b_j)=\tilde{\lambda}(h,j)\; b_j \otimes a_h \in A_j \otimes A_h.
\end{equation}
The properties of $\tilde{\lambda}$ guarantee that $\tilde{\lambda}(h,j)=\tilde{\lambda}(lhl^{-1},j)$ for all $h,j,l \in H$. This means the arguments of $\tilde{\lambda}$ takes values in $H/\Inn(H)$ where $\Inn(H)$ denotes the group of inner automorphisms of $H$. In other words, $\tilde{\lambda}(h,l)=1_k$ whenever $h \in \Inn(H)$.  Since $H/\Inn(H)$ is isomorphic to the centre of $H$, $\mathcal{Z}(H)$, there is no loss of generality in choosing $H$ to be abelian -- a choice we will resort to from now on. 

From definition \eqref{eq:bi_def} it is straightforward to conclude properties \eqref{eq:graded_mult} and \eqref{eq:graded_inverse} of a bicharacter $\gls{bi}$ are in correspondence with the crossing axioms \ref{fig:axiom_mult} and \ref{fig:axiom_square} of definition \ref{def:spin-model}. On the other hand, axiom \ref{fig:axiom_Reid_3} is automatically verified since $\tilde{\lambda}$ is $k$-valued. The remaining conditions, however, impose new constraints on a bicharacter. Write $A_h\perp A_j$ if $\gls{eps}(a_h \cdot b_j)=0$ for all $a_h\in A_h$, $b_j\in A_j$. 
\begin{proposition} \label{lem:graded} A graded Frobenius algebra with a bicharacter  $\tilde\lambda$ determines a spin state sum model if and only if
\begin{enumerate}[label=D\arabic{*}), ref=(D\arabic{*})]
\item\label{it:cr-axiom1} For each $h,j\in \gls{H}$, either  $A_h\perp A_j$ or $\tilde{\lambda}(h,l)=\tilde{\lambda}(l,j)$  for all $l \in H$.
\item\label{it:cr-axiom2} The Nakayama automorphism $\gls{sigma}$ obeys $\sigma^2=\iden$.
\end{enumerate}
\end{proposition}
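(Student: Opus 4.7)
Of the five axioms in Definition~\ref{def:spin-model}, the three preceding the statement are already disposed of: axiom~\ref{fig:axiom_mult} unpacks into the multiplicativity \eqref{eq:graded_mult} of $\tilde\lambda$, axiom~\ref{fig:axiom_square} into the inverse relation \eqref{eq:graded_inverse}, and axiom~\ref{fig:axiom_Reid_3} holds automatically because $\tilde\lambda$ is scalar (so the Yang--Baxter-type move evaluates to the same product of three scalars on each side). What remains is to translate axiom~\ref{fig:axiom_form} and axiom~\ref{fig:axiom_left_right} into purely algebraic conditions, and to show that these are exactly \ref{it:cr-axiom1} and \ref{it:cr-axiom2}.

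For axiom~\ref{fig:axiom_form} the plan is to expand the bilinear form along the grading, $B=\sum_{h,j} B_{h,j}$ with $B_{h,j}\in A_h\otimes A_j$, and note that $B_{h,j}\neq 0$ precisely when $A_h\not\perp A_j$. Sliding an auxiliary strand of grade $l$ across the cap produces $\tilde\lambda(l,h)B_{h,j}$ on one side and $\tilde\lambda(l,j)^{-1}B_{h,j}=\tilde\lambda(j,l)B_{h,j}$ on the other, by \eqref{eq:graded_inverse}. The axiom therefore requires, for each pair $(h,j)$, either $B_{h,j}=0$ or $\tilde\lambda(l,h)=\tilde\lambda(j,l)$ for every $l\in \gls{H}$; rewriting once more with \eqref{eq:graded_inverse} gives $\tilde\lambda(h,l)=\tilde\lambda(l,j)$, which is exactly \ref{it:cr-axiom1}. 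The converse direction is read off from the same calculation.

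For axiom~\ref{fig:axiom_left_right}, the plan is to compute both curl diagrams explicitly on $a_h\in A_h$. Each curl can be rewritten as a single self-crossing composed with the dualities $B$ and $B^{-1}$; on a homogeneous element the single self-crossing contributes the scalar $\tilde\lambda(h,h)$, and the dualities implement the Nakayama automorphism $\gls{sigma}$ (which preserves the grading, a fact forced by \ref{it:cr-axiom1} together with $\varepsilon(a\cdot b)=\varepsilon(\sigma(b)\cdot a)$). The two curls therefore evaluate to $\tilde\lambda(h,h)\sigma(a_h)$ and $\tilde\lambda(h,h)\sigma^{-1}(a_h)$ respectively. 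Since $\tilde\lambda(h,h)^2=1$ by \eqref{eq:graded_inverse} with $h=j$, the scalar cancels and axiom~\ref{fig:axiom_left_right} becomes $\sigma(a_h)=\sigma^{-1}(a_h)$ on each graded component, i.e.\ $\sigma^2=\iden$. This is \ref{it:cr-axiom2}, and lemma~\ref{lem:varphi} recovers $\varphi^2=\iden$ as a consistency check on the derivation.

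The main obstacle I expect is the careful bookkeeping in the curl computation. Both sides of axiom~\ref{fig:axiom_left_right} involve a strand being dragged through a cap and a cup whose grading need not be concentrated in a single component; one must invoke \ref{it:cr-axiom1} at each intermediate step to guarantee that the scalars coming from $\tilde\lambda$ collapse to the single factor $\tilde\lambda(h,h)$ and do not leave residual bicharacter data entangled with the Nakayama action. Once this decoupling is established, the equivalence $\text{axiom~\ref{fig:axiom_left_right}} \Leftrightarrow \sigma^2=\iden$ is automatic, and the converse half of the proposition reduces to substituting \ref{it:cr-axiom1}--\ref{it:cr-axiom2} back into each diagrammatic axiom and reading off the identities already verified.
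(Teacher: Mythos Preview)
Your overall strategy is the same as the paper's, and your treatment of the curl axiom is essentially identical to it: compute $\varphi$ on a homogeneous element, see a factor $\tilde\lambda(h,h)$ times (a power of) the Nakayama automorphism, and use $\tilde\lambda(h,h)^2=1$ to reduce axiom~\ref{fig:axiom_left_right} to $\sigma^2=\iden$. You also correctly anticipate that the delicate step is collapsing all the stray bicharacter factors in the curl to the single factor $\tilde\lambda(h,h)$; the paper does exactly this, invoking \ref{it:cr-axiom1} (in its reformulation on the support of $B$) at that point.

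There is, however, one concrete gap in your handling of axiom~\ref{fig:axiom_form}. You assert that $B_{h,j}\neq 0$ \emph{precisely when} $A_h\not\perp A_j$, i.e.\ that $B\in A\otimes A$ and $B^{-1}=\varepsilon\circ m$ have the same graded support. This is not true for an arbitrary graded Frobenius algebra: the inverse of a block matrix need not share its zero pattern, and nothing in the Frobenius axioms forces the two supports to coincide. (A simple illustration: for $\Cb\Zb_3$ with $\varepsilon(1)=\varepsilon(h)=1$, $\varepsilon(h^2)=0$, one has $A_0\perp A_2$ while every graded component of $B$ is nonzero.) You use both directions of this ``iff'': the implication $A_h\not\perp A_j \Rightarrow B_{h,j}\neq 0$ to deduce \ref{it:cr-axiom1}, and the reverse implication for the converse.

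The paper sidesteps this entirely by applying axiom~\ref{fig:axiom_form} to $B^{-1}$ rather than to $B$: evaluating both sides on $a_h\otimes c_l\otimes b_j$ produces $\tilde\lambda(l,j)\,\varepsilon(a_h b_j)\,c_l=\tilde\lambda(h,l)\,\varepsilon(a_h b_j)\,c_l$, which is \ref{it:cr-axiom1} on the nose. The condition on the support of $B$ that you compute does appear in the paper, but only as an \emph{equivalent} reformulation obtained from the $\pi$-rotated form of the same axiom, and the equivalence is established diagrammatically rather than by matching supports. So the fix is simply to run your computation on the cup $\varepsilon\circ m$ instead of the cap $B$; everything else in your plan goes through.
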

\begin{proof} 

Applying the maps in axiom \ref{fig:axiom_form} of definition \ref{def:spin-model} to $a_h\otimes c_l\otimes b_j$ gives
\begin{equation}
\tilde\lambda(l,j)\,\varepsilon(a_h\cdot b_j)\,c_l=\tilde\lambda(h,l)\,\varepsilon(a_h\cdot b_j)\,c_l,
\end{equation}
which is equivalent to condition \ref{it:cr-axiom1}.

The element $\gls{Bb}$ can be written as a sum of linearly independent terms as $B=\sum y_m\otimes z_n$, in which the gradings $m$ and $n$ may vary. An equivalent relation to condition \ref{it:cr-axiom1} is that for each term $y_m\otimes z_n$ in the sum,
\begin{equation}
\tilde\lambda(n,l)=\tilde\lambda(l,m) \text{ for all } l\in H.
\label{eq:bichar-identity}
\end{equation}
This can be proved by using an equivalent form of axiom \ref{fig:axiom_form} given by rotating both diagrams in the expression by $\pi$. Then applying the maps on both sides of the equation to $a_l$ gives the identity
\begin{equation}\sum\tilde\lambda(l,m)\, y_m\otimes a_l\otimes z_n=\sum\tilde\lambda(n,l)\, y_m\otimes a_l\otimes z_n.\end{equation}

The curl on the right-hand side of axiom \ref{fig:axiom_left_right} is the map 
\begin{equation}
\label{eq:bichar-curl} 
a_l\mapsto\sum \varepsilon(a_l \cdot z_n)\tilde\lambda(l,m)\,y_m.
\end{equation}
However, from \eqref{eq:bichar-identity}, $\tilde\lambda(l,m)=\tilde\lambda(n,l)$ and for the non-zero terms in \eqref{eq:bichar-curl}, $\tilde\lambda(l,l)=\tilde\lambda(l,n)$. Together these imply $\tilde\lambda(l,m)=\tilde\lambda(l,l)$. Hence the curl is
\begin{equation}
\gls{phi}(a_l)=\tilde\lambda(l,l)\sum\varepsilon(z_n \cdot \sigma^{-1}(a_l))\,y_m=\tilde\lambda(l,l)\sigma^{-1}(a_l). \label{eq:graded_group_Naka}
\end{equation}
Since axiom \ref{fig:axiom_left_right} is equivalent to $\gls{phi}^2=\iden$, and \eqref{eq:graded_inverse} implies $\gls{bi}(l,l)^2=\gls{field_unit}$,  the axioms of definition \ref{def:spin-model} imply that $\gls{sigma}^2=\iden$. Conversely, $\sigma^2=\iden$ together with axioms \ref{fig:axiom_form} to \ref{fig:axiom_Reid_3} imply that axiom \ref{fig:axiom_left_right} is satisfied.
\end{proof}

It is possible to regard crossings from bicharacters as the natural generalisation of the canonical choice $a \otimes b \mapsto b \otimes a$. Although non-trivial, they retain many of the simple features of the canonical choice. In particular, they share with the planar models with canonical crossing of example \ref{ex:semi_simple_canonical} one key feature: partition functions $\gls{Z}(\gls{Sigma_g}, \gls{s}, A_1 \oplus A_2)$ can be naturally defined through the partition functions of their simple components, $Z(\Sigma_g,s,A_1)$ and $Z(\Sigma_g,s,A_2)$. To understand this we must first show how one can define a bicharacter for $A_1 \oplus A_2$ if the $A_1,A_2$ come equipped with one \cite{Bahturin}.

\begin{lemma} \label{lem:bicharacters_semi}
Let $\tilde{\lambda}_i \colon H_i \times H_i \to k$ be bicharacters for $H_i$-graded algebras $A_i$, $i=1,2$. Then $\gls{A}=A_1 \oplus A_2$ is $H$-graded with $\gls{H}=H_1 \times H_2$ and it comes naturally equipped with a bicharacter $\tilde{\lambda}$ defined as
\begin{align}
\tilde{\lambda}\left(g_1g_2,h_1h_2\right)\equiv\tilde{\lambda}_1(g_1,h_1)\tilde{\lambda}_2(g_2,h_2)
\label{eq:lambda_semi}
\end{align}
where $g_1g_2$ and $h_1h_2$ belong to $H_1 \times H_2$.
\end{lemma}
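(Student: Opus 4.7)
The plan is to verify three things in sequence: that $A=A_1\oplus A_2$ inherits a genuine $H$-grading, that $\tilde\lambda$ is a bicharacter on $H$, and that the axioms \ref{it:cr-axiom1} and \ref{it:cr-axiom2} of proposition~\ref{lem:graded} are satisfied so that the resulting data defines a spin state sum model. For the grading, I would identify $A_1$ and $A_2$ with the subalgebras $A_1\oplus 0$ and $0\oplus A_2$ of $A$, and declare a homogeneous element of $(A_1)_{g_1}$ to sit in grade $(g_1,e_2)\in H_1\times H_2$ and a homogeneous element of $(A_2)_{g_2}$ in grade $(e_1,g_2)$. Setting $A_{(g_1,g_2)}=0$ for $g_1\neq e_1$ and $g_2\neq e_2$, the decomposition $A=\bigoplus_{(g_1,g_2)\in H} A_{(g_1,g_2)}$ is immediate, and compatibility with multiplication follows from $A_1\cdot A_2=0$ together with the gradings of each summand.

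Next I would check that \eqref{eq:lambda_semi} defines a bicharacter, namely that it satisfies \eqref{eq:graded_mult} and \eqref{eq:graded_inverse}. Both are a one-line consequence of the corresponding identities for $\tilde\lambda_1,\tilde\lambda_2$ combined with the fact that $\tilde\lambda_i(e_i,\,\cdot\,)=\tilde\lambda_i(\,\cdot\,,e_i)=1_k$ (itself a consequence of \eqref{eq:graded_mult} applied with trivial argument). Multiplication in $H=H_1\times H_2$ is componentwise, so the calculation factorises cleanly.

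For axiom \ref{it:cr-axiom1}, the key observation is that the Frobenius form on $A_1\oplus A_2$ splits as $\varepsilon=\varepsilon_1\oplus\varepsilon_2$, so products across the two summands vanish; hence whenever $h$ lies in $H_1\times\{e_2\}$ and $j$ lies in $\{e_1\}\times H_2$ (or vice versa), $A_h\perp A_j$ trivially. Within a single summand, say $h=(g_1,e_2)$ and $j=(h_1,e_2)$ with $(A_1)_{g_1}\not\perp (A_1)_{h_1}$, the hypothesis that $\tilde\lambda_1$ satisfies \ref{it:cr-axiom1} for $A_1$ gives $\tilde\lambda_1(g_1,l_1)=\tilde\lambda_1(l_1,h_1)$ for all $l_1\in H_1$; evaluating \eqref{eq:lambda_semi} at a generic $l=(l_1,l_2)\in H$, the $H_2$-factor collapses to $1_k$ on both sides, and the required equality $\tilde\lambda(h,l)=\tilde\lambda(l,j)$ drops out. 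Axiom \ref{it:cr-axiom2} is even easier: the Nakayama automorphism of the direct sum decomposes as $\sigma=\sigma_1\oplus\sigma_2$ because $\varepsilon$ and the multiplication both respect the decomposition, and $\sigma^2=\sigma_1^2\oplus\sigma_2^2=\iden$ by hypothesis on each summand.

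I do not expect any genuine obstacle in this lemma; the content is really just bookkeeping, and the only mildly subtle point is being careful about how the two grading groups embed into $H_1\times H_2$ so that the components $A_{(g_1,g_2)}$ with both coordinates non-identity end up being zero rather than something one has to handle separately.
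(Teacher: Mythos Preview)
Your argument is correct, and on one point it is actually cleaner than the paper's. The paper declares the graded piece at $(h_1,h_2)\in H_1\times H_2$ to be $(A_1)_{h_1}\oplus(A_2)_{h_2}$, which taken literally is not a direct-sum decomposition of $A$ (each $(A_1)_{h_1}$ would appear in every degree $(h_1,\,\cdot\,)$). Your convention---placing $(A_1)_{g_1}$ in degree $(g_1,e_2)$, $(A_2)_{g_2}$ in degree $(e_1,g_2)$, and declaring the mixed degrees to be zero---is the standard and correct way to grade a direct sum of algebras by the product of the grading groups, and it is exactly the point you flagged as ``mildly subtle''. The verification of the bicharacter axioms \eqref{eq:graded_mult} and \eqref{eq:graded_inverse} is essentially identical in both treatments: it factorises through the two components.

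One remark on scope: the lemma as stated only claims that $\tilde\lambda$ is a bicharacter on $H_1\times H_2$. The paper checks conditions \ref{it:cr-axiom1} and \ref{it:cr-axiom2} of proposition~\ref{lem:graded} in the paragraph \emph{following} the lemma, not inside its proof. Your third paragraph therefore establishes more than the lemma asks for, though the argument is correct and simply anticipates what the paper does next.
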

\begin{proof}
Let the gradings for the $A_i$ be displayed as $A_i = \bigoplus_{h \in H_i} (A_i)_h$. Then $A$ is naturally $H$-graded as 
\begin{align}\label{eq:grading}
A=\bigoplus_{h_1h_2\in H_1 \times H_2} (A_1)_{h_1} \oplus (A_2)_{h_2}
\end{align}
where the group $H_1 \times H_2$ comes with the product $h_1h_2 \cdot l_1l_2=(h_1\cdot l_1)(h_2 \cdot l_2)$. Using this definition of group product we know $\tilde{\lambda}(g_1g_2 \cdot h_1h_2,l_1l_2)=\tilde{\lambda}((g_1 \cdot h_1)(g_2 \cdot h_2),l_1l_2)$. Then expression \eqref{eq:lambda_semi} tells us $\tilde{\lambda}((g_1\cdot h_1)(g_2 \cdot h_2),l_1l_2))=\tilde{\lambda}_1(g_1 \cdot h_1,l_1)\tilde{\lambda}_2(g_2 \cdot h_2,l_2)$. Using property \eqref{eq:graded_mult} for the $\tilde{\lambda}_i$ and rearranging terms we find $\tilde{\lambda}_1(g_1 \cdot h_1,l_1)\tilde{\lambda}_2(g_2 \cdot h_2,l_2)=\tilde{\lambda}_1(g_1,l_1)\tilde{\lambda}_2(g_2,l_2)\tilde{\lambda}_1(h_1,l_1)\tilde{\lambda}_2(h_2,l_2)$. Again using the definition \eqref{eq:lambda_semi} we can finally rewrite 
\begin{align}
\tilde{\lambda}(g_1g_2 \cdot h_1h_2,l_1l_2)=\tilde{\lambda}(g_1g_2,l_1l_2)\tilde{\lambda}(h_1h_2,l_1l_2).
\end{align}
Similarly to establish property \eqref{eq:graded_inverse} we use the corresponding properties for the $\tilde{\lambda}_i$.
\begin{align}
\gls{bi}(g_1g_2,h_1h_2)
&=\tilde{\lambda}_1(g_1,h_1)\tilde{\lambda}_2(g_2,h_2)
=(\tilde{\lambda}_1(h_1,g_1)\tilde{\lambda}_2(h_2,g_2))^{-1} \notag \\
&=\tilde{\lambda}(h_1h_2,g_1g_2)^{-1}
\end{align}
\end{proof}

Moreover, if the $\lambda_i$ also satisfy the conditions of proposition \ref{lem:graded} then also will $\gls{lambda}$ as defined in \eqref{eq:lambda_semi}. Note that since the Frobenius form $\gls{eps}$ for $\gls{A}$ decomposes as $\varepsilon=\varepsilon_1 \oplus \varepsilon_2$ we know that $A_1 \perp A_2$. Therefore determining property \ref{it:cr-axiom1} for $\lambda$ is equivalent to determining it for each $\lambda_i$ which is true by construction. It is also an easy exercise to verify that the Nakayama automorphism associated with $\varepsilon$, $\gls{sigma}$, decomposes as $\sigma = \sigma_1 \oplus \sigma_2$. Therefore, since $\sigma_i^2=\iden$ we know $\sigma^2=\iden$. 

We will finally show that if the preferred elements of $A_i$ are $\eta_i$ and $\chi_i$ then the preferred elements for $A_1 \oplus A_2$ are given by $\gls{eta}=\eta_1 \oplus \eta_2$ and $\gls{chi}=\chi_1 \oplus \chi_2$. Let us use the expression $B_i=\sum y_{m_i} \otimes z_{n_i}$ to denote the $A_i$ bilinear maps, subject to condition \eqref{eq:bichar-identity}. Then, using the grading of equation \eqref{eq:grading}, we conclude 
\begin{align}
\gls{Bb}= \sum \left( y_{m_1} \oplus  y_{m_2} \right) \otimes \left( z_{n_1} \oplus  z_{n_2} \right).
\end{align}
The algebraic expression for $\eta$ will then read:
\begin{align}
\label{eq:eta_graded}
\eta = \sum \tilde{\lambda}(n_1n_2,h_1h_2) \left( y_{m_1} \oplus  y_{m_2} \right) \cdot \left( y_{h_1} \oplus  y_{h_2} \right) \cdot \left( z_{n_1} \oplus  z_{n_2} \right) \cdot \left( z_{l_1} \oplus  z_{l_2} \right).
\end{align}
Using the multiplication property $(a \oplus b) \cdot (c \oplus d)= a\cdot c \oplus b\cdot d$ along with the defining equation for $\tilde{\lambda}$, expression \eqref{eq:lambda_semi}, we learn \eqref{eq:eta_graded} can be rewritten as
\begin{align}
\eta &= \sum \tilde{\lambda}_1(n_1,h_1)\, y_{m_1} \cdot y_{h_1} \cdot z_{n_1} \cdot z_{l_1} \oplus \sum  \tilde{\lambda}_2(n_2,h_2)\, y_{m_2} \cdot y_{h_2} \cdot z_{n_2} \cdot z_{l_2} \notag \\
&= \eta_1 \oplus \eta_2.
\end{align}
The expression for $\chi$ is entirely similar apart from the action of $\varphi$ on $y_{m_1} \oplus  y_{m_2}$ and $z_{l_1} \oplus  z_{l_2}$. Since $\varphi$ is an automorphism it will also preserve the algebra decomposition. Therefore the conclusion $\chi = \chi_1 \oplus \chi_2$ also holds. 

This is enough to guarantee the partition function $\gls{Z}(\gls{Sigma_g}, \gls{s}, A_1 \oplus A_2)$ decomposes as a sum of the partition functions $Z(\Sigma_g, s, A_i)$. To see this note the general expression for the partition function, equation \eqref{eq:Z_general}, can be rewritten for $A_1 \oplus A_2$ as $\gls{Z}(\gls{Sigma_g},\gls{s})=\gls{R}\,\gls{eps} ((\eta_1 \oplus \eta_2)^{g-l} \cdot (\chi_1 \oplus \chi_2)^l)$. Again using the product rule $(a \oplus b) \cdot (c \oplus d)= a\cdot c \oplus b\cdot d$, the linearity of $\varepsilon$ and its decomposition $\varepsilon = \varepsilon_1 \oplus \varepsilon_2$ we can further rewrite 
\begin{align}
Z(\Sigma_g,s)=R\,\varepsilon_1(\eta_1^{g-l}\cdot \chi_1^l) + R\,\varepsilon_2(\eta_2^{g-l} \cdot \chi_2^l)
\end{align}
which establishes the intended result.

To make our knowledge of bicharacters more concrete we will use the fundamental theorem of finite abelian groups. According to this classification, any finite abelian group $\gls{H}$ is isomorphic to $\mathbb{Z}_{n_1} \times \cdots \times \mathbb{Z}_{n_p}$ for some natural numbers $n_i$ \cite{Scott} where $\Zb_n$ is the cyclic group of order $n$. Without loss of generality we will consider abelian groups of this type only, from now on. This will allow us to determine more precisely what the constants $\gls{bi}(h,l)$ look like.

\begin{proposition} \label{prop:graded_roots}
Suppose $A$ is an $H$-graded algebra with $H=\Zb_{n_1} \times \cdots \times \Zb_{n_p}$ where $\Zb_{n_k}$ is generated by $r_k$. Let $r_1^{\alpha_1}\cdots r_p^{\alpha_p}$, $\alpha_k \in \mathbb{N} \mod n_k$ denote an element of $H$. Then, a bicharacter $\tilde{\lambda} \colon H \times H \to \gls{k}$ must satisfy the following conditions. 
\begin{enumerate}[label=E\arabic{*}), ref=(E\arabic{*})]
\item \label{it:lambda_2} $\tilde{\lambda} \left(r_1^{\alpha_1}\cdots r_p^{\alpha_p}, r_1^{\beta_1} \cdots r_p^{\beta_p}\right)= \prod\limits_{i,j=1}^{p} \tilde{\lambda}(r_i,r_j)^{\alpha_i\beta_j}$
\item \label{it:lambda_3} $\tilde{\lambda}(r_i,r_j)^{n_i}=\tilde{\lambda}(r_i,r_j)^{n_j}=\gls{field_unit}$
\item \label{it:lambda_1} $\tilde{\lambda}(r_i,r_j)\tilde{\lambda}(r_j,r_i)=1_k$
\end{enumerate}
\end{proposition}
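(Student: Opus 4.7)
The plan is to derive all three properties directly from the two defining axioms of a bicharacter, namely \eqref{eq:graded_mult} and \eqref{eq:graded_inverse}, after first noting that multiplicativity in the second argument together with \eqref{eq:graded_inverse} forces multiplicativity in the first argument as well. Explicitly, for any $h,j,l \in H$,
\begin{align*}
\tilde{\lambda}(hj,l) = \tilde{\lambda}(l,hj)^{-1} = \bigl(\tilde{\lambda}(l,h)\tilde{\lambda}(l,j)\bigr)^{-1} = \tilde{\lambda}(h,l)\tilde{\lambda}(j,l).
\end{align*}
So a bicharacter is biadditive in the group exponents.

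For \ref{it:lambda_2}, I would simply iterate biadditivity. Writing $r_1^{\alpha_1}\cdots r_p^{\alpha_p}$ and $r_1^{\beta_1}\cdots r_p^{\beta_p}$ as products of the $r_k$'s, applying multiplicativity in the first argument peels off the $\alpha_i$ and yields a product $\prod_i \tilde{\lambda}(r_i,r_1^{\beta_1}\cdots r_p^{\beta_p})^{\alpha_i}$; then applying multiplicativity in the second argument to each factor peels off the $\beta_j$ and produces $\prod_{i,j}\tilde{\lambda}(r_i,r_j)^{\alpha_i\beta_j}$.

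For \ref{it:lambda_3}, first I would note that $\tilde{\lambda}(1_H,h) = 1_k$ for every $h\in H$: indeed $\tilde{\lambda}(1_H,h) = \tilde{\lambda}(1_H \cdot 1_H, h) = \tilde{\lambda}(1_H,h)^2$, and since $\tilde{\lambda}$ takes values in $k^{\times}$ by \eqref{eq:graded_inverse} this forces $\tilde{\lambda}(1_H,h)=1_k$. Since $r_i^{n_i} = 1_H$ in $H$, biadditivity gives $\tilde{\lambda}(r_i,r_j)^{n_i} = \tilde{\lambda}(r_i^{n_i},r_j) = \tilde{\lambda}(1_H,r_j) = 1_k$, and the analogous argument in the second slot gives $\tilde{\lambda}(r_i,r_j)^{n_j} = 1_k$.

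Finally, \ref{it:lambda_1} is simply the defining axiom \eqref{eq:graded_inverse} specialised to $h=r_i$, $j=r_j$. There is no real obstacle in this proof; it is essentially a bookkeeping exercise, and the only subtlety worth flagging is the preliminary derivation of multiplicativity in the first argument, which is not part of the original definition but follows immediately from the inverse property.
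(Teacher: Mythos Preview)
Your proof is correct and follows essentially the same route as the paper's: iterate biadditivity for \ref{it:lambda_2}, use $r_i^{n_i}=1_H$ together with $\tilde\lambda(1_H,-)=1_k$ for \ref{it:lambda_3}, and specialise \eqref{eq:graded_inverse} for \ref{it:lambda_1}. The one place you are actually more careful than the paper is in explicitly deriving first-argument multiplicativity from \eqref{eq:graded_mult} and \eqref{eq:graded_inverse}; the paper invokes it tacitly when peeling off the $\alpha_i$, so your preliminary step is a welcome clarification rather than a deviation.
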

\begin{proof}
Identity \ref{it:lambda_2} will follow from equation \eqref{eq:graded_mult}. Using this property we first rewrite 
\begin{align}
\tilde{\lambda} (r_1^{\alpha_1}\cdots r_p^{\alpha_p}, r_1^{\beta_1} \cdots r_p^{\beta_p})
=\tilde{\lambda} (r_1, r_1^{\beta_1} \cdots r_p^{\beta_p})\tilde{\lambda} (r_1^{\alpha_1-1}r_2^{\alpha_2}\cdots r_p^{\alpha_p}, r_1^{\beta_1} \cdots r_p^{\beta_p})
\end{align}
which is valid for every $\alpha_i\in \mathbb{N} \mod n_i$. Iterating this process a total of $\alpha_i$ times we find 
\begin{align}
\tilde{\lambda} (r_1^{\alpha_1}\cdots r_p^{\alpha_p}, r_1^{\beta_1} \cdots r_p^{\beta_p})
=\tilde{\lambda} (r_1, r_1^{\beta_1} \cdots r_p^{\beta_p})^{\alpha_1}\tilde{\lambda} (r_2^{\alpha_2}\cdots r_p^{\alpha_p}, r_1^{\beta_1} \cdots r_p^{\beta_p}).
\end{align}
Repeating the process for each remaining $r_k$ will then deliver identity \ref{it:lambda_2}.

The second property is a consequence of \ref{it:lambda_2}. Consider the elements $r_i^{n_i}$ and $r_j^{\beta_j}$. Then identity \ref{it:lambda_2} tells us $\tilde{\lambda}(r_i^{n_i},r_j^{\beta_j})=\tilde{\lambda}(r_i,r_j)^{n_i\beta_j}$.
However, since $r_i^{n_i}=1$ and $\gls{bi}(1,h)=\gls{field_unit}$ for all $h \in \gls{H}$ we must have $(\tilde{\lambda}(r_i,r_j)^{n_i})^{\beta_j}=1_k$ for all $\beta_j$. This means the identity $\tilde{\lambda}(r_i,r_j)^{n_i}=1_k$ must hold. An entirely analogous treatment would allows us to conclude $\tilde{\lambda}(r_i,r_j)^{n_j}=1_k$.

Finally, the last statement is trivial: it follows directly from identity \eqref{eq:graded_inverse}. 
\end{proof}

The first non-trivial consequence of proposition \ref{prop:graded_roots} is understanding what condition \eqref{eq:bichar-identity}, $\tilde{\lambda}(n,l)=\tilde{\lambda}(l,m)$ for all $l \in H$, implies. It is equivalent to having either $\tilde{\lambda}(n,l)=1_k$ for all $n,l \in H$ or $n=m^{-1}$. To see this note that equation \eqref{eq:bichar-identity} can be rewritten as 
\begin{align}
\tilde{\lambda}(n,l)\tilde{\lambda}(m,l)=\tilde{\lambda}(r_1^{\alpha_1} \cdots r_p^{\alpha_p},r_1^{\beta_1} \cdots r_p^{\beta_p})\tilde{\lambda}(r_1^{\gamma_1} \cdots r_p^{\gamma_p},r_1^{\beta_1} \cdots r_p^{\beta_p})=1_k.
\end{align}
Property \ref{it:lambda_2} then tells us this equation is equivalent to $\prod_{i,j=1}^p (\tilde{\lambda}(r_i,r_j)^{\alpha_i + \gamma_i})^{\beta_j}=1_k$. Since this must be true for all $\beta_j$ we conclude that either $\tilde{\lambda}(r_i,r_j)=1_k$ for all $i,j=1,p$ or $\alpha_i + \gamma_i =0$. In other words either the bicharacter is trivial or $n=m^{-1}$.

The equivalence we have just discussed also allows us to conclude that $\gls{phi}$ preserves the algebra grading when $\tilde{\lambda}$ is not trivial. (When $\tilde{\lambda}(h,l)=1_k$ for all $h,l \in H$ then $\varphi=\gls{sigma}$ and the grading is not relevant for constructing partition functions.) Note it implies that if $\gls{eps} (a_h \cdot b_l) \neq 0$ we must have $l=h^{-1}$ . However, the definition of the Nakayama automorphism tells us that $\varepsilon (a_h \cdot b_{h^{-1}})=\varepsilon (b_{h^{-1}} \cdot \sigma(a_h))$. Since we assumed the left hand-side does not vanish we must have $\sigma(a_{h}) \in A_h$. On the other hand, $\varepsilon (a_h \cdot b_l) = 0$ cannot be true for all $a_h, h \in H$ since $\varepsilon$ is non-degenerate. Hence, $\sigma$ preserves the algebra grading. Since $\varphi(a_h)=\tilde{\lambda}(h,h)\sigma(a_h)$ we also conclude $\varphi (A_h) \subset A_h$. 

Since we now know how we can construct partition functions for semi-simple algebras from their simple blocks we will be mainly concerned with generating examples for $\gls{A}=\Mb_{n}(D)$ with $D=\Cb, \Rb, \Cb_{\Rb}, \Hb_{\Rb}$. As we understand most of the properties of crossings from bicharacters already, our freedom to create invariants relies on our choice of grading.    

\begin{example}[Algebras $A=\Mb_n(k)$, $k=\Rb,\Cb$] \label{ex:non_symmetric_Frobenius}
\noindent Let $\varepsilon(a)=\Tr(\gls{x}a)$ with $x^2=\gls{R}\Tr(x) \gls{unit}$ and $\Tr(x) \neq 0$. The algebra $A=\Mb_n(k)$ can be seen as having a natural $\Zb_2$-grading $A_{r^0} \oplus A_{r^1}$ where each matrix splits into a block-diagonal and a block-anti-diagonal part ($p+q=n$).
\begin{align}
\left(\begin{array}{cc}
a_{p\times p} & b_{p \times q}\\ c_{q \times p} & d_{q \times q}
\end{array}\right)
=
\left(\begin{array}{cc}
a_{p\times p} & 0\\ 0 & d_{q \times q}
\end{array}\right)
\oplus
\left(\begin{array}{cc}
0 & b_{p \times q}\\ c_{q \times p} & 0
\end{array}\right)
\in A_{r^0} \oplus A_{r^1}.
\end{align}
According to proposition \ref{prop:graded_roots} the components of $\gls{bi}$ are determined by the relation $\tilde{\lambda}(r^{\alpha},r^{\beta})=\tilde{\lambda}(r,r)^{\alpha,\beta}$ with $\alpha,\beta =0,1$ and $\tilde{\lambda}(r,r)=\pm \gls{field_unit}$. 

If we choose $\tilde{\lambda}(r,r)=+1_k$ we are in the conditions of example \ref{ex:semi_simple_canonical}. Therefore we will concentrate on the case $\tilde{\lambda}(r,r)=-1_k$. Condition \ref{it:cr-axiom1} tells us the identity $\varepsilon(a_{\alpha}b_{\beta})=0$ must hold whenever $\alpha \neq \beta$. This means we must have $\gls{eps} (a_0a_1)=0$ for all $a_i \in A_{r^{i}}$. However, we know $a_0a_1 \in A_1$ so it is just as valid to show $\varepsilon(a_1)=0$. Since $\varepsilon(a_1)=\Tr(\gls{x}a_1)$ we learn that $x$ must belong to $A_{r^0}$. 

The bilinear form can be written as
\begin{align}
\gls{Bb} = \frac{1}{\gls{R}\Tr(x)}\sum_{\alpha}\sum_{lm}e_{lm}^{\alpha}x \otimes e_{ml}^{\alpha} 
\end{align}
where the label $\alpha$ identifies whether the elementary matrix belongs to $A_{r^0}$ or $A_{r^1}$. The expressions for both $\gls{eta}$ and $\gls{chi}$ read
\begin{align}
\eta&=\frac{1}{R^2\Tr(x)^2} \sum_{\alpha \beta} \tilde{\lambda}(r,r)^{\alpha\beta}\sum_{lmso}e_{lm}^{\alpha}xe_{so}^{\beta}e_{ml}^{\alpha}xe_{os}^{\beta} \label{eq:eta_Z2_graded},\\
\chi&=\frac{1}{R^2\Tr(x)^2} \sum_{\alpha \beta} \tilde{\lambda}(r,r)^{\alpha\beta + \alpha + \beta}\sum_{lmso}xe_{lm}^{\alpha}e_{so}^{\beta}e_{ml}^{\alpha}e_{os}^{\beta}x. \label{eq:chi_Z2_graded}
\end{align}
We can rewrite the sum terms in \eqref{eq:eta_Z2_graded} as
\begin{align} \label{eq:eta_sum_separate}
\sum_{\alpha} \sum_{lmso}e_{lm}^{\alpha}xe_{so}^{0}e_{ml}^{\alpha}xe_{so}^{0} +
\sum_{\alpha} \tilde{\lambda}(r,r)^{\alpha}\sum_{lmso}e_{lm}^{\alpha}xe_{so}^{1}e_{ml}^{\alpha}xe_{so}^{1}.
\end{align}
The simplification $\sum_{lm}e_{lm}xe_{so}^{0}e_{ml}=\Tr(xe_{so}^{0})\gls{unit}$ holds. Since $x \in A_{r^0}$ we can rewrite the first sum term in equation \eqref{eq:eta_sum_separate} as $\sum_{so} \Tr(xe_{so}^{0}) xe_{so}^{0}=x^2=R\Tr(x)1$. On the other hand, $\sum_{so}e^1_{so}ye^1_{os}=0$ holds for all $y \in A$. Therefore the second term in equation \eqref{eq:eta_sum_separate} vanishes. This means that we have $\eta=\frac{1}{R\Tr(x)}$.

Let us now similarly decompose the sum terms in \eqref{eq:chi_Z2_graded}.
\begin{align} \label{eq:eta_sum_separate}
\sum_{\alpha} \gls{bi}(r,r)^{\alpha}\sum_{lmso}\gls{x}e_{lm}^{\alpha}e_{so}^{0}e_{ml}^{\alpha}e_{os}^{0}x - 
\sum_{\alpha} \sum_{lmso}xe_{lm}^{\alpha}e_{so}^{1}e_{ml}^{\alpha}e_{os}^{1}x
\end{align}
The property $\sum_{so}e^1_{so}ye^1_{os}=0$ for all $y \in \gls{A}$ again renders the second term in the sum zero. It also means that only $\alpha=0$ contributes to the first sum. One can finally easily verify that $\sum_{lmso}e_{lm}^{0}e_{so}^{0}e_{ml}^{0}e_{os}^{0}=\gls{unit}$. Therefore, $\gls{chi}=\frac{1}{R\Tr(x)}=\gls{eta}$ which means this choice of crossing also does not allow for spin structures to be distinguished. The partition function reads
\begin{align}\label{eq:Z2_partition}
\gls{Z}(\gls{Sigma_g})=\gls{R}^{1-g}\Tr(x)^{1-g},
\end{align}
which coincides with the canonical crossing one.
\end{example}

Our conclusions so far do not guarantee the existence of crossing maps satisfying $\eta \neq \chi$. The last efforts in this section therefore concentrate on presenting various examples of such algebras.

\begin{example}[Algebras $A=\Mb_n(\Cb_{\Rb})$] \label{ex:complex_crossing} These algebras are naturally $\Zb_2$-graded: $\Mb_n(\Cb_{\Rb}) =A_{r^0} \bigoplus A_{r^1}$ with $A_{r^0}=\Mb_{n}(\Rb)$ and $A_{r^1}=\hat{\imath}\Mb_{n}(\Rb)$. The components of the crossing are then determined by the relation $\tilde{\lambda}(r^\alpha,r^{\beta})=\tilde{\lambda}(r,r)^{\alpha\beta}$ for $\tilde{\lambda}(r,r)=-1_k$. For convenience we will also use the alternative notation $r=\hat{\imath}$.

Let us take the Frobenius form to be as general as possible: $\gls{eps}(a)=\Real\Tr(xa)$ with $\gls{x}^2=2\gls{R}\Tr(x)\gls{unit}$ and $\Tr(x) \neq 0$. Then the analysis of the previous example tells us that $x \in A_{r^0}$.  

Then the bilinear form reads as
\begin{align}
\gls{Bb}=\frac{1}{2R\Tr(x)}\sum_{w} \sum_{lm} w e_{lm}x \otimes w ^{\ast} e_{ml}. 
\end{align}
This information can be used to determine the relation 
\begin{align}\label{eq:eta_com}
\eta=\frac{1}{4R^2\Tr(x)^2}\sum_{wt} \gls{bi}(w,t) \sum_{lmpq} e_{lm} x e_{pq} x e_{ml} e_{qp}.
\end{align}
Recalling that $\sum_{wt} \tilde{\lambda}(w,t)=\sum_{\alpha,\beta}\tilde{\lambda}(r,r)^{\alpha\beta}$ and that $\tilde{\lambda}(r,r)=-\gls{field_unit}$ we can identify the first sum as the constant $2$. The second sum is just the element $\gls{x}^2=2\gls{R}\Tr(x)\gls{unit}$, so one concludes that $\gls{eta}=\frac{1}{2R\Tr(x)}$. The element $\gls{chi}$ is constructed in an analogous fashion: 
\begin{align}\label{eq:chi_com}
\eta=\frac{1}{4R^2\Tr(x)^2}\sum_{wt} \tilde{\lambda}(w,w)\tilde{\lambda}(w,t)\tilde{\lambda}(t,t) \sum_{lmpq} xe_{lm} e_{pq} e_{ml} e_{qp}x.
\end{align}

As before, the second sum term gives rise to the term $x^2=2R\Tr(x)1$. However, the first sum is now equal to $\sum_{\alpha\beta}(-1_k)^{\alpha\beta + \alpha + \beta}=-2$. Hence, $\chi=-\eta$. The invariant produced distinguishes spin structures of different parity and takes the form
\begin{align}\label{eq:first_spin_inv}
\gls{Z}(\gls{Sigma_g},\gls{s})=P(s)2^{-g}R^{1-g}\Tr(x)^{1-g}.
\end{align}
\end{example}

\begin{example}[Algebras $\gls{A}=\Mb_n(\Hb_{\Rb})$] \label{ex:quaternions_crossing} \hspace{1mm}\\
\noindent Consider the group of the quaternions $\widehat{\mathcal{K}}=\left\lbrace 1,\hat{\imath},\hat{\jmath},\hat{k}, -1,-\hat{\imath},-\hat{\jmath},-\hat{k}\right\rbrace  \ni w$ and define $A_w=w\Mb_{n}(\Rb)$. Then $A_w=A_{-w}$ and $A_wA_t=A_{wt}$, so that the algebra  $A=\Mb_n(\Hb_{\Rb})$ is graded by the quotient group $\mathcal{K}=\widehat{\mathcal{K}}/\{\pm1\}$, which is isomorphic to the Klein group $\Zb_2^{(1)} \times \Zb_2^{(2)}$. The grading is conveniently written $\Mb_n(\Hb_{\Rb}) = \oplus_{w}A_w$ with $w\in\left\lbrace 1,\hat{\imath},\hat{\jmath},\hat{k}\right\rbrace$. 

Let us denote the generators of $\Zb_2^{(1)} \times \Zb_2^{(2)}$ as $r_1=\hat{\imath}$ and $r_2=\hat{\jmath}$. Then proposition \ref{prop:graded_roots} guarantees the bicharacter will be determined by the terms $\tilde{\lambda}(\hat{\imath},\hat{\imath})$, $\tilde{\lambda}(\hat{\jmath},\hat{\jmath})$ and $\tilde{\lambda}(\hat{\imath},\hat{\jmath})$ all of which are roots of unit. We exclude the case $\tilde{\lambda}(\hat{\imath},\hat{\imath})=\tilde{\lambda}(\hat{\jmath},\hat{\jmath})=\tilde{\lambda}(\hat{\imath},\hat{\jmath})=1_k$ as this gives rises to the canonical crossing.

Let us take the Frobenius form to be as general as possible: $\varepsilon(a)=\Real\Tr(xa)$ with $x^2=4R\Tr(x)1$ and $\Tr(x) \neq 0$. The condition $\varepsilon(a_{h}b_{l})=0$ whenever $l\neq h^{-1}$ translates into the identities $\gls{eps}(a_{\hat{\imath}})=\varepsilon(a_{\hat{\jmath}})=\varepsilon(a_{\hat{k}})=0$. This means the element $x \in A_{1}$.

The bilinear form satisfies
\begin{align}
\gls{Bb}=\frac{1}{4R\Tr(x)}\sum_{w,lm}\left(w \,e_{lm} x \otimes w^{\ast}\,e_{ml}\right).
\end{align}
This information can be used to determine the identity 
\begin{align}
\gls{eta}=\frac{1}{16\gls{R}^2\Tr(\gls{x})^2}\left(\sum_{lmpq}e_{lm}xe_{pq}xe_{ml}e_{qp}\right)\left(\sum_{wt}\gls{bi}(w,t)wtw^{\ast}t^{\ast}\right).
\end{align}
The first sum is simply the element $4R\Tr(x)\gls{unit}$. The second, with some algebraic manipulation, can be seen to satisfy
\begin{align}
\sum_{wt}\tilde{\lambda}(w,t)wtw^{\ast}t^{\ast}=\frac{11}{2}-2\Lambda+ \frac{\Lambda^2}{2}
\end{align}
with $\Lambda = \tilde{\lambda}(\hat{\imath},\hat{\imath}) + \tilde{\lambda}(\hat{\jmath},\hat{\jmath}) + \tilde{\lambda}(\hat{k},\hat{k})$. The element $\gls{chi}$ is constructed in an analogous fashion:
\begin{align}
\chi=\frac{1}{16R^2\Tr(x)^2}\sum_{lmpq}xe_{lm}e_{pq}e_{ml}e_{qp}x\sum_{wt}\tilde{\lambda}(w,w)\tilde{\lambda}(w,t)\tilde{\lambda}(t,t)wtw^{\ast}t^{\ast}.
\end{align}
and again the first sum equates to $4R\Tr(x)1$. We are then able to verify
\begin{equation}
\sum_{wt}\tilde{\lambda}(w,w)\tilde{\lambda}(w,t)\tilde{\lambda}(t,t)wtw^{\ast}t^{\ast}
=\sum_{wt}\tilde{\lambda}(w,t)wtw^{\ast}t^{\ast}-\left(\Lambda -1\right)\left(\Lambda -3\right)\left(\Lambda +3\right).
\end{equation}
It is easy to see $\Lambda \in \lbrace -3,-1,1 \rbrace$ (the canonical crossing would correspond to the choice $\Lambda=3$). Therefore only the crossings satisfying $\Lambda=-1$ distinguish spin structures. 
The partition function reads  
\begin{align} \label{eq:quaternion_spin}
\gls{Z}(\gls{Sigma_g},\gls{s})=f(\Lambda,s)R^{1-g}\Tr(x)^{1-g},
\hspace{5mm}
f(\Lambda,s)=
\begin{cases}
2^{2g} &(\Lambda=-3) \\ 
P(s)2^{g}&(\Lambda=-1) \\ 
1 &(\Lambda=+1) 
\end{cases}.
\end{align}
\end{example}

A natural question is whether algebras with symmetric Frobenius forms, and a crossing respecting the conditions of definition~\ref{def:spin-model} and the curl-free condition always give rise to an FHK state sum model. This is not, however, the case as it can be seen from the example below.

\begin{example}[Algebras $\gls{A}=\Mb_n(\Cb)$] \label{ex:G_general}
As studied in \cite{Bahturin} $\Mb_n(\Cb)$ can be regarded as an $\gls{H}$-graded algebra where the group is $H = \Zb_n^{(1)} \times \Zb_n^{(2)}$ and we denote the generators as $r_1=a$ and $r_2=b$. $\Mb_{n}(\Cb)$ is decomposed into $n^2$ components as follows. Let $\xi \in \Cb$ be a  primitive $n$-th root of unit and define the matrices $X_a=\text{diag}(\xi^{n-1},\cdots,\xi,1)$ and $Y_b=e_{n1}+\sum_{m=1}^{n-1}e_{m(m+1)}$ \cite{Bahturin}. Then, the $X_a^{\alpha}Y_b^{\beta}$ are linearly independent. Let us denote the one-dimensional subspace generated by each $X_a^{\alpha}Y_b^{\beta}$ as $A_h$, $h=a^{\alpha}b^{\beta}$. The fact $X_a^iY_b^jX_a^{i'}Y_b^{j'}=\xi^{-ji'}X_a^{i+i'}Y_b^{j+j'}$ implies the decomposition $\oplus_{h \in H} A_h$ is indeed a group grading.

The expression
\begin{align}
\gls{bi}(a^ib^j,a^{i'}b^{j'})=\tilde{\lambda}(a,a)^{ii'}\tilde{\lambda}(b,b)^{jj'}\tilde{\lambda}(a,b)^{ij'-ji'}
\end{align}
is the most general we can have for a grading. The constants $\tilde{\lambda}(a,a)$, $\tilde{\lambda}(b,b)$ and $\tilde{\lambda}(a,b)$ are $n$-roots of unit. In addition we must have $\tilde{\lambda}(a,a)^2=\tilde{\lambda}(b,b)^2=\gls{field_unit}$. Therefore, if $n$ is odd we must have $\tilde{\lambda}(a,a)=\tilde{\lambda}(b,b)=1_k$.
 
It is necessary to verify condition \ref{it:cr-axiom1}, which is to say that $\gls{eps} (a_h)=0$ for all $h \neq 1$. This means the Frobenius form $\varepsilon(a)=\Tr(xa)$ must be such that $\gls{x} \in A_1$. In other words, the Frobenius form is required to be symmetric: $x=\gls{R}\,n.\gls{unit}$. To show that $\varepsilon (a_h)=0$ is satisfied under these conditions, note that $\varepsilon (Y_b^jX_a^i)=\varepsilon (X_a^iY_b^j)=\xi^{ij}\varepsilon (Y_b^jX_a^i)$. Since $\xi$ is a primitive root of unity $\varepsilon(a_h)=0$ is verified whenever $i,j$ are both non-zero. On the other hand, we have $X_a^iY_b^jX_a^{-1}=\xi^{ij}Y_b^j$. This implies $\varepsilon(X_a^i)=\varepsilon(Y_b^j)=0$ whenever $i,j \neq 0$. 

The bilinear form reads
\begin{align}
\gls{Bb}= \frac{1}{Rn^2}\sum_{ij} \xi^{-ij} X_a^iY_b^j \otimes X_a^{-i}Y_b^{-j}
\end{align}
and some algebraic manipulations allow us to conclude:
\begin{align}
\gls{eta} &= \frac{1}{R^2n^4}\sum_{ijkl} \left(\xi^{-1} \tilde{\lambda}(a,b)\right)^{il-jk} \tilde{\lambda}(a,a)^{ik}\tilde{\lambda}(b,b)^{jl}, \\
\gls{chi} &= \frac{1}{R^2n^4}\sum_{ijkl} \left(\xi^{-1} \tilde{\lambda}(a,b)\right)^{il-jk} \tilde{\lambda}(a,a)^{i+ik+k}\tilde{\lambda}(b,b)^{j+jl+l}.
\end{align}
We can see the constant $\tilde{\lambda}(a,b)$ does not play a role in determining whether the model distinguishes spin structures or not. Therefore, we will set $\tilde{\lambda}(a,b)=\xi$ for simplicity. Noting that $\sum_{ij=1}^{2p} (-1)^{ij}=2p$ and $\sum_{ij=1}^{2p} (-1)^{i+ij+j}=-2p$, and defining $\Lambda = \gls{bi}(a,a)+ \tilde{\lambda}(b,b)$ we conclude 
\begin{align}
\gls{Z}(\gls{Sigma_g},\gls{s})=f(\Lambda,s)R^{2-2g}n^2,
\hspace{5mm}
f(\Lambda,s)=
\begin{cases}
n^{-2g}&\left(\Lambda=-2\right)  \\
P(s)n^{-g}&\left(\Lambda=0\right)  \\
1&\left(\Lambda=+2\right)  
\end{cases}.
\end{align}
\end{example}

The techniques introduced thus far allowed us to create a rich class of examples. In particular, we now know that any algebra $\gls{A}$ with a simple component isomorphic to $\Mb_{2n}(\Rb)$, $\Mb_{n}(\Cb_{\Rb})$, $\Mb_{n}(\Hb_{\Rb})$ or $\Mb_{2n}(\Cb)$ can be equipped with a crossing giving rise to a non-topological spin model. However, there are some algebras for which we can never find a non-trivial crossing by constructing one from the simple components of the algebra. These are commutative algebras $A=\bigoplus_{i=1}^N A_i$ with a trivial centre, and are isomorphic to a number of copies over either $\Cb$ or $\Rb$. Each simple component $A_i$ is one-dimensional which means the space $A_i \otimes A_i$ is also one-dimensional. Consequently, $\lambda_i \colon A_i \otimes A_i \to A_i \otimes A_i$ has to be the canonical crossing. The commutative complex algebras are also isomorphic to $\Cb H$  for some abelian group $\gls{H}$ with $|H|=N$. We can explore this isomorphism to generate crossings from bicharacters. 

\begin{example}\label{ex:H_awesome}
The algebra $\Cb H$ for abelian $H$ can naturally be seen as $H$-graded with each element $h \in H$ generating the linear subspace $A_h$. If we take the group elements as the algebra basis it is easy to conclude the Frobenius form $\gls{eps}(h)=\gls{R}|H|\,\delta_{h,1}$ and $\gls{Bb}=R^{-1}|H|^{-1}\sum_{h \in H} h \otimes h^{-1}$ (the Frobenius form must be symmetric because the algebra is commutative). If we choose the crossing to be given by equation \eqref{eq:bi_def} the preferred elements of the algebra satisfy 
\begin{align}
\gls{eta}&=\frac{\gls{unit}}{R^2|H|^2}\sum_{h,l}\tilde{\lambda}(h,l), \label{eq:eta_com_group}\\
\gls{chi}&=\frac{1}{R^2|H|^2}\sum_{h,l}\tilde{\lambda}(h,l)\tilde{\lambda}(h,h)\tilde{\lambda}(l,l) \label{eq:chi_com_group}.
\end{align}  
Using the classification of bicharacters of proposition~\ref{prop:graded_roots} where $H=\Zb_{n_1} \times \cdots \times \Zb_{n_p}$, $|H|=n_1\cdots n_p$ we can rewrite  the identities above as
\begin{align}
\gls{eta}&=\frac{\gls{unit}}{\gls{R}^2|H|^2}\prod_{ij=1}^p\sum_{\alpha_i=1}^{n_i}\sum_{\beta_j=1}^{n_j}\tilde{\lambda}(r_i,r_j)^{\alpha_i\beta_j},\\
\gls{chi}&=\frac{1}{R^2|H|^2}\prod_{ij=1}^p\sum_{\alpha_i=1}^{n_i}\sum_{\beta_j=1}^{n_j}\tilde{\lambda}(r_i,r_j)^{\alpha_i\beta_j}\tilde{\lambda}(r_i,r_i)^{\alpha_i}\tilde{\lambda}(r_j,r_j)^{\beta_j}.
\end{align}  
We can see the factors $\tilde{\lambda}(r_i,r_j)$ for $i \neq j$ do not contribute for $\eta$ and $\chi$ to be distinct. Therefore, we set $\tilde{\lambda}(r_i,r_j)=1_k$ whenever $i \neq j$. The expressions for $\eta$ and $\chi$ are simplified to
\begin{align}
\eta&=\frac{1}{R^2|H|^2}\prod_{i=1}^p\sum_{\alpha_i,\beta_i=1}^{n_i}\tilde{\lambda}(r_i,r_i)^{\alpha_i\beta_i} = \frac{1}{R^2|H|^2}\prod_{i=1}^p \eta_i,\\
\chi&=\frac{1}{R^2|H|^2}\prod_{i=1}^p\sum_{\alpha_i,\beta_i=1}^{n_i}\tilde{\lambda}(r_i,r_i)^{\alpha_i\beta_i+ \alpha_i +\beta_i}= \frac{1}{R^2|H|^2}\prod_{i=1}^p \chi_i.
\end{align}  
Each term in the sum over $i$ can be independently calculated. If $n_i$ is odd then $\tilde{\lambda}(r_i,r_i)=1_k$ and $\eta_i=\chi_i=n^2_i$. If $n_i$ is even it is possible to choose $\tilde{\lambda}(r_i,r_i)=-1_k$. For this choice, as discussed in example~\ref{ex:G_general}, we have $\eta_i=-\chi_i=n_i$. Let $I$ be the set of indices $i$ for which $\tilde{\lambda}(r_i,r_i)=-1_k$ and denote $\prod_{i \not\in I} n_i=N_{I}$. Then,
\begin{align}
\eta=1.\frac{N_I}{R^2|H|}, \hspace{5mm}\chi=(-1_k)^{|I|}\eta
\end{align}
where $|I|$ denotes the cardinality of $I$. The partition function reads
\begin{align} \label{eq:Z_p}
\gls{Z}(\gls{Sigma_g},\gls{s})=P(s)^{|I|}R^{2-2g}N_I^g|H|^{1-g}.
\end{align}
When $I=\empty$ and $p=0,1$ the equation above should be compared with expression \eqref{eq:group_inv}: it is easy to verify \eqref{eq:Z_p} does reduce to $R^{2-2g}|H|$ since $N_I=|H|$ under the quoted circumstances. 

The construction we have just described cannot be used to differentiate spin structures if the algebra is odd-dimensional. However, for $|\gls{H}|$ odd we can always regard $\Cb H \simeq \Cb \oplus \Cb N$ as a $\Zb_1 \times N$ algebra ($|H|=1+|N|$ ). Then we know
\begin{align}
Z(\Sigma_g,s,\Cb H)=R^{2-2g}+Z(\Sigma_g,s,\Cb N).
\end{align} 
Of course, this process can be generalised to include the gradings generated by all abelian  $M \times N$ such that $|M|+|N|=|H|$. Currently, we conjecture that all distinct partition functions for algebras $\Cb \gls{H}$ can be created from crossings generated in this way. The first few iterations of this conjecture (for $|H|=2,3,4$) have been established using the program in appendix~\ref{app:code}.
\end{example}

We finish this section by presenting crossings that can be used for group algebras $\Cb H$ when $H$ is not abelian.

\begin{example} \label{ex:non_abelian}
Let $\Cb H$ be a group algebra of dimension $|H|$. Let the Frobenius form be chosen to be symmetric for simplicity: $\gls{eps}(h)=\gls{R}|H|\delta_{h,1}$. Remember that a bicharater $\gls{bi} \colon H \times H \to \Cb$ must satisfy $\tilde{\lambda}(g,h)=\tilde{\lambda}(lgl^{-1},h)$. Therefore, $\tilde{\lambda} \colon H/\Inn(H) \times H/\Inn(H) \to \gls{k}$ , where $\Inn(H)$ denotes the group of innner $H$-automorphisms and satisfies $H/\Inn(H)=\mathcal{Z}(H)$. In other words, $\tilde{\lambda}(h,1)=\gls{field_unit}$ if $h \in \Inn(H)$.  

Note $\gls{Bb}=R^{-1}|H|^{-1}\sum_{h \in H} h \otimes h^{-1}$. The expression for $\eta$ can then be decomposed as follows, if we use the elements $h \in H$ as a basis. 
\begin{align}
R^2|H|^2\, \gls{eta}&= \sum_{h,l} hlh^{-1}l^{-1}\tilde{\lambda}(h,l)\notag \\
&=\sum_{h,l \in \mathcal{Z}(H)} \tilde{\lambda}(h,g)\gls{unit} + 2\sum_{\substack{h \in \mathcal{Z}(H)\\ l \in \Inn(H)}}1 + \sum_{h,l \in \Inn(H)}lhl^{-1}h^{-1}
\end{align}
On one hand, the first sum can be identified with the element $\eta$ created by a $\Cb\mathcal{Z}(H)$ algebra for some bicharacter $\tilde{\lambda}$ -- see equation \eqref{eq:eta_com_group}. On the other hand, the last sum can be identified with the preferred element created for a $\Cb\Inn(H)$ algebra when using the canonical crossing. These elements shall be denoted $\eta^{\tilde{\lambda}}_{\mathcal{Z}(H)}$ and $\eta_{\,\Inn(H)}$ respectively. Note as well the term $\sum_{h \in \mathcal{Z}(H),\, l \in \Inn(H)}1$ reduces to $|\mathcal{Z}(H)||\Inn(H)|1=|H|1$:
\begin{align}
\eta=\frac{|\mathcal{Z}(H)|^2}{|H|^2}\eta^{\lambda}_{\mathcal{Z}(H)} + \frac{2}{R^2|H|}1 + \frac{1}{|\mathcal{Z}(H)|^2}\eta_{\,\Inn(H)}.
\end{align}
The element $\gls{chi}$ decomposes as follows.
\begin{align}
R^2|H|^2\, \chi = \sum_{h,l \in \mathcal{Z}(H)} \tilde{\lambda}(h,g)\tilde{\lambda}(h,h)\tilde{\lambda}(g,g)1 + 2\sum_{\substack{h \in \mathcal{Z}(H)\\ l \in \Inn(H)}}\tilde{\lambda}(h,h)1 + \sum_{h,l \in \Inn(H)}lhl^{-1}h^{-1} 
\end{align}
To compute the sum $\sum_{h \in \mathcal{Z}(H), \,l \in \Inn(H)}\tilde{\lambda}(h,h)1=|\Inn(H)|\sum_{h \in \mathcal{Z}(H)}\tilde{\lambda}(h,h)1$ we use the decomposition of proposition~\ref{prop:graded_roots}:
\begin{align}
\sum_{h \in \mathcal{Z}(H)}\gls{bi}(h,h)\gls{unit}=\prod_{i=1}^p\sum_{\alpha_i=1}^{n_i}\tilde{\lambda}(r_i,r_i)^{\alpha_i}=|\mathcal{Z}(H)|\delta_{|I|,0}.
\end{align}
Note the last step uses the fact $\sum_{\alpha=1}^{2n}(-\gls{field_unit})^{\alpha}=0$. With a definition for $\chi^{\tilde{\lambda}}_{\mathcal{Z}(H)}$ similar to that of $\eta^{\tilde{\lambda}}_{\mathcal{Z}(H)}$ we find
\begin{align}
\gls{chi}= \frac{|\mathcal{Z}(H)|^2}{|H|^2}\chi^{\lambda}_{\mathcal{Z}(H)} + \frac{2\delta_{|I|,0}}{\gls{R}^2|H|}1 + \frac{1}{|\mathcal{Z}(H)|^2}\eta_{\,\Inn(H)}.
\end{align}
For simplicity, we will calculate partition functions for bicharacters satisfying $\tilde{\lambda}(r_i,r_j)=1_k$ whenever $i \neq j$ as in the previous example. It is easy to see that $\gls{eta}=\chi$ when $|I|=0$ (which is to say the bicharacter is trivial). Therefore, we will concentrate on the case $|I|\neq 0$. Recall that under these restrictions $\eta^{\tilde{\lambda}}_{\mathcal{Z}(H)}=R^{-2}N_I|\mathcal{Z}(H)|^{-1}1$ and $\chi^{\tilde{\lambda}}_{\mathcal{Z}(H)}=(-1)^{|I|}\eta^{\tilde{\lambda}}_{\mathcal{Z}(H)}$.
\begin{align}
\eta&=\frac{2|H|+N_I|\mathcal{Z}(H)|}{R^2|H|^2}1 + \frac{1}{|\mathcal{Z}(H)|^2}\eta_{\,\Inn(H)}\\
\chi&= \frac{(-1)^{|I|}N_I|\mathcal{Z}(H)|}{R^2|H|^2}1 + \frac{1}{|\mathcal{Z}(H)|^2}\eta_{\,\Inn(H)}
\end{align}
We start with case $\gls{s}=\text{even}$. We note that $\gls{eps}_{H}=|\mathcal{Z}(H)|\varepsilon_{\Inn(H)}$. Therefore, using the binomial decomposition of $\eta^g$ we can relate the partition function for $\Cb \gls{H}$ with the partition function for $\Cb \Inn(H)$. Moreover, since the model for $\Cb \Inn(H)$ is an FHK model we can use expression \eqref{eq:group_inv} for further simplification. The representations of $\Inn(H)$ have labels $j \in J$ and we denote $|\Inn(H)|^{-1}N_I=d_{H,I}$.
\begin{align}
\gls{Z}(\gls{Sigma_g},\text{even})= R^{2-2g}\mathcal{Z}(H)|^{1-g}\sum_{k=0}^g\sum_{j \in J}({}_{k}^{g})\left(d_{H,I}+2\right)^k(\dim j)^{2-2(g-k)}
\end{align}
Reordering the sums and performing the summation over $k$ the expression can be simplified to:
\begin{align}
Z(\Sigma_g,\text{even})= R^{2-2g}|H|^{-g}|\mathcal{Z}(H)|^{1-g}\sum_{j \in J}\left(d_{H,I}+2+(\dim j)^{-2}\right)^{g}(\dim j)^{2}.
\end{align}
A similar treatment for the case $s=\text{odd}$ allows us to write the partition function as
\begin{align}
\gls{Z}(\gls{Sigma_g},\gls{s})&=\gls{R}^{2-2g}|H|^{-g}|\mathcal{Z}(H)|^{1-g}\sum_{j \in J}\left(d_{H,I}+2+(\dim j)^{-2}\right)^{g-1}(\dim j)^{2}f(s,j), \notag \\
f(s,j) &= 
\begin{cases}
d_{H,I}+2+(\dim j)^{-2} & (s \text{ even})\\
P(s)^{|I|}d_{H,I}+(\dim j)^{-2} & (s \text{ odd})
\end{cases}.
\end{align}
\end{example}

\chapter{Planar and spherical defect models}\label{ch:defects}

\section{Planar models with defects}

\begin{figure}
     \begin{subfigure}[t!]{0.32\textwidth}
                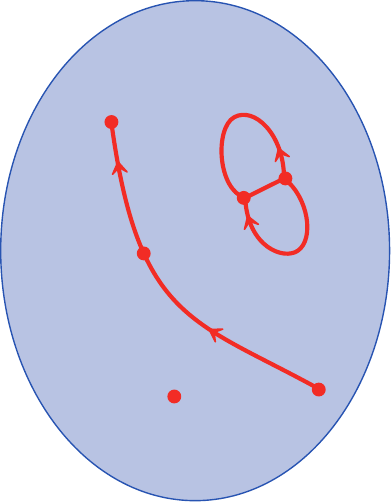
                \caption{}
                \label{fig:zoom1}
      \end{subfigure}
      \begin{subfigure}[t!]{0.32\textwidth}
                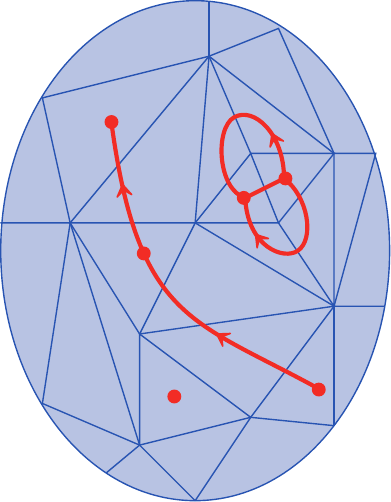
                \caption{}
                \label{fig:zoom2}
      \end{subfigure}
     \begin{subfigure}[t!]{0.32\textwidth}
                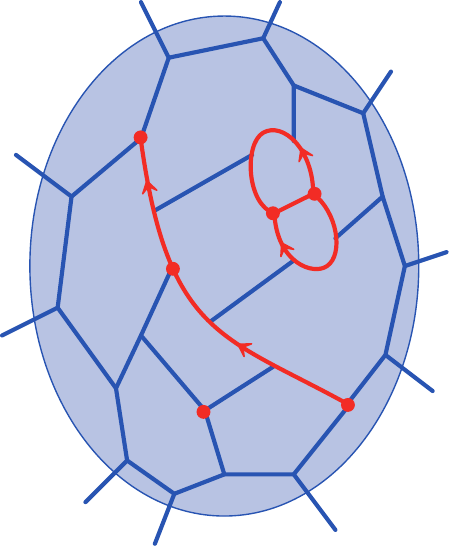
                \caption{}
                \label{fig:zoom3}
      \end{subfigure}
\caption[Defect graphs]{Graphs are embedded in the surface and referred to as defects. We introduce a triangulation respecting conditions \ref{it:defect1} to \ref{it:defect4}. To construct a diagram we regard the defects as elements of the dual diagram of the surface.}
\label{fig:graph}
\end{figure} 

The introduction of defects in FHK models has been studied by Davydov, Kong and Runkel in the context of topological quantum field theory \cite{Runkel}. In this section we will discern how their framework can be understood within the more general theory of planar state sum models. 

A diagrammatic state sum model is defined for a triangulated compact subset $M \subset \Rb^2$ (see section \S\ref{sec:planar}). Our new model is constructed for an extension of this type of space: a compact subset $M$ of $\Rb^2$ that comes equipped with a collection of embedded graphs. A graph $\gls{Gamma}$ is defined as a pair $(\Gamma_0,\Gamma_1)$ where $\Gamma_0$ is a finite set of labelled nodes $\mathfrak{i}$ and $\Gamma_1$ is a finite set of pairs of ordered nodes $(\mathfrak{i},\mathfrak{j})$ in $\Gamma_0$ called arrows. These come labelled as $\mathfrak{b}_{\mathfrak{i}\mathfrak{j}}$ where $\mathfrak{b}$ indexes the number of arrows $(\mathfrak{i},\mathfrak{j})$. 
$$
\hspace{40mm}
\begingroup%
  \makeatletter%
  \providecommand\color[2][]{%
    \errmessage{(Inkscape) Color is used for the text in Inkscape, but the package 'color.sty' is not loaded}%
    \renewcommand\color[2][]{}%
  }%
  \providecommand\transparent[1]{%
    \errmessage{(Inkscape) Transparency is used (non-zero) for the text in Inkscape, but the package 'transparent.sty' is not loaded}%
    \renewcommand\transparent[1]{}%
  }%
  \providecommand\rotatebox[2]{#2}%
  \ifx\svgwidth\undefined%
    \setlength{\unitlength}{171.45849609bp}%
    \ifx\svgscale\undefined%
      \relax%
    \else%
      \setlength{\unitlength}{\unitlength * \real{\svgscale}}%
    \fi%
  \else%
    \setlength{\unitlength}{\svgwidth}%
  \fi%
  \global\let\svgwidth\undefined%
  \global\let\svgscale\undefined%
  \makeatother%
  \begin{picture}(1,0.29763945)%
    \put(0,0){\includegraphics[width=\unitlength]{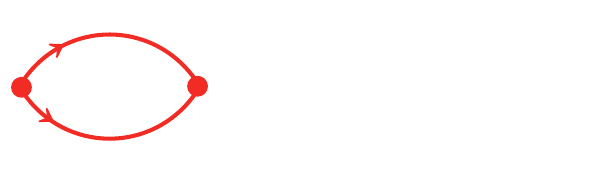}}%
    \put(0.16919125,0.2611647){\color[rgb]{0,0,0}\makebox(0,0)[lb]{\smash{$\mathfrak{1}_{\mathfrak{i}\mathfrak{j}}$}}}%
    \put(0.16536223,0.00981925){\color[rgb]{0,0,0}\makebox(0,0)[lb]{\smash{$\mathfrak{2}_{\mathfrak{i}\mathfrak{j}}$}}}%
    \put(-0.00166312,0.17632672){\color[rgb]{0,0,0}\makebox(0,0)[lb]{\smash{$\mathfrak{i}$}}}%
    \put(0.35064617,0.17124708){\color[rgb]{0,0,0}\makebox(0,0)[lb]{\smash{$\mathfrak{j}$}}}%
  \end{picture}%
\endgroup%

$$
We also restrict nodes to be at most trivalent: this means there are at most three $\mathfrak{b}_{\mathfrak{i}\mathfrak{j}}$ for each $\mathfrak{i}$. An example of this new type of space is illustrated in figure~\ref{fig:graph} and embedded graphs will also be referred to as defects. Note the embeddings are restricted to the plane which is to say that we do not allow for graphs to intersect each other.

The discrete structure of the new model is again given by a triangulation of $(M,\gls{Gamma})$. In this case, however, we restrict the possible choices of triangulation. 

\begin{definition} \label{def:graph_like}
A triangulation $\gls{T}$ of $(M,\gls{Gamma})$, $M \subset \Rb^2$ is called graph-like if the following conditions are satisfied.
\begin{enumerate}[label=F\arabic{*}), ref=(F\arabic{*})]
\item \label{it:defect1} Each defect arrow must intersect only edges and triangles of $T$ and it must intersect an edge at least once. 
\item \label{it:defect2} Edges may be intersected by defect arrows at most once.
\item \label{it:defect3} Each defect node must intersect exactly one triangle.
\item \label{it:defect4} Each triangle contains at most one defect node; if a node is present only defect arrows starting or ending at that node are allowed to cross the triangle.
\end{enumerate}
\end{definition}

The point of choosing a graph-like triangulation is the unambiguous construction of the dual diagram $\gls{G}$ -- this is achieved by regarding the graph components as part of $G$ and creating the entire dual diagram by simply adding the counterparts of surface vertices, edges and triangles (see figure~\ref{fig:graph}). Note that if a triangulation is not graph-like we can always generate a new one from it that is, by a (finite) series of Pachner moves. A straightforward consequence of definition~\ref{def:graph_like} is that the boundary of $M$ is not intersected by defects -- arrows begin and end at nodes and nodes cannot be placed at edges or vertices of the triangulation, the only components of a boundary.   

We assume that a planar model with defects must reduce to a planar model when all graphs present are the empty graph. Therefore, for the parts of the diagram that do not involve defects we will use the maps $\gls{Ct}$, $\gls{Bb}$ and $\gls{R}$ for the duals of triangles, pairs of edges and vertices respectively. However, the introduction of defects presents us with a collection of new variables that we must fix. 

We start with the elements of the dual diagram that involve defect arrows $\mathfrak{b}_{\mathfrak{ij}}$ but not nodes. Fix the pair of nodes $(\mathfrak{i},\mathfrak{j})$. To each arrow $\mathfrak{b}$ we associate a set of states $\alpha, \beta, \gamma \in \gls{Tb}$ as we did for edges in the surface, here labeled with states $a,b,c \in \gls{S}$. We will denote the corresponding arrow $\mathfrak{b}_{\mathfrak{ji}}$, $\mathfrak{b}_{\mathfrak{ij}}$ with reversed orientation, as $\mathfrak{b}^{\times}$ and the new set of states as $\mathfrak{T}(\mathfrak{b}^{\times})$. Then the counterparts of the constants $\gls{C}$ and $\gls{B}$ for defect lines are as follows.
\begin{align}
\begin{aligned}
\hspace{25mm}
&
\begingroup%
  \makeatletter%
  \providecommand\color[2][]{%
    \errmessage{(Inkscape) Color is used for the text in Inkscape, but the package 'color.sty' is not loaded}%
    \renewcommand\color[2][]{}%
  }%
  \providecommand\transparent[1]{%
    \errmessage{(Inkscape) Transparency is used (non-zero) for the text in Inkscape, but the package 'transparent.sty' is not loaded}%
    \renewcommand\transparent[1]{}%
  }%
  \providecommand\rotatebox[2]{#2}%
  \ifx\svgwidth\undefined%
    \setlength{\unitlength}{282.16124268bp}%
    \ifx\svgscale\undefined%
      \relax%
    \else%
      \setlength{\unitlength}{\unitlength * \real{\svgscale}}%
    \fi%
  \else%
    \setlength{\unitlength}{\svgwidth}%
  \fi%
  \global\let\svgwidth\undefined%
  \global\let\svgscale\undefined%
  \makeatother%
  \begin{picture}(1,0.15358301)%
    \put(0,0){\includegraphics[width=\unitlength]{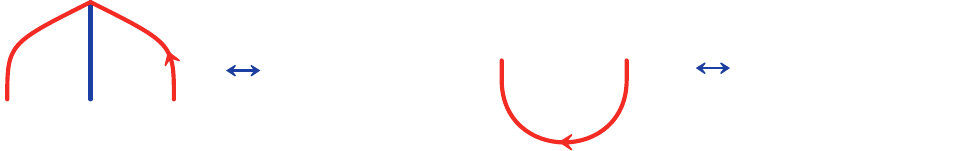}}%
    \put(-0.00101061,0.01310383){\color[rgb]{0,0,0}\makebox(0,0)[lb]{\smash{$\alpha$}}}%
    \put(0.08515608,0.01289478){\color[rgb]{0,0,0}\makebox(0,0)[lb]{\smash{$a$}}}%
    \put(0.1699187,0.01356795){\color[rgb]{0,0,0}\makebox(0,0)[lb]{\smash{$\beta$}}}%
    \put(0.29709519,0.07173184){\color[rgb]{0,0,0}\makebox(0,0)[lb]{\smash{$L_{\alpha a \beta}$}}}%
    \put(0.50404758,0.11095987){\color[rgb]{0,0,0}\makebox(0,0)[lb]{\smash{$\alpha$}}}%
    \put(0.62849871,0.11271252){\color[rgb]{0,0,0}\makebox(0,0)[lb]{\smash{$\beta$}}}%
    \put(0.77823241,0.07487037){\color[rgb]{0,0,0}\makebox(0,0)[lb]{\smash{$P^{\alpha\beta}$}}}%
  \end{picture}%
\endgroup%
 \\
\hspace{25mm}
&
\begingroup%
  \makeatletter%
  \providecommand\color[2][]{%
    \errmessage{(Inkscape) Color is used for the text in Inkscape, but the package 'color.sty' is not loaded}%
    \renewcommand\color[2][]{}%
  }%
  \providecommand\transparent[1]{%
    \errmessage{(Inkscape) Transparency is used (non-zero) for the text in Inkscape, but the package 'transparent.sty' is not loaded}%
    \renewcommand\transparent[1]{}%
  }%
  \providecommand\rotatebox[2]{#2}%
  \ifx\svgwidth\undefined%
    \setlength{\unitlength}{283.03624268bp}%
    \ifx\svgscale\undefined%
      \relax%
    \else%
      \setlength{\unitlength}{\unitlength * \real{\svgscale}}%
    \fi%
  \else%
    \setlength{\unitlength}{\svgwidth}%
  \fi%
  \global\let\svgwidth\undefined%
  \global\let\svgscale\undefined%
  \makeatother%
  \begin{picture}(1,0.15308475)%
    \put(0,0){\includegraphics[width=\unitlength]{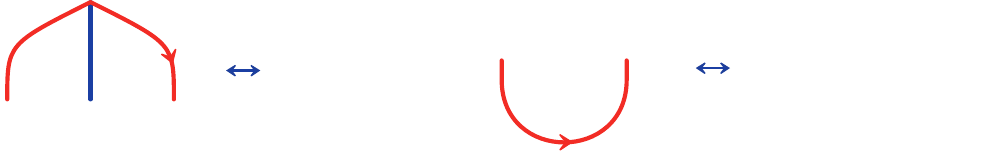}}%
    \put(-0.00100749,0.01303985){\color[rgb]{0,0,0}\makebox(0,0)[lb]{\smash{$\alpha$}}}%
    \put(0.08489283,0.01283145){\color[rgb]{0,0,0}\makebox(0,0)[lb]{\smash{$a$}}}%
    \put(0.1693934,0.01350254){\color[rgb]{0,0,0}\makebox(0,0)[lb]{\smash{$\beta$}}}%
    \put(0.29617673,0.07148662){\color[rgb]{0,0,0}\makebox(0,0)[lb]{\smash{$R_{\alpha a \beta}$}}}%
    \put(0.50248933,0.11059337){\color[rgb]{0,0,0}\makebox(0,0)[lb]{\smash{$\alpha$}}}%
    \put(0.62655572,0.11234061){\color[rgb]{0,0,0}\makebox(0,0)[lb]{\smash{$\beta$}}}%
    \put(0.77582652,0.07461545){\color[rgb]{0,0,0}\makebox(0,0)[lb]{\smash{$Q^{\alpha\beta}$}}}%
  \end{picture}%
\endgroup%
 
\end{aligned}
\end{align}
Note that the constants $L_{\alpha a\beta}$, $R_{\alpha a\beta}$, $P^{\alpha\beta}$, $Q^{\alpha\beta}$ are specific to each defect arrow, a dependence we leave implicit. Combinations of these maps are used to define diagrams with one leg pointing upwards.
\begin{align}
\begingroup%
  \makeatletter%
  \providecommand\color[2][]{%
    \errmessage{(Inkscape) Color is used for the text in Inkscape, but the package 'color.sty' is not loaded}%
    \renewcommand\color[2][]{}%
  }%
  \providecommand\transparent[1]{%
    \errmessage{(Inkscape) Transparency is used (non-zero) for the text in Inkscape, but the package 'transparent.sty' is not loaded}%
    \renewcommand\transparent[1]{}%
  }%
  \providecommand\rotatebox[2]{#2}%
  \ifx\svgwidth\undefined%
    \setlength{\unitlength}{559.25952148bp}%
    \ifx\svgscale\undefined%
      \relax%
    \else%
      \setlength{\unitlength}{\unitlength * \real{\svgscale}}%
    \fi%
  \else%
    \setlength{\unitlength}{\svgwidth}%
  \fi%
  \global\let\svgwidth\undefined%
  \global\let\svgscale\undefined%
  \makeatother%
  \begin{picture}(1,0.10109027)%
    \put(0,0){\includegraphics[width=\unitlength]{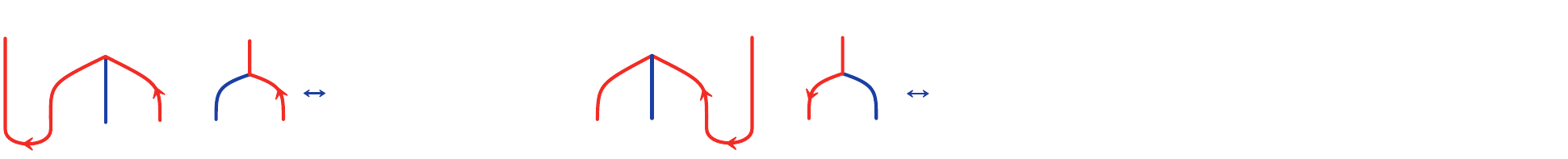}}%
    \put(-0.00050988,0.08865408){\color[rgb]{0,0,0}\makebox(0,0)[lb]{\smash{$\beta$}}}%
    \put(0.06384859,0.00255286){\color[rgb]{0,0,0}\makebox(0,0)[lb]{\smash{$a$}}}%
    \put(0.09834707,0.00284277){\color[rgb]{0,0,0}\makebox(0,0)[lb]{\smash{$\alpha$}}}%
    \put(0.13342537,0.00284277){\color[rgb]{0,0,0}\makebox(0,0)[lb]{\smash{$a$}}}%
    \put(0.17643333,0.00349112){\color[rgb]{0,0,0}\makebox(0,0)[lb]{\smash{$\alpha$}}}%
    \put(0.15497639,0.08931889){\color[rgb]{0,0,0}\makebox(0,0)[lb]{\smash{$\beta$}}}%
    \put(0.21510512,0.03675021){\color[rgb]{0,0,0}\makebox(0,0)[lb]{\smash{$P^{\beta\gamma}L_{\gamma a \alpha}=L_{a\alpha}{}^{\beta},$}}}%
    \put(0.11304455,0.03636265){\color[rgb]{0,0,0}\makebox(0,0)[lb]{\smash{$=$}}}%
    \put(0.3767349,0.00314175){\color[rgb]{0,0,0}\makebox(0,0)[lb]{\smash{$\alpha$}}}%
    \put(0.41152337,0.00343171){\color[rgb]{0,0,0}\makebox(0,0)[lb]{\smash{$a$}}}%
    \put(0.47588176,0.08953293){\color[rgb]{0,0,0}\makebox(0,0)[lb]{\smash{$\beta$}}}%
    \put(0.50977825,0.00359995){\color[rgb]{0,0,0}\makebox(0,0)[lb]{\smash{$\alpha$}}}%
    \put(0.55458036,0.00408001){\color[rgb]{0,0,0}\makebox(0,0)[lb]{\smash{$a$}}}%
    \put(0.53312341,0.08990779){\color[rgb]{0,0,0}\makebox(0,0)[lb]{\smash{$\beta$}}}%
    \put(0.48877906,0.03698066){\color[rgb]{0,0,0}\makebox(0,0)[lb]{\smash{$=$}}}%
    \put(0.59933768,0.03717086){\color[rgb]{0,0,0}\makebox(0,0)[lb]{\smash{$L_{\alpha a \gamma}Q^{\gamma\beta}=L_{\alpha a}{}^{\beta}$}}}%
  \end{picture}%
\endgroup%

\end{align}
\begin{align}
\begingroup%
  \makeatletter%
  \providecommand\color[2][]{%
    \errmessage{(Inkscape) Color is used for the text in Inkscape, but the package 'color.sty' is not loaded}%
    \renewcommand\color[2][]{}%
  }%
  \providecommand\transparent[1]{%
    \errmessage{(Inkscape) Transparency is used (non-zero) for the text in Inkscape, but the package 'transparent.sty' is not loaded}%
    \renewcommand\transparent[1]{}%
  }%
  \providecommand\rotatebox[2]{#2}%
  \ifx\svgwidth\undefined%
    \setlength{\unitlength}{561.04077148bp}%
    \ifx\svgscale\undefined%
      \relax%
    \else%
      \setlength{\unitlength}{\unitlength * \real{\svgscale}}%
    \fi%
  \else%
    \setlength{\unitlength}{\svgwidth}%
  \fi%
  \global\let\svgwidth\undefined%
  \global\let\svgscale\undefined%
  \makeatother%
  \begin{picture}(1,0.10076931)%
    \put(0,0){\includegraphics[width=\unitlength]{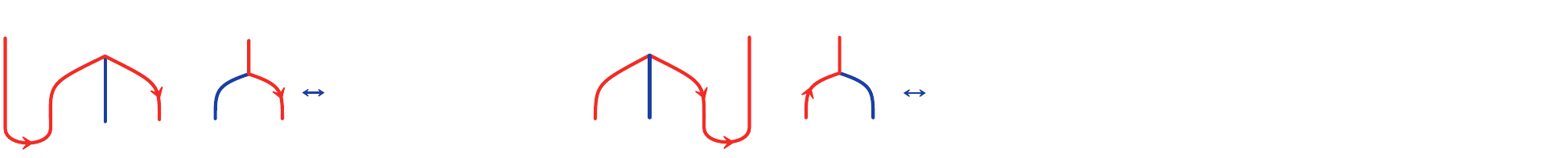}}%
    \put(-0.00050826,0.08837261){\color[rgb]{0,0,0}\makebox(0,0)[lb]{\smash{$\beta$}}}%
    \put(0.06364588,0.00254475){\color[rgb]{0,0,0}\makebox(0,0)[lb]{\smash{$a$}}}%
    \put(0.09803483,0.00283374){\color[rgb]{0,0,0}\makebox(0,0)[lb]{\smash{$\alpha$}}}%
    \put(0.13300176,0.00283374){\color[rgb]{0,0,0}\makebox(0,0)[lb]{\smash{$a$}}}%
    \put(0.17587317,0.00348003){\color[rgb]{0,0,0}\makebox(0,0)[lb]{\smash{$\alpha$}}}%
    \put(0.15448435,0.08903531){\color[rgb]{0,0,0}\makebox(0,0)[lb]{\smash{$\beta$}}}%
    \put(0.21442218,0.03663353){\color[rgb]{0,0,0}\makebox(0,0)[lb]{\smash{$Q^{\beta\gamma}R_{\gamma a \alpha}=R_{a\alpha}{}^{\beta},$}}}%
    \put(0.11268564,0.0362472){\color[rgb]{0,0,0}\makebox(0,0)[lb]{\smash{$=$}}}%
    \put(0.3755388,0.00313178){\color[rgb]{0,0,0}\makebox(0,0)[lb]{\smash{$\alpha$}}}%
    \put(0.41021683,0.00342081){\color[rgb]{0,0,0}\makebox(0,0)[lb]{\smash{$a$}}}%
    \put(0.47437088,0.08924867){\color[rgb]{0,0,0}\makebox(0,0)[lb]{\smash{$\beta$}}}%
    \put(0.50815975,0.00358852){\color[rgb]{0,0,0}\makebox(0,0)[lb]{\smash{$\alpha$}}}%
    \put(0.55281962,0.00406706){\color[rgb]{0,0,0}\makebox(0,0)[lb]{\smash{$a$}}}%
    \put(0.5314308,0.08962234){\color[rgb]{0,0,0}\makebox(0,0)[lb]{\smash{$\beta$}}}%
    \put(0.48722724,0.03686325){\color[rgb]{0,0,0}\makebox(0,0)[lb]{\smash{$=$}}}%
    \put(0.59743484,0.03705285){\color[rgb]{0,0,0}\makebox(0,0)[lb]{\smash{$R_{\alpha a \gamma}P^{\gamma\beta}=R_{\alpha a}{}^{\beta}$}}}%
  \end{picture}%
\endgroup%

\end{align}
Similar combinations allow us to define defect-line diagrams with two or three legs pointing upwards. We must now include the data associated with defect nodes. Allow us to represent by a solid black line a leg that might belong to the surface or a defect, $\begin{aligned} 
\begingroup%
  \makeatletter%
  \providecommand\color[2][]{%
    \errmessage{(Inkscape) Color is used for the text in Inkscape, but the package 'color.sty' is not loaded}%
    \renewcommand\color[2][]{}%
  }%
  \providecommand\transparent[1]{%
    \errmessage{(Inkscape) Transparency is used (non-zero) for the text in Inkscape, but the package 'transparent.sty' is not loaded}%
    \renewcommand\transparent[1]{}%
  }%
  \providecommand\rotatebox[2]{#2}%
  \ifx\svgwidth\undefined%
    \setlength{\unitlength}{1.20146561bp}%
    \ifx\svgscale\undefined%
      \relax%
    \else%
      \setlength{\unitlength}{\unitlength * \real{\svgscale}}%
    \fi%
  \else%
    \setlength{\unitlength}{\svgwidth}%
  \fi%
  \global\let\svgwidth\undefined%
  \global\let\svgscale\undefined%
  \makeatother%
  \begin{picture}(1,11.01848951)%
    \put(0,0){\includegraphics[width=\unitlength]{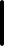}}%
  \end{picture}%
\endgroup%
 = 
\begingroup%
  \makeatletter%
  \providecommand\color[2][]{%
    \errmessage{(Inkscape) Color is used for the text in Inkscape, but the package 'color.sty' is not loaded}%
    \renewcommand\color[2][]{}%
  }%
  \providecommand\transparent[1]{%
    \errmessage{(Inkscape) Transparency is used (non-zero) for the text in Inkscape, but the package 'transparent.sty' is not loaded}%
    \renewcommand\transparent[1]{}%
  }%
  \providecommand\rotatebox[2]{#2}%
  \ifx\svgwidth\undefined%
    \setlength{\unitlength}{1.20146561bp}%
    \ifx\svgscale\undefined%
      \relax%
    \else%
      \setlength{\unitlength}{\unitlength * \real{\svgscale}}%
    \fi%
  \else%
    \setlength{\unitlength}{\svgwidth}%
  \fi%
  \global\let\svgwidth\undefined%
  \global\let\svgscale\undefined%
  \makeatother%
  \begin{picture}(1,11.01848951)%
    \put(0,0){\includegraphics[width=\unitlength]{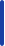}}%
  \end{picture}%
\endgroup%
, 
\begingroup%
  \makeatletter%
  \providecommand\color[2][]{%
    \errmessage{(Inkscape) Color is used for the text in Inkscape, but the package 'color.sty' is not loaded}%
    \renewcommand\color[2][]{}%
  }%
  \providecommand\transparent[1]{%
    \errmessage{(Inkscape) Transparency is used (non-zero) for the text in Inkscape, but the package 'transparent.sty' is not loaded}%
    \renewcommand\transparent[1]{}%
  }%
  \providecommand\rotatebox[2]{#2}%
  \ifx\svgwidth\undefined%
    \setlength{\unitlength}{4.70028763bp}%
    \ifx\svgscale\undefined%
      \relax%
    \else%
      \setlength{\unitlength}{\unitlength * \real{\svgscale}}%
    \fi%
  \else%
    \setlength{\unitlength}{\svgwidth}%
  \fi%
  \global\let\svgwidth\undefined%
  \global\let\svgscale\undefined%
  \makeatother%
  \begin{picture}(1,2.81649491)%
    \put(0,0){\includegraphics[width=\unitlength]{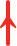}}%
  \end{picture}%
\endgroup%
, 
\begingroup%
  \makeatletter%
  \providecommand\color[2][]{%
    \errmessage{(Inkscape) Color is used for the text in Inkscape, but the package 'color.sty' is not loaded}%
    \renewcommand\color[2][]{}%
  }%
  \providecommand\transparent[1]{%
    \errmessage{(Inkscape) Transparency is used (non-zero) for the text in Inkscape, but the package 'transparent.sty' is not loaded}%
    \renewcommand\transparent[1]{}%
  }%
  \providecommand\rotatebox[2]{#2}%
  \ifx\svgwidth\undefined%
    \setlength{\unitlength}{4.69499741bp}%
    \ifx\svgscale\undefined%
      \relax%
    \else%
      \setlength{\unitlength}{\unitlength * \real{\svgscale}}%
    \fi%
  \else%
    \setlength{\unitlength}{\svgwidth}%
  \fi%
  \global\let\svgwidth\undefined%
  \global\let\svgscale\undefined%
  \makeatother%
  \begin{picture}(1,2.81966848)%
    \put(0,0){\includegraphics[width=\unitlength]{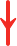}}%
  \end{picture}%
\endgroup%
\end{aligned}$. Then the constant associated with a node is represented as follows, where $I,J,K$ are labels in either $\gls{S}$, $\gls{Tb}$ or $\mathfrak{T}(\mathfrak{b}^{\times})$.
\begin{align}
\begin{aligned}
\label{eq:diag_node}
\hspace{25mm}
\begingroup%
  \makeatletter%
  \providecommand\color[2][]{%
    \errmessage{(Inkscape) Color is used for the text in Inkscape, but the package 'color.sty' is not loaded}%
    \renewcommand\color[2][]{}%
  }%
  \providecommand\transparent[1]{%
    \errmessage{(Inkscape) Transparency is used (non-zero) for the text in Inkscape, but the package 'transparent.sty' is not loaded}%
    \renewcommand\transparent[1]{}%
  }%
  \providecommand\rotatebox[2]{#2}%
  \ifx\svgwidth\undefined%
    \setlength{\unitlength}{120.9078125bp}%
    \ifx\svgscale\undefined%
      \relax%
    \else%
      \setlength{\unitlength}{\unitlength * \real{\svgscale}}%
    \fi%
  \else%
    \setlength{\unitlength}{\svgwidth}%
  \fi%
  \global\let\svgwidth\undefined%
  \global\let\svgscale\undefined%
  \makeatother%
  \begin{picture}(1,0.30411227)%
    \put(0,0){\includegraphics[width=\unitlength]{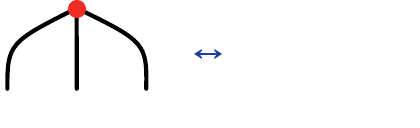}}%
    \put(-0.00235846,0.00681692){\color[rgb]{0,0,0}\makebox(0,0)[lb]{\smash{$I$}}}%
    \put(0.16305682,0.00681692){\color[rgb]{0,0,0}\makebox(0,0)[lb]{\smash{$J$}}}%
    \put(0.32847211,0.00681692){\color[rgb]{0,0,0}\makebox(0,0)[lb]{\smash{$K$}}}%
    \put(0.64606945,0.15238237){\color[rgb]{0,0,0}\makebox(0,0)[lb]{\smash{$\phi_{IJK}$}}}%
  \end{picture}%
\endgroup%

\end{aligned}
\end{align}
In chapter \S\ref{sec:diagram} we discussed how the cyclic symmetry of $\gls{Ct}$ and the identity $\gls{B}=B^{ba}$ could be replaced with a more general requirement, equation \eqref{eq:BCequation}. For FHK models with defects, it is discussed in \cite{Runkel} how the former requirements apply to defect graphs: $P^{\alpha\beta}=P^{\beta\alpha}$, $Q^{\alpha\beta}=Q^{\beta\alpha}$ and $\phi_{IJK}=\phi_{JKI}=\phi_{KIJ}$ whenever $I,J,K$ are labels in the same space guarantee rotational symmetry. Diagrammatic models with defects include a refinement of these conditions. One, we set $B^{ab}P^{\alpha\gamma}L_{b\gamma}{}^{\beta}=B^{ba}Q^{\gamma\beta}L_{\gamma b}{}^{\beta}$ as depicted below,
\begin{align}
\begin{aligned}
\label{eq:defects_cyclicity}
\begingroup%
  \makeatletter%
  \providecommand\color[2][]{%
    \errmessage{(Inkscape) Color is used for the text in Inkscape, but the package 'color.sty' is not loaded}%
    \renewcommand\color[2][]{}%
  }%
  \providecommand\transparent[1]{%
    \errmessage{(Inkscape) Transparency is used (non-zero) for the text in Inkscape, but the package 'transparent.sty' is not loaded}%
    \renewcommand\transparent[1]{}%
  }%
  \providecommand\rotatebox[2]{#2}%
  \ifx\svgwidth\undefined%
    \setlength{\unitlength}{222.0390625bp}%
    \ifx\svgscale\undefined%
      \relax%
    \else%
      \setlength{\unitlength}{\unitlength * \real{\svgscale}}%
    \fi%
  \else%
    \setlength{\unitlength}{\svgwidth}%
  \fi%
  \global\let\svgwidth\undefined%
  \global\let\svgscale\undefined%
  \makeatother%
  \begin{picture}(1,0.34459832)%
    \put(0,0){\includegraphics[width=\unitlength]{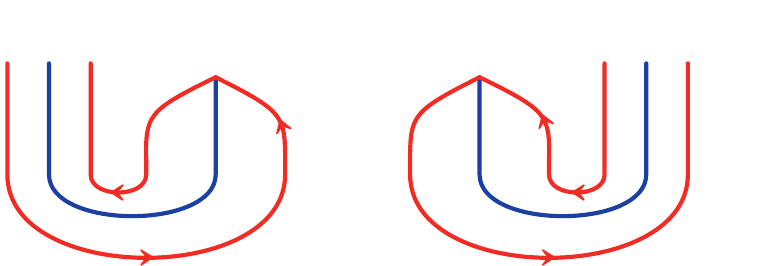}}%
    \put(0.441881,0.17231374){\color[rgb]{0,0,0}\makebox(0,0)[lb]{\smash{$=$}}}%
    \put(-0.00128426,0.31643253){\color[rgb]{0,0,0}\makebox(0,0)[lb]{\smash{$\alpha$}}}%
    \put(0.05276028,0.31643253){\color[rgb]{0,0,0}\makebox(0,0)[lb]{\smash{$a$}}}%
    \put(0.10680483,0.31643253){\color[rgb]{0,0,0}\makebox(0,0)[lb]{\smash{$\beta$}}}%
    \put(0.77335421,0.31643253){\color[rgb]{0,0,0}\makebox(0,0)[lb]{\smash{$\alpha$}}}%
    \put(0.82739875,0.31643253){\color[rgb]{0,0,0}\makebox(0,0)[lb]{\smash{$a$}}}%
    \put(0.8814433,0.31643253){\color[rgb]{0,0,0}\makebox(0,0)[lb]{\smash{$\beta$}}}%
  \end{picture}%
\endgroup%

\end{aligned}
\end{align}
and the analogous relation for $R_{\alpha a \beta}$ that is given by reversing all arrow orientations on equation~\eqref{eq:defects_cyclicity}. Two, whenever the sets of states $I,J,K$ belong to coincide the following equations must be satisfied. 
\begin{align} \label{eq:phi_samespace}
\begin{aligned}
\hspace{15mm}
\begingroup%
  \makeatletter%
  \providecommand\color[2][]{%
    \errmessage{(Inkscape) Color is used for the text in Inkscape, but the package 'color.sty' is not loaded}%
    \renewcommand\color[2][]{}%
  }%
  \providecommand\transparent[1]{%
    \errmessage{(Inkscape) Transparency is used (non-zero) for the text in Inkscape, but the package 'transparent.sty' is not loaded}%
    \renewcommand\transparent[1]{}%
  }%
  \providecommand\rotatebox[2]{#2}%
  \ifx\svgwidth\undefined%
    \setlength{\unitlength}{212.0171875bp}%
    \ifx\svgscale\undefined%
      \relax%
    \else%
      \setlength{\unitlength}{\unitlength * \real{\svgscale}}%
    \fi%
  \else%
    \setlength{\unitlength}{\svgwidth}%
  \fi%
  \global\let\svgwidth\undefined%
  \global\let\svgscale\undefined%
  \makeatother%
  \begin{picture}(1,0.25223486)%
    \put(0,0){\includegraphics[width=\unitlength]{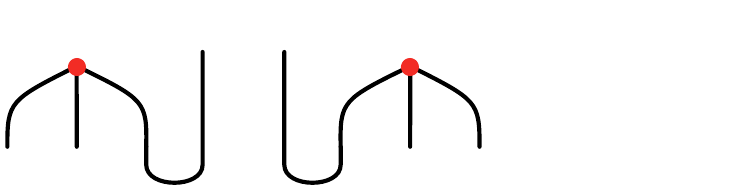}}%
    \put(-0.00134497,0.00388751){\color[rgb]{0,0,0}\makebox(0,0)[lb]{\smash{$I$}}}%
    \put(0.09298701,0.00388751){\color[rgb]{0,0,0}\makebox(0,0)[lb]{\smash{$J$}}}%
    \put(0.70803148,0.07180653){\color[rgb]{0,0,0}\makebox(0,0)[lb]{\smash{$=\phi_{IJ}{}^{K}$}}}%
    \put(0.54578049,0.00388751){\color[rgb]{0,0,0}\makebox(0,0)[lb]{\smash{$I$}}}%
    \put(0.64011246,0.00388751){\color[rgb]{0,0,0}\makebox(0,0)[lb]{\smash{$J$}}}%
    \put(0.31183719,0.07180653){\color[rgb]{0,0,0}\makebox(0,0)[lb]{\smash{$=$}}}%
    \put(0.26278456,0.22273769){\color[rgb]{0,0,0}\makebox(0,0)[lb]{\smash{$K$}}}%
    \put(0.37598293,0.22273769){\color[rgb]{0,0,0}\makebox(0,0)[lb]{\smash{$K$}}}%
  \end{picture}%
\endgroup%

\end{aligned}
\end{align}
Three, the definition of $\phi^{IJK}$ must be unambiguous.
\begin{align} \label{eq:phi_neqspace}
\begin{aligned}
\centering
\begingroup%
  \makeatletter%
  \providecommand\color[2][]{%
    \errmessage{(Inkscape) Color is used for the text in Inkscape, but the package 'color.sty' is not loaded}%
    \renewcommand\color[2][]{}%
  }%
  \providecommand\transparent[1]{%
    \errmessage{(Inkscape) Transparency is used (non-zero) for the text in Inkscape, but the package 'transparent.sty' is not loaded}%
    \renewcommand\transparent[1]{}%
  }%
  \providecommand\rotatebox[2]{#2}%
  \ifx\svgwidth\undefined%
    \setlength{\unitlength}{209.5859375bp}%
    \ifx\svgscale\undefined%
      \relax%
    \else%
      \setlength{\unitlength}{\unitlength * \real{\svgscale}}%
    \fi%
  \else%
    \setlength{\unitlength}{\svgwidth}%
  \fi%
  \global\let\svgwidth\undefined%
  \global\let\svgscale\undefined%
  \makeatother%
  \begin{picture}(1,0.3572003)%
    \put(0,0){\includegraphics[width=\unitlength]{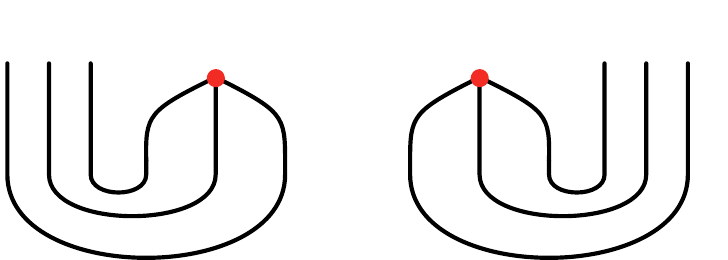}}%
    \put(0.46813658,0.17467896){\color[rgb]{0,0,0}\makebox(0,0)[lb]{\smash{$=$}}}%
    \put(-0.00136057,0.32736096){\color[rgb]{0,0,0}\makebox(0,0)[lb]{\smash{$I$}}}%
    \put(0.05589518,0.32736096){\color[rgb]{0,0,0}\makebox(0,0)[lb]{\smash{$J$}}}%
    \put(0.11315093,0.32736096){\color[rgb]{0,0,0}\makebox(0,0)[lb]{\smash{$K$}}}%
    \put(0.81930518,0.32736096){\color[rgb]{0,0,0}\makebox(0,0)[lb]{\smash{$I$}}}%
    \put(0.87656093,0.32736096){\color[rgb]{0,0,0}\makebox(0,0)[lb]{\smash{$J$}}}%
    \put(0.93381668,0.32736096){\color[rgb]{0,0,0}\makebox(0,0)[lb]{\smash{$K$}}}%
  \end{picture}%
\endgroup%

\end{aligned}
\end{align}
It is easy to see both \eqref{eq:phi_samespace} and \eqref{eq:phi_neqspace} reduce to \eqref{eq:BCequation} when $I,J,K \in \gls{S}$ and $\phi=\gls{Ct}$. 

The partition function reads, again, as the evaluation of the diagram $\gls{Gg}$ associated with the surface with defects $(M,\gls{Gamma})$.
\begin{align}
\gls{Z}(M,\gls{Gamma})=|\gls{Gg}|
\end{align}
The linear algebra interpretation of section \S\ref{sec:lattice_tft} can be extended to defect data. Let $\mathfrak{b}$ be a positively-oriented arrow. We associate a state $\alpha$ of the set $\gls{Tb}$ with a basis element $v_{\alpha}$ of a vector space $\gls{V}$ -- the dependence in $\mathfrak{b}$ is left implicit from this point onwards. By convention, if we reverse the orientation of the arrow $\mathfrak{b}$ a state $\alpha \in \mathfrak{T}(\mathfrak{b}^{\times})$ is associated with a basis element $w_{\alpha}$ of a new vector space that we will denote as $V^{\times}$. The constants $L_{\alpha a \beta}$ and $R_{\alpha a \beta}$ are then interpreted as maps
\begin{align}
L \colon V \times \gls{A} \times V^{\times} \to \gls{k}, \hspace{5mm} (v_{\alpha},e_a,w_{\beta}) \mapsto L_{\alpha a \beta}, \label{eq:L}\\
R \colon V^{\times} \times A \times V \to k, \hspace{5mm} (w_{\alpha},e_a,v_{\beta}) \mapsto R_{\alpha a \beta} \label{eq:R}.
\end{align}
The matrix element $P^{\alpha\beta}$ is seen as defined through $P=v_{\alpha} \otimes w_{\beta} \, P^{\alpha\beta} \in V \otimes V^{\times}$. Similarly, $Q^{\alpha\beta}$ is defined through $Q=w_{\alpha} \otimes v_{\beta} \, Q^{\alpha\beta} \in V^{\times} \otimes V$. Furthermore, for each $V$ we will define a left and right action of $A$:
\begin{align}
\gls{l} \colon A \otimes V \to V, \hspace{5mm} e_a \otimes v_{\alpha} \mapsto  l(e_a \otimes v_{\alpha})=L_{a\alpha}{}^{\beta}v_{\beta},\\
\gls{r} \colon V \otimes A \to V, \hspace{5mm} v_{\alpha} \otimes e_a  \mapsto r(v_{\alpha} \otimes e_a)=R_{\alpha a}{}^{\beta}v_{\beta}. 
\end{align}
Note that our definition of actions of $A$ on a vector space $V$ does not imply at this stage any assumptions other than linearity. For convenience we will also denote $l(e_a \otimes v_{\alpha})=e_a \cdot v_{\alpha}$ and $r(v_{\alpha} \otimes e_a)= v_{\alpha} \cdot e_a$. The interpretation of the symbol $\cdot$ as a multiplication, left or right action is made clear by the vectors being multiplied or acted upon. Left and right actions for $V^{\times}$ also exist: 
\begin{align}
l^{\times} \colon A \otimes V^{\times} \to V^{\times}, \hspace{5mm} e_a \otimes w_{\alpha} \mapsto l^{\times}(e_a \otimes w_{\alpha})=R_{a\alpha}{}^{\beta}w_{\beta},\\
r^{\times} \colon V^{\times} \otimes A \to V^{\times}, \hspace{5mm} w_{\alpha} \otimes e_a  \mapsto  r^{\times}(w_{\alpha} \otimes e_a)=L_{\alpha a}{}^{\beta}w_{\beta}. 
\end{align}
Finally, the constants $\phi_{IJK}$ also have a linear algebra interpretation. Let the states $I$, $J$, $K$ in $U$, $U'$, $U''$ be associated with basis elements $e_I$, $e_J$, $e_K$ of vector spaces $W$, $W'$, $W''$. Then, $\phi \colon W \times W' \times W'' \to k$ is defined according to the relation $\phi(e_I,e_J,e_K)=\phi_{IJK}$. It can also be seen as a map $\phi \colon W \otimes W' \otimes W'' \to \gls{k}$.

We are now prepared to define the concept of defect data. On what follows we consider $M \subset \Rb^2$ to be anti-clockwise oriented.

\begin{definition}
Let $(M,\Gamma)$ be graph-like triangulated and let $G$ be the dual graph that comes associated with it. Let $G_0$ be the set of vertices in $G$, and $G_1$ be its set of edges. Without loss of generality consider all trivalent vertices to have only downward pointing legs.
\begin{enumerate}[label=G\arabic{*}), ref=(G\arabic{*})]
\item To each edge $\mathfrak{b} \in G_1 \cap \Gamma_1$ with an orientation opposite to that induced by $M$ we associate an element $P \in V(\mathfrak{b}) \otimes V(\mathfrak{b})^{\times}$.
\item To each edge $\mathfrak{b} \in G_1 \cap \Gamma_1$ with an orientation that matches the induced orientation from $M$ we associate an element $Q \in V(\mathfrak{b}) \otimes V(\mathfrak{b})^{\times}$.
\item To each vertex in $G_0$ that coincides with a node in $\Gamma_0$ and with ordered edges $(\mathfrak{b}_1,\mathfrak{b}_2,\mathfrak{b}_3)$ we associate a map $\phi \colon W_1 \times W_2 \times W_3 \to k$ where the spaces $W_i=A,V(\mathfrak{b}_i),V(\mathfrak{b}_i)^{\times}$ are determined by the vertex edges.
\item Consider a vertex $v$ in $G_0$ that does not coincide with a node in $\Gamma_0$. If $v$ has ordered edges $(\mathfrak{b},a,\mathfrak{b})$, where $a$ denotes an $M-\Gamma$ edge, we associate with $v$ the map $L$ \eqref{eq:L} if $\mathfrak{b}$ comes with an orientation that matches the induced orientation from $M$. Otherwise, we associate with $v$ the map $R$ \eqref{eq:R}.
\end{enumerate}
The collection of associations above are referred to as defect data and denoted as $(L,R,P,Q,\phi)$. 
\end{definition}

We are interested in working with a specific subset of the collection defined above -- defect data restricted to having non-degenerate $P$ and $Q$. The maps $P$ and $Q$ are said to be non-degenerate if there exist 
\begin{align}
P^{-1} \colon & V^{\times} \otimes V \to k, \hspace{5mm} w_{\alpha} \otimes v_{\beta} \mapsto P_{\alpha\beta}, \label{eq:H_inverse}\\
Q^{-1} \colon & V \otimes V^{\times} \to k,  \hspace{5mm} v_{\alpha} \otimes w_{\beta} \mapsto Q_{\alpha\beta}. \label{eq:F_inverse}
\end{align}
such that $P_{\alpha\rho}P^{\rho\beta}=(\delta_{V^{\times}})_{\alpha}^{\beta}=Q^{\beta\rho}Q_{\rho\alpha}$ and $Q_{\alpha\rho}Q^{\rho\beta}=(\delta_{V})_{\alpha}^{\beta}=P^{\beta\rho}P_{\rho\alpha}$. These new maps are introduced diagrammatically below 
\begin{align}
\begin{aligned}
\begingroup%
  \makeatletter%
  \providecommand\color[2][]{%
    \errmessage{(Inkscape) Color is used for the text in Inkscape, but the package 'color.sty' is not loaded}%
    \renewcommand\color[2][]{}%
  }%
  \providecommand\transparent[1]{%
    \errmessage{(Inkscape) Transparency is used (non-zero) for the text in Inkscape, but the package 'transparent.sty' is not loaded}%
    \renewcommand\transparent[1]{}%
  }%
  \providecommand\rotatebox[2]{#2}%
  \ifx\svgwidth\undefined%
    \setlength{\unitlength}{262.68432617bp}%
    \ifx\svgscale\undefined%
      \relax%
    \else%
      \setlength{\unitlength}{\unitlength * \real{\svgscale}}%
    \fi%
  \else%
    \setlength{\unitlength}{\svgwidth}%
  \fi%
  \global\let\svgwidth\undefined%
  \global\let\svgscale\undefined%
  \makeatother%
  \begin{picture}(1,0.14844861)%
    \put(0,0){\includegraphics[width=\unitlength]{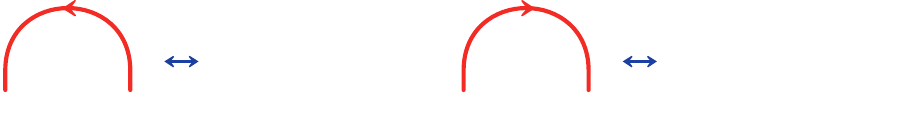}}%
    \put(-0.00108555,0.00605239){\color[rgb]{0,0,0}\makebox(0,0)[lb]{\smash{$\alpha$}}}%
    \put(0.13259309,0.007935){\color[rgb]{0,0,0}\makebox(0,0)[lb]{\smash{$\beta$}}}%
    \put(0.25327714,0.0711579){\color[rgb]{0,0,0}\makebox(0,0)[lb]{\smash{$P_{\alpha\beta},$}}}%
    \put(0.50141893,0.0060523){\color[rgb]{0,0,0}\makebox(0,0)[lb]{\smash{$\alpha$}}}%
    \put(0.63509756,0.0079349){\color[rgb]{0,0,0}\makebox(0,0)[lb]{\smash{$\beta$}}}%
    \put(0.75578161,0.07115781){\color[rgb]{0,0,0}\makebox(0,0)[lb]{\smash{$Q_{\alpha\beta}$}}}%
  \end{picture}%
\endgroup%

\end{aligned}
\end{align}
and their relation to $P$ and $Q$ is encoded through snake identities. 
\begin{align}
\begin{aligned}
\begingroup%
  \makeatletter%
  \providecommand\color[2][]{%
    \errmessage{(Inkscape) Color is used for the text in Inkscape, but the package 'color.sty' is not loaded}%
    \renewcommand\color[2][]{}%
  }%
  \providecommand\transparent[1]{%
    \errmessage{(Inkscape) Transparency is used (non-zero) for the text in Inkscape, but the package 'transparent.sty' is not loaded}%
    \renewcommand\transparent[1]{}%
  }%
  \providecommand\rotatebox[2]{#2}%
  \ifx\svgwidth\undefined%
    \setlength{\unitlength}{355.72545166bp}%
    \ifx\svgscale\undefined%
      \relax%
    \else%
      \setlength{\unitlength}{\unitlength * \real{\svgscale}}%
    \fi%
  \else%
    \setlength{\unitlength}{\svgwidth}%
  \fi%
  \global\let\svgwidth\undefined%
  \global\let\svgscale\undefined%
  \makeatother%
  \begin{picture}(1,0.15582881)%
    \put(0,0){\includegraphics[width=\unitlength]{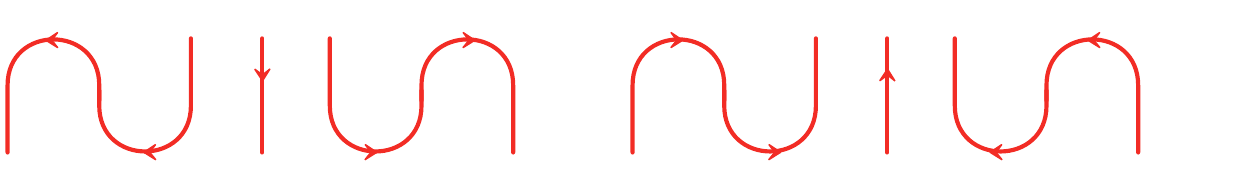}}%
    \put(-0.00080162,0.0044693){\color[rgb]{0,0,0}\makebox(0,0)[lb]{\smash{$\alpha$}}}%
    \put(0.14686966,0.13755588){\color[rgb]{0,0,0}\makebox(0,0)[lb]{\smash{$\beta$}}}%
    \put(0.2038417,0.1380116){\color[rgb]{0,0,0}\makebox(0,0)[lb]{\smash{$\beta$}}}%
    \put(0.20520905,0.00492508){\color[rgb]{0,0,0}\makebox(0,0)[lb]{\smash{$\alpha$}}}%
    \put(0.17376047,0.06508754){\color[rgb]{0,0,0}\makebox(0,0)[lb]{\smash{$=$}}}%
    \put(0.2325619,0.06573213){\color[rgb]{0,0,0}\makebox(0,0)[lb]{\smash{$=$}}}%
    \put(0.40870173,0.00556151){\color[rgb]{0,0,0}\makebox(0,0)[lb]{\smash{$\alpha$}}}%
    \put(0.26027266,0.1382481){\color[rgb]{0,0,0}\makebox(0,0)[lb]{\smash{$\beta$}}}%
    \put(0.50520656,0.0044693){\color[rgb]{0,0,0}\makebox(0,0)[lb]{\smash{$\alpha$}}}%
    \put(0.65287787,0.13755588){\color[rgb]{0,0,0}\makebox(0,0)[lb]{\smash{$\beta$}}}%
    \put(0.70984992,0.1380116){\color[rgb]{0,0,0}\makebox(0,0)[lb]{\smash{$\beta$}}}%
    \put(0.71121734,0.00492508){\color[rgb]{0,0,0}\makebox(0,0)[lb]{\smash{$\alpha$}}}%
    \put(0.67976875,0.06508754){\color[rgb]{0,0,0}\makebox(0,0)[lb]{\smash{$=$}}}%
    \put(0.73857011,0.06573213){\color[rgb]{0,0,0}\makebox(0,0)[lb]{\smash{$=$}}}%
    \put(0.91470994,0.00556151){\color[rgb]{0,0,0}\makebox(0,0)[lb]{\smash{$\alpha$}}}%
    \put(0.76628087,0.1382481){\color[rgb]{0,0,0}\makebox(0,0)[lb]{\smash{$\beta$}}}%
    \put(0.43568884,0.06628249){\color[rgb]{0,0,0}\makebox(0,0)[lb]{\smash{$,$}}}%
  \end{picture}%
\endgroup%

\end{aligned}
\end{align}
One important consequence of the existence of $P^{-1}$ and $Q^{-1}$ is that we can express more easily the relation between the actions $\gls{l}$ and $r^{\times}$, and $\gls{r}$ and $l^{\times}$ that are a consequence of \eqref{eq:defects_cyclicity}. In this sense we will regard the vector spaces $\gls{V}$ and $V^{\times}$ as dual (non-degeneracy implies the dimensions of $V$ and $V^{\times}$ match).
\begin{align} \label{eq:left_right_action_relations}
\begin{aligned}
\begingroup%
  \makeatletter%
  \providecommand\color[2][]{%
    \errmessage{(Inkscape) Color is used for the text in Inkscape, but the package 'color.sty' is not loaded}%
    \renewcommand\color[2][]{}%
  }%
  \providecommand\transparent[1]{%
    \errmessage{(Inkscape) Transparency is used (non-zero) for the text in Inkscape, but the package 'transparent.sty' is not loaded}%
    \renewcommand\transparent[1]{}%
  }%
  \providecommand\rotatebox[2]{#2}%
  \ifx\svgwidth\undefined%
    \setlength{\unitlength}{358.68919082bp}%
    \ifx\svgscale\undefined%
      \relax%
    \else%
      \setlength{\unitlength}{\unitlength * \real{\svgscale}}%
    \fi%
  \else%
    \setlength{\unitlength}{\svgwidth}%
  \fi%
  \global\let\svgwidth\undefined%
  \global\let\svgscale\undefined%
  \makeatother%
  \begin{picture}(1,0.23365143)%
    \put(0,0){\includegraphics[width=\unitlength]{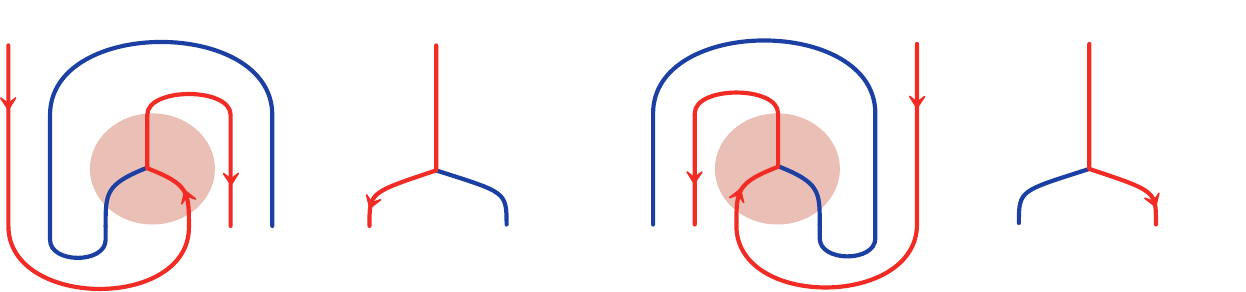}}%
    \put(0.73015298,0.2139996){\color[rgb]{0,0,0}\makebox(0,0)[lb]{\smash{$\beta$}}}%
    \put(0.80828836,0.0201154){\color[rgb]{0,0,0}\makebox(0,0)[lb]{\smash{$a$}}}%
    \put(0.91541467,0.02056735){\color[rgb]{0,0,0}\makebox(0,0)[lb]{\smash{$\alpha$}}}%
    \put(0.86620391,0.21621599){\color[rgb]{0,0,0}\makebox(0,0)[lb]{\smash{$\beta$}}}%
    \put(0.77475984,0.10915951){\color[rgb]{0,0,0}\makebox(0,0)[lb]{\smash{$=$}}}%
    \put(0.55172552,0.01994579){\color[rgb]{0,0,0}\makebox(0,0)[lb]{\smash{$\alpha$}}}%
    \put(0.51827037,0.01994579){\color[rgb]{0,0,0}\makebox(0,0)[lb]{\smash{$a$}}}%
    \put(0.64093925,0.10915951){\color[rgb]{0,0,0}\makebox(0,0)[lb]{\smash{$r$}}}%
    \put(0.886277,0.10915951){\color[rgb]{0,0,0}\makebox(0,0)[lb]{\smash{$l^{\times}$}}}%
    \put(0.17847114,0.01876527){\color[rgb]{0,0,0}\makebox(0,0)[lb]{\smash{$\alpha$}}}%
    \put(0.21192629,0.01876527){\color[rgb]{0,0,0}\makebox(0,0)[lb]{\smash{$a$}}}%
    \put(0.00004371,0.21503547){\color[rgb]{0,0,0}\makebox(0,0)[lb]{\smash{$\beta$}}}%
    \put(0.28560099,0.01893489){\color[rgb]{0,0,0}\makebox(0,0)[lb]{\smash{$\alpha$}}}%
    \put(0.39272733,0.01938684){\color[rgb]{0,0,0}\makebox(0,0)[lb]{\smash{$a$}}}%
    \put(0.34351657,0.21503547){\color[rgb]{0,0,0}\makebox(0,0)[lb]{\smash{$\beta$}}}%
    \put(0.25207246,0.107979){\color[rgb]{0,0,0}\makebox(0,0)[lb]{\smash{$=$}}}%
    \put(0.45136007,0.05340093){\color[rgb]{0,0,0}\makebox(0,0)[lb]{\smash{$,$}}}%
    \put(0.12796031,0.10915951){\color[rgb]{0,0,0}\makebox(0,0)[lb]{\smash{$l$}}}%
    \put(0.36214635,0.10915951){\color[rgb]{0,0,0}\makebox(0,0)[lb]{\smash{$r^{\times}$}}}%
  \end{picture}%
\endgroup%

\end{aligned}
\end{align}

Now the Pachner moves are introduced. Note that only transformations between graph-like triangulations are allowed. A Pachner move preserves the boundary of a graph-like triangulation and it is assumed that the corresponding dual edges do not change in a neighbourhood of the boundary, as before. A diagrammatic model with defects is called planar if it is invariant under Pachner moves. Recall that $\gls{sigma} \colon A \to A$ is the Nakayama automorphism associated with a Frobenius algebra $A$. 
\begin{theorem}
A planar state sum model $(\gls{Ct},\gls{Bb},\gls{R})$ together with defect data such that $P,Q$ are non-degenerate determine a planar state sum model with defects if and only if the following conditions are satisfied. 
\begin{enumerate}[label=H\arabic{*}), ref=(H\arabic{*})]
\item \label{it:H_one}The vector spaces $\gls{V}$, $V^{\times}$ are $\gls{A}$-$A$ bimodules.
\item \label{it:H_F} The non-degenerate maps $P^{-1} \colon V \otimes V^{\times} \to k$ and $Q^{-1} \colon V^{\times} \otimes V$ satisfy $P^{-1}(\gls{sigma}(a) \cdot v \cdot b, w)= P^{-1}(v , b \cdot w \cdot a)$ and $Q^{-1}(\sigma(a) \cdot w \cdot b, \cdot v)= Q^{-1}(w , b \cdot v \cdot a)$ for all $a,b \in A$, $v \in V$, $w \in V^{\times}$.
\item \label{it:phi} Each map $\phi$ satisfies $\phi(\sigma(a) \cdot e_I \cdot b , e_J \cdot c , e_K)=\phi(e_I , b \cdot e_J , c \cdot e_K \cdot a)$ for all $a,b,c \in A$.
\end{enumerate} 
\end{theorem}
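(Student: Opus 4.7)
The plan is to imitate the proof of Theorem~\ref{theo:diagram}: the non-defect part of the triangulation already reduces to an ordinary planar state sum model, so the content of this theorem lies entirely in the Pachner moves in which at least one participating triangle meets the defect graph $\Gamma$. I would classify these moves by local defect content --- (i) a 2-2 move whose flipped edge is crossed by an arrow $\mathfrak{b}$; (ii) a 2-2 move adjacent to, but not crossing, $\mathfrak{b}$; (iii) a 1-3 move performed inside a region traversed by $\mathfrak{b}$; (iv) a move involving the unique triangle containing a node --- and translate each into an algebraic identity by stripping off free indices using non-degeneracy of $C$, $B$, $P$ and $Q$.

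First I would derive the bimodule structure \ref{it:H_one}. Among the type (i) and (ii) moves, three inequivalent local configurations arise, distinguished by which pair of sides of the two triangles the arrow enters and exits. After contracting the shared internal edge with $B^{-1}$ each one collapses to one of the three bimodule axioms, namely $(e_a\cdot e_b)\cdot v=e_a\cdot(e_b\cdot v)$, $(v\cdot e_a)\cdot e_b=v\cdot(e_a\cdot e_b)$, and $(e_a\cdot v)\cdot e_b=e_a\cdot(v\cdot e_b)$, precisely saying that $V$ is an $A$-$A$ bimodule. The same statements for $V^{\times}$ follow either from the analogous moves read with reversed arrow orientation or, equivalently, from the identifications~\eqref{eq:left_right_action_relations}.

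Next I would derive \ref{it:H_F} from a type (iii) move applied in a strip traversed by $\mathfrak{b}$, equivalently from a 2-2 move that rotates $\mathfrak{b}$ past a vertex. One side of the move pairs the arrow with the canonical configuration; on the other side one leg of $\mathfrak{b}$ has to be dragged around the extra vertex, forcing the use of the asymmetric bilinear-form identity~\eqref{eq:associative_bilinear} together with $\varepsilon(x\cdot y)=\varepsilon(\sigma(y)\cdot x)$. This is where the Nakayama automorphism enters. Using the non-degeneracy of $P$ and $Q$ to cancel the defect indices against their inverses, one obtains exactly $P^{-1}(\sigma(a)\cdot v\cdot b,w)=P^{-1}(v,b\cdot w\cdot a)$ and the corresponding identity for $Q^{-1}$. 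For~\ref{it:phi} I would apply the analogous moves to the triangle containing a defect node: a 1-3 move on that triangle, combined with the moves of types (i)--(iii) already established for the three incident arrows, compares $\phi$ evaluated on a cyclic permutation of its three arguments, and rotating one argument past the Frobenius form again drags it through $\sigma$, producing the asserted identity. Note that when all three legs of the node are surface edges this reduces to the planar cyclicity~\eqref{eq:gen_cycle}, consistent with Theorem~\ref{theo:diagram}.

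For the converse direction, given defect data satisfying~\ref{it:H_one}--\ref{it:phi} I would verify that each local Pachner move of types (i)--(iv) is algebraically satisfied; since any two graph-like triangulations of $(M,\Gamma)$ are connected by a finite sequence of these local moves, global invariance follows. The main technical obstacle I anticipate is orientation bookkeeping: whether each arrow's local orientation agrees with or opposes the induced orientation of $M$ determines which of $P$, $Q$, $P^{-1}$, $Q^{-1}$ appears at each defect crossing, and correctly tracking this is what fixes the exact placement of $\sigma$ in~\ref{it:H_F} and~\ref{it:phi} rather than, say, $\sigma^{-1}$ or no automorphism at all.
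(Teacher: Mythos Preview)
Your overall strategy matches the paper's: classify the local Pachner moves by defect content and read off the algebraic identities using non-degeneracy of $C,B,P,Q$. The three 2-2 moves you describe for \ref{it:H_one} are exactly the ones the paper uses, and for \ref{it:phi} the paper likewise uses 2-2 moves adjacent to the node triangle, obtaining $\phi(e_I\cdot e_a,e_J,e_K)=\phi(e_I,e_a\cdot e_J,e_K)$ and $\phi(e_I,e_J,e_K\cdot e_a)=\phi(\sigma(e_a)\cdot e_I,e_J,e_K)$ directly.

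There is one misallocation you should fix. Your type (iii), the 1-3 move in a strip crossed by $\mathfrak{b}$, does \emph{not} give \ref{it:H_F}; it gives the unit axiom $1\cdot v_\alpha=v_\alpha$ (and $v_\alpha\cdot 1=v_\alpha$), which you need to complete \ref{it:H_one} --- the three associativity relations you list do not by themselves make $V$ a bimodule. The paper derives \ref{it:H_F} from a 2-2 move in which the defect arrow crosses both triangles and the flip carries an algebra edge $b$ around the quadrilateral; the equation reads $R_{\alpha a}{}^{\mu}L_{\beta b}{}^{\nu}P_{\mu\nu}=(B^{\tr}B^{-1})_b{}^{c}L_{c\alpha}{}^{\mu}R_{a\beta}{}^{\nu}P_{\mu\nu}$, and $\sigma$ enters precisely as the matrix $B^{\tr}B^{-1}$. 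You hint at this when you offer ``equivalently from a 2-2 move that rotates $\mathfrak{b}$ past a vertex'', but that is the actual source of \ref{it:H_F}, not an equivalent alternative to the 1-3 move.
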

\begin{proof}
Not all Pachner moves will be explicitly verified. As an exercise, the reader can confirm that all the omitted moves are either entirely analogous to the ones included, or automatically verified as a consequence of the presented moves being satisfied. 

One starts by showing that defect data must satisfy the conditions outlined above. Proving each $V$ must be an $A$-$A$ bimodule will consist of showing that the left and right actions $\gls{l}$ and $\gls{r}$ must be compatible with each other and the algebra multiplication $\gls{m}$. The Pachner move below is equivalent to the identity $L_{a\alpha}{}^{\gamma}R_{\gamma b}{}^{\beta}=R_{\alpha b}{}^{\gamma}L_{a\gamma}{}^{\beta}$.
\begin{align}
\begin{aligned}
\begingroup%
  \makeatletter%
  \providecommand\color[2][]{%
    \errmessage{(Inkscape) Color is used for the text in Inkscape, but the package 'color.sty' is not loaded}%
    \renewcommand\color[2][]{}%
  }%
  \providecommand\transparent[1]{%
    \errmessage{(Inkscape) Transparency is used (non-zero) for the text in Inkscape, but the package 'transparent.sty' is not loaded}%
    \renewcommand\transparent[1]{}%
  }%
  \providecommand\rotatebox[2]{#2}%
  \ifx\svgwidth\undefined%
    \setlength{\unitlength}{366.83658295bp}%
    \ifx\svgscale\undefined%
      \relax%
    \else%
      \setlength{\unitlength}{\unitlength * \real{\svgscale}}%
    \fi%
  \else%
    \setlength{\unitlength}{\svgwidth}%
  \fi%
  \global\let\svgwidth\undefined%
  \global\let\svgscale\undefined%
  \makeatother%
  \begin{picture}(1,0.18210029)%
    \put(0,0){\includegraphics[width=\unitlength]{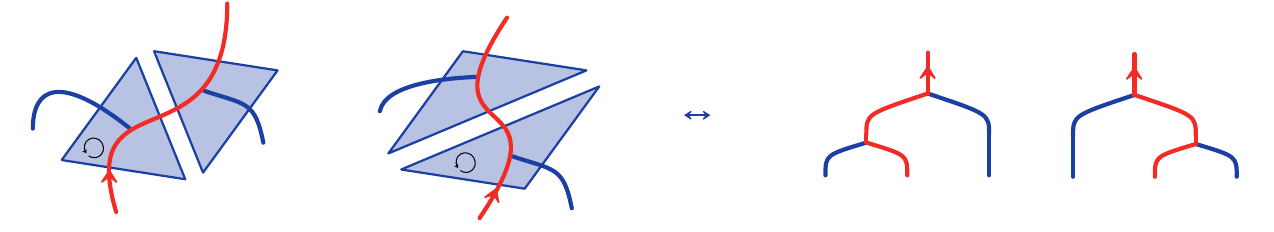}}%
    \put(0.09889674,0.01456206){\color[rgb]{0,0,0}\makebox(0,0)[lb]{\smash{$\alpha$}}}%
    \put(-0.00077734,0.08448999){\color[rgb]{0,0,0}\makebox(0,0)[lb]{\smash{$a$}}}%
    \put(0.18464448,0.16505209){\color[rgb]{0,0,0}\makebox(0,0)[lb]{\smash{$\beta$}}}%
    \put(0.2134798,0.07122319){\color[rgb]{0,0,0}\makebox(0,0)[lb]{\smash{$b$}}}%
    \put(0.34702004,0.00433393){\color[rgb]{0,0,0}\makebox(0,0)[lb]{\smash{$\alpha$}}}%
    \put(0.45717039,0.01959073){\color[rgb]{0,0,0}\makebox(0,0)[lb]{\smash{$b$}}}%
    \put(0.40394545,0.15703372){\color[rgb]{0,0,0}\makebox(0,0)[lb]{\smash{$\beta$}}}%
    \put(0.29160364,0.06835915){\color[rgb]{0,0,0}\makebox(0,0)[lb]{\smash{$a$}}}%
    \put(0.24616747,0.09323105){\color[rgb]{0,0,0}\makebox(0,0)[lb]{\smash{$=$}}}%
    \put(0.64197639,0.00939987){\color[rgb]{0,0,0}\makebox(0,0)[lb]{\smash{$a$}}}%
    \put(0.70694613,0.00895796){\color[rgb]{0,0,0}\makebox(0,0)[lb]{\smash{$\alpha$}}}%
    \put(0.77058986,0.00939987){\color[rgb]{0,0,0}\makebox(0,0)[lb]{\smash{$b$}}}%
    \put(0.83644356,0.00895796){\color[rgb]{0,0,0}\makebox(0,0)[lb]{\smash{$a$}}}%
    \put(0.90075036,0.00939987){\color[rgb]{0,0,0}\makebox(0,0)[lb]{\smash{$\alpha$}}}%
    \put(0.96461512,0.00962096){\color[rgb]{0,0,0}\makebox(0,0)[lb]{\smash{$b$}}}%
    \put(0.72198231,0.15912461){\color[rgb]{0,0,0}\makebox(0,0)[lb]{\smash{$\beta$}}}%
    \put(0.88421588,0.15860244){\color[rgb]{0,0,0}\makebox(0,0)[lb]{\smash{$\beta$}}}%
    \put(0.79678651,0.06140849){\color[rgb]{0,0,0}\makebox(0,0)[lb]{\smash{$=$}}}%
  \end{picture}%
\endgroup%

\end{aligned}
\end{align}
The definitions of $\gls{r}$ and $\gls{l}$ allow one to conclude that $L_{a\alpha}{}^{\gamma}R_{\gamma b}{}^{\beta}v_{\beta}=(e_a \cdot v_{\alpha}) \cdot e_b$ and $R_{\alpha b}{}^{\gamma}L_{a\gamma}{}^{\beta}v_{\beta}= e_a \cdot (v_{\alpha} \cdot e_b)$. Linear independence of the basis elements $v_{\beta}$ therefore imply that for the Pachner move to be respected one must have $(e_a \cdot v_{\alpha}) \cdot e_{b}= e_a \cdot (v_{\alpha} \cdot e_{b})$. By inverting the defect arrow and using the actions $l^{\times}$ and $r^{\times}$ one would also conclude that $(e_a \cdot w_{\alpha}) \cdot e_{b}= e_a \cdot (w_{\alpha} \cdot e_b)$. 
\begin{align}
\begin{aligned}
\begingroup%
  \makeatletter%
  \providecommand\color[2][]{%
    \errmessage{(Inkscape) Color is used for the text in Inkscape, but the package 'color.sty' is not loaded}%
    \renewcommand\color[2][]{}%
  }%
  \providecommand\transparent[1]{%
    \errmessage{(Inkscape) Transparency is used (non-zero) for the text in Inkscape, but the package 'transparent.sty' is not loaded}%
    \renewcommand\transparent[1]{}%
  }%
  \providecommand\rotatebox[2]{#2}%
  \ifx\svgwidth\undefined%
    \setlength{\unitlength}{364.42905884bp}%
    \ifx\svgscale\undefined%
      \relax%
    \else%
      \setlength{\unitlength}{\unitlength * \real{\svgscale}}%
    \fi%
  \else%
    \setlength{\unitlength}{\svgwidth}%
  \fi%
  \global\let\svgwidth\undefined%
  \global\let\svgscale\undefined%
  \makeatother%
  \begin{picture}(1,0.19416522)%
    \put(0,0){\includegraphics[width=\unitlength]{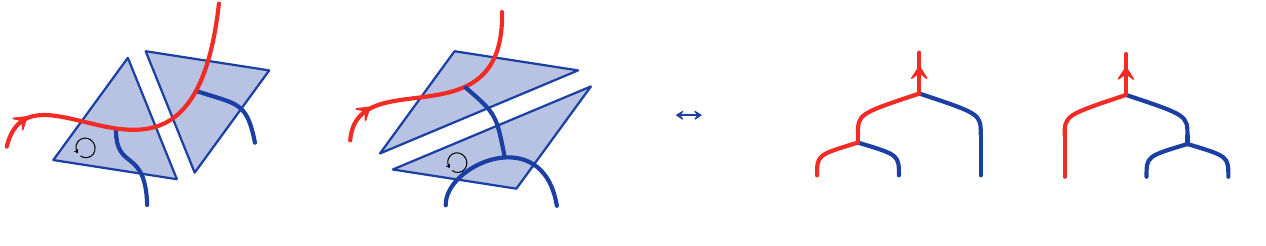}}%
    \put(0.11024598,0.00475755){\color[rgb]{0,0,0}\makebox(0,0)[lb]{\smash{$a$}}}%
    \put(-0.00078247,0.04935146){\color[rgb]{0,0,0}\makebox(0,0)[lb]{\smash{$\alpha$}}}%
    \put(0.179258,0.17700439){\color[rgb]{0,0,0}\makebox(0,0)[lb]{\smash{$\beta$}}}%
    \put(0.19601499,0.05235548){\color[rgb]{0,0,0}\makebox(0,0)[lb]{\smash{$b$}}}%
    \put(0.27063398,0.05259531){\color[rgb]{0,0,0}\makebox(0,0)[lb]{\smash{$\alpha$}}}%
    \put(0.43445401,0.00299886){\color[rgb]{0,0,0}\makebox(0,0)[lb]{\smash{$b$}}}%
    \put(0.40000774,0.16893305){\color[rgb]{0,0,0}\makebox(0,0)[lb]{\smash{$\beta$}}}%
    \put(0.34698401,0.00226167){\color[rgb]{0,0,0}\makebox(0,0)[lb]{\smash{$a$}}}%
    \put(0.24118743,0.10470888){\color[rgb]{0,0,0}\makebox(0,0)[lb]{\smash{$=$}}}%
    \put(0.63961118,0.02032389){\color[rgb]{0,0,0}\makebox(0,0)[lb]{\smash{$\alpha$}}}%
    \put(0.70501013,0.01987906){\color[rgb]{0,0,0}\makebox(0,0)[lb]{\smash{$a$}}}%
    \put(0.76907431,0.02032389){\color[rgb]{0,0,0}\makebox(0,0)[lb]{\smash{$b$}}}%
    \put(0.83536306,0.01987906){\color[rgb]{0,0,0}\makebox(0,0)[lb]{\smash{$\alpha$}}}%
    \put(0.90009469,0.02032389){\color[rgb]{0,0,0}\makebox(0,0)[lb]{\smash{$a$}}}%
    \put(0.96438136,0.02054644){\color[rgb]{0,0,0}\makebox(0,0)[lb]{\smash{$b$}}}%
    \put(0.72014564,0.17103775){\color[rgb]{0,0,0}\makebox(0,0)[lb]{\smash{$\beta$}}}%
    \put(0.88345097,0.17051213){\color[rgb]{0,0,0}\makebox(0,0)[lb]{\smash{$\beta$}}}%
    \put(0.79544402,0.07267609){\color[rgb]{0,0,0}\makebox(0,0)[lb]{\smash{$=$}}}%
  \end{picture}%
\endgroup%

\end{aligned}
\end{align}
The next move, depicted above, indicates that $R_{\alpha a}{}^{\mu}R_{\mu b}{}^{\beta} = C_{ab}{}^c R_{\alpha c}{}^{\beta}$. Note that for the $r$ action one has $(v_{\alpha} \cdot e_a) \cdot e_b = R_{\alpha a}{}^{\mu}R_{\mu b}{}^{\beta} v_{\beta}$ whilst $v_{\alpha} \cdot (e_a \cdot e_b) = C_{ab}{}^c R_{\alpha c}{}^{\beta} v_{\beta}$. Again, the linear independence of the $v_{\beta}$ allows one to conclude the move imposes the identity $(v_{\alpha} \cdot e_a) \cdot e_b=v_{\alpha} \cdot (e_a \cdot e_b)$. Furthermore, the move below tells us a similar conclusion holds for the left action $l$: $(e_a \cdot e_b) \cdot v_{\alpha}=e_a \cdot (e_b \cdot v_{\alpha})$. By inverting the direction of the arrow, the same conclusions will hold for the actions $l^{\times}$ and $r^{\times}$.
\begin{align}
\begin{aligned}
\begingroup%
  \makeatletter%
  \providecommand\color[2][]{%
    \errmessage{(Inkscape) Color is used for the text in Inkscape, but the package 'color.sty' is not loaded}%
    \renewcommand\color[2][]{}%
  }%
  \providecommand\transparent[1]{%
    \errmessage{(Inkscape) Transparency is used (non-zero) for the text in Inkscape, but the package 'transparent.sty' is not loaded}%
    \renewcommand\transparent[1]{}%
  }%
  \providecommand\rotatebox[2]{#2}%
  \ifx\svgwidth\undefined%
    \setlength{\unitlength}{353.04507599bp}%
    \ifx\svgscale\undefined%
      \relax%
    \else%
      \setlength{\unitlength}{\unitlength * \real{\svgscale}}%
    \fi%
  \else%
    \setlength{\unitlength}{\svgwidth}%
  \fi%
  \global\let\svgwidth\undefined%
  \global\let\svgscale\undefined%
  \makeatother%
  \begin{picture}(1,0.20930913)%
    \put(0,0){\includegraphics[width=\unitlength]{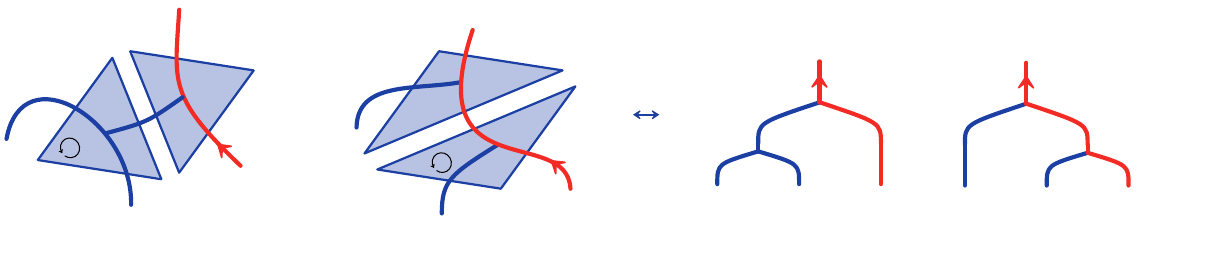}}%
    \put(-0.0008077,0.06568764){\color[rgb]{0,0,0}\makebox(0,0)[lb]{\smash{$a$}}}%
    \put(0.19089425,0.04421182){\color[rgb]{0,0,0}\makebox(0,0)[lb]{\smash{$\alpha$}}}%
    \put(0.17238006,0.19159494){\color[rgb]{0,0,0}\makebox(0,0)[lb]{\smash{$\beta$}}}%
    \put(0.10058555,0.01011457){\color[rgb]{0,0,0}\makebox(0,0)[lb]{\smash{$b$}}}%
    \put(0.45958167,0.02230227){\color[rgb]{0,0,0}\makebox(0,0)[lb]{\smash{$\alpha$}}}%
    \put(0.35406087,0.0023346){\color[rgb]{0,0,0}\makebox(0,0)[lb]{\smash{$b$}}}%
    \put(0.4002479,0.18326334){\color[rgb]{0,0,0}\makebox(0,0)[lb]{\smash{$\beta$}}}%
    \put(0.27662897,0.08102152){\color[rgb]{0,0,0}\makebox(0,0)[lb]{\smash{$a$}}}%
    \put(0.23630641,0.11696826){\color[rgb]{0,0,0}\makebox(0,0)[lb]{\smash{$=$}}}%
    \put(0.57881998,0.02270903){\color[rgb]{0,0,0}\makebox(0,0)[lb]{\smash{$a$}}}%
    \put(0.64632772,0.02224985){\color[rgb]{0,0,0}\makebox(0,0)[lb]{\smash{$b$}}}%
    \put(0.71245767,0.02270903){\color[rgb]{0,0,0}\makebox(0,0)[lb]{\smash{$\alpha$}}}%
    \put(0.7808839,0.02224985){\color[rgb]{0,0,0}\makebox(0,0)[lb]{\smash{$a$}}}%
    \put(0.84770281,0.02270903){\color[rgb]{0,0,0}\makebox(0,0)[lb]{\smash{$b$}}}%
    \put(0.91406241,0.02293875){\color[rgb]{0,0,0}\makebox(0,0)[lb]{\smash{$\alpha$}}}%
    \put(0.66195128,0.17828268){\color[rgb]{0,0,0}\makebox(0,0)[lb]{\smash{$\beta$}}}%
    \put(0.83052241,0.1777401){\color[rgb]{0,0,0}\makebox(0,0)[lb]{\smash{$\beta$}}}%
    \put(0.73967767,0.07674933){\color[rgb]{0,0,0}\makebox(0,0)[lb]{\smash{$=$}}}%
  \end{picture}%
\endgroup%

\end{aligned}
\end{align}
A 1-3 Pachner move predicts the diagram below must equal $R_{\alpha a}{}^{\beta}$. Note that it evaluates to $\gls{R}.R_{\alpha a}{}^{\gamma}L_{b\gamma}{}^{\mu}B^{cb}L_{c\mu}{}^{\beta}$. Notice as well that $B^{cb} e_c \cdot e_b \cdot e_\alpha \cdot e_a = R_{\alpha a}{}^{\gamma}L_{b\gamma}{}^{\mu}B^{cb}L_{c\mu}{}^{\beta}v_{\beta}$. The order of the actions has been omitted since one now knows they must be associative. Recalling that $\gls{m}(\gls{B})=B^{cb} e_c \cdot e_b$ and that $R.m(B)=\gls{unit}$ one learns this 1-3 Pachner move is translated algebraically as $1 \cdot v_{\alpha}=v_{\alpha}$. Similar 1-3 Pachner moves indicate $v_{\alpha} \cdot 1 = v_{\alpha}$ and $1 \cdot w_{\alpha}=w_{\alpha}= w_{\alpha} \cdot 1$. This concludes the proof that if the model is to be planar the $\gls{V}, V^{\times}$ must be $\gls{A}$-$A$ bimodules.
\begin{align}
\begin{aligned}
\begingroup%
  \makeatletter%
  \providecommand\color[2][]{%
    \errmessage{(Inkscape) Color is used for the text in Inkscape, but the package 'color.sty' is not loaded}%
    \renewcommand\color[2][]{}%
  }%
  \providecommand\transparent[1]{%
    \errmessage{(Inkscape) Transparency is used (non-zero) for the text in Inkscape, but the package 'transparent.sty' is not loaded}%
    \renewcommand\transparent[1]{}%
  }%
  \providecommand\rotatebox[2]{#2}%
  \ifx\svgwidth\undefined%
    \setlength{\unitlength}{348.2165648bp}%
    \ifx\svgscale\undefined%
      \relax%
    \else%
      \setlength{\unitlength}{\unitlength * \real{\svgscale}}%
    \fi%
  \else%
    \setlength{\unitlength}{\svgwidth}%
  \fi%
  \global\let\svgwidth\undefined%
  \global\let\svgscale\undefined%
  \makeatother%
  \begin{picture}(1,0.41352755)%
    \put(0,0){\includegraphics[width=\unitlength]{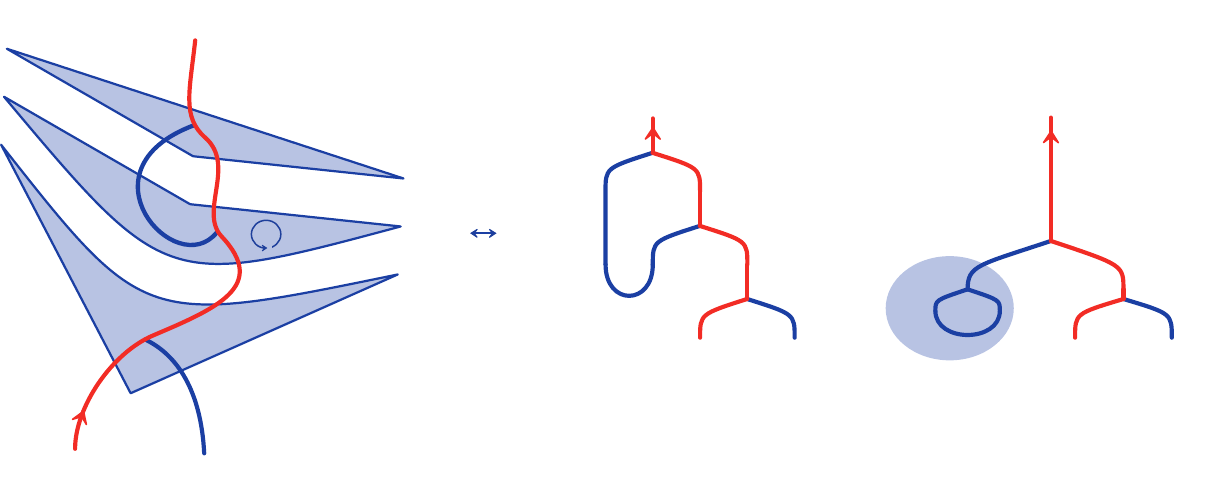}}%
    \put(0.16287955,0.00620012){\color[rgb]{0,0,0}\makebox(0,0)[lb]{\smash{$a$}}}%
    \put(0.05446135,0.00456568){\color[rgb]{0,0,0}\makebox(0,0)[lb]{\smash{$\alpha$}}}%
    \put(0.15501954,0.39556773){\color[rgb]{0,0,0}\makebox(0,0)[lb]{\smash{$\beta$}}}%
    \put(0.57180213,0.09716741){\color[rgb]{0,0,0}\makebox(0,0)[lb]{\smash{$\alpha$}}}%
    \put(0.65068819,0.09806863){\color[rgb]{0,0,0}\makebox(0,0)[lb]{\smash{$a$}}}%
    \put(0.53377157,0.33418429){\color[rgb]{0,0,0}\makebox(0,0)[lb]{\smash{$\beta$}}}%
    \put(0.45266093,0.21465201){\color[rgb]{0,0,0}\makebox(0,0)[lb]{\smash{$R$}}}%
    \put(0.86267456,0.33729641){\color[rgb]{0,0,0}\makebox(0,0)[lb]{\smash{$\beta$}}}%
    \put(0.88325971,0.09652898){\color[rgb]{0,0,0}\makebox(0,0)[lb]{\smash{$\alpha$}}}%
    \put(0.962723,0.09606329){\color[rgb]{0,0,0}\makebox(0,0)[lb]{\smash{$a$}}}%
    \put(0.68240302,0.21465201){\color[rgb]{0,0,0}\makebox(0,0)[lb]{\smash{$=$}}}%
    \put(0.73860212,0.15060972){\color[rgb]{0,0,0}\makebox(0,0)[lb]{\smash{$R$}}}%
  \end{picture}%
\endgroup%

\end{aligned}
\end{align}
Now one explores the properties of the map $P^{-1}$. The 2-2 Pachner move below is equivalent to the equation $R_{\alpha a}{}^{\mu}L_{\beta b}{}^{\nu} P_{\mu \nu}=(B^{\tr}B^{-1})_b{}^c L_{c\alpha}{}^{\mu}R_{a\beta}{}^{\nu}P_{\mu\nu}$. Recall that $(B^{\tr}\gls{Bb}^{-1})_b{}^c=\sigma_b{}^{c}$ where $\gls{sigma}$ stands for the Nakayama automorphism associated with $\gls{eps}$. Linear independence implies the equation is equivalent to $P^{-1}(v_{\alpha} \cdot e_a, w_{\beta} \cdot e_b)=P^{-1}(\sigma(e_b) \cdot v_{\alpha},e_a \cdot w_{\beta})$. By inverting the direction of the arrow one would obtain the equivalent conclusion for $Q^{-1}$.  
\begin{align}
\begin{aligned}
\begingroup%
  \makeatletter%
  \providecommand\color[2][]{%
    \errmessage{(Inkscape) Color is used for the text in Inkscape, but the package 'color.sty' is not loaded}%
    \renewcommand\color[2][]{}%
  }%
  \providecommand\transparent[1]{%
    \errmessage{(Inkscape) Transparency is used (non-zero) for the text in Inkscape, but the package 'transparent.sty' is not loaded}%
    \renewcommand\transparent[1]{}%
  }%
  \providecommand\rotatebox[2]{#2}%
  \ifx\svgwidth\undefined%
    \setlength{\unitlength}{394.25388947bp}%
    \ifx\svgscale\undefined%
      \relax%
    \else%
      \setlength{\unitlength}{\unitlength * \real{\svgscale}}%
    \fi%
  \else%
    \setlength{\unitlength}{\svgwidth}%
  \fi%
  \global\let\svgwidth\undefined%
  \global\let\svgscale\undefined%
  \makeatother%
  \begin{picture}(1,0.15648018)%
    \put(0,0){\includegraphics[width=\unitlength]{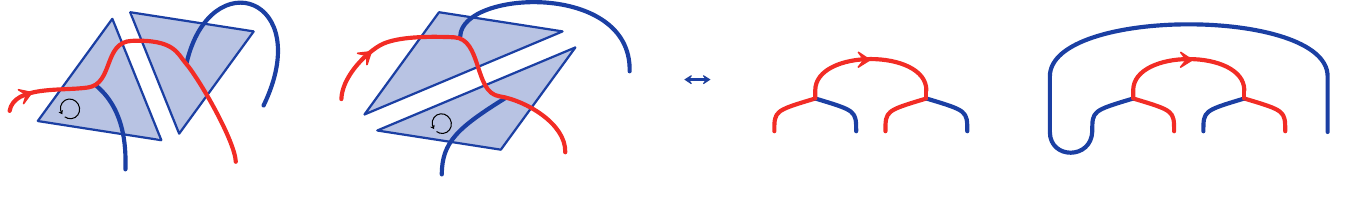}}%
    \put(-0.00072328,0.04320284){\color[rgb]{0,0,0}\makebox(0,0)[lb]{\smash{$\alpha$}}}%
    \put(0.16687027,0.00776177){\color[rgb]{0,0,0}\makebox(0,0)[lb]{\smash{$\beta$}}}%
    \put(0.18605809,0.04756029){\color[rgb]{0,0,0}\makebox(0,0)[lb]{\smash{$b$}}}%
    \put(0.08595963,0.00495299){\color[rgb]{0,0,0}\makebox(0,0)[lb]{\smash{$a$}}}%
    \put(0.24293813,0.05370044){\color[rgb]{0,0,0}\makebox(0,0)[lb]{\smash{$\alpha$}}}%
    \put(0.45358325,0.074476){\color[rgb]{0,0,0}\makebox(0,0)[lb]{\smash{$b$}}}%
    \put(0.40693833,0.01360393){\color[rgb]{0,0,0}\makebox(0,0)[lb]{\smash{$\beta$}}}%
    \put(0.31667825,0.00209058){\color[rgb]{0,0,0}\makebox(0,0)[lb]{\smash{$a$}}}%
    \put(0.21160683,0.10228292){\color[rgb]{0,0,0}\makebox(0,0)[lb]{\smash{$=$}}}%
    \put(0.62019754,0.02902102){\color[rgb]{0,0,0}\makebox(0,0)[lb]{\smash{$a$}}}%
    \put(0.7025703,0.02852296){\color[rgb]{0,0,0}\makebox(0,0)[lb]{\smash{$b$}}}%
    \put(0.55881868,0.02951574){\color[rgb]{0,0,0}\makebox(0,0)[lb]{\smash{$\alpha$}}}%
    \put(0.87307767,0.0261967){\color[rgb]{0,0,0}\makebox(0,0)[lb]{\smash{$a$}}}%
    \put(0.9631543,0.02660788){\color[rgb]{0,0,0}\makebox(0,0)[lb]{\smash{$b$}}}%
    \put(0.85101327,0.02623199){\color[rgb]{0,0,0}\makebox(0,0)[lb]{\smash{$\alpha$}}}%
    \put(0.64153779,0.02983199){\color[rgb]{0,0,0}\makebox(0,0)[lb]{\smash{$\beta$}}}%
    \put(0.93323029,0.02585667){\color[rgb]{0,0,0}\makebox(0,0)[lb]{\smash{$\beta$}}}%
    \put(0.72393457,0.07267337){\color[rgb]{0,0,0}\makebox(0,0)[lb]{\smash{$=$}}}%
  \end{picture}%
\endgroup%

\end{aligned}
\end{align}
Finally one establishes the properties that $\phi$ must obey. Recall that each leg stands for an element of $A$, $V$ or $V^{\times}$. Therefore, there is always an action of $A$ on each $e_I$. Denote such actions in general as $e_I \cdot e_a = M_{Ia}{}^J e_J$ and $e_a \cdot e_I = N_{aI}{}^Je_J$. Then the diagram below is translated as $M_{Ka}{}^{L}\phi_{IJL}=\sigma_a{}^bN_{bI}{}^L\phi_{LJK}$. In other words, one must have $\phi(e_I , e_J , e_K \cdot e_a)=\phi(\gls{sigma}(e_a) \cdot e_I , e_J , e_K)$.  
\begin{align}
\begin{aligned}
\label{eq:sigma_def}
\begingroup%
  \makeatletter%
  \providecommand\color[2][]{%
    \errmessage{(Inkscape) Color is used for the text in Inkscape, but the package 'color.sty' is not loaded}%
    \renewcommand\color[2][]{}%
  }%
  \providecommand\transparent[1]{%
    \errmessage{(Inkscape) Transparency is used (non-zero) for the text in Inkscape, but the package 'transparent.sty' is not loaded}%
    \renewcommand\transparent[1]{}%
  }%
  \providecommand\rotatebox[2]{#2}%
  \ifx\svgwidth\undefined%
    \setlength{\unitlength}{393.68057404bp}%
    \ifx\svgscale\undefined%
      \relax%
    \else%
      \setlength{\unitlength}{\unitlength * \real{\svgscale}}%
    \fi%
  \else%
    \setlength{\unitlength}{\svgwidth}%
  \fi%
  \global\let\svgwidth\undefined%
  \global\let\svgscale\undefined%
  \makeatother%
  \begin{picture}(1,0.15670806)%
    \put(0,0){\includegraphics[width=\unitlength]{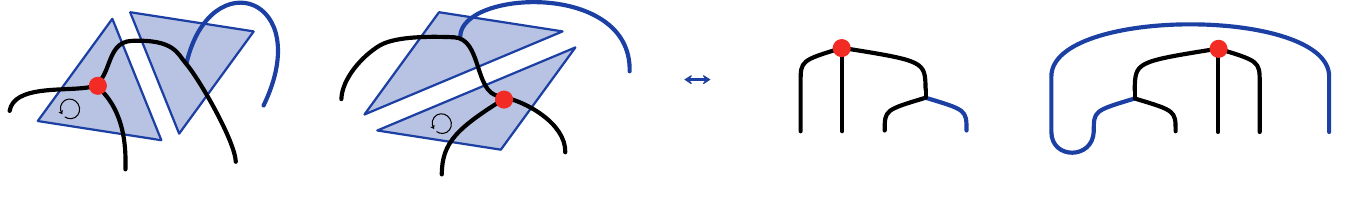}}%
    \put(-0.00072433,0.04326575){\color[rgb]{0,0,0}\makebox(0,0)[lb]{\smash{$I$}}}%
    \put(0.16711328,0.00777307){\color[rgb]{0,0,0}\makebox(0,0)[lb]{\smash{$K$}}}%
    \put(0.18632905,0.04762955){\color[rgb]{0,0,0}\makebox(0,0)[lb]{\smash{$a$}}}%
    \put(0.08608481,0.0049602){\color[rgb]{0,0,0}\makebox(0,0)[lb]{\smash{$J$}}}%
    \put(0.24329192,0.05377864){\color[rgb]{0,0,0}\makebox(0,0)[lb]{\smash{$I$}}}%
    \put(0.4542438,0.07458446){\color[rgb]{0,0,0}\makebox(0,0)[lb]{\smash{$a$}}}%
    \put(0.40753095,0.01362374){\color[rgb]{0,0,0}\makebox(0,0)[lb]{\smash{$K$}}}%
    \put(0.31713943,0.00209362){\color[rgb]{0,0,0}\makebox(0,0)[lb]{\smash{$J$}}}%
    \put(0.21191499,0.10243188){\color[rgb]{0,0,0}\makebox(0,0)[lb]{\smash{$=$}}}%
    \put(0.61121673,0.02947513){\color[rgb]{0,0,0}\makebox(0,0)[lb]{\smash{$J$}}}%
    \put(0.70112246,0.0285645){\color[rgb]{0,0,0}\makebox(0,0)[lb]{\smash{$a$}}}%
    \put(0.58104784,0.02791136){\color[rgb]{0,0,0}\makebox(0,0)[lb]{\smash{$I$}}}%
    \put(0.88588049,0.02829399){\color[rgb]{0,0,0}\makebox(0,0)[lb]{\smash{$J$}}}%
    \put(0.96702792,0.02952951){\color[rgb]{0,0,0}\makebox(0,0)[lb]{\smash{$a$}}}%
    \put(0.85385704,0.02775706){\color[rgb]{0,0,0}\makebox(0,0)[lb]{\smash{$I$}}}%
    \put(0.64123653,0.02946359){\color[rgb]{0,0,0}\makebox(0,0)[lb]{\smash{$K$}}}%
    \put(0.91646865,0.02877715){\color[rgb]{0,0,0}\makebox(0,0)[lb]{\smash{$K$}}}%
    \put(0.72498884,0.0727792){\color[rgb]{0,0,0}\makebox(0,0)[lb]{\smash{$=$}}}%
  \end{picture}%
\endgroup%

\end{aligned}
\end{align}
Similarly it is found that $\phi$ must also respect the symmetry $\phi(e_I \cdot e_a , e_J , e_K)=\phi( e_I , e_a \cdot e_J , e_K)$. Furthermore, identical moves would guarantee the condition extends to all arguments of $\phi$.
\begin{align}
\begin{aligned}
\label{eq:phi_ass}
\begingroup%
  \makeatletter%
  \providecommand\color[2][]{%
    \errmessage{(Inkscape) Color is used for the text in Inkscape, but the package 'color.sty' is not loaded}%
    \renewcommand\color[2][]{}%
  }%
  \providecommand\transparent[1]{%
    \errmessage{(Inkscape) Transparency is used (non-zero) for the text in Inkscape, but the package 'transparent.sty' is not loaded}%
    \renewcommand\transparent[1]{}%
  }%
  \providecommand\rotatebox[2]{#2}%
  \ifx\svgwidth\undefined%
    \setlength{\unitlength}{362.1313797bp}%
    \ifx\svgscale\undefined%
      \relax%
    \else%
      \setlength{\unitlength}{\unitlength * \real{\svgscale}}%
    \fi%
  \else%
    \setlength{\unitlength}{\svgwidth}%
  \fi%
  \global\let\svgwidth\undefined%
  \global\let\svgscale\undefined%
  \makeatother%
  \begin{picture}(1,0.16736507)%
    \put(0,0){\includegraphics[width=\unitlength]{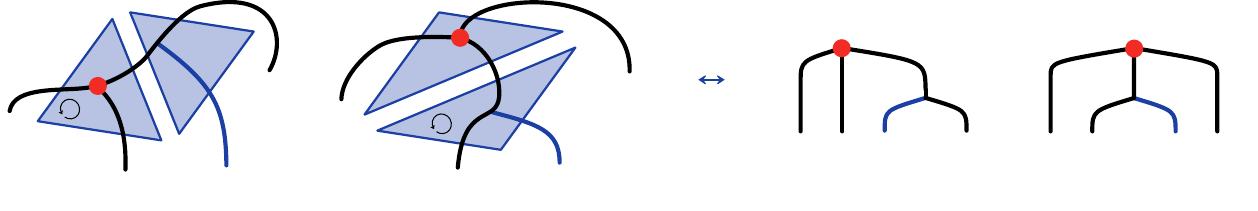}}%
    \put(-0.00078744,0.04391878){\color[rgb]{0,0,0}\makebox(0,0)[lb]{\smash{$I$}}}%
    \put(0.20763971,0.08144523){\color[rgb]{0,0,0}\makebox(0,0)[lb]{\smash{$K$}}}%
    \put(0.17346081,0.00299597){\color[rgb]{0,0,0}\makebox(0,0)[lb]{\smash{$a$}}}%
    \put(0.0935846,0.00227602){\color[rgb]{0,0,0}\makebox(0,0)[lb]{\smash{$J$}}}%
    \put(0.26448772,0.05534757){\color[rgb]{0,0,0}\makebox(0,0)[lb]{\smash{$I$}}}%
    \put(0.43951825,0.00661273){\color[rgb]{0,0,0}\makebox(0,0)[lb]{\smash{$a$}}}%
    \put(0.4940747,0.07616507){\color[rgb]{0,0,0}\makebox(0,0)[lb]{\smash{$K$}}}%
    \put(0.35778889,0.00707812){\color[rgb]{0,0,0}\makebox(0,0)[lb]{\smash{$J$}}}%
    \put(0.2303772,0.10823951){\color[rgb]{0,0,0}\makebox(0,0)[lb]{\smash{$=$}}}%
    \put(0.66446645,0.02892671){\color[rgb]{0,0,0}\makebox(0,0)[lb]{\smash{$J$}}}%
    \put(0.69762227,0.02856993){\color[rgb]{0,0,0}\makebox(0,0)[lb]{\smash{$a$}}}%
    \put(0.63166923,0.0272267){\color[rgb]{0,0,0}\makebox(0,0)[lb]{\smash{$I$}}}%
    \put(0.86259411,0.02890898){\color[rgb]{0,0,0}\makebox(0,0)[lb]{\smash{$J$}}}%
    \put(0.92928363,0.02898583){\color[rgb]{0,0,0}\makebox(0,0)[lb]{\smash{$a$}}}%
    \put(0.82904707,0.02895846){\color[rgb]{0,0,0}\makebox(0,0)[lb]{\smash{$I$}}}%
    \put(0.76295049,0.02954729){\color[rgb]{0,0,0}\makebox(0,0)[lb]{\smash{$K$}}}%
    \put(0.96169596,0.02816792){\color[rgb]{0,0,0}\makebox(0,0)[lb]{\smash{$K$}}}%
    \put(0.78815048,0.07600347){\color[rgb]{0,0,0}\makebox(0,0)[lb]{\smash{$=$}}}%
  \end{picture}%
\endgroup%

\end{aligned}
\end{align}

One must now show the reverse implication also holds. Let $\gls{V}$ and $V^{\times}$ be two $\gls{A}$-$A$ bimodules with left actions $\gls{l}$ and $l^{\times}$, and right actions $\gls{r}$ and $r^{\times}$. If they come equipped with maps $P^{-1}$ and $Q^{-1}$ satisfying conditions \ref{it:H_F} and \eqref{eq:left_right_action_relations} then the fact $l$ and $r^{\times}$, $r$ and $l^{\times}$ are related follows from the non-degeneracy of $P^{-1}$ and $Q^{-1}$. The properties of a bimodule guarantee the Pachner moves associated with defect lines are satisfied. Finally, if $\phi$ satisfies condition \ref{it:phi} all relevant Pachner moves are also satisfied.   
\end{proof}

An intuitive feature of models with symmetry under homeomorphisms is that a change of scale should leave the model unaltered. We already know that the partition function of a defect model $\gls{Z}(M,\gls{Gamma})$ is insensitive to the number of triangles, edges and vertices in the graph-like triangulation of $M$ as long as the boundary data of $M \subset \Rb^2$ remains unchanged. On the other hand, if $\Gamma$ is confined to a very small region of $M$, so as to `resemble' a defect node with no arrows, can we equate the evaluation $|\gls{Gg}|$ with that of a node map? 

Any defect graph, or collection of disjoint graphs,  $\Gamma$ is in fact equivalent to the defect graph consisting of a single node and no arrows -- $\gamma_0$ -- for a suitable choice of node map $\phi$. By equivalence, we mean they give rise to the same partition function. This statement is made precise below. 
\begin{proposition}
A planar defect model for $(M,\gls{Gamma})$ is equivalent to a model for $(M,\gamma_0)$ for some choice of $\phi$ to associate with $\gamma_0$, where $\phi \colon \gls{A} \otimes A \otimes A \to \gls{k}$. If $\phi$ is non-degenerate it satisfies $\phi(a \otimes b \otimes c)=\gls{eps} (y \cdot a \cdot b \cdot c)$, for some $y \in \gls{ZA}$.
\end{proposition}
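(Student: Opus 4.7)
The plan is to exploit Pachner invariance to collapse the defect graph into a single decorated node, and then to analyse the resulting trilinear form using the Frobenius structure of $A$. First, I would refine the given graph-like triangulation so that $\Gamma$ lies inside a small triangulated disk $D\subset M$ whose boundary consists of exactly three edges; such a triangulation is obtainable by a finite sequence of $2$-$2$ and $1$-$3$ Pachner moves applied in the complement of a neighbourhood of $\Gamma$. Summing over every interior state (both surface and defect) on $D$ yields a constant $\phi(a,b,c):=|G_D|_{abc}\in k$ depending only on the three boundary labels, i.e., a trilinear map $\phi\colon A\otimes A\otimes A\to k$.

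Next I would verify that this $\phi$ qualifies as a legitimate node amplitude for $\gamma_0$ (the defect graph with one trivalent node and no arrows). All three legs of the node carry states in $A$, so the Pachner-move identities of the preceding theorem reduce to ordinary associativity with respect to the $A$-multiplication, together with the Nakayama-twisted cyclic conditions of equations \eqref{eq:phi_samespace}, \eqref{eq:phi_neqspace}, \eqref{eq:sigma_def} and \eqref{eq:phi_ass}. These are inherited automatically from the corresponding Pachner identities already satisfied by $(L,R,P,Q,\phi_{\Gamma})$ inside $D$. Hence the triangulated disk $D$ may be replaced by a single triangle carrying $\gamma_0$ decorated by $\phi$, and invariance under Pachner moves in $M$ delivers the equivalence $Z(M,\Gamma)=Z(M,\gamma_0)$.

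For the second half of the statement, assume $\phi$ is non-degenerate. Iterating the associativity-type identity \eqref{eq:phi_ass} I would slide every argument into the first slot, obtaining $\phi(a,b,c)=f(a\cdot b\cdot c)$ with $f(x):=\phi(x,1,1)$, where $1=R\cdot m(B)$ is the algebra unit. Since $\varepsilon$ is non-degenerate there is a unique $y\in A$ with $f(x)=\varepsilon(y\cdot x)$, producing
\[
\phi(a,b,c)=\varepsilon(y\cdot a\cdot b\cdot c).
\]

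The final and most delicate step is to show $y\in \mathcal{Z}(A)$. My plan is to combine the rotational symmetry \eqref{eq:sigma_def} of $\phi$ with the Nakayama identity $\varepsilon(uv)=\varepsilon(v\cdot\sigma(u))$ and the resulting $\sigma$-invariance of $\varepsilon$ to derive the relation $\varepsilon(y\cdot[a,b])=0$ for all $a,b\in A$, and then to invoke the non-degeneracy of $\varepsilon\circ m$ (equivalently, of $B^{-1}$) to conclude $[y,a]=0$ for every $a\in A$. The hard part here will be bookkeeping the Nakayama twists so that they cancel cleanly: in general $\sigma$ need not be an involution, so extra care is required when translating the diagrammatic cyclic identity into this clean algebraic condition. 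The pay-off is that once $y$ is shown to be central, the characterization $\phi(a\otimes b\otimes c)=\varepsilon(y\cdot a\cdot b\cdot c)$ with $y\in\mathcal{Z}(A)$ follows immediately.
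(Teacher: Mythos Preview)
Your overall strategy matches the paper's: represent $\Gamma$ as a blob with only algebra legs emerging, verify that this blob satisfies the node axioms \eqref{eq:phi_samespace} and \ref{it:phi} (by sliding algebra lines past all internal vertices using associativity and the defect Pachner identities), and then reduce $\phi$ to a linear functional $f_\phi$ via $\phi(a,b,c)=f_\phi(a\cdot b\cdot c)$. Up to this point your plan and the paper's proof are essentially identical.

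The divergence is in the centrality argument, and your stated intermediate target is off. The Nakayama-twisted cyclic identity \eqref{eq:sigma_def} gives $f_\phi(\sigma(a)\cdot b)=f_\phi(b\cdot a)$, not an untwisted trace condition. Writing $f_\phi(x)=\varepsilon(y\cdot x)$ and applying the Nakayama property of $\varepsilon$ to the right-hand side yields
\[
\varepsilon\bigl(y\,\sigma(a)\,b\bigr)=\varepsilon\bigl(y\,b\,a\bigr)=\varepsilon\bigl(\sigma(a)\,y\,b\bigr),
\]
hence $\varepsilon\bigl([y,\sigma(a)]\,b\bigr)=0$ for all $a,b$; non-degeneracy of $\varepsilon\circ m$ and surjectivity of $\sigma$ then force $y\in\mathcal{Z}(A)$. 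This is exactly what the paper packages as ``the Nakayama automorphism determines a non-degenerate Frobenius form up to a central element.'' Your proposed relation $\varepsilon(y\cdot[a,b])=0$ is a different condition: it says $f_\phi$ is a trace, which is neither what the twisted cyclic symmetry delivers nor equivalent to centrality of $y$ when $\sigma\ne\iden$ (for $A=\Mb_n(\Cb)$ with $\varepsilon(a)=\Tr(xa)$ and $x$ non-central, the traces are $\Tr(z\,\cdot\,)$ with $z$ central, giving $y=x^{-1}z$, which is not central). So the ``delicate bookkeeping'' you anticipate is actually a symptom of aiming at the wrong identity; once you target $\varepsilon([y,\sigma(a)]b)=0$ instead, the Nakayama twists cancel in one line and no further care is needed.
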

\begin{proof}
To establish this result it will be shown that any defect graph $\Gamma$ behaves algebraically as a node map $\phi \colon A \otimes A \otimes A \to k$. To do so, one must show all properties of a node map are satisfied by a more general defect graph. Without loss of generality represent $\Gamma$ as a diagram $\begin{aligned}
\begingroup%
  \makeatletter%
  \providecommand\color[2][]{%
    \errmessage{(Inkscape) Color is used for the text in Inkscape, but the package 'color.sty' is not loaded}%
    \renewcommand\color[2][]{}%
  }%
  \providecommand\transparent[1]{%
    \errmessage{(Inkscape) Transparency is used (non-zero) for the text in Inkscape, but the package 'transparent.sty' is not loaded}%
    \renewcommand\transparent[1]{}%
  }%
  \providecommand\rotatebox[2]{#2}%
  \ifx\svgwidth\undefined%
    \setlength{\unitlength}{13.86475363bp}%
    \ifx\svgscale\undefined%
      \relax%
    \else%
      \setlength{\unitlength}{\unitlength * \real{\svgscale}}%
    \fi%
  \else%
    \setlength{\unitlength}{\svgwidth}%
  \fi%
  \global\let\svgwidth\undefined%
  \global\let\svgscale\undefined%
  \makeatother%
  \begin{picture}(1,0.69249371)%
    \put(0,0){\includegraphics[width=\unitlength]{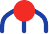}}%
  \end{picture}%
\endgroup%
\end{aligned}$ where
\begin{align}
\begin{aligned}
\begingroup%
  \makeatletter%
  \providecommand\color[2][]{%
    \errmessage{(Inkscape) Color is used for the text in Inkscape, but the package 'color.sty' is not loaded}%
    \renewcommand\color[2][]{}%
  }%
  \providecommand\transparent[1]{%
    \errmessage{(Inkscape) Transparency is used (non-zero) for the text in Inkscape, but the package 'transparent.sty' is not loaded}%
    \renewcommand\transparent[1]{}%
  }%
  \providecommand\rotatebox[2]{#2}%
  \ifx\svgwidth\undefined%
    \setlength{\unitlength}{167.02519019bp}%
    \ifx\svgscale\undefined%
      \relax%
    \else%
      \setlength{\unitlength}{\unitlength * \real{\svgscale}}%
    \fi%
  \else%
    \setlength{\unitlength}{\svgwidth}%
  \fi%
  \global\let\svgwidth\undefined%
  \global\let\svgscale\undefined%
  \makeatother%
  \begin{picture}(1,0.40530562)%
    \put(0,0){\includegraphics[width=\unitlength]{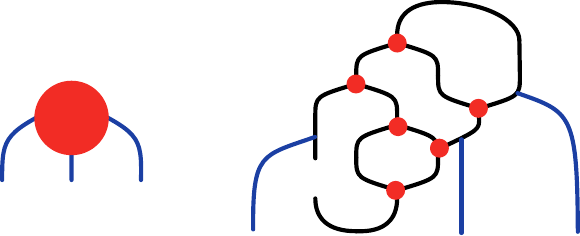}}%
    \put(0.29034437,0.16437702){\color[rgb]{0,0,0}\makebox(0,0)[lb]{\smash{$=$}}}%
    \put(0.50538947,0.07965549){\color[rgb]{0,0,0}\makebox(0,0)[lb]{\smash{$\cdots$}}}%
  \end{picture}%
\endgroup%

\end{aligned}
\end{align}
Note such a description is always possible since $\Gamma$ does not intersect the boundary of $M$. One must show that conditions \eqref{eq:phi_samespace} and \ref{it:phi} are satisfied. The identities below rely on a general notion of associativity: algebra lines can always move past vertices be these algebra vertices, bimodule vertices or defect nodes:
\begin{align}
\begin{aligned}
&
\begingroup%
  \makeatletter%
  \providecommand\color[2][]{%
    \errmessage{(Inkscape) Color is used for the text in Inkscape, but the package 'color.sty' is not loaded}%
    \renewcommand\color[2][]{}%
  }%
  \providecommand\transparent[1]{%
    \errmessage{(Inkscape) Transparency is used (non-zero) for the text in Inkscape, but the package 'transparent.sty' is not loaded}%
    \renewcommand\transparent[1]{}%
  }%
  \providecommand\rotatebox[2]{#2}%
  \ifx\svgwidth\undefined%
    \setlength{\unitlength}{361.48677875bp}%
    \ifx\svgscale\undefined%
      \relax%
    \else%
      \setlength{\unitlength}{\unitlength * \real{\svgscale}}%
    \fi%
  \else%
    \setlength{\unitlength}{\svgwidth}%
  \fi%
  \global\let\svgwidth\undefined%
  \global\let\svgscale\undefined%
  \makeatother%
  \begin{picture}(1,0.12858389)%
    \put(0,0){\includegraphics[width=\unitlength]{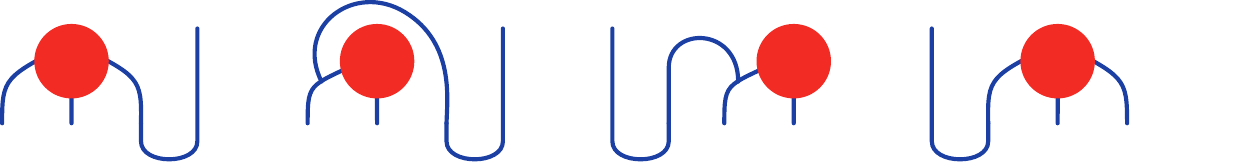}}%
    \put(0.17870656,0.04145451){\color[rgb]{0,0,0}\makebox(0,0)[lb]{\smash{$\overset{\eqref{eq:sigma_def}}{=}$}}}%
    \put(0.42214562,0.04145451){\color[rgb]{0,0,0}\makebox(0,0)[lb]{\smash{$\overset{\eqref{eq:snake}}{=}$}}}%
    \put(0.67665009,0.04145451){\color[rgb]{0,0,0}\makebox(0,0)[lb]{\smash{$\overset{\eqref{eq:phi_ass}}{=}$}}}%
  \end{picture}%
\endgroup%
 
\end{aligned}\, ,\\
\begin{aligned}
&
\begingroup%
  \makeatletter%
  \providecommand\color[2][]{%
    \errmessage{(Inkscape) Color is used for the text in Inkscape, but the package 'color.sty' is not loaded}%
    \renewcommand\color[2][]{}%
  }%
  \providecommand\transparent[1]{%
    \errmessage{(Inkscape) Transparency is used (non-zero) for the text in Inkscape, but the package 'transparent.sty' is not loaded}%
    \renewcommand\transparent[1]{}%
  }%
  \providecommand\rotatebox[2]{#2}%
  \ifx\svgwidth\undefined%
    \setlength{\unitlength}{287.14730121bp}%
    \ifx\svgscale\undefined%
      \relax%
    \else%
      \setlength{\unitlength}{\unitlength * \real{\svgscale}}%
    \fi%
  \else%
    \setlength{\unitlength}{\svgwidth}%
  \fi%
  \global\let\svgwidth\undefined%
  \global\let\svgscale\undefined%
  \makeatother%
  \begin{picture}(1,0.15108464)%
    \put(0,0){\includegraphics[width=\unitlength]{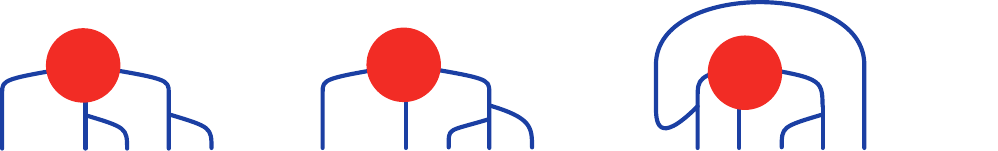}}%
    \put(0.56039821,0.0166268){\color[rgb]{0,0,0}\makebox(0,0)[lb]{\smash{$\overset{\eqref{eq:sigma_def}}{=}$}}}%
    \put(0.24000515,0.0166268){\color[rgb]{0,0,0}\makebox(0,0)[lb]{\smash{$\overset{\eqref{eq:phi_ass}}{=}$}}}%
  \end{picture}%
\endgroup%
 \, . \hspace{2.7cm}
\end{aligned}
\end{align}
The action of $\phi$ can be regarded as $a \otimes b \otimes c \mapsto f_{\phi}(a \cdot b \cdot c)$ for some $f_{\phi} \colon A \to k $. Since property \ref{it:phi} is verified this function must satisfy $f_{\phi}(\sigma(a) \cdot b)= f_{\phi} (b\cdot a)$.  
Classifying planar models with defects reduces therefore to classifying maps $f_{\phi}$. Recall the Nakayama automorphism determines a non-degenerate Frobenius form $\varepsilon$ up to a central element. Therefore, one learns that if $\phi$ is non-degenerate the map $f_{\phi}$ must obey $f_{\phi}(a)=\varepsilon(y \cdot a)$ for some $y \in \mathcal{Z}(A)$. 
\end{proof}
\section{Spherical models with defects}

Consider $M \subset \Sigma_0$, a submanifold of the sphere with a chosen orientation. As in section \S\ref{sec:spherical}, it is possible to extend the construction of models with defects to triangulated subsets of the sphere. Not only condition \eqref{eq:blob} must be satisfied but also both
\begin{align}
\begin{aligned}
\label{eq:defect_spherical_condition}
\begingroup%
  \makeatletter%
  \providecommand\color[2][]{%
    \errmessage{(Inkscape) Color is used for the text in Inkscape, but the package 'color.sty' is not loaded}%
    \renewcommand\color[2][]{}%
  }%
  \providecommand\transparent[1]{%
    \errmessage{(Inkscape) Transparency is used (non-zero) for the text in Inkscape, but the package 'transparent.sty' is not loaded}%
    \renewcommand\transparent[1]{}%
  }%
  \providecommand\rotatebox[2]{#2}%
  \ifx\svgwidth\undefined%
    \setlength{\unitlength}{141.73971918bp}%
    \ifx\svgscale\undefined%
      \relax%
    \else%
      \setlength{\unitlength}{\unitlength * \real{\svgscale}}%
    \fi%
  \else%
    \setlength{\unitlength}{\svgwidth}%
  \fi%
  \global\let\svgwidth\undefined%
  \global\let\svgscale\undefined%
  \makeatother%
  \begin{picture}(1,0.60503771)%
    \put(0,0){\includegraphics[width=\unitlength]{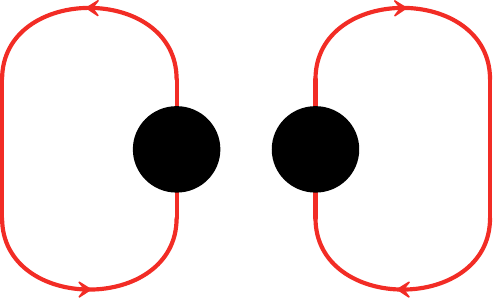}}%
    \put(0.47174086,0.27329872){\color[rgb]{0,0,0}\makebox(0,0)[lb]{\smash{$=$}}}%
  \end{picture}%
\endgroup%

\end{aligned}
\end{align}
and its counterpart (given by reversing the orientation of the arrows) must be met. Note $\begin{aligned} 
\begingroup%
  \makeatletter%
  \providecommand\color[2][]{%
    \errmessage{(Inkscape) Color is used for the text in Inkscape, but the package 'color.sty' is not loaded}%
    \renewcommand\color[2][]{}%
  }%
  \providecommand\transparent[1]{%
    \errmessage{(Inkscape) Transparency is used (non-zero) for the text in Inkscape, but the package 'transparent.sty' is not loaded}%
    \renewcommand\transparent[1]{}%
  }%
  \providecommand\rotatebox[2]{#2}%
  \ifx\svgwidth\undefined%
    \setlength{\unitlength}{11.50664063bp}%
    \ifx\svgscale\undefined%
      \relax%
    \else%
      \setlength{\unitlength}{\unitlength * \real{\svgscale}}%
    \fi%
  \else%
    \setlength{\unitlength}{\svgwidth}%
  \fi%
  \global\let\svgwidth\undefined%
  \global\let\svgscale\undefined%
  \makeatother%
  \begin{picture}(1,1.57272029)%
    \put(0,0){\includegraphics[width=\unitlength]{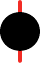}}%
  \end{picture}%
\endgroup%
\end{aligned}$ stands for a diagram, not necessarily closed, which is the same in both sides of the equation. A sufficient condition for \eqref{eq:defect_spherical_condition} to be verified is given by the identity below
\begin{align}
\label{eq:spherical_matrix}
Q_{\gamma\alpha}P^{\gamma\beta}=P_{\alpha\gamma}Q^{\beta\gamma}.
\end{align}
This condition can be seen in matrix form as $(P^{\tr})^2=Q^2$. Note that by allowing the arrows in \eqref{eq:defect_spherical_condition} to be reversed we would arrive at the condition $(Q^{\tr})^2=P^2$ which is therefore a consequence of \eqref{eq:spherical_matrix}. Another interpretation of equation \eqref{eq:spherical_matrix} is in terms of linear maps. It tells us that $\sigma_V \colon V \to V$ defined according to
\begin{align}
\sigma_V(v)=\sum_{\alpha\beta}P^{-1}(w_{\alpha},v)v_{\beta}Q^{\alpha\beta}
\end{align}
must match its inverse, $\sigma^{-1}_V \colon V \to V$ such that 
\begin{align}
\sigma^{-1}_V(v)&=\sum_{\alpha\beta}Q^{-1}(v,w_{\alpha})v_{\beta}P^{\alpha\beta}.
\end{align}
The fact $\sigma_V \circ \sigma_V^{-1}= \iden$ can be readily verified using the snake identities linking $P,P^{-1}$ and $Q,Q^{-1}$. The map $\sigma_V$ will be of use in the next chapter along with $\sigma_{V^{\times}}$, similarly defined. 

Unlike for the pure spherical model, condition \eqref{eq:defect_spherical_condition} is not automatically satisfied for $M=\Sigma_0$. Nevertheless, only surfaces without boundary will be considered in the next chapter where the explicit verification of condition \eqref{eq:defect_spherical_condition} will be used.

\chapter{Spin defect models}\label{ch:spin_defects}

\section{Defect models with crossings}

In the previous chapter we enlarged our class of planar diagrams to include defects. Accommodating this new information without losing invariance under Pachner moves was made possible by carefully translating the essential features of the old calculus to the new. The main novelty was building a description of the surface with defects $(M,\gls{Gamma})$ through a graph $\gls{Gg}$ that retained all the algebraic features of $G$, the graph dual to $M$ alone. The extension of defect diagrams to all surfaces will follow the same steps. Nonetheless this chapter will not treat the most general type of model extensively. We will discuss a restricted class of models where a single closed defect line is present. The reason for this restriction is simple: this is a class for which the role of immersions is well understood, information that is presently still lacking for the most general theory.

The new calculus is tailored to retain the invariance under regular homotopy characteristic of the spin models of chapter \S\ref{ch:spin}. We consider a surface with defects, immersed in $\Rb^3$. This surface has been triangulated through a graph-like triangulation. The construction of its dual diagram follows the rules of chapter \S\ref{ch:defects} and is extended to a ribbon graph through a regular neighbourhood, as in section \S\ref{sec:crossing}. A blackboard-framed projection from $\Rb^3$ to $\Rb^2$ is used to generate its spin diagram. This projection introduces crossings on the diagram, a structure that was not present in the planar models with defects. The algebra-algebra crossing $\lambda \colon A \otimes A \to A \otimes A$ has already been studied in detail; however, new types of crossings arise. Under and over crossings are not distinguished since they are regarded as equivalent under regular homotopy. These are denoted collectively as follows, where $\begin{aligned}  = , , \end{aligned}$ and $W,W'=A,V,V^{\times}$:
\begin{align}
\begin{aligned}
\begingroup%
  \makeatletter%
  \providecommand\color[2][]{%
    \errmessage{(Inkscape) Color is used for the text in Inkscape, but the package 'color.sty' is not loaded}%
    \renewcommand\color[2][]{}%
  }%
  \providecommand\transparent[1]{%
    \errmessage{(Inkscape) Transparency is used (non-zero) for the text in Inkscape, but the package 'transparent.sty' is not loaded}%
    \renewcommand\transparent[1]{}%
  }%
  \providecommand\rotatebox[2]{#2}%
  \ifx\svgwidth\undefined%
    \setlength{\unitlength}{138.38503418bp}%
    \ifx\svgscale\undefined%
      \relax%
    \else%
      \setlength{\unitlength}{\unitlength * \real{\svgscale}}%
    \fi%
  \else%
    \setlength{\unitlength}{\svgwidth}%
  \fi%
  \global\let\svgwidth\undefined%
  \global\let\svgscale\undefined%
  \makeatother%
  \begin{picture}(1,0.40936172)%
    \put(0,0){\includegraphics[width=\unitlength]{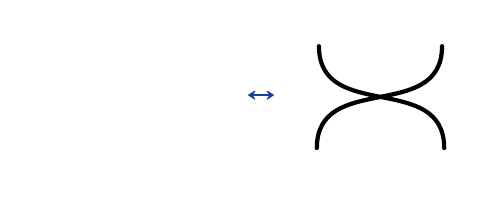}}%
    \put(-0.0020606,0.19486963){\color[rgb]{0,0,0}\makebox(0,0)[lb]{\smash{$(\lambda_{W,W'})_{IJ}{}^{KL}$}}}%
    \put(0.64313719,0.00595598){\color[rgb]{0,0,0}\makebox(0,0)[lb]{\smash{$I$}}}%
    \put(0.90947472,0.00698805){\color[rgb]{0,0,0}\makebox(0,0)[lb]{\smash{$J$}}}%
    \put(0.64623391,0.36416965){\color[rgb]{0,0,0}\makebox(0,0)[lb]{\smash{$K$}}}%
    \put(0.90224851,0.36313741){\color[rgb]{0,0,0}\makebox(0,0)[lb]{\smash{$L$}}}%
  \end{picture}%
\endgroup%

\end{aligned}.
\end{align}
Note that $I,L \in W$ and $J,K \in W'$ and that $\lambda_{A,A}=\lambda$, the algebra crossing introduced in section \S\ref{sec:crossing}. 

We are thus ready to define a spin model with defects. For simplicity, the notation $\begin{aligned}
\begingroup%
  \makeatletter%
  \providecommand\color[2][]{%
    \errmessage{(Inkscape) Color is used for the text in Inkscape, but the package 'color.sty' is not loaded}%
    \renewcommand\color[2][]{}%
  }%
  \providecommand\transparent[1]{%
    \errmessage{(Inkscape) Transparency is used (non-zero) for the text in Inkscape, but the package 'transparent.sty' is not loaded}%
    \renewcommand\transparent[1]{}%
  }%
  \providecommand\rotatebox[2]{#2}%
  \ifx\svgwidth\undefined%
    \setlength{\unitlength}{16.75061919bp}%
    \ifx\svgscale\undefined%
      \relax%
    \else%
      \setlength{\unitlength}{\unitlength * \real{\svgscale}}%
    \fi%
  \else%
    \setlength{\unitlength}{\svgwidth}%
  \fi%
  \global\let\svgwidth\undefined%
  \global\let\svgscale\undefined%
  \makeatother%
  \begin{picture}(1,0.81094489)%
    \put(0,0){\includegraphics[width=\unitlength]{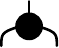}}%
  \end{picture}%
\endgroup%
\end{aligned}$ is used to denote the multiplication and action vertices $\begin{aligned}
\begingroup%
  \makeatletter%
  \providecommand\color[2][]{%
    \errmessage{(Inkscape) Color is used for the text in Inkscape, but the package 'color.sty' is not loaded}%
    \renewcommand\color[2][]{}%
  }%
  \providecommand\transparent[1]{%
    \errmessage{(Inkscape) Transparency is used (non-zero) for the text in Inkscape, but the package 'transparent.sty' is not loaded}%
    \renewcommand\transparent[1]{}%
  }%
  \providecommand\rotatebox[2]{#2}%
  \ifx\svgwidth\undefined%
    \setlength{\unitlength}{16.75061919bp}%
    \ifx\svgscale\undefined%
      \relax%
    \else%
      \setlength{\unitlength}{\unitlength * \real{\svgscale}}%
    \fi%
  \else%
    \setlength{\unitlength}{\svgwidth}%
  \fi%
  \global\let\svgwidth\undefined%
  \global\let\svgscale\undefined%
  \makeatother%
  \begin{picture}(1,0.81094489)%
    \put(0,0){\includegraphics[width=\unitlength]{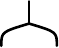}}%
  \end{picture}%
\endgroup%
\end{aligned}$ , and nodes $\begin{aligned}
\begingroup%
  \makeatletter%
  \providecommand\color[2][]{%
    \errmessage{(Inkscape) Color is used for the text in Inkscape, but the package 'color.sty' is not loaded}%
    \renewcommand\color[2][]{}%
  }%
  \providecommand\transparent[1]{%
    \errmessage{(Inkscape) Transparency is used (non-zero) for the text in Inkscape, but the package 'transparent.sty' is not loaded}%
    \renewcommand\transparent[1]{}%
  }%
  \providecommand\rotatebox[2]{#2}%
  \ifx\svgwidth\undefined%
    \setlength{\unitlength}{16.75061919bp}%
    \ifx\svgscale\undefined%
      \relax%
    \else%
      \setlength{\unitlength}{\unitlength * \real{\svgscale}}%
    \fi%
  \else%
    \setlength{\unitlength}{\svgwidth}%
  \fi%
  \global\let\svgwidth\undefined%
  \global\let\svgscale\undefined%
  \makeatother%
  \begin{picture}(1,0.81094489)%
    \put(0,0){\includegraphics[width=\unitlength]{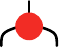}}%
  \end{picture}%
\endgroup%
\end{aligned}$ collectively. 

\begin{definition} \label{def:spin_defects}
A spin state sum model with defects is a planar state sum model with defects, together with crossing maps $\lambda _{W,W'} \colon W \otimes W' \to W' \otimes W$, $W,W'=A,V,V^{\times}$, satisfying the following axioms:
\begin{enumerate}[label=I\arabic{*}), ref=(I\arabic{*})]
\item \label{it:B_defects} compatibility with $B$, $P^{-1}$ and $Q^{-1}$, \hskip0.9cm $\begin{aligned} 
\begingroup%
  \makeatletter%
  \providecommand\color[2][]{%
    \errmessage{(Inkscape) Color is used for the text in Inkscape, but the package 'color.sty' is not loaded}%
    \renewcommand\color[2][]{}%
  }%
  \providecommand\transparent[1]{%
    \errmessage{(Inkscape) Transparency is used (non-zero) for the text in Inkscape, but the package 'transparent.sty' is not loaded}%
    \renewcommand\transparent[1]{}%
  }%
  \providecommand\rotatebox[2]{#2}%
  \ifx\svgwidth\undefined%
    \setlength{\unitlength}{120.57544212bp}%
    \ifx\svgscale\undefined%
      \relax%
    \else%
      \setlength{\unitlength}{\unitlength * \real{\svgscale}}%
    \fi%
  \else%
    \setlength{\unitlength}{\svgwidth}%
  \fi%
  \global\let\svgwidth\undefined%
  \global\let\svgscale\undefined%
  \makeatother%
  \begin{picture}(1,0.21909887)%
    \put(0,0){\includegraphics[width=\unitlength]{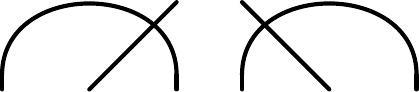}}%
    \put(0.46464559,0.06359185){\color[rgb]{0,0,0}\makebox(0,0)[lb]{\smash{$=$}}}%
  \end{picture}%
\endgroup%
\end{aligned}\,$,
\item \label{it:C_defects} compatibility with $C$, $L$, $R$ and $\phi$, \hskip1.1cm $\begin{aligned} 
\begingroup%
  \makeatletter%
  \providecommand\color[2][]{%
    \errmessage{(Inkscape) Color is used for the text in Inkscape, but the package 'color.sty' is not loaded}%
    \renewcommand\color[2][]{}%
  }%
  \providecommand\transparent[1]{%
    \errmessage{(Inkscape) Transparency is used (non-zero) for the text in Inkscape, but the package 'transparent.sty' is not loaded}%
    \renewcommand\transparent[1]{}%
  }%
  \providecommand\rotatebox[2]{#2}%
  \ifx\svgwidth\undefined%
    \setlength{\unitlength}{124.33878124bp}%
    \ifx\svgscale\undefined%
      \relax%
    \else%
      \setlength{\unitlength}{\unitlength * \real{\svgscale}}%
    \fi%
  \else%
    \setlength{\unitlength}{\svgwidth}%
  \fi%
  \global\let\svgwidth\undefined%
  \global\let\svgscale\undefined%
  \makeatother%
  \begin{picture}(1,0.32354096)%
    \put(0,0){\includegraphics[width=\unitlength]{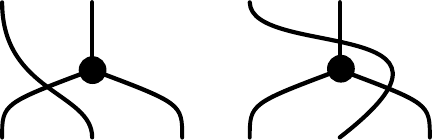}}%
    \put(0.45951808,0.05716834){\color[rgb]{0,0,0}\makebox(0,0)[lb]{\smash{$=$}}}%
  \end{picture}%
\endgroup%
\end{aligned}\,$,
\item \label{it: RII_defects} the Reidemeister II move (RII), \hskip2.35cm $\begin{aligned}
\begingroup%
  \makeatletter%
  \providecommand\color[2][]{%
    \errmessage{(Inkscape) Color is used for the text in Inkscape, but the package 'color.sty' is not loaded}%
    \renewcommand\color[2][]{}%
  }%
  \providecommand\transparent[1]{%
    \errmessage{(Inkscape) Transparency is used (non-zero) for the text in Inkscape, but the package 'transparent.sty' is not loaded}%
    \renewcommand\transparent[1]{}%
  }%
  \providecommand\rotatebox[2]{#2}%
  \ifx\svgwidth\undefined%
    \setlength{\unitlength}{60.60261686bp}%
    \ifx\svgscale\undefined%
      \relax%
    \else%
      \setlength{\unitlength}{\unitlength * \real{\svgscale}}%
    \fi%
  \else%
    \setlength{\unitlength}{\svgwidth}%
  \fi%
  \global\let\svgwidth\undefined%
  \global\let\svgscale\undefined%
  \makeatother%
  \begin{picture}(1,0.6942303)%
    \put(0,0){\includegraphics[width=\unitlength]{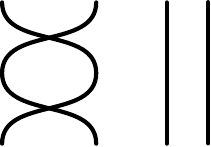}}%
    \put(0.53767689,0.30475986){\color[rgb]{0,0,0}\makebox(0,0)[lb]{\smash{$=$
}}}%
  \end{picture}%
\endgroup%
\end{aligned}\,$,
\item \label{it: RIII_defects} the Reidemeister III move (RIII), \hskip1.2cm $\begin{aligned} 
\begingroup%
  \makeatletter%
  \providecommand\color[2][]{%
    \errmessage{(Inkscape) Color is used for the text in Inkscape, but the package 'color.sty' is not loaded}%
    \renewcommand\color[2][]{}%
  }%
  \providecommand\transparent[1]{%
    \errmessage{(Inkscape) Transparency is used (non-zero) for the text in Inkscape, but the package 'transparent.sty' is not loaded}%
    \renewcommand\transparent[1]{}%
  }%
  \providecommand\rotatebox[2]{#2}%
  \ifx\svgwidth\undefined%
    \setlength{\unitlength}{124.40082108bp}%
    \ifx\svgscale\undefined%
      \relax%
    \else%
      \setlength{\unitlength}{\unitlength * \real{\svgscale}}%
    \fi%
  \else%
    \setlength{\unitlength}{\svgwidth}%
  \fi%
  \global\let\svgwidth\undefined%
  \global\let\svgscale\undefined%
  \makeatother%
  \begin{picture}(1,0.3227895)%
    \put(0,0){\includegraphics[width=\unitlength]{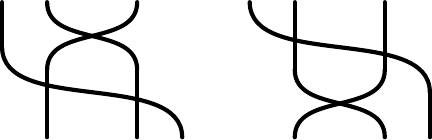}}%
    \put(0.44944427,0.14184992){\color[rgb]{0,0,0}\makebox(0,0)[lb]{\smash{$=$}}}%
  \end{picture}%
\endgroup%
\end{aligned}\,$,
\item \label{it:axiom_ribbon_defects} the ribbon condition, \hskip4.4cm  $\begin{aligned} 
\begingroup%
  \makeatletter%
  \providecommand\color[2][]{%
    \errmessage{(Inkscape) Color is used for the text in Inkscape, but the package 'color.sty' is not loaded}%
    \renewcommand\color[2][]{}%
  }%
  \providecommand\transparent[1]{%
    \errmessage{(Inkscape) Transparency is used (non-zero) for the text in Inkscape, but the package 'transparent.sty' is not loaded}%
    \renewcommand\transparent[1]{}%
  }%
  \providecommand\rotatebox[2]{#2}%
  \ifx\svgwidth\undefined%
    \setlength{\unitlength}{75.275bp}%
    \ifx\svgscale\undefined%
      \relax%
    \else%
      \setlength{\unitlength}{\unitlength * \real{\svgscale}}%
    \fi%
  \else%
    \setlength{\unitlength}{\svgwidth}%
  \fi%
  \global\let\svgwidth\undefined%
  \global\let\svgscale\undefined%
  \makeatother%
  \begin{picture}(1,0.59045802)%
    \put(0,0){\includegraphics[width=\unitlength]{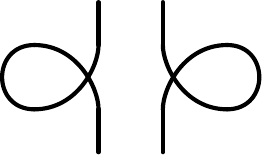}}%
    \put(0.44898783,0.26266386){\color[rgb]{0,0,0}\makebox(0,0)[lb]{\smash{$=$}}}%
  \end{picture}%
\endgroup%
\end{aligned}\,$.
\end{enumerate}
\end{definition}
Either side of equation \ref{it:axiom_ribbon_defects} defines a map $\varphi_W \colon W \to W$ where $W=A,V,V^{\times}$ -- for consistency of notation if $W=A$ we will refer to the map simply as $\varphi$. A spin model with defects is independent of the graph-like triangulation chosen, as it is spherical.
\begin{lemma}
A spin state sum model with defects is spherical.
\end{lemma}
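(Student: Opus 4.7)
The plan is to prove that the two spherical identities~\eqref{eq:blob} and~\eqref{eq:defect_spherical_condition} follow from the axioms of definition~\ref{def:spin_defects}. Once these are in hand, the partition function on any $M\subset\Sigma_0$ is unambiguously computed from the planar model by stereographic projection away from a point $p$ not meeting the dual graph $G_{\Gamma}$, and independence of the choice of $p$ is precisely the sphericality property.

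The central step is to establish, as an analogue of lemma~\ref{lem:closed_below}, that for any semi-closed subdiagram (one with no open edges inside) an arc crossing the shaded region on one side may be slid to the other side as an equality of evaluations. I would prove this by structural induction on the elementary vertices $C$, $B^{\pm 1}$, $L$, $R$, $P^{\pm 1}$, $Q^{\pm 1}$, $\phi$ and internal crossings that make up the blob. At an edge vertex the sliding arc passes through by the compatibility axiom~\ref{it:B_defects}; at a trivalent vertex (algebra multiplication, bimodule action, or defect node) by the compatibility axiom~\ref{it:C_defects}; and at an internal crossing by the Reidemeister moves~\ref{it: RII_defects} and~\ref{it: RIII_defects}. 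Any nontrivial framings picked up during the slide manifest as curls, which cancel in pairs via the ribbon condition~\ref{it:axiom_ribbon_defects}. Applied to~\eqref{eq:blob} this sliding argument carries the arc from the left to the right of the blob; applied to a small closed defect loop surrounding a blob it yields~\eqref{eq:defect_spherical_condition}.

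The main obstacle will be a uniform treatment of the three strand types $W\in\{A,V,V^{\times}\}$ and their corresponding curl maps $\varphi_W$. Adapting the identities of lemma~\ref{lem:varphi} to defect strands (so that $\varphi_V$ and $\varphi_{V^{\times}}$ are shown to be compatible with $L$, $R$, $P^{\pm 1}$, $Q^{\pm 1}$, and to satisfy $\varphi_W^2=\iden$) is a nontrivial diagrammatic exercise, and is the prerequisite for the pair-cancellation of curls during the slide. Once this adaptation is in place, the inductive sliding argument proceeds uniformly across all strand types and sphericality follows.
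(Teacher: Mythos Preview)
Your proposal is correct and follows essentially the same approach as the paper. The paper's proof is more terse---it simply asserts that ``the natural generalisation of lemmas~\ref{lem:closed_below} and~\ref{lem:varphi} holds'' for defect diagrams and then gives a four-step diagrammatic chain using~\eqref{eq:closed_below}, property~\ref{it:phi_four}, and property~\ref{it:phi_one}---but the underlying strategy is exactly the structural induction on elementary vertices you outline, together with the extension of the curl identities to all strand types $W\in\{A,V,V^{\times}\}$ so that $\varphi_W^2=\iden$ cancels the curls picked up during the slide.
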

\begin{proof}
We need only show equation \eqref{eq:defect_spherical_condition} is satisfied. Note that for any closed spin defects diagram, the natural generalisation of lemmas \ref{lem:closed_below} and \ref{lem:varphi} holds.
\begin{align}
\begin{aligned}
\begingroup%
  \makeatletter%
  \providecommand\color[2][]{%
    \errmessage{(Inkscape) Color is used for the text in Inkscape, but the package 'color.sty' is not loaded}%
    \renewcommand\color[2][]{}%
  }%
  \providecommand\transparent[1]{%
    \errmessage{(Inkscape) Transparency is used (non-zero) for the text in Inkscape, but the package 'transparent.sty' is not loaded}%
    \renewcommand\transparent[1]{}%
  }%
  \providecommand\rotatebox[2]{#2}%
  \ifx\svgwidth\undefined%
    \setlength{\unitlength}{359.30873646bp}%
    \ifx\svgscale\undefined%
      \relax%
    \else%
      \setlength{\unitlength}{\unitlength * \real{\svgscale}}%
    \fi%
  \else%
    \setlength{\unitlength}{\svgwidth}%
  \fi%
  \global\let\svgwidth\undefined%
  \global\let\svgscale\undefined%
  \makeatother%
  \begin{picture}(1,0.23870366)%
    \put(0,0){\includegraphics[width=\unitlength]{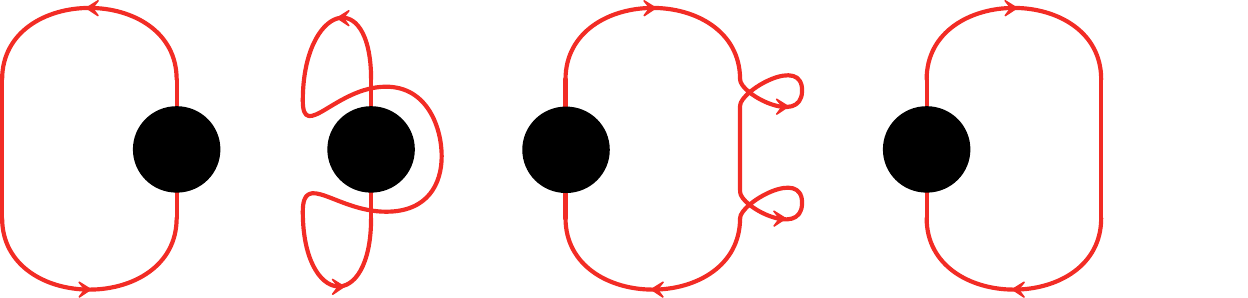}}%
    \put(0.18612167,0.10781065){\color[rgb]{0,0,0}\makebox(0,0)[lb]{\smash{$\overset{\eqref{eq:closed_below}}{=}$}}}%
    \put(0.36424147,0.10781065){\color[rgb]{0,0,0}\makebox(0,0)[lb]{\smash{$\overset{\text{\ref{it:phi_four}}}{=}$}}}%
    \put(0.65368614,0.10781065){\color[rgb]{0,0,0}\makebox(0,0)[lb]{\smash{$\overset{\text{\ref{it:phi_one}}}{=}$}}}%
  \end{picture}%
\endgroup%

\end{aligned}
\end{align}
\end{proof}

\begin{figure}[t!]
\center
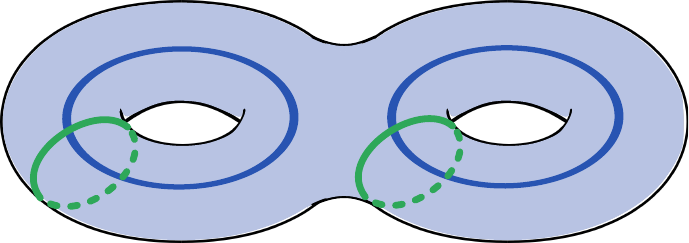
\caption[Generating loops]{A basis of curves that generate the fundamental group of $\Sigma_2$ are depicted above. This specific basis of generators will be known as `generating curves' and defects placed along them are called `generating loops'.}
\label{fig:gen}
\end{figure} 

The relation between a spin diagram $(\Sigma_g,\Gamma)$ and spin structures of $\Sigma_g$ is currently an open problem and one of the threads of development that are worth future investigation. The key difference between spin models and spin models with defects is the lack of a standard format any $G_{\Gamma}$ can be brought to, as opposed to the dual of $\Sigma_g$ with no defects which is always equivalent to $\gamma_g$ (see figure~\ref{fig:dual-graph}).   

To circumvent this issue we will devote our attention to a small class of $G_{\Gamma}$ that we shall denote as `generating loops'. Let us define a loop as a defect graph consisting of a single arrow and a single node where the arrow both begins and terminates. Furthermore, the map $\phi$ the node would come associated with is simply $L$ or $R$ -- we refer to this type of node as a transparent node. Topologically, a loop is therefore a simple closed curve. 

Generating loops are a very restricted subset of loops that do not bound a circle. They can be seen as defects placed along a specific basis of the fundamental group of a surface and are illustrated in figure~\ref{fig:gen}. Surfaces $\Sigma_g$ whose defect graph is a generating loop $l$ are denoted as $(\Sigma_g,l)$. We must reiterate the result below applies only to generating loops, although we conjecture it can be extended to all defect loops.

\begin{theorem}
The partition function for $(\Sigma_g,l)$, a surface with a generating loop, depends on the immersion only through the spin structure induced on $\Sigma_g$.
\end{theorem}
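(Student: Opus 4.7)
The plan is to mirror the strategy of Theorem \ref{spin-embedding}, applying the Whitney--Graustein theorem independently to each generating cycle and then invoking the ribbon condition to cancel curls in pairs. Since a generating loop sits along exactly one of the $2g$ generating cycles of $\Sigma_g$, the dual graph $G_l$ will have $2g-1$ ordinary algebra cycles and one cycle made up of both algebra edges and a closed $V$-strand; the partition function will need to be shown invariant under regular homotopy of each of these cycles separately, modulo a single $\Zb_2$ invariant.

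First I would fix a standard embedding $i_0\colon\Sigma_g\to\Rb^3$ together with a graph-like triangulation whose dual is of the form $\gamma_g$ of figure \ref{fig:dual-graph}, with the generating loop $l$ placed along one designated cycle. Given any other immersion $i$, I would move the induced ribbon graph by a small regular homotopy in $\Rb^3$ to an embedding that is blackboard-framed with respect to the projection $P\colon\Rb^3\to\Rb^2$, and then adjust a neighbourhood of the consolidated vertex so that it matches the base neighbourhood of $\gamma_g$. This leaves each of the $2g$ ribbon loops free to be moved independently by a regular homotopy fixing the base.

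The key algebraic input is the extension of Lemma \ref{lem:varphi} to the defect setting: the axioms \ref{it:B_defects}--\ref{it:axiom_ribbon_defects} of Definition \ref{def:spin_defects} yield $\varphi_V^2=\iden$ and $\varphi_{V^\times}^2=\iden$ by the same Whitney-trick argument used for $\varphi$, so curls on the defect cycle can be cancelled in pairs exactly as in the algebra case. Applying the Whitney--Graustein theorem to the projection of each cycle to $\Rb^2$ shows that its regular-homotopy class is captured entirely by the Whitney degree; lifting back to $\Rb^3$ with the $z$-coordinate kept constant extends this to a regular homotopy of the ribbon graph in $\Rb^3$. Reducing each Whitney degree modulo $2$ via $\varphi_A^2=\varphi_V^2=\iden$ leaves only a single bit per cycle, and this bit is precisely $q_s(c_i)$ for the corresponding generating curve $c_i$. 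Hence the partition function depends on the immersion only through the tuple $(q_s(c_1),\ldots,q_s(c_{2g}))$, which is the induced spin structure.

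The main obstacle is establishing that the defect-bearing cycle behaves no differently from a pure algebra cycle under these manipulations. Concretely, this requires the defect versions of Lemmas \ref{lem:closed_below} and \ref{lem:varphi}, asserting that a semi-closed diagram slides freely past arbitrary strands (including $V$-strands) and that $\varphi_V$ is an isomorphism squaring to the identity. Both should follow by the same pattern of reductions to axioms \ref{it:C_defects}--\ref{it: RIII_defects}, using compatibility with $L$, $R$ and $\phi$ in place of compatibility with $C$ wherever the strand being pushed through is a defect strand, and using the mixed Reidemeister moves at crossings of $V$-strands with $A$-strands. Once these extended lemmas are in hand, the Whitney--Graustein plus curl-cancellation argument transplants verbatim, and the restriction to generating loops ensures that the ambient regular homotopy of $\Sigma_g$ in $\Rb^3$ decomposes cleanly into independent homotopies of the $2g$ cycles.
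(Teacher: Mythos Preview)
Your proposal is correct and follows essentially the same route as the paper: fix a standard embedding with the defect placed along one generating cycle, consolidate the vertices, use Whitney--Graustein on each ribbon loop independently, and then cancel curls in pairs via the defect extension of Lemma~\ref{lem:varphi} (i.e.\ $\varphi_V^2=\iden$, $\varphi_{V^\times}^2=\iden$). The paper's proof is terser but makes exactly the same moves, explicitly invoking Theorem~\ref{spin-embedding} for the geometry and the defect analogue of Lemma~\ref{lem:varphi} for the pairwise curl cancellation on both algebra and defect strands.
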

\begin{proof}
Recall that a spin structure in $\Sigma_g - D$ extends uniquely to a spin structure in $\Sigma_g$. Let $K_l$ be the a ribbon graph to associate with $(\Sigma_g - D,l)$ that we choose to be immersed in $\Rb^2 \subset \Rb^3$. If the immersion is an embedding $i_0$ the ribbon graph can be brought to the standard format below where the defect line could have been placed along any of the generating curves. For simplicity, the multiplication and action vertices have been consolidated into one.
$$
\begingroup%
  \makeatletter%
  \providecommand\color[2][]{%
    \errmessage{(Inkscape) Color is used for the text in Inkscape, but the package 'color.sty' is not loaded}%
    \renewcommand\color[2][]{}%
  }%
  \providecommand\transparent[1]{%
    \errmessage{(Inkscape) Transparency is used (non-zero) for the text in Inkscape, but the package 'transparent.sty' is not loaded}%
    \renewcommand\transparent[1]{}%
  }%
  \providecommand\rotatebox[2]{#2}%
  \ifx\svgwidth\undefined%
    \setlength{\unitlength}{197.36258342bp}%
    \ifx\svgscale\undefined%
      \relax%
    \else%
      \setlength{\unitlength}{\unitlength * \real{\svgscale}}%
    \fi%
  \else%
    \setlength{\unitlength}{\svgwidth}%
  \fi%
  \global\let\svgwidth\undefined%
  \global\let\svgscale\undefined%
  \makeatother%
  \begin{picture}(1,0.83461445)%
    \put(0,0){\includegraphics[width=\unitlength]{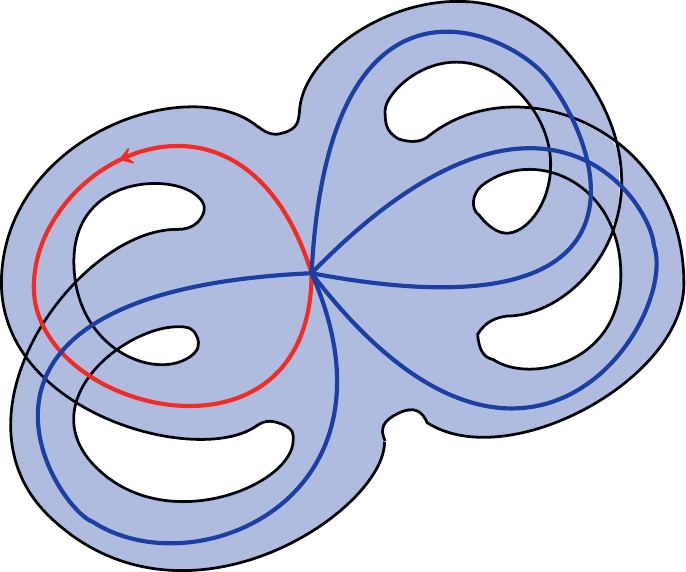}}%
  \end{picture}%
\endgroup%

$$
As in the case of spin models we can perform transformations on the ribbon loops leaving a neighbourhood of the vertex unaltered. By the same arguments presented in theorem~\ref{spin-embedding} we know each immersion of $(\Sigma_g,l)$ will be like $i_0(K_l)$ except for a number of curls in each ribbon loop. Since such loops can be cancelled pairwise for either defect or algebra lines (as a consequence of lemma \ref{lem:varphi}) the result follows.
\end{proof}

\section{Algebraic structure of defect lines}

In this section we will systematically study spin models equipped with defects $l$. Much like in section \S\ref{sec:spinssm}, we are interested in learning what consequences for distinguishing spin structures arise from the introduction of new information -- in the latter algebra-algebra crossings were the novelty, whereas now are those involving defect lines.

It will be useful to first study the four types of maps we can associate to a cylinder with a defect line. These are shown below and their corresponding maps are denoted as $p^V$, $p_V$ and $n^V$, $n_V$ respectively.   
\begin{align}
\begin{aligned}
\begingroup%
  \makeatletter%
  \providecommand\color[2][]{%
    \errmessage{(Inkscape) Color is used for the text in Inkscape, but the package 'color.sty' is not loaded}%
    \renewcommand\color[2][]{}%
  }%
  \providecommand\transparent[1]{%
    \errmessage{(Inkscape) Transparency is used (non-zero) for the text in Inkscape, but the package 'transparent.sty' is not loaded}%
    \renewcommand\transparent[1]{}%
  }%
  \providecommand\rotatebox[2]{#2}%
  \ifx\svgwidth\undefined%
    \setlength{\unitlength}{349.19917439bp}%
    \ifx\svgscale\undefined%
      \relax%
    \else%
      \setlength{\unitlength}{\unitlength * \real{\svgscale}}%
    \fi%
  \else%
    \setlength{\unitlength}{\svgwidth}%
  \fi%
  \global\let\svgwidth\undefined%
  \global\let\svgscale\undefined%
  \makeatother%
  \begin{picture}(1,0.25405775)%
    \put(0,0){\includegraphics[width=\unitlength]{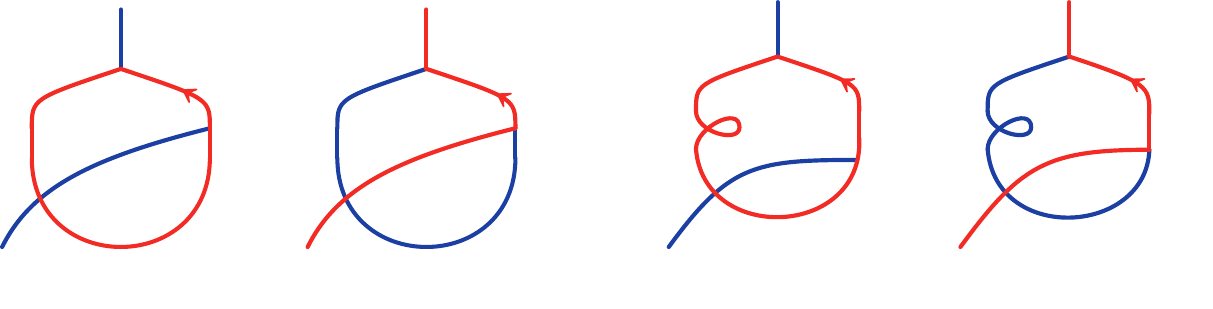}}%
    \put(0.0017449,0.00455283){\color[rgb]{0,0,0}\makebox(0,0)[lb]{\smash{$p^V \colon A \to A$}}}%
    \put(0.25375011,0.00455283){\color[rgb]{0,0,0}\makebox(0,0)[lb]{\smash{$p_V \colon V \to V$}}}%
    \put(0.55157442,0.00455283){\color[rgb]{0,0,0}\makebox(0,0)[lb]{\smash{$n^V \colon A \to A$}}}%
    \put(0.79212481,0.00455283){\color[rgb]{0,0,0}\makebox(0,0)[lb]{\smash{$n_V \colon V \to V$}}}%
  \end{picture}%
\endgroup%

\end{aligned}
\end{align}
We now define two subspaces of $V$: $\mathcal{Z}_{\lambda}(V)$, the set of elements $v \in V$ satisfying $m (a \otimes v)= m \circ \lambda (a \otimes v)$ for all $a \in A$, and $\overline{\mathcal{Z}}_{\lambda}(V)$, the set of elements $v \in V$ obeying the equation $m (a \otimes v)= m \circ \lambda (\varphi(a) \otimes v)$ for all $a \in A$. In diagrammatic terms, the elements $v$ of $\mathcal{Z}_{\lambda}(V)$ or $\overline{\mathcal{Z}}_{\lambda}(V)$ satisfy either
\begin{align}
\begin{aligned}
\begingroup%
  \makeatletter%
  \providecommand\color[2][]{%
    \errmessage{(Inkscape) Color is used for the text in Inkscape, but the package 'color.sty' is not loaded}%
    \renewcommand\color[2][]{}%
  }%
  \providecommand\transparent[1]{%
    \errmessage{(Inkscape) Transparency is used (non-zero) for the text in Inkscape, but the package 'transparent.sty' is not loaded}%
    \renewcommand\transparent[1]{}%
  }%
  \providecommand\rotatebox[2]{#2}%
  \ifx\svgwidth\undefined%
    \setlength{\unitlength}{120.2421875bp}%
    \ifx\svgscale\undefined%
      \relax%
    \else%
      \setlength{\unitlength}{\unitlength * \real{\svgscale}}%
    \fi%
  \else%
    \setlength{\unitlength}{\svgwidth}%
  \fi%
  \global\let\svgwidth\undefined%
  \global\let\svgscale\undefined%
  \makeatother%
  \begin{picture}(1,0.47108055)%
    \put(0,0){\includegraphics[width=\unitlength]{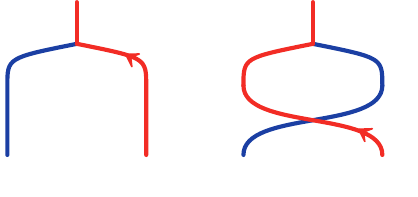}}%
    \put(0.43008901,0.23306478){\color[rgb]{0,0,0}\makebox(0,0)[lb]{\smash{$=$}}}%
    \put(-0.00237152,0.00685466){\color[rgb]{0,0,0}\makebox(0,0)[lb]{\smash{$a$}}}%
    \put(0.33029043,0.00685466){\color[rgb]{0,0,0}\makebox(0,0)[lb]{\smash{$v$}}}%
    \put(0.56315379,0.00685466){\color[rgb]{0,0,0}\makebox(0,0)[lb]{\smash{$a$}}}%
    \put(0.89581574,0.00685466){\color[rgb]{0,0,0}\makebox(0,0)[lb]{\smash{$v$}}}%
  \end{picture}%
\endgroup%
\end{aligned}, \text{ or} \hspace{5mm}\begin{aligned}
\begingroup%
  \makeatletter%
  \providecommand\color[2][]{%
    \errmessage{(Inkscape) Color is used for the text in Inkscape, but the package 'color.sty' is not loaded}%
    \renewcommand\color[2][]{}%
  }%
  \providecommand\transparent[1]{%
    \errmessage{(Inkscape) Transparency is used (non-zero) for the text in Inkscape, but the package 'transparent.sty' is not loaded}%
    \renewcommand\transparent[1]{}%
  }%
  \providecommand\rotatebox[2]{#2}%
  \ifx\svgwidth\undefined%
    \setlength{\unitlength}{120.6953125bp}%
    \ifx\svgscale\undefined%
      \relax%
    \else%
      \setlength{\unitlength}{\unitlength * \real{\svgscale}}%
    \fi%
  \else%
    \setlength{\unitlength}{\svgwidth}%
  \fi%
  \global\let\svgwidth\undefined%
  \global\let\svgscale\undefined%
  \makeatother%
  \begin{picture}(1,0.53559459)%
    \put(0,0){\includegraphics[width=\unitlength]{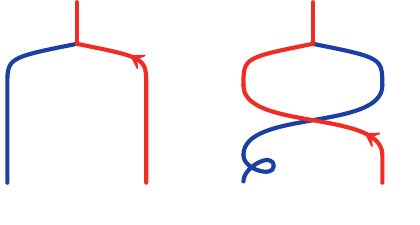}}%
    \put(0.42847433,0.29847239){\color[rgb]{0,0,0}\makebox(0,0)[lb]{\smash{$=$}}}%
    \put(-0.00236261,0.00682892){\color[rgb]{0,0,0}\makebox(0,0)[lb]{\smash{$b$}}}%
    \put(0.32905042,0.00682892){\color[rgb]{0,0,0}\makebox(0,0)[lb]{\smash{$a$}}}%
    \put(0.56103955,0.00682892){\color[rgb]{0,0,0}\makebox(0,0)[lb]{\smash{$b$}}}%
    \put(0.89245259,0.00682892){\color[rgb]{0,0,0}\makebox(0,0)[lb]{\smash{$a$}}}%
  \end{picture}%
\endgroup%
\end{aligned}, \label{eq:Z_lambda}
\end{align}
respectively. Similarly, we could have defined $\mathcal{Z}_{\lambda}(V^{\times})$ and $\overline{\mathcal{Z}}_{\lambda}(V^{\times})$ by inverting the direction of the arrows in the equations above. The proof of the following lemma is left as an exercise to the reader; the equations used to prove analogous statements in lemma \ref{lem:projector} can be followed step by step.
\begin{lemma}
The map $R.p_V$ is a projector $V \to V$ with image $\mathcal{Z}_{\lambda}(V)$. The map $R.n_V$ is a projector $V \to V$ with image $\overline{\mathcal{Z}}_{\lambda}(V)$. Further, $p_V \circ \varphi_V = \varphi_V \circ p_V$ and $n_V \circ \varphi_V = \varphi_V \circ n_V$. The image of the maps $p^V$ and $n^V$ is a subset of $\mathcal{Z}_{\lambda}(A)$ and $\overline{\mathcal{Z}}_{\lambda}(A)$, respectively.   
\end{lemma}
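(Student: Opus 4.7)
The plan is to mirror the proof of lemma \ref{lem:projector} step by step, using the axioms \ref{it:B_defects}--\ref{it:axiom_ribbon_defects} of definition \ref{def:spin_defects} together with the obvious extensions of lemmas \ref{lem:closed_below} and \ref{lem:varphi} to diagrams containing defect lines. The essential observation is that each of $p_V$, $n_V$, $p^V$, $n^V$ is evaluated from a semi-closed diagram whose shaded region now contains the additional elementary maps $L$, $R$, $P$, $Q$; since axioms \ref{it:B_defects} and \ref{it:C_defects} extend the compatibility statements \ref{fig:axiom_form} and \ref{fig:axiom_mult} to precisely these new pieces, the generalisation of equation \eqref{eq:closed_below} goes through unchanged. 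Similarly, the curl maps $\varphi_V$, $\varphi_{V^{\times}}$ inherit from axiom \ref{it:axiom_ribbon_defects} the analogues of properties \ref{it:phi_one}--\ref{it:phi_three}.

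First I would establish that $p_V(v) \in \mathcal{Z}_{\lambda}(V)$ for all $v \in V$, by reproducing the diagrammatic chain of lemma \ref{lem:projector}: slide the algebra leg past the action vertex using \eqref{fig:diagram_mult2} (associativity for the action, guaranteed by \ref{it:H_one}), then use axiom \ref{it:C_defects} to push the algebra strand across the defect-line crossing, reapply associativity to recombine, and finally invoke axiom \ref{it: RII_defects} to cancel the introduced crossing pair. The conclusion is that the defining equation on the left of \eqref{eq:Z_lambda} holds after $p_V$ is applied. Next I would show that if $v \in \mathcal{Z}_{\lambda}(V)$ then $R\,p_V(v)=v$: apply axiom \ref{it:B_defects} to slide $B$ past the crossing, use \eqref{eq:Z_lambda} to remove the crossing entirely, then collapse the resulting bubble with the separability identity \eqref{eq:prop_sep} for the action (i.e.\ the bimodule version of lemma \ref{separable}), and finish with \eqref{eq:diag_1} to absorb the remaining constant $R$. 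Together these two facts say $R\,p_V$ is idempotent and its image is exactly $\mathcal{Z}_{\lambda}(V)$.

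The treatment of $n_V$ runs along the same lines as the second half of lemma \ref{lem:projector}: one establishes $n_V(v)\in\overline{\mathcal{Z}}_{\lambda}(V)$ by inserting a curl using axiom \ref{it:C_defects}, then removing it with the defect-line version of \ref{it:phi_three} and \ref{it:phi_one}; conversely, $R\,n_V$ restricted to $\overline{\mathcal{Z}}_{\lambda}(V)$ is the identity by the same collapse argument as for $p_V$, after first absorbing the $\varphi$ factor using the bimodule-compatibility axiom \ref{it:phi_four} applied to $V$. For the commutativity relations $p_V\circ\varphi_V=\varphi_V\circ p_V$ and $n_V\circ\varphi_V=\varphi_V\circ n_V$, the argument is purely diagrammatic: slide the curl on the defect input past the crossing using the extension of \ref{it:phi_three} to mixed crossings, and then past the action vertex using the extension of \ref{it:phi_two} (the fact that $\varphi$ is compatible with the bimodule action, which follows from axiom \ref{it:C_defects}). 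For the last statement, the diagrams defining $p^V$ and $n^V$ are semi-closed on the defect strand, so reading the resulting element of $A$ as a module over itself, the same pair of moves used in the first step above shows that the output element satisfies the defining relation of $\mathcal{Z}_{\lambda}(A)$ or $\overline{\mathcal{Z}}_{\lambda}(A)$ respectively.

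The only step that requires genuine care rather than routine translation is the extension of the semi-closed diagram lemma \ref{lem:closed_below}, since we must now verify that \eqref{eq:closed_below} still holds when the shaded region contains $L$, $R$, $P$, $Q$ and $\phi$ boxes. This reduces to two short diagrammatic checks analogous to those in the proof of lemma \ref{lem:closed_below}: one for the action vertices, using axiom \ref{it:C_defects} in place of \ref{fig:axiom_mult}, and one for the $P$, $Q$ caps, using axiom \ref{it:B_defects} in place of \ref{fig:axiom_form}; the node map $\phi$ is handled similarly using \ref{it:C_defects}. Once this technical point is settled every other step is a direct transcription of the corresponding step in lemma \ref{lem:projector}.
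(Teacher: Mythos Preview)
Your proposal is correct and is exactly what the paper intends: the paper does not give a proof of this lemma at all, stating only that it is ``left as an exercise to the reader; the equations used to prove analogous statements in lemma \ref{lem:projector} can be followed step by step.'' Your outline carries out precisely that exercise, correctly noting that the extensions of lemmas \ref{lem:closed_below} and \ref{lem:varphi} to defect diagrams (which the paper invokes elsewhere without proof) are the only points requiring fresh verification, and that these reduce to short checks using axioms \ref{it:B_defects} and \ref{it:C_defects} in place of \ref{fig:axiom_form} and \ref{fig:axiom_mult}. You also correctly observe that here one only needs the commutation $p_V\circ\varphi_V=\varphi_V\circ p_V$ rather than the stronger identity $p\circ\varphi=p$ that held in lemma \ref{lem:projector}.
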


Our main objective is to determine whether by using defect lines we can differentiate all inequivalent spin structures of a given surface $\Sigma$. To do so we define the analogous of the elements $\eta$ and $\chi$, but now for $\Sigma_1-D$ with generating loops. Denote the partition function associated with $(\Sigma_1-D,l)$ as $Z(\Sigma_1-D,l,s)$.

\begin{proposition}
Fix the orientation of $l$ and let it come associated with the bimodule $V$. Then for each $(\Sigma_1-D,l)$ there are at most three different partition functions $Z(\Sigma_1-D,l,s)$. Further, these algebra elements belong to $\mathcal{Z}(A) \cap \mathcal{Z}_{\lambda}(A)$.
\end{proposition}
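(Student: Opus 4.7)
\emph{Plan.} I would first enumerate the diagrams that can represent $(\Sigma_1 - D, l)$. Since $\Sigma_1$ admits $2^2 = 4$ spin structures, indexed by $(q(c_1), q(c_2)) \in \mathbb{Z}_2^2$ for the two generating cycles $c_1, c_2$, and since (without loss of generality) $l$ runs along $c_1$, the defect analogue of theorem \ref{spin-embedding} (just proved) lets me bring any ribbon diagram for $(\Sigma_1-D,l)$ into a standard ``flower'' form with possibly a curl on the defect loop (tracking the framing of $l$) and possibly a curl on the remaining algebra loop (tracking $q(c_2)$). This produces four candidate elements $\eta_V^{ij} \in A$, $i,j \in \{0,1\}$, exhausting all possibilities up to regular homotopy.

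Next, I would establish that two of these four elements coincide, following the template of the proof of $\eta^2 = \chi^2$ in proposition \ref{lem:properties_eta_chi}. By commuting one loop past the other through a sequence of Reidemeister moves, invoking the compatibility of $\lambda_{A,V}$ with multiplication and action (axiom \ref{it:C_defects}), and using $\varphi^2 = \varphi_V^2 = \iden$ from the ribbon condition \ref{it:axiom_ribbon_defects}, one converts a curl on the algebra loop into a curl on the defect loop (or absorbs it altogether), forcing one identification among the $\eta_V^{ij}$. This reduces the count to at most three.

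Third, I would verify that each distinct $\eta_V^{ij}$ lies in $\mathcal{Z}(A) \cap \mathcal{Z}_\lambda(A)$. Centrality follows by commuting an arbitrary $y \in A$ around the diagram: associativity of multiplication, the bimodule property \ref{it:H_one}, and the $\lambda$-compatibility of both curl maps let one push $y$ from one side of the defect loop to the other, in direct parallel with the step for $\eta, \chi$ in proposition \ref{lem:properties_eta_chi}. Membership in $\mathcal{Z}_\lambda(A)$ follows because the diagram closes off the free algebra boundary through a cylinder of $p^V$ or $n^V$ type, whose image lies in $\mathcal{Z}_\lambda(A)$ or $\overline{\mathcal{Z}}_\lambda(A)$; the semi-closed diagram argument of lemma \ref{lem:closed_below} (applied in its defect-extended form, i.e.\ to diagrams whose shaded region now also contains bimodule vertices, crossings and transparent nodes) then delivers the claim.

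The main obstacle is the four-to-three reduction. The pure-case identity $\eta^2 = \chi^2$ already required roughly a dozen diagrammatic rewrites; here the manipulation must additionally route the bimodule crossings $\lambda_{A,V}, \lambda_{V,A}$ and the defect curl $\varphi_V$ through the loop. The subtle point is to single out precisely which pair of the $\eta_V^{ij}$ collapses --- the remaining three should correspond to the Arf-even pair split by the extra information carried by $l$, plus the Arf-odd class --- and to do so without accidentally forcing a second, spurious identification that would over-collapse the count.
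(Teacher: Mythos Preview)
Your plan for centrality and $\mathcal{Z}_\lambda(A)$-membership tracks the paper's argument closely. The real divergence is in the four-to-three collapse, and there you have both over-complicated the route and cited the wrong template. The identity $\eta^2=\chi^2$ in proposition~\ref{lem:properties_eta_chi} is about \emph{squares} --- it commutes two handle factors past one another --- and is not the mechanism by which individual handle elements are identified. The correct analogue is the earlier, almost trivial step $\eta_1=\eta_2$, which came from $p\circ\varphi=p$ (lemma~\ref{lem:projector}). The paper does exactly this: once the diagram is in standard form, the case where the defect loop carries no curl is governed by the cylinder map $p^V\colon A\to A$, and one simply invokes $p^V\circ\varphi_A=p^V$ to conclude that the presence or absence of a curl on the algebra loop is immaterial. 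That is a one-line reduction, not a dozen Reidemeister rewrites.

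You also skip a step the paper treats at length. Before the four-to-three reduction, the paper begins with \emph{eight} a priori diagrams: the standard flower for $\Sigma_1-D$ admits two forms according to which of the two petals carries the defect (one form uses the $V$-curl, the other the $V^\times$-curl), and each form carries four curl configurations. A nontrivial diagrammatic argument --- using \ref{it:phi_two}, \ref{it:phi_three}, \ref{it:C_defects}, \ref{it: RIII_defects}, and the fact that $\mathrm{Im}\,p^V\subset\mathcal{Z}_\lambda(A)$, $\mathrm{Im}\,n^V\subset\overline{\mathcal{Z}}_\lambda(A)$ --- is needed to show the two forms coincide. Only after this $8\to4$ collapse does the simple $p^V\circ\varphi_A=p^V$ step bring the count down to three. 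Your starting point of four is defensible if you lean hard on the theorem just proved to fix a canonical petal for the defect, but you should say so explicitly; as written, the enumeration looks incomplete.
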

\begin{proof} The surface $\Sigma_1-D$ can be immersed into $\Rb^2 \subset \Rb^3$ in four non-equivalent ways, where the equivalence relation is regular homotopy. If defect loops are placed along the generating curves of $\Sigma_1-D$ their diagrammatic counterparts can have one of two forms,
\begin{align}
\begin{aligned}
\label{eq:two_forms}
\begingroup%
  \makeatletter%
  \providecommand\color[2][]{%
    \errmessage{(Inkscape) Color is used for the text in Inkscape, but the package 'color.sty' is not loaded}%
    \renewcommand\color[2][]{}%
  }%
  \providecommand\transparent[1]{%
    \errmessage{(Inkscape) Transparency is used (non-zero) for the text in Inkscape, but the package 'transparent.sty' is not loaded}%
    \renewcommand\transparent[1]{}%
  }%
  \providecommand\rotatebox[2]{#2}%
  \ifx\svgwidth\undefined%
    \setlength{\unitlength}{164.34609375bp}%
    \ifx\svgscale\undefined%
      \relax%
    \else%
      \setlength{\unitlength}{\unitlength * \real{\svgscale}}%
    \fi%
  \else%
    \setlength{\unitlength}{\svgwidth}%
  \fi%
  \global\let\svgwidth\undefined%
  \global\let\svgscale\undefined%
  \makeatother%
  \begin{picture}(1,0.41455073)%
    \put(0,0){\includegraphics[width=\unitlength]{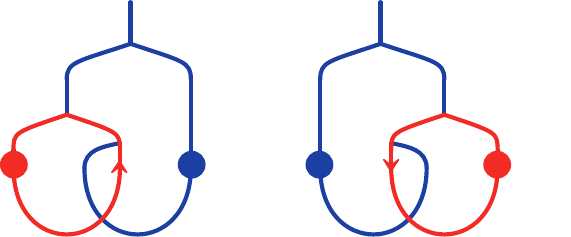}}%
    \put(0.42349653,0.11871445){\color[rgb]{0,0,0}\makebox(0,0)[lb]{\smash{or}}}%
    \put(0.93461303,0.11871445){\color[rgb]{0,0,0}\makebox(0,0)[lb]{\smash{$,$}}}%
  \end{picture}%
\endgroup%

\end{aligned}
\end{align}
where $\begin{aligned}
\begingroup%
  \makeatletter%
  \providecommand\color[2][]{%
    \errmessage{(Inkscape) Color is used for the text in Inkscape, but the package 'color.sty' is not loaded}%
    \renewcommand\color[2][]{}%
  }%
  \providecommand\transparent[1]{%
    \errmessage{(Inkscape) Transparency is used (non-zero) for the text in Inkscape, but the package 'transparent.sty' is not loaded}%
    \renewcommand\transparent[1]{}%
  }%
  \providecommand\rotatebox[2]{#2}%
  \ifx\svgwidth\undefined%
    \setlength{\unitlength}{8.4000037bp}%
    \ifx\svgscale\undefined%
      \relax%
    \else%
      \setlength{\unitlength}{\unitlength * \real{\svgscale}}%
    \fi%
  \else%
    \setlength{\unitlength}{\svgwidth}%
  \fi%
  \global\let\svgwidth\undefined%
  \global\let\svgscale\undefined%
  \makeatother%
  \begin{picture}(1,1.61712179)%
    \put(0,0){\includegraphics[width=\unitlength]{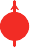}}%
  \end{picture}%
\endgroup%
\end{aligned}=\varphi_{V},\text{id}_{V}$, $\begin{aligned}
\begingroup%
  \makeatletter%
  \providecommand\color[2][]{%
    \errmessage{(Inkscape) Color is used for the text in Inkscape, but the package 'color.sty' is not loaded}%
    \renewcommand\color[2][]{}%
  }%
  \providecommand\transparent[1]{%
    \errmessage{(Inkscape) Transparency is used (non-zero) for the text in Inkscape, but the package 'transparent.sty' is not loaded}%
    \renewcommand\transparent[1]{}%
  }%
  \providecommand\rotatebox[2]{#2}%
  \ifx\svgwidth\undefined%
    \setlength{\unitlength}{8.4000037bp}%
    \ifx\svgscale\undefined%
      \relax%
    \else%
      \setlength{\unitlength}{\unitlength * \real{\svgscale}}%
    \fi%
  \else%
    \setlength{\unitlength}{\svgwidth}%
  \fi%
  \global\let\svgwidth\undefined%
  \global\let\svgscale\undefined%
  \makeatother%
  \begin{picture}(1,1.61712179)%
    \put(0,0){\includegraphics[width=\unitlength]{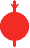}}%
  \end{picture}%
\endgroup%
\end{aligned}=\varphi_{V^{\times}},\text{id}_{V^{\times}}$ and $\begin{aligned}\end{aligned}=\varphi_A,\text{id}_A$. This means that one could have eight non-equivalent partition functions to associate with $(\Sigma_1-D,l)$. First it is established that the two forms of expression \eqref{eq:two_forms} are equivalent. By abuse of notation, equation \eqref{eq:closed_below} is used to refer not only to algebra diagrams but all defect diagrams instead. The properties \ref{it:phi_one}-\ref{it:phi_three} of $\varphi$, established in lemma~\ref{lem:varphi}, can also be extended to the map $\varphi_V$.
$$
\hspace{1cm}
\begingroup%
  \makeatletter%
  \providecommand\color[2][]{%
    \errmessage{(Inkscape) Color is used for the text in Inkscape, but the package 'color.sty' is not loaded}%
    \renewcommand\color[2][]{}%
  }%
  \providecommand\transparent[1]{%
    \errmessage{(Inkscape) Transparency is used (non-zero) for the text in Inkscape, but the package 'transparent.sty' is not loaded}%
    \renewcommand\transparent[1]{}%
  }%
  \providecommand\rotatebox[2]{#2}%
  \ifx\svgwidth\undefined%
    \setlength{\unitlength}{398.92421875bp}%
    \ifx\svgscale\undefined%
      \relax%
    \else%
      \setlength{\unitlength}{\unitlength * \real{\svgscale}}%
    \fi%
  \else%
    \setlength{\unitlength}{\svgwidth}%
  \fi%
  \global\let\svgwidth\undefined%
  \global\let\svgscale\undefined%
  \makeatother%
  \begin{picture}(1,0.24222593)%
    \put(0,0){\includegraphics[width=\unitlength]{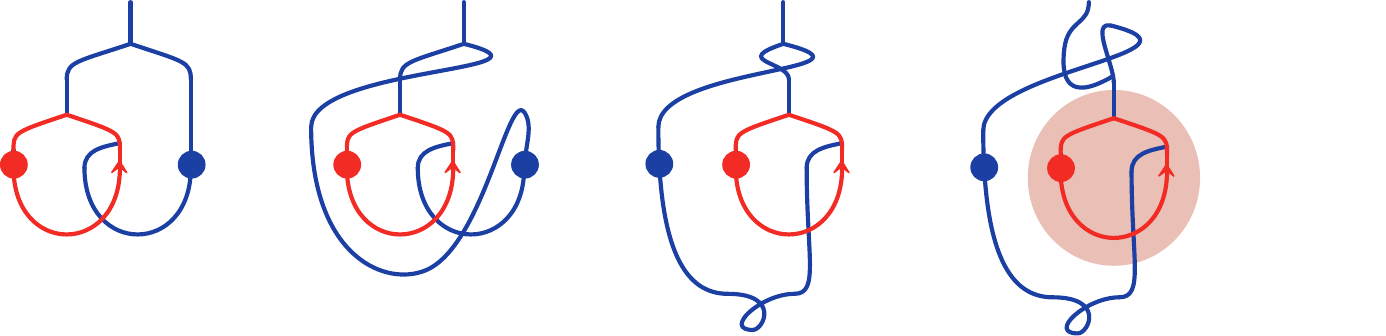}}%
    \put(0.17446923,0.12034931){\color[rgb]{0,0,0}\makebox(0,0)[lb]{\smash{$\overset{\eqref{eq:closed_below}}{=}$}}}%
    \put(0.64573668,0.12034931){\color[rgb]{0,0,0}\makebox(0,0)[lb]{\smash{$\overset{\text{\ref{fig:axiom_mult}}}{=}$}}}%
    \put(0.41511644,0.12034931){\color[rgb]{0,0,0}\makebox(0,0)[lb]{\smash{$\overset{\text{\ref{it:phi_three}}}{=}$}}}%
  \end{picture}%
\endgroup%

$$ 
$$
\begingroup%
  \makeatletter%
  \providecommand\color[2][]{%
    \errmessage{(Inkscape) Color is used for the text in Inkscape, but the package 'color.sty' is not loaded}%
    \renewcommand\color[2][]{}%
  }%
  \providecommand\transparent[1]{%
    \errmessage{(Inkscape) Transparency is used (non-zero) for the text in Inkscape, but the package 'transparent.sty' is not loaded}%
    \renewcommand\transparent[1]{}%
  }%
  \providecommand\rotatebox[2]{#2}%
  \ifx\svgwidth\undefined%
    \setlength{\unitlength}{511.0078125bp}%
    \ifx\svgscale\undefined%
      \relax%
    \else%
      \setlength{\unitlength}{\unitlength * \real{\svgscale}}%
    \fi%
  \else%
    \setlength{\unitlength}{\svgwidth}%
  \fi%
  \global\let\svgwidth\undefined%
  \global\let\svgscale\undefined%
  \makeatother%
  \begin{picture}(1,0.1890965)%
    \put(0,0){\includegraphics[width=\unitlength]{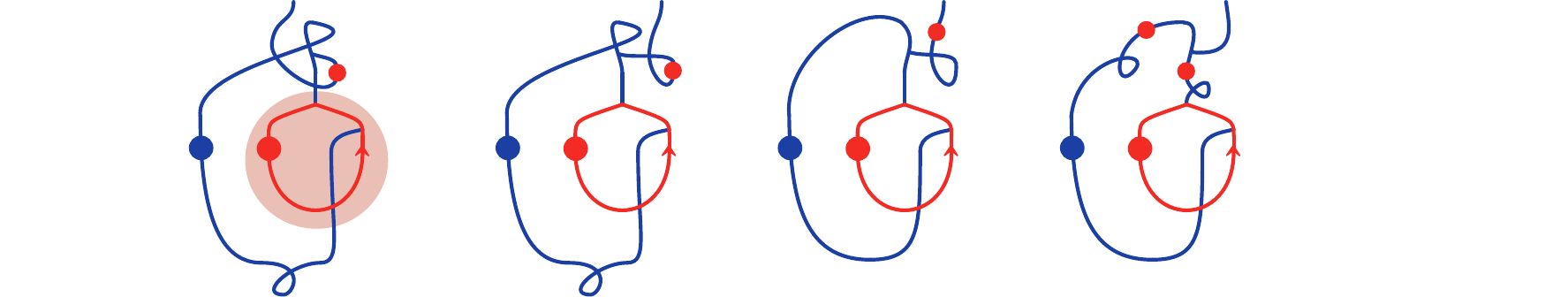}}%
    \put(-0.00055803,0.09395209){\color[rgb]{0,0,0}\makebox(0,0)[lb]{\smash{$\overset{\substack{p^V(a) \in \mathcal{Z}_{\lambda}(A) \\ n^V(a) \in \overline{\mathcal{Z}}_{\lambda}(A)}}{=}$}}}%
    \put(0.27341039,0.09395209){\color[rgb]{0,0,0}\makebox(0,0)[lb]{\smash{$\overset{\substack{\text{\ref{it:phi_three}} \\ \text{\ref{fig:axiom_mult}}\\ \text{\ref{fig:axiom_square}}}}{=}$}}}%
    \put(0.45344677,0.09395209){\color[rgb]{0,0,0}\makebox(0,0)[lb]{\smash{$\overset{\substack{\text{\ref{it:phi_three}} \\ \text{\ref{it:phi_one}}}}{=}$}}}%
    \put(0.63348316,0.09395209){\color[rgb]{0,0,0}\makebox(0,0)[lb]{\smash{$\overset{\text{\ref{it:phi_two}}}{=}$}}}%
  \end{picture}%
\endgroup%

$$ 
$$
\hspace{4mm}
\begingroup%
  \makeatletter%
  \providecommand\color[2][]{%
    \errmessage{(Inkscape) Color is used for the text in Inkscape, but the package 'color.sty' is not loaded}%
    \renewcommand\color[2][]{}%
  }%
  \providecommand\transparent[1]{%
    \errmessage{(Inkscape) Transparency is used (non-zero) for the text in Inkscape, but the package 'transparent.sty' is not loaded}%
    \renewcommand\transparent[1]{}%
  }%
  \providecommand\rotatebox[2]{#2}%
  \ifx\svgwidth\undefined%
    \setlength{\unitlength}{441.0546875bp}%
    \ifx\svgscale\undefined%
      \relax%
    \else%
      \setlength{\unitlength}{\unitlength * \real{\svgscale}}%
    \fi%
  \else%
    \setlength{\unitlength}{\svgwidth}%
  \fi%
  \global\let\svgwidth\undefined%
  \global\let\svgscale\undefined%
  \makeatother%
  \begin{picture}(1,0.1966669)%
    \put(0,0){\includegraphics[width=\unitlength]{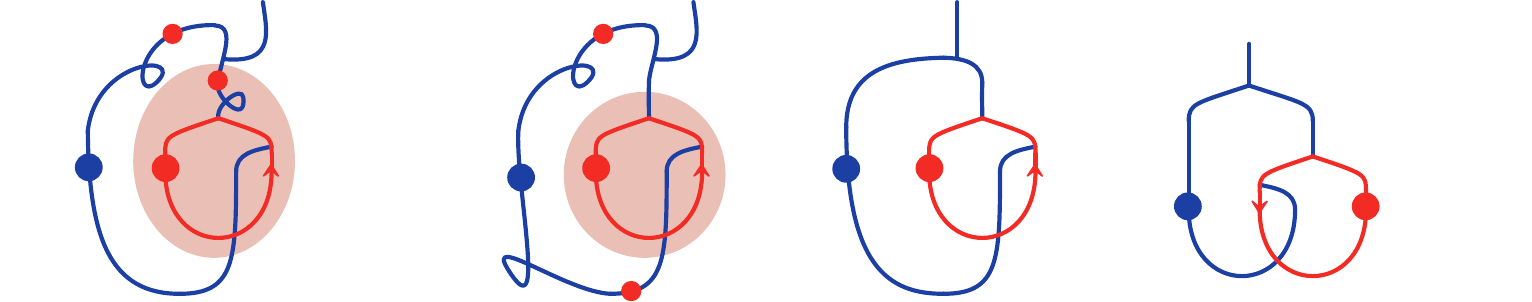}}%
    \put(-0.00064653,0.08643219){\color[rgb]{0,0,0}\makebox(0,0)[lb]{\smash{$=$}}}%
    \put(0.71581791,0.08643219){\color[rgb]{0,0,0}\makebox(0,0)[lb]{\smash{$=$}}}%
    \put(0.21701355,0.08643219){\color[rgb]{0,0,0}\makebox(0,0)[lb]{\smash{$\overset{\substack{p^V \circ \varphi = \varphi \circ p^V \\ n^V \circ \varphi = \varphi \circ n^V}}{=}$}}}%
    \put(0.49815782,0.08643219){\color[rgb]{0,0,0}\makebox(0,0)[lb]{\smash{$\overset{\text{\ref{it:phi_two}}}{=}$}}}%
  \end{picture}%
\endgroup%

$$ 
Next one shows that for $\begin{aligned}\end{aligned}=\text{id}_{V^{\times}}$ having $\begin{aligned}\end{aligned}=\varphi_A$ or $\begin{aligned}\end{aligned}=\text{id}_A$ is equivalent. This can be seen merely by recalling that $p^V \circ \varphi_A=p^V$. Finally one shows that all the partition functions $Z(\Sigma_1-D,l,s)$ are central elements and, as a consequence, they also belong to $\mathcal{Z}_{\lambda}(A)$.
$$
\begingroup%
  \makeatletter%
  \providecommand\color[2][]{%
    \errmessage{(Inkscape) Color is used for the text in Inkscape, but the package 'color.sty' is not loaded}%
    \renewcommand\color[2][]{}%
  }%
  \providecommand\transparent[1]{%
    \errmessage{(Inkscape) Transparency is used (non-zero) for the text in Inkscape, but the package 'transparent.sty' is not loaded}%
    \renewcommand\transparent[1]{}%
  }%
  \providecommand\rotatebox[2]{#2}%
  \ifx\svgwidth\undefined%
    \setlength{\unitlength}{602.90865375bp}%
    \ifx\svgscale\undefined%
      \relax%
    \else%
      \setlength{\unitlength}{\unitlength * \real{\svgscale}}%
    \fi%
  \else%
    \setlength{\unitlength}{\svgwidth}%
  \fi%
  \global\let\svgwidth\undefined%
  \global\let\svgscale\undefined%
  \makeatother%
  \begin{picture}(1,0.14151907)%
    \put(0,0){\includegraphics[width=\unitlength]{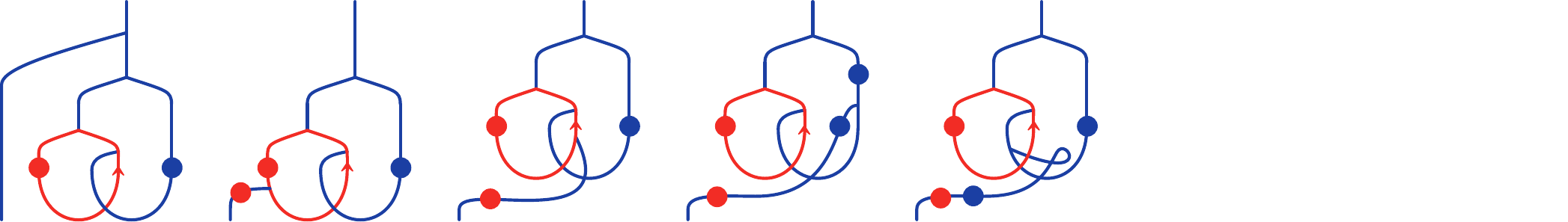}}%
    \put(0.12041635,0.06087158){\color[rgb]{0,0,0}\makebox(0,0)[lb]{\smash{$\overset{\substack{\eqref{fig:diagram_mult2} \\ \text{\ref{it:phi_two}}}}{=}$}}}%
    \put(0.26637544,0.06087158){\color[rgb]{0,0,0}\makebox(0,0)[lb]{\smash{$\overset{\text{\ref{fig:axiom_mult}}}{=}$}}}%
    \put(0.41233454,0.06087158){\color[rgb]{0,0,0}\makebox(0,0)[lb]{\smash{$\overset{\substack{\eqref{fig:diagram_mult2} \\ \text{\ref{it:phi_two}}}}{=}$}}}%
    \put(0.55829363,0.06087158){\color[rgb]{0,0,0}\makebox(0,0)[lb]{\smash{$\overset{\substack{\text{\ref{fig:axiom_mult}} \\ \text{\ref{it:phi_three}}}}{=}$}}}%
  \end{picture}%
\endgroup%

$$ 
$$
\hspace{25mm}
\begingroup%
  \makeatletter%
  \providecommand\color[2][]{%
    \errmessage{(Inkscape) Color is used for the text in Inkscape, but the package 'color.sty' is not loaded}%
    \renewcommand\color[2][]{}%
  }%
  \providecommand\transparent[1]{%
    \errmessage{(Inkscape) Transparency is used (non-zero) for the text in Inkscape, but the package 'transparent.sty' is not loaded}%
    \renewcommand\transparent[1]{}%
  }%
  \providecommand\rotatebox[2]{#2}%
  \ifx\svgwidth\undefined%
    \setlength{\unitlength}{404.5546875bp}%
    \ifx\svgscale\undefined%
      \relax%
    \else%
      \setlength{\unitlength}{\unitlength * \real{\svgscale}}%
    \fi%
  \else%
    \setlength{\unitlength}{\svgwidth}%
  \fi%
  \global\let\svgwidth\undefined%
  \global\let\svgscale\undefined%
  \makeatother%
  \begin{picture}(1,0.21065564)%
    \put(0,0){\includegraphics[width=\unitlength]{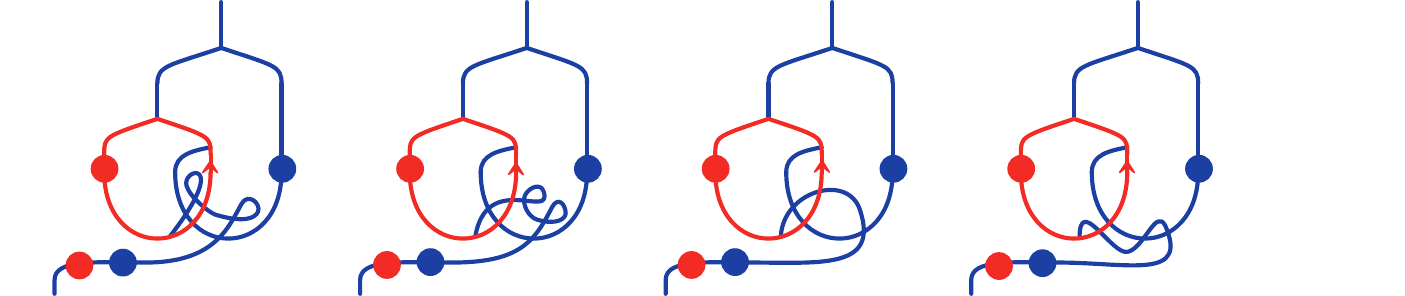}}%
    \put(0.65186451,0.09049097){\color[rgb]{0,0,0}\makebox(0,0)[lb]{\smash{$\overset{\text{\ref{it: RIII_defects}}}{=}$}}}%
    \put(-0.00070486,0.09049097){\color[rgb]{0,0,0}\makebox(0,0)[lb]{\smash{$\overset{\substack{\eqref{fig:diagram_mult2} \\ \text{\ref{it:C_defects}}}}{=}$}}}%
    \put(0.21681826,0.09049097){\color[rgb]{0,0,0}\makebox(0,0)[lb]{\smash{$\overset{\text{\ref{it:phi_three}}}{=}$}}}%
    \put(0.43434139,0.09049097){\color[rgb]{0,0,0}\makebox(0,0)[lb]{\smash{$\overset{\text{\ref{it:phi_one}}}{=}$}}}%
  \end{picture}%
\endgroup%

$$ 
$$
\hspace{25mm}
\begingroup%
  \makeatletter%
  \providecommand\color[2][]{%
    \errmessage{(Inkscape) Color is used for the text in Inkscape, but the package 'color.sty' is not loaded}%
    \renewcommand\color[2][]{}%
  }%
  \providecommand\transparent[1]{%
    \errmessage{(Inkscape) Transparency is used (non-zero) for the text in Inkscape, but the package 'transparent.sty' is not loaded}%
    \renewcommand\transparent[1]{}%
  }%
  \providecommand\rotatebox[2]{#2}%
  \ifx\svgwidth\undefined%
    \setlength{\unitlength}{391.6796875bp}%
    \ifx\svgscale\undefined%
      \relax%
    \else%
      \setlength{\unitlength}{\unitlength * \real{\svgscale}}%
    \fi%
  \else%
    \setlength{\unitlength}{\svgwidth}%
  \fi%
  \global\let\svgwidth\undefined%
  \global\let\svgscale\undefined%
  \makeatother%
  \begin{picture}(1,0.21758016)%
    \put(0,0){\includegraphics[width=\unitlength]{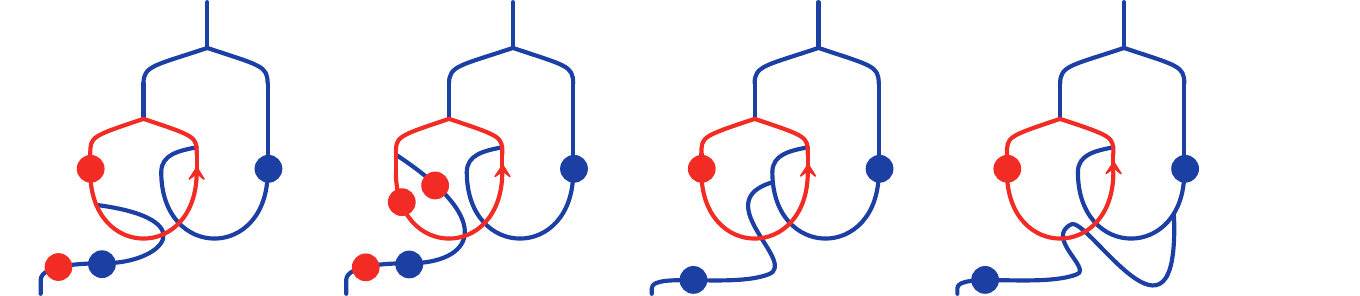}}%
    \put(0.4384063,0.08325309){\color[rgb]{0,0,0}\makebox(0,0)[lb]{\smash{$=$}}}%
    \put(0.21373292,0.08325309){\color[rgb]{0,0,0}\makebox(0,0)[lb]{\smash{$\overset{\text{\ref{it:phi_two}}}{=}$}}}%
    \put(-0.00072803,0.08325309){\color[rgb]{0,0,0}\makebox(0,0)[lb]{\smash{$\overset{\text{\ref{fig:axiom_square}}}{=}$}}}%
    \put(0.66307968,0.08325309){\color[rgb]{0,0,0}\makebox(0,0)[lb]{\smash{$\overset{\text{\ref{it:C_defects}}}{=}$}}}%
  \end{picture}%
\endgroup%

$$ 
$$
\hspace{15mm}
\begingroup%
  \makeatletter%
  \providecommand\color[2][]{%
    \errmessage{(Inkscape) Color is used for the text in Inkscape, but the package 'color.sty' is not loaded}%
    \renewcommand\color[2][]{}%
  }%
  \providecommand\transparent[1]{%
    \errmessage{(Inkscape) Transparency is used (non-zero) for the text in Inkscape, but the package 'transparent.sty' is not loaded}%
    \renewcommand\transparent[1]{}%
  }%
  \providecommand\rotatebox[2]{#2}%
  \ifx\svgwidth\undefined%
    \setlength{\unitlength}{314.3671875bp}%
    \ifx\svgscale\undefined%
      \relax%
    \else%
      \setlength{\unitlength}{\unitlength * \real{\svgscale}}%
    \fi%
  \else%
    \setlength{\unitlength}{\svgwidth}%
  \fi%
  \global\let\svgwidth\undefined%
  \global\let\svgscale\undefined%
  \makeatother%
  \begin{picture}(1,0.27108977)%
    \put(0,0){\includegraphics[width=\unitlength]{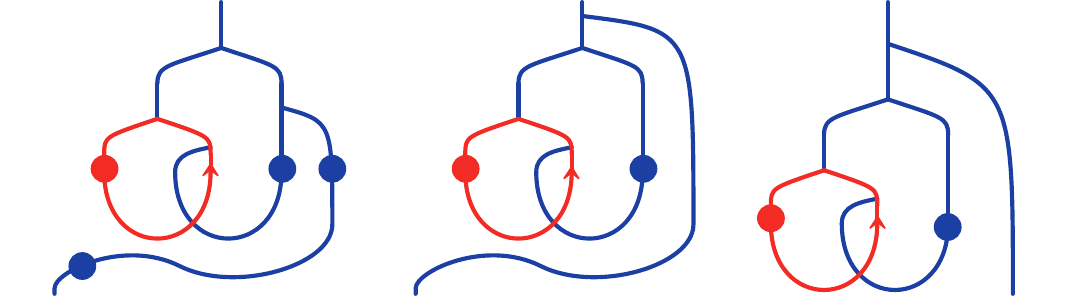}}%
    \put(-0.00090708,0.10372757){\color[rgb]{0,0,0}\makebox(0,0)[lb]{\smash{$\overset{\text{\ref{it: RII_defects}}}{=}$}}}%
    \put(0.32991625,0.10372757){\color[rgb]{0,0,0}\makebox(0,0)[lb]{\smash{$\overset{\text{\ref{it:phi_two}}}{=}$}}}%
    \put(0.64801561,0.10372757){\color[rgb]{0,0,0}\makebox(0,0)[lb]{\smash{$\overset{\eqref{eq:snake}}{=}$}}}%
  \end{picture}%
\endgroup%

$$ 
\end{proof}
The new elements are denoted as follows.
\begin{align}
\begin{aligned}
\begingroup%
  \makeatletter%
  \providecommand\color[2][]{%
    \errmessage{(Inkscape) Color is used for the text in Inkscape, but the package 'color.sty' is not loaded}%
    \renewcommand\color[2][]{}%
  }%
  \providecommand\transparent[1]{%
    \errmessage{(Inkscape) Transparency is used (non-zero) for the text in Inkscape, but the package 'transparent.sty' is not loaded}%
    \renewcommand\transparent[1]{}%
  }%
  \providecommand\rotatebox[2]{#2}%
  \ifx\svgwidth\undefined%
    \setlength{\unitlength}{292.20149807bp}%
    \ifx\svgscale\undefined%
      \relax%
    \else%
      \setlength{\unitlength}{\unitlength * \real{\svgscale}}%
    \fi%
  \else%
    \setlength{\unitlength}{\svgwidth}%
  \fi%
  \global\let\svgwidth\undefined%
  \global\let\svgscale\undefined%
  \makeatother%
  \begin{picture}(1,0.22665926)%
    \put(0,0){\includegraphics[width=\unitlength]{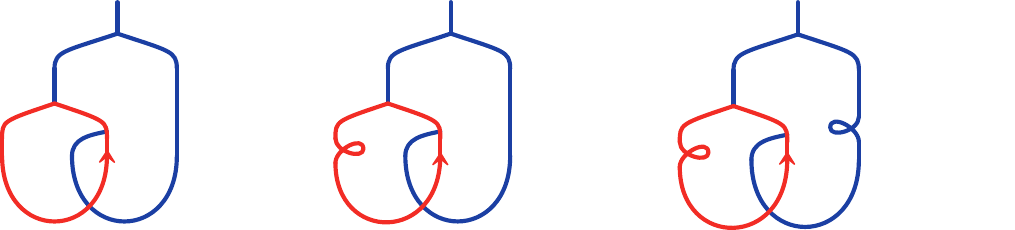}}%
    \put(0.18962443,0.08592135){\color[rgb]{0,0,0}\makebox(0,0)[lb]{\smash{$=\eta_{V}$}}}%
    \put(0.51816479,0.08592135){\color[rgb]{0,0,0}\makebox(0,0)[lb]{\smash{$=\rho_{V}$}}}%
    \put(0.86039439,0.08592135){\color[rgb]{0,0,0}\makebox(0,0)[lb]{\smash{$=\chi_{V}$}}}%
  \end{picture}%
\endgroup%

\end{aligned}
\end{align} 

\section{Group graded bimodules}  

Last section saw the introduction of a set of new preferred elements of $A$, $\eta_V$, $\rho_V$ and $\chi_V$. Given that any algebra $A$ is a bimodule over itself, it is straightforward to construct examples. However, we wish to find non-trivial examples by which we mean those not already studied for spin models without defects. To do so we introduce a new notion, that of a $H$-graded bimodule. 
\begin{definition}
Let $A$ be a $H$-graded algebra where $H$ is a finite group. Suppose $V$ is an $A$-$A$ bimodule that is also $H$-graded as a vector space. Then, $V$ is said to be a $H$-graded bimodule if $A_hV_lA_m \subset V_{hlm}$ for all $h,l,m \in H$.
\end{definition}
In section \S\ref{sec:graded_algebras} we introduced the notion of a bicharacter, a concept that was very useful in constructing examples of crossings for $A$. We will expand this concept so it can be used to determine the crossings that arise on a spin model with defects.
\begin{definition}
Let $W$ and $W'$ be $H$-graded. A map $\tilde{\lambda}_{W,W'}\colon H \times H \to k$ is said to be a $W$-$W'$ bicharacter if the crossing $\lambda \colon W \otimes W' \to W' \otimes W$ defined according to
\begin{align}
\lambda_{W,W'} (w_h \otimes w'_l)= \tilde{\lambda}_{W,W'}(h,l) w'_l \otimes w_h
\end{align} 
satisfies the axioms of definition~\ref{def:spin_defects}.
\end{definition}
We will understand what are the properties the $\tilde{\lambda}_{W,W'} \colon H \times H \to k$ must obey for a model with defect lines, when all the crossings on the model come from bicharacters. Note that an $A$-$A$ bicharacter is simply a bicharacter that satisfies the conditions of proposition~\ref{lem:graded}. We must, however, understand the properties of the remaining bicharacters for $W,W'=A,V,V^{\times}$.    

Let $W$ and $W'$ be such that a bilinear map $I^{-1} \colon W \otimes W' \to k$ exists. Such maps can be $B^{-1}$, $P^{-1}$ and $Q^{-1}$ as defined in equations \eqref{eq:snake}, \eqref{eq:H_inverse} and \eqref{eq:F_inverse}. Two subspaces $W_h$ and $W'_l$ are called orthogonal if $I(W_h,W'_l)=0$ and this fact is denoted as $W_h \perp W'_l$. By applying axiom \ref{it:B_defects} to $e_h \otimes e''_m \otimes e'_l \in W \otimes W'' \otimes W'$ we obtain the identity
\begin{align}
\tilde{\lambda}_{W'',W'}(m,l)I^{-1}(e_h,e'_l)e''_m=\tilde{\lambda}_{W,W''}(h,m)I^{-1}(e_h,e'_l)e''_m.
\end{align}
Therefore, for axiom \ref{it:B_defects} to be satisfied we must either have $W_h \perp W'_l$ or 
\begin{align}
\label{eq:general_bicharacter_relation}
\tilde{\lambda}_{W,W''}(h,m)=\tilde{\lambda}_{W'',W'}(m,l) \text{ for all } m \in H  \text{ and } W''=A,V,V^{\times}
\end{align}
which generalises property \ref{it:cr-axiom1} of proposition~\ref{lem:graded}.

Axioms \ref{it: RII_defects} and \ref{it: RIII_defects} have a straightforward interpretation: the former tell us 
$\tilde{\lambda}_{W,W'}(m,l)\tilde{\lambda}_{W',W}(l,m)=1_k$
whilst the latter is automatically satisfied given all $\tilde{\lambda}$ take values in $k$. This extends property~\ref{eq:graded_inverse} of a bicharacter.

Consider now the case when $W$ comes with a left action on $W'$, as are the maps $l$, $l^{\times}$ and $m$. This means we must have $W=A$. Then, axiom \ref{it:C_defects} applied to $e_h \otimes e'_m \otimes e''_l \in A \otimes W' \otimes W''$ translates into the condition $\tilde{\lambda}_{A,W'}(h,m)=\tilde{\lambda}_{W',W''}(m,l)\tilde{\lambda}_{W'',W'}(hl,m)$. Using axiom \ref{it: RII_defects} we can rewrite the condition as $\tilde{\lambda}_{W'',W'}(hl,m)=\tilde{\lambda}_{A,W'}(h,m)\tilde{\lambda}_{W'',W'}(l,m)$. On the other hand, if $W''=A$ has a right action on $W'$ instead, axiom \ref{it:C_defects} reads $\tilde{\lambda}_{W,W'}(hl,m)=\tilde{\lambda}_{W,W'}(h,m)\tilde{\lambda}_{W',A}(l,m)$. This means that in general we must have
\begin{align}
\label{eq:lambda_general}
\tilde{\lambda}_{W,W'}(hl,m)=\tilde{\lambda}_{A,W'}(h,m)\tilde{\lambda}_{W,W'}(l,m)=\tilde{\lambda}_{W,W'}(h,m)\tilde{\lambda}_{W',A}(l,m)
\end{align}

Finally, we address the ribbon condition \ref{it:axiom_ribbon_defects}. We write $I_1 \in W' \otimes W$ as $I_1=\sum e'_l \otimes e_h$ and $I_2 \in W \otimes W'$ as $I_2 = \sum f_h \otimes f'_l$ where the elements in both sums are subject to condition \eqref{eq:general_bicharacter_relation}. Note that $I_1$ can be either $B$, $P$ or $Q$; for each of these choices we will have $I_2$ equal to $B$, $Q$ or $P$. The curl maps on the left and right hand-side of axiom \ref{it:axiom_ribbon_defects} equate respectively to
\begin{align}
w_m &\mapsto \sum I_2^{-1}(e'_l,w_m)\tilde{\lambda}_{W,W}(h,m)e_h \\
w_m &\mapsto \sum I_1^{-1}(w_m,f'_l)\tilde{\lambda}_{W,W}(m,h)f_h
\end{align}
Consider the first of the curl maps. Since for non-zero $I_2^{-1}(e'_l,w_m)$ we must have $\tilde{\lambda}_{W',W''}(l,n)=\tilde{\lambda}_{W'',W}(n,m)$ we can conclude $\tilde{\lambda}_{W',W}(l,m)=\tilde{\lambda}_{W,W}(m,m)$. On the other hand, we know \eqref{eq:general_bicharacter_relation} holds which implies $\tilde{\lambda}_{W,W}(h,m)=\tilde{\lambda}_{W,W'}(m,l)$. Therefore we can conclude $\tilde{\lambda}_{W,W}(h,m)=\tilde{\lambda}_{W,W}(m,m)$. A similar treatment holds for the second form of the curling map. This allows to further identify the left and right curls as $w_m \mapsto \tilde{\lambda}_{W,W}(m,m)\sigma_W(w_m)$ and $w_m \mapsto \tilde{\lambda}_{W,W}(m,m)\sigma^{-1}_W(w_m)$, respectively. Therefore, the condition $\sigma_W^2=\text{id}$ must hold. 

Equation \eqref{eq:lambda_general} has strong implications on how $W$-$W'$ bicharacters relate to $A$-$A$ ones.
\begin{lemma} \label{lem:bi_simple}
Any $W$-$W'$ bicharacter satisfies
\begin{align} \label{eq:bi_simplification}
\tilde{\lambda}_{W,W'}(m,l)=\tilde{\lambda}_{W,W'}(1,1)\tilde{\lambda}_{A,A}(m,l).
\end{align}
Furthermore, $\tilde{\lambda}_{A,W}(1,1)=1$ for all $W$.
\end{lemma}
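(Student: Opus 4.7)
The plan is to exploit the two forms of equation \eqref{eq:lambda_general}, together with its counterparts obtained from axiom \ref{it: RII_defects}, so as to decompose the bicharacter in each of its two arguments independently. Starting from \eqref{eq:lambda_general} one has the first-argument decompositions
\begin{equation*}
\tilde{\lambda}_{W,W'}(hl,m)=\tilde{\lambda}_{A,W'}(h,m)\tilde{\lambda}_{W,W'}(l,m)=\tilde{\lambda}_{W,W'}(h,m)\tilde{\lambda}_{W',A}(l,m).
\end{equation*}
Inverting via axiom \ref{it: RII_defects} ($\tilde{\lambda}_{W,W'}(h,m)\tilde{\lambda}_{W',W}(m,h)=1$) and relabelling gives the mirror second-argument decompositions
\begin{equation*}
\tilde{\lambda}_{W,W'}(m,hl)=\tilde{\lambda}_{W,A}(m,h)\tilde{\lambda}_{W,W'}(m,l)=\tilde{\lambda}_{W,W'}(m,h)\tilde{\lambda}_{A,W}(m,l).
\end{equation*}

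First I would set $h=1$ (respectively $l=1$) in each of the four identities above. Since bicharacters take values in roots of unity they are never zero, so each of these substitutions forces a unit-valued factor, yielding $\tilde{\lambda}_{A,W'}(1,m)=\tilde{\lambda}_{W',A}(1,m)=\tilde{\lambda}_{W,A}(m,1)=\tilde{\lambda}_{A,W}(m,1)=1$ for all $m$ and all bimodules $W,W'$. Specialising to $m=1$ immediately gives the second claim $\tilde{\lambda}_{A,W}(1,1)=1$.

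Next I would reduce all bicharacters of the form $\tilde{\lambda}_{A,W'}$ to $\tilde{\lambda}_{A,A}$. Applying the second-argument decomposition to $\tilde{\lambda}_{A,W'}$ itself, namely $\tilde{\lambda}_{A,W'}(m,hl)=\tilde{\lambda}_{A,W'}(m,h)\tilde{\lambda}_{A,A}(m,l)$, and setting $h=1$ whilst using $\tilde{\lambda}_{A,W'}(m,1)=1$ yields $\tilde{\lambda}_{A,W'}(m,l)=\tilde{\lambda}_{A,A}(m,l)$. An entirely analogous calculation gives $\tilde{\lambda}_{W,A}(l,m)=\tilde{\lambda}_{A,A}(l,m)$.

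To conclude, I would isolate the $W,W'$-dependence of the bicharacter. Setting $l=1$ in the first of the identities of \eqref{eq:lambda_general} and substituting $\tilde{\lambda}_{A,W'}(h,m)=\tilde{\lambda}_{A,A}(h,m)$ gives $\tilde{\lambda}_{W,W'}(h,m)=\tilde{\lambda}_{A,A}(h,m)\,\tilde{\lambda}_{W,W'}(1,m)$. It then remains to show $\tilde{\lambda}_{W,W'}(1,m)$ is independent of $m$. For this I would specialise the second-argument decomposition $\tilde{\lambda}_{W,W'}(1,hl)=\tilde{\lambda}_{W,A}(1,h)\tilde{\lambda}_{W,W'}(1,l)$ and use the already-established identities $\tilde{\lambda}_{W,A}(1,h)=\tilde{\lambda}_{A,A}(1,h)=1$ (the latter being an instance of the earlier general fact applied to $W'=A$), so that $\tilde{\lambda}_{W,W'}(1,hl)=\tilde{\lambda}_{W,W'}(1,l)$; iterating and setting $l=1$ gives $\tilde{\lambda}_{W,W'}(1,m)=\tilde{\lambda}_{W,W'}(1,1)$, and combining produces \eqref{eq:bi_simplification}.

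The proof is essentially bookkeeping, so I do not anticipate any significant obstacle; the only subtle point is tracking which of the four decomposition identities to invoke at each stage so that the required unit factors (such as $\tilde{\lambda}_{A,W'}(m,1)=1$) are available precisely when they are needed.
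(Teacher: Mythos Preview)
Your proposal is correct and follows essentially the same approach as the paper: both arguments specialise \eqref{eq:lambda_general} at $l=1$ (or $h=1$) to extract the unit-valued factors $\tilde{\lambda}_{W',A}(1,m)=1$ and $\tilde{\lambda}_{A,W'}(1,1)=1$, then use the Reidemeister~II relation to pass between first- and second-argument decompositions and show $\tilde{\lambda}_{W,W'}(1,m)$ is constant in $m$. Your version is somewhat more explicit in writing out the mirror second-argument identities as a separate intermediate step, whereas the paper leaves that passage through RII implicit; but the logical content is the same.
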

\begin{proof}
By restricting equation \eqref{eq:lambda_general} to $l=1$ we obtain
\begin{align}
\tilde{\lambda}_{W,W'}(h,m)=\tilde{\lambda}_{A,W'}(h,m)\tilde{\lambda}_{W,W'}(1,m)=\tilde{\lambda}_{W,W'}(h,m)\tilde{\lambda}_{W',A}(1,m).
\end{align}
The second identity guarantees we must have $\tilde{\lambda}_{W',A}(1,m)=1_k$ and, in particular, that $\tilde{\lambda}_{A,W'}(1,1)=1_k$. If we apply the first identity again to $\tilde{\lambda}_{W,W'}(1,m)=(\tilde{\lambda}_{W',W}(m,1))^{-1}$ we conclude $\tilde{\lambda}_{W,W'}(1,m)=\tilde{\lambda}_{W,W'}(1,1)\tilde{\lambda}_{W,A}(m,1)=\tilde{\lambda}_{W,W''}(1,1)$. By the same token, the relation $\tilde{\lambda}_{W',A}(m,h)=\tilde{\lambda}_{A,A}(m,h)$. The claimed result, equation \eqref{eq:bi_simplification}, follows. 
\end{proof}

Models with defects of the kind explored by Davydov, Kong and Runkel \cite{Runkel} use the canonical choice for every possible type of crossing. As we did with pure spin models, we are trying to establish a natural class of crossings that extends the canonical choice as to give rise to more interesting examples. One important step is to understand how partition functions for semi-simple objects can be generated from their simple components. Our first step is to understand how we can generate $W$-$W'$ bicharacters from $W_i$-$W_i'$ ones, where $W=W_1 \oplus W_2$ and $W'=W'_1 \oplus W'_2$. Note we also assume a direct sum of bilinear maps, $I=I_1 \oplus I_2$. However, the relation between $W$-$W'$ bicharacters and $A$-$A$ ones that we found through lemma~\ref{lem:bi_simple} makes our task simple -- as a direct consequence of lemma~\ref{lem:bicharacters_semi} we have the following.
\begin{corollary}
Let $\tilde{\lambda}_{W_i,W'_i} \colon H_i \times H_i \to k$ be $W_i$-$W_i'$ bicharacters. Then $W=W_1 \oplus W_2$ and $W'=W'_1 \oplus W'_2$ are $H$-graded with $H=H_1 \oplus H_2$ and it comes naturally equipped with a $W$-$W'$ bicharacter defined as
\begin{align}\label{eq:defects_semi_bicharacter}
\tilde{\lambda}_{W,W'}(g_1g_2,h_1h_2)=\tilde{\lambda}_{W_1,W'_1}(g_1,h_1)\tilde{\lambda}_{W_2,W'_2}(g_2,h_2)
\end{align}
where $g_1g_2$ and $h_1h_2$ belong to $H_1 \times H_2$. 
\end{corollary}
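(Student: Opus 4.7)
The plan is to reduce this statement to Lemma~\ref{lem:bicharacters_semi} (the analogous result for $A$-$A$ bicharacters) by invoking the factorisation established in Lemma~\ref{lem:bi_simple}. First I would check the graded structure: since each $A_i$ is $H_i$-graded, Lemma~\ref{lem:bicharacters_semi} already gives the $H = H_1 \times H_2$-grading on $A = A_1 \oplus A_2$. The bimodules $W_i$, $W'_i$ are $H_i$-graded, so writing $W_i = \bigoplus_{h_i \in H_i}(W_i)_{h_i}$ I would define the $H$-grading on $W = W_1 \oplus W_2$ by setting $W_{h_1 h_2} = (W_1)_{h_1} \oplus (W_2)_{h_2}$, and likewise for $W'$. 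The compatibility $A_{h_1 h_2} W_{l_1 l_2} A_{m_1 m_2} \subset W_{h_1 l_1 m_1\, h_2 l_2 m_2}$ follows because each factor $A_i W_i A_i \subset W_i$ respects its own $H_i$-grading and the direct sum decomposition preserves the actions.

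Next I would verify the defining relations of a $W$-$W'$ bicharacter for the formula \eqref{eq:defects_semi_bicharacter}. By Lemma~\ref{lem:bi_simple}, every $W_i$-$W_i'$ bicharacter factorises as
\begin{equation}
\tilde{\lambda}_{W_i,W'_i}(g_i,h_i) = \tilde{\lambda}_{W_i,W'_i}(1,1)\,\tilde{\lambda}_{A_i,A_i}(g_i,h_i),
\end{equation}
so the candidate map on the direct sum satisfies
\begin{equation}
\tilde{\lambda}_{W,W'}(g_1 g_2, h_1 h_2) = \tilde{\lambda}_{W_1,W'_1}(1,1)\,\tilde{\lambda}_{W_2,W'_2}(1,1)\,\tilde{\lambda}_{A_1,A_1}(g_1,h_1)\,\tilde{\lambda}_{A_2,A_2}(g_2,h_2).
\end{equation}
The two $A_i$-$A_i$ factors combine, by Lemma~\ref{lem:bicharacters_semi}, into the $A$-$A$ bicharacter $\tilde{\lambda}_{A,A}$ on $A$. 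Thus $\tilde{\lambda}_{W,W'}$ is again of the form prescribed by Lemma~\ref{lem:bi_simple}, with the constant $\tilde{\lambda}_{W,W'}(1,1) = \tilde{\lambda}_{W_1,W'_1}(1,1)\tilde{\lambda}_{W_2,W'_2}(1,1)$.

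It then remains to confirm the three structural conditions derived in this section: the orthogonality/matching relation \eqref{eq:general_bicharacter_relation}, the left-right action property \eqref{eq:lambda_general}, and the Reidemeister II type identity $\tilde{\lambda}_{W,W'}(m,l)\tilde{\lambda}_{W',W}(l,m)=1_k$. Each of these is multiplicative in the factors $\tilde{\lambda}_{A_i,A_i}$ and in the constants $\tilde{\lambda}_{W_i,W'_i}(1,1)$, and each holds for the individual bicharacters by hypothesis; since the Frobenius form and the bilinear maps $P^{-1}, Q^{-1}$ on $W$ respect the direct sum $W_1 \perp W_2$, the orthogonality condition on mixed grade components is automatic. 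Finally the ribbon axiom reduces to $\sigma_W^2 = \mathrm{id}$, which decomposes as $\sigma_{W_1}^2 \oplus \sigma_{W_2}^2 = \mathrm{id}_{W_1} \oplus \mathrm{id}_{W_2}$ and so holds termwise.

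The main obstacle, modest as it is, will be bookkeeping rather than mathematics: one must be careful that the normalisation constants $\tilde{\lambda}_{W_i,W'_i}(1,1)$ propagate consistently through the reduction to Lemma~\ref{lem:bicharacters_semi}, and that the direct-sum decomposition of the maps $l, r, l^{\times}, r^{\times}, P^{-1}, Q^{-1}$ really does make the cross terms between $W_1$ and $W_2$ vanish, so that the verification of each axiom splits cleanly into its two summand contributions.
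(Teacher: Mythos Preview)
Your proposal is correct and matches the paper's approach exactly: the paper gives no detailed proof, stating only that the corollary is ``a direct consequence of lemma~\ref{lem:bicharacters_semi}'' once the factorisation of Lemma~\ref{lem:bi_simple} is in hand. Your write-up is simply a fleshed-out version of that one-line justification, using the same two lemmas in the same way.
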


Ultimately, we are interested in studying the invariants objects like $\eta_V$, $\rho_V$ and $\chi_V$ can be used to create. We leave it as an exercise to the reader to deduce that if $\eta_{V_i} \in A_i$ are associated with $Z(T-S^1,s,A_i,V_i)$ then $Z(T-S^1,s,A_1\oplus A_2,V_1\oplus V_2)=\eta_{V_1}\oplus \eta_{V_2}$ with analogous conclusions valid also for $\rho_{V_i}$ and $\chi_{V_i}$.

We will now use the algebras of examples~\ref{ex:non_symmetric_Frobenius}-\ref{ex:G_general} to relate $\eta_V$, $\rho_V$, $\chi_V$ with the $\eta$ and $\chi$ of such models. The bimodules are $V=V^{\times}=A$ as vector spaces, with both left and right actions given by matrix multiplication -- note that for simple algebras any bimodule $V$ consists of a finite number of copies of $A$. The bilinear maps $P^{-1}$ and $Q^{-1}$ must therefore be non-degenerate linear maps $A \otimes A \to k$. Without loss of generality we can then write $P^{-1}(w,v)=\Tr(pwv)$ and $Q^{-1}(v,w)=\Tr(ovw)$ for some $p,o \in A$. However, the relations $\varphi_V(av)=\varphi(a)\varphi_V(v)$ and $\varphi_V(va)=\varphi_V(v)\varphi(a)$ imply that we must have $P^{-1}=Q^{-1}=B^{-1}$ or in other words $p=o=x$ such that $\varepsilon(a)=\Tr(xa)$ and $x^2=R\Tr(x)$.

Given the relation $\tilde{\lambda}_{A,V}=\tilde{\lambda}_{A,A}$ it is easy to conclude that for all examples~\ref{ex:non_symmetric_Frobenius}-\ref{ex:G_general} we have the relations:
\begin{align}
\eta_V=\eta ,\hspace{5mm}
\rho_V=\tilde{\lambda}_{V,V}(1,1)\eta, \hspace{5mm}
\chi_V=\tilde{\lambda}_{V,V}(1,1)\chi.
\end{align}
The constants $\tilde{\lambda}_{V,V}(1,1)$ satisfy $\tilde{\lambda}_{V,V}(1,1)=\pm 1$. The identities above therefore tell us that although models for $(\Sigma_g,l)$ can distinguish only a parity, the definition of parity itself might change. In particular, even in models for which $\chi=\eta$ we can now distinguish the parity of spin structures based on the number of defect curls present.

\chapter{Final remarks}\label{sec:cat}

This work has been driven by an effort to extend our knowledge of the two-dimensional world. Fundamentally, we have been able to understand the role played by spin structures when constructing Pachner-move-invariant state sum models. Nevertheless, this new information is merely a propeller for more questions. We shall address these first and foremost, here. 

\section{Summary of results}

The material of chapter \S\ref{chapter:pure_models}, devoted to the original constructions of two-dimensional models of the early 90s, is not new. Its main function was to present the reader with a pedagogical approach to how models on a triangulated surface are constructed. However, there was a clear intention to clarify the exact assumptions used and is this effort that allowed us to make the jump to the more interesting models of chapter \S\ref{sec:diagram}. The core of this thesis is chapter \S\ref{ch:spin}, where the spin calculus is developed -- we wish to highlight three of the results we can find therein.

\begin{description}
\item[Dependence on spin structure.] The spin calculus construction departs from the strategy of the preceding chapters in one fundamental area: no effort was made to present the models as in one-to-one correspondence with surfaces with spin structure, as it had been the case, for example, between FHK models and oriented surfaces up to homeomorphism. However, significant effort was made to first, motivate definition \ref{def:spin-model} of a spin model and two, conclude such models can only depend on the topology and spin structure of a given surface. Theorem \ref{spin-embedding} is a decisive step forward and a non trivial one, as highlighted by the non-existence of a similar result for spin defect models.    
\item[Dependence on spin parity.] Establishing this property was in fact a consistency test. From what is known of the mapping class group, the symmetry group of a surface with spin structure, it is possible to conclude from the outset that if a model is invariant under Pachner moves then it can only depend on the spin structure via parity \cite{Kusner}. It is nevertheless one of the achievements of this work to have clearly presented the algebraic translation of this dependence of spin structure through parity, a result encoded both by lemma \ref{lem:properties_eta_chi} and theorem \ref{theo:main}.   
\item[Non-trivial examples.] Although the results above suggest a framework that can produce spin-distinguishing models in theory, the strength of this work was built on the ability to construct explicit examples of such behaviour. The extensive study of group-graded algebras gave us a rich class of examples that allowed us to put to rest the question of existence of spin models which are not simply FHK, or even curl-free, models.    
\end{description}  

The two final chapters of this thesis try to expand the realm of planar, spherical and spin models to include defects -- loci where new information can be stored. The inspiration for this development comes most strongly from condensed matter theories where their presence has been used to model phase transitions, impurities and boundaries \cite{Kitaev}. It has received great attention from algebraic topological and conformal field theory groups and is another segment along which two-dimensional theories are being revisited \cite{Runkel}. The most important conclusions to take from chapters \S\ref{ch:defects} and \S\ref{ch:spin_defects} are as follows.

\begin{description}
\item[Planar and spherical defect models are distinct.] For defect models the difference between developing a theory on a disk or on a sphere becomes clear -- this is a conceptual improvement over the original planar and spherical models, for which the data were shared. 
\item[Spin defect models are still largely an unknown.] The biggest conclusion we can draw from chapter \S\ref{ch:spin_defects} is that there is much to explore within spin defect models. However, being able to show that even simple defect patterns can have an impact on distinguishing spin structures through more than parity gives us a glimpse of what futures results could bring us.
\end{description}

\section{Future research directions}

There is one line of research that is in immediate need of attention: a more comprehensive look at the role of spin structures in surfaces. An obvious example is the need to understand what exactly is a spin defect model as defined in chapter \S\ref{ch:spin_defects} since as of yet we do not know what is the role of the mapping class group. What kind of invariance does regular homotopy  give rise to for spin defect graphs? Can we distinguish all spin structures?

However, there is another extension that would pose more conceptual questions: spin models for non-orientable surfaces, combining the knowledge of section \S\ref{sec:unoriented} and chapter \S\ref{ch:spin}. This is a more fundamental problem because a spin structure does not exist for non-orientable two-dimensional manifolds. Nonetheless, there is nothing on the algebraic side that would compel us to believe a spin graphical calculus depends on the existence of an orientation. This means it is our interpretation that is too naive: we are looking at a calculus more suited to be seen as reliant on pin structures (the generalisation of spin structures that is valid for all surfaces) \cite{Cimasoni2}. 

On a more practical level, we are also lacking a traditional gauge theory interpretation for the results of chapter \S\ref{ch:spin}. In particular, it would be of interest to explore the possible relation between example \ref{ex:H_awesome} and variations of the zero-area limit of Yang-Mills theory in two-dimensions, since such a relationship exists for the FHK models, both orientable and unoriented \cite{Witten}.

The categorical question should also not be sidestepped. As the categorically-minded reader might have already realised much of the thought-process employed (such as the duality between surfaces and vector spaces) and many of the axioms used (such as definitions \ref{def:spin-model} and \ref{def:spin_defects}) have been inspired by Topological Quantum Field Theory (TQFT) in the algebraic sense \cite{Kock}, and the axioms of several types of category \cite{Street}. However, we chose not to employ such a construction by design -- we think what was perhaps lost in generality has been gained in clarity of exposition. Nevertheless, much of the graphical calculus developed could be employed for categories not based on vector spaces, especially if one is interested in developing a richer class of examples --  another reason not to develop the categorical framework was our lack of interesting examples outside of the linear algebra realm.   

The mathematically-inclined reader will have noticed a double standard permeating this work: whilst in chapters \S\ref{chapter:pure_models}, \S\ref{sec:diagram} and \S\ref{ch:defects} there is a concerted effort to achieve some sort of classification of models, that same effort is not a part of chapters \S\ref{ch:spin} and \S\ref{ch:spin_defects}. There are two reasons for this flawed pattern. The first reason is the more hypothetical nature of the work in these two chapters. Although we know our axioms lead (or can potentially lead) to a certain behaviour under regular homotopy we have not been able to establish an `if and only if' premise -- for example, there is no claim spin models as defined here are the most general one could construct. This is a direction of work we wish to explore in the future. Second, there was one additional aspect of classification to be omitted: the classification of crossing maps. To resolve this problem we need to better understand the behaviour of these maps under automorphisms of the Frobenius algebra $A$ (that we know leave invariant the planar models) --  this is also the motivation behind the conjecture of example \ref{ex:H_awesome}.

A different angle worth exploring is presented in the work of Novak and Runkel \cite{Novak_Runkel} where a full TQFT for spin models is defined. Their conceptual understanding of how to encode the spin structure information is fundamentally different to ours. The approach of this thesis could be characterised as global: spin information is recorded through immersions; on the other hand Runkel and Novak propose a language where the spin structure data is provided locally as part of a new kind of triangulation. This idea has also been explored by Cimasoni and Reshetikhin \cite{Cimasoni} although the type of information recorded differs in these two works. Furthermore, Cimasoni has been able to extend such a framework to all surfaces \cite{Cimasoni2}.

An advantage of \cite{Novak_Runkel} is the treatment of surfaces with boundary from the outset. This thesis sidesteps the issues such a general treatment pose by choosing to deal only with spin models for manifolds without boundary. Developing a spin calculus for all surfaces whilst analysing the connections between spin models and spin TQFTs is high on our future agenda. Another work to keep in mind to achieve such a goal is that of Douglas, Schommer-Pries and Snyder \cite{Douglas}. Here, framed TQFTs are studied and the notion of spin is also developed in connection with immersions.

\appendix

\chapter{Equivalence of involutions}\label{app:TBA}

\begin{lemma} \label{lem:non_complex}
Let $A=\Mb_n(\Cb)$ with involution $\ast$ define a KM model. For $n$ odd there is one equivalence class of involutions; for $n$ even there are two. These classes are represented by $[a^{\ast}=a^{\tr}]$ in the first case and $[a^{\ast}=a^{\tr}]$, $[a^{\ast}=\Omega a^{\tr} \Omega^{-1}]$ in the second, where the matrix $\Omega$ is a standard anti-symmetric matrix.
\end{lemma}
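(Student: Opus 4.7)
The plan is to reduce the classification of involutions up to equivalence to the classification of invertible matrices $s \in \Mb_n(\Cb)$ satisfying $s = \mu s^{\tr}$ (with $\mu = \pm 1$) up to the congruence action $s \mapsto t s t^{\tr}$.

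First, I would invoke lemma~\ref{lem:class_inv}, which tells us that every involution $\ast$ on $A = \Mb_n(\Cb)$ defining a KM model has the form $a^{\ast} = s a^{\tr} s^{-1}$ with $s$ invertible and $s = \mu s^{\tr}$ where $\mu = \pm 1_k$. Since $A$ is central simple, every algebra automorphism is inner, so an equivalence $\bullet = \omega' \circ \ast \circ (\omega')^{-1}$ is realised by $\omega'(a) = t a t^{-1}$ for some invertible $t$. A direct computation then gives
\begin{align}
a^{\bullet} = \omega'\bigl((t^{-1} a t)^{\ast}\bigr) = (t s t^{\tr}) \, a^{\tr} \, (t s t^{\tr})^{-1},
\end{align}
so at the level of the matrix $s$ the equivalence acts by the congruence $s \mapsto t s t^{\tr}$. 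Since $(tst^{\tr})^{\tr} = t s^{\tr} t^{\tr}$, this action preserves the sign $\mu$, so symmetric and antisymmetric classes cannot mix.

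Next I would classify the orbits under congruence within each sign. For $\mu = +1_k$ the matrix $s$ corresponds to a non-degenerate symmetric bilinear form on $\Cb^n$; by the standard classification over an algebraically closed field of characteristic zero, any two such forms are congruent and one may choose the representative $s = \id$, which gives $a^{\ast} = a^{\tr}$. For $\mu = -1_k$ the matrix $s$ represents a non-degenerate antisymmetric bilinear form. Such forms exist only in even dimension (an antisymmetric $n \times n$ matrix with $n$ odd has vanishing determinant), and by the classification of symplectic forms over $\Cb$ any two are congruent, so one may take $s = \Omega$, giving $a^{\ast} = \Omega a^{\tr} \Omega^{-1}$.

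Combining these two points, for $n$ odd only the symmetric class is available and there is one equivalence class, represented by transposition. For $n$ even both classes occur and are distinct, yielding the two representatives claimed. Finally, one should check each representative genuinely defines a KM model, i.e.\ that $\varepsilon \circ \ast = \varepsilon$; this is immediate from lemma~\ref{lem:class_inv} for transposition, and for $\ast = \Omega(\cdot)^{\tr}\Omega^{-1}$ it follows from cyclicity of the trace applied to the Frobenius form $\varepsilon(a) = R n \Tr(a)$. The only step that requires care is the congruence classification of antisymmetric forms, but this is the standard existence of a symplectic basis and poses no real obstacle.
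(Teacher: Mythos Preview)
Your proposal is correct and follows essentially the same route as the paper: invoke lemma~\ref{lem:class_inv} to write $a^{\ast}=sa^{\tr}s^{-1}$ with $s=\pm s^{\tr}$, compute that equivalence acts on $s$ by congruence $s\mapsto tst^{\tr}$, and then normalise $s$ within each sign class. The only difference is cosmetic: where you cite the standard classification of nondegenerate symmetric and alternating bilinear forms over $\Cb$, the paper carries out the normalisation explicitly (orthogonal diagonalisation followed by rescaling in the symmetric case, block-diagonalisation followed by rescaling in the antisymmetric case); your observation that congruence preserves the sign $\mu$, hence the two classes are genuinely distinct for even $n$, is a point the paper leaves implicit.
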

\begin{proof}

It has already been established in lemma~\ref{lem:class_inv} that any involution $\ast$ for $\Mb_n(\Cb)$ acts as $a^{\ast}=sa^{\tr}s^{-1}$ with $s=\pm s^{\tr}$. If $\omega'$ is to be an isomorphism not only of FHK models but also of KM ones then it must preserve $\ast$ structures. Involutions $\ast$ and $\bullet$ are equivalent if they are related through an automorphism $\omega'$ as $\bullet=\omega' \circ \ast \circ (\omega')^{-1}$. Rewrite the $\bullet$ operation using the definition of $\omega'$:
\begin{align}
a^{\bullet}=(tst^{\tr})a^{\tr}(tst^{\tr})^{-1}.
\end{align}
Given the symmetry of $s$ and the freedom to choose $t$ one can use an appropriate transformation $tst^{\tr}$ to bring $s$ to a standard form. 

If $s$ is symmetric there exists an orthogonal matrix $o$ such that $oso^{\tr}$ is diagonal with entries $\lambda_k$. Since the algebra is complex, a diagonal matrix $d=\text{diag}\left(\frac{1}{\sqrt{\lambda_1}},\cdots,\frac{1}{\sqrt{\lambda_n}} \right)$ is well defined and can be used to rescale the eigenvalues of $s$. Then, for $t=do$ the transformation holds $tst^{\tr}=1$. In other words, the equivalence class can be represented by $\ast=\tr$.

If $s$ is skew-symmetric there is an orthogonal matrix $o$ such that $oso^{\tr}$ is block-diagonal where each block is constituted by a $2 \times 2$ matrix $\gamma=\left(\begin{smallmatrix}0 & 1\\ -1 &0\end{smallmatrix}\right)$ multiplied by some complex number $\theta_k$. Again, one can use a matrix $d=\text{diag}\left(\frac{1}{\sqrt{\theta_1}},\frac{1}{\sqrt{\theta_1}},\cdots,\frac{1}{\sqrt{\theta_{\frac{n}{2}}}},\frac{1}{\sqrt{\theta_{\frac{n}{2}}}}\right)$ to rescale the entries of $oso^{\tr}$. This means that by choosing $t=do$ we will bring $s$ to the standard skew-symmetric form $\Omega$ composed of $2 \times 2$ blocks $\gamma=\left(\begin{smallmatrix}0 & 1\\ -1 &0\end{smallmatrix}\right)$ along its diagonal. In other words, this equivalence class can be represented by $a^{\ast}=\Omega a^{\tr} \Omega^{-1}$.
\end{proof}

\begin{lemma}
Let $A=\Mb_{n}(D)$ with $D=\Rb$, $\Cb_{\Rb}$ or $\Hb_{\Rb}$ equipped with an involution $\ast$ determine a KM model. Then the model is isomorphic to one in which $a^{\ast}=sa^{\#}s^{-1}$ where the invertible element $s \in A$ and $\#$, a preferred choice of involution, satisfy the following.
\begin{enumerate}
\item \label{it:real1} For $D=\Rb$, $\#$ is the matrix transposition $\tr$. If $n$ is odd, $s$ takes the form
$
\eta(p,q)=\left(
\begin{smallmatrix}
1_{p\times p} & 0 \\
0 & - 1_{q \times q}
\end{smallmatrix}
\right)
$
where $p+q=n$; if $n$ is even then $s$ can also be the anti-symmetric matrix $\Omega$.
\item \label{it:real2} For $D=\Cb_{\Rb}$, $\#$ can be either matrix transposition $tr$ or hermitian conjugation $\dagger$. For $\#=\tr$, $s=1$ if $n$ is odd; if $n$ is even we can in addition have $s=\Omega$. For $\#=\dagger$, 
$
\eta(p,q)=\left(
\begin{smallmatrix}
1_{p\times p} & 0 \\
0 & - 1_{q \times q}
\end{smallmatrix}
\right)
$ where $p+q=n$.
\item \label{it:real3} For $D=\Hb$, $\#=\ddagger$, the quaternionic hermitian conjugation. Then $s$ is proportional to
$
\eta(p,q)=\left(
\begin{smallmatrix}
1_{p\times p} & 0 \\
0 & - 1_{q \times q}
\end{smallmatrix}
\right)
$
where $p+q=n$ and the proportionality constant is either $1$ or $\hat{\imath}$.
\end{enumerate} 
\end{lemma}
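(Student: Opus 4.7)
The plan is to mirror the strategy of lemma~\ref{lem:non_complex} one case at a time, exploiting the fact that two involutions $\ast,\bullet$ related by a conjugation $\bullet=\omega'\circ\ast\circ(\omega')^{-1}$ define the same KM model. The key observation from lemma~\ref{lem:class_inv} is that, once a canonical involution $\#$ is fixed, every other involution takes the form $a^\bullet=sa^\#s^{-1}$ for some invertible $s$ with $s^\#=\mu s$. For inner automorphisms $\omega'(a)=tat^{-1}$, this $s$ transforms as $s\mapsto tst^\#$, so the classification reduces to finding normal forms for $s$ under this action (possibly composed with the outer $\Cb_\Rb$-automorphism $\psi(a)=\bar a$ in the $\Cb_\Rb$ case). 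In each case, the division ring will dictate which invariants of $s$ survive the transformation $s\mapsto tst^\#$.

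For $D=\Rb$, I would take $\#=\tr$ and distinguish $s^\tr=s$ from $s^\tr=-s$. In the symmetric subcase, the spectral theorem gives an orthogonal $o$ with $oso^\tr=\diag(\lambda_1,\ldots,\lambda_n)$; unlike the complex setting only the \emph{positive} real square roots of $|\lambda_i|$ are available, so a diagonal rescaling can normalise each $\lambda_i$ to $\pm 1$ but not eliminate the signs. This produces the signature normal form $\eta(p,q)$. In the anti-symmetric subcase (only possible for $n$ even since $s$ must be invertible), a real symplectic reduction followed by a positive rescaling brings $s$ to the standard $\Omega$, exactly as in lemma~\ref{lem:non_complex}.

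For $D=\Cb_\Rb$ the argument for $\#=\tr$ is essentially identical to lemma~\ref{lem:non_complex}, because complex square roots are available, so $s=1$ (or $s=\Omega$ when $n$ is even) appear. The new ingredient is $\#=\dagger$, where $s=\mu s^\dagger$ with $|\mu|=1$; by absorbing a phase into $s$ (using the outer automorphism $\psi$ combined with an inner scaling) I can assume $\mu=1$, i.e.\ $s$ is Hermitian. Unitary diagonalisation yields $usu^\dagger=\diag(\lambda_1,\ldots,\lambda_n)$ with real $\lambda_i$; rescaling by positive reals then forces the signature normal form $\eta(p,q)$, again because $\sqrt{\lambda_i}$ is not real for negative $\lambda_i$.

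For $D=\Hb_\Rb$ the only canonical involution is $\ddagger$, and $s$ satisfies $s=\mu s^\ddagger$ with $\mu=\pm 1$. In the Hermitian subcase ($\mu=+1$) I would use the quaternionic spectral theorem: there is a symplectic $u$ so that $usu^\ddagger$ is diagonal with real entries, which a positive diagonal rescaling can reduce to $\eta(p,q)$. The anti-Hermitian subcase ($\mu=-1$) is the main obstacle, both because quaternionic multiplication is non-commutative and because one cannot simply ``divide by $\hat\imath$'' from one side. The idea is to note that the centraliser argument of lemma~\ref{lem:class_inv} allows one to choose a quaternionic unit $q$ so that $q^{-1}s$ is Hermitian; conjugating by a symplectic matrix and using that real diagonal matrices commute with all of $\Hb_\Rb$, one obtains $s=\hat\imath\,\eta(p,q)$ up to equivalence. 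Verifying directly that this $s$ indeed satisfies $s^\ddagger=-s$ (using $\eta(p,q)^\ddagger=\eta(p,q)$ and that $\eta(p,q)$ commutes with $\hat\imath$) closes the classification. The hardest technical point in all of this will be tracking which rescalings are legitimate under $s\mapsto tst^\#$ in the non-commutative quaternionic setting, since not every scalar can be written as $t\bar t$.
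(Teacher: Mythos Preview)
Your overall strategy matches the paper's proof closely: fix a canonical $\#$, write any involution as $a\mapsto sa^\#s^{-1}$, and reduce $s$ to normal form under $s\mapsto tst^\#$. The real case and the $\Cb_\Rb$, $\#=\tr$ case are handled exactly as the paper does.

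Two points of divergence are worth flagging. First, in the $\Cb_\Rb$, $\#=\dagger$ case you propose absorbing the phase $\mu$ using the outer automorphism $\psi$. The paper explicitly avoids this: it simply writes $s=\sqrt{\mu}\,h$ with $h=h^\dagger$, brings $h$ to $\eta(p,q)$ by a unitary-plus-rescaling $t$, and then observes that $\sqrt{\mu}$ and $\sqrt{\mu^{-1}}$ cancel in the formula $a^\bullet=(tst^\dagger)a^\dagger(tst^\dagger)^{-1}$, so the involution is $\eta(p,q)a^\dagger\eta(p,q)$ regardless. Your route works too, but the paper's is shorter and needs no outer automorphism.

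Second, and more substantively, your treatment of the quaternionic anti-Hermitian subcase has a gap. You propose choosing a quaternionic unit $q$ so that $q^{-1}s$ is Hermitian \emph{before} diagonalising; but for a general anti-Hermitian $s$ there is no such $q$, since $(q^{-1}s)^\ddagger=s^\ddagger\,\overline{q^{-1}}=-s\,\bar q^{-1}$ and this equals $q^{-1}s$ only if $s$ already commutes with $q$. The paper sidesteps this by treating both signs at once: $s=\pm s^\ddagger$ forces $s$ to be normal, and any normal quaternionic matrix is unitarily similar to a diagonal matrix with \emph{complex} entries (citing Loring). Only after this reduction does one split: the diagonal entries are real in the Hermitian case and purely imaginary in the anti-Hermitian case, and a further complex-unitary step then yields $\eta(p,q)$ or $\hat\imath\,\eta(p,q)$ respectively. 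So the correct order is diagonalise first, factor out $\hat\imath$ second---which also dissolves the non-commutativity worry you raise at the end, since at that stage everything is already scalar.
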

\begin{proof}

One starts with the case $D=\Rb$. Since $\tr$ is an involution also for real matrices the analysis of lemma \ref{lem:non_complex} leading to the conclusion that any $\ast$ structure is equivalent to one of the form $a^{\bullet}=(tst^{\tr})a^{\tr}(tst^{\tr})^{-1}$, where we are free to choose the invertible element $t \in A$, still follows. The fact $t$ is now a real matrix, however, will condition the transformations $tst^{\tr}$. 

If $s$ is symmetric there exists a real orthogonal matrix $o$ that will allow one to diagonalise it; in particular it is possible to choose the matrix $o$ so that the first $p$  diagonal entries $\lambda_1,\cdots,\lambda_p$ are positive eigenvalues ($p$ is the total number of positive eigenvalues). A matrix $d=\text{diag}\left(\frac{1}{\sqrt{\lambda_1}},\cdots,\frac{1}{\sqrt{\lambda_p}},\frac{1}{\sqrt{-\lambda_{p+1}}},\cdots,\frac{1}{\sqrt{-\lambda_n}} \right)$ is well defined and can be used to rescale the eigenvalues of $s$. However, the eigenvalue signs cannot be modified. The transformation $(do)s(do)^{tr}$ will result on the matrix $\eta(p,q)$ with $p+q=n$. 

If $n$ is even, $s$ could also be anti-symmetric. For such $s$ there exists an orthogonal matrix $o$ such that $oso^{\tr}$ is block-diagonal where each block constitutes of a $2 \times 2$ matrix $\gamma=\left(\begin{smallmatrix}0 & 1\\ -1 &0\end{smallmatrix}\right)$ multiplied by some number $\theta_k$. In particular, the $\theta_k$ are real and positive. This means the matrix $d=\text{diag}\left(\frac{1}{\sqrt{\theta_1}},\frac{1}{\sqrt{\theta_1}},\cdots,\frac{1}{\sqrt{\theta_{\frac{n}{2}}}},\frac{1}{\sqrt{\theta_{\frac{n}{2}}}}\right)$ is well defined and can be used to rescale the entries of $oso^{\tr}$. Therefore, by choosing $t=do$ we obtain $tst^{\tr}=\Omega$.

For $D=\mathbb{C}_{\Rb}$, one can either have $a^{\ast}=sa^{\tr}s^{-1}$ with $s=\pm s^{\tr}$ or $a^{\ast}=sa^{\dagger}s^{-1}$ with $s=\mu s^{\dagger}$, $|\mu|=1$. A treatment similar to that of lemma \ref{lem:non_complex} leads to the stated conclusions for $\ast=\tr$ and is therefore not repeated here. As previously studied, KM models are regarded as equivalent if their involutions $\ast,\bullet$ can be related via an inner automorphism $\omega'$ as $\bullet=\omega' \circ \ast \circ (\omega')^{-1}$. (One could also include the action of $\psi \colon \mathcal{Z}(D) \to \mathcal{Z}(D)$ on the automorphism since $\varepsilon \circ \psi = \varepsilon$; however, there is no loss of generality in not doing so for the current purpose.) 

For our specific $\ast=\dagger$ this means any $\bullet$ acting as
\begin{align}
a^{\bullet}=(tst^{\dagger})a^{\dagger}(tst^{\dagger})^{-1}
\end{align}
will give rise to the same state sum model. Consider for the moment the case $s=s^{\dagger}$. Then there exists unitary $u$ such that $usu^{\dagger}$ is a diagonal matrix with eigenvalues $\lambda_k$ as the non-zero entries. Since the eigenvalues of a hermitian matrix are always real, one can choose with no loss of generality the first $p$ eigenvalues to be positive and the remaining $q$ to be negative, with $p+q=n$. The matrix $d=\text{diag}\left(\frac{1}{\sqrt{\lambda_1}},\cdots,\frac{1}{\sqrt{\lambda_n}} \right)$ can be used to rescale them. Therefore, for $t=du$ one obtains $tst^{\dagger}=\eta(p,q)$. On the other hand, if $s=\mu s^{\dagger}$ for $|\mu|=1$ then there exists a hermitian matrix $h$ and a root of $\mu$ such that $s=\sqrt{\mu}h$. As before, one could then choose a matrix $t$ so that $tst^{\dagger}=\sqrt{\mu}\eta(p,q)$ and the involution reduces to $a^{\bullet}=\eta(p,q)a^{\dagger}\eta(p,q)$. On the other hand, we would have $(tst^{\dagger})^{-1}=\sqrt{\mu^{-1}}\eta(p,q)$. The involution would still be reduced to $a^{\bullet}=\eta(p,q)a^{\dagger}\eta(p,q)$. 

Finally one must handle the quaternionic case, for which any involution satisfies $a^{\bullet}=sa^{\ddagger}s^{-1}$ and $s=\pm s^{\ddagger}$. This relation for $s$ means $s$ is normal ($ss^{\ddagger}=s^{\ddagger}s$) and any quaternionic normal matrix is equivalent through a quaternionic unitary transformation $u$ to a diagonal matrix with complex entries \cite{Loring} -- here, complex denotes an entry of the form $a + \hat{\imath}b +\hat{\jmath}.0 +\hat{k}.0$. But it is already known that any hermitian (anti-hermitian) complex matrix is similar to $\eta(p,q)$ ($\hat{\imath}.\eta(p,q)$) through a complex unitary transformation. Therefore, by choosing the appropriate automorphism one can have either $a^{\bullet}=\eta(p,q)a^{\ddagger}\eta(p,q)$ or $a^{\bullet}=-\hat{\imath}\eta(p,q)a^{\ddagger}\hat{\imath}\eta(p,q)$.   
\end{proof}

\chapter{Finding all spin crossings for $\Cb\Zb_n$}\label{app:code}

\textit{Mathematica} code based on {`}Two-dimensional state sum models and spin structures{'} by John W. Barrett and Sara O. G. Tavares, prepared by Antony R. Lee and Sara O. G. Tavares. Please
contact the authors to reproduce any section of the code. 

We wish to study all crossings compatible with a spin state sum model in a special class of algebras: $\Cb\Zb_n$ where $\Zb_n$ is the cyclic group of order $n$ generated by $h$. The axioms 1 to 5 used throughout are those of definition~\ref{def:spin-model} for a spin crossing. Einstein{'}s summation convention is used throughout with other sums displayed explicitly.

We start by defining the order of the group through the variable dim.

\begin{doublespace}
\noindent\(\pmb{\quad \dim =3;}\)
\end{doublespace}

The variable A is a tensor in four indices that will store the information for the crossing map. In other words, \(\lambda :A \otimes A \rightarrow
A\otimes A\) can be written as \(\lambda =\Sigma _{ijkl}\lambda _{i,j}{}^{k,l}h^i\otimes h^j\otimes h^k\otimes h^l\) where the group elements,
powers of \(h\), are taken as the algebra basis. The components of $\lambda $ are then stored in A. 

\begin{doublespace}
\noindent\(\pmb{\quad A=\text{Array}\left[a_{\#\#}\&,\{\dim ,\dim ,\dim ,\dim \}\right];}\)
\end{doublespace}

\begin{doublespace}
\noindent\(\pmb{\quad \text{list0}=\text{Table}\left[0,\left\{i,1,\dim ^3\right\}\right];}\\
\pmb{\quad \text{list1}=\text{Table}\left[0,\left\{i,1,\dim ^4\right\}\right];}\\
\pmb{\quad \text{list2}=\text{Table}\left[0,\left\{i,1,\dim ^4-\dim ^3\right\}\right];}\)
\end{doublespace}

A combination of axioms 2 and 3 allows us to simplify the tensorial form of the crossing: it is easy to conclude \(\lambda _{0,j}{}^{k,l}=\delta
_0{}^l\delta _j{}^k\) . Although we will still enforce axioms 2 and 3 independently we can use this simplification to expedite calculations. We use
list0 to store the values \(\lambda _{0,j}{}^{k,l}\) must take. There are \(\dim ^3\) of these. Note that our index system records (powers of \(h\))
+ 1 instead of powers of \(h\). This is due to the internal labelling system of \textit{Mathematica} i.e. the zeroth element of an array does not
exist in \textit{Mathematica}. 

\begin{doublespace}
\noindent\(\pmb{\quad \text{tot}=0;}\\
\pmb{\quad \text{For}[j=1,j\leq \dim ,j\text{++},}\\
\pmb{\quad \text{For}[k=1,k\leq \dim ,k\text{++},}\\
\pmb{\quad \text{For}[l=1,l\leq \dim ,l\text{++},\text{tot}\text{++};}\\
\pmb{\quad \quad \text{list0}[[\text{tot}]]=\{A[[1,j,k,l]]=\text{KroneckerDelta}[1,l]\text{KroneckerDelta}[j,k]\}}\\
\pmb{\quad ]]]}\\
\pmb{\quad \text{Clear}[j,k,l]}\)
\end{doublespace}

Similarly we can conclude axiom 1 corresponds to \(\lambda _{i,j}{}^{k,l}=\lambda _{-k,i}{}^{l,-j}\) . We use list1 to store the equations the identity
gives rise to. As we said indices record (powers of \(h\)) + 1. To appropriately calculate \(-i\) we must therefore compute { }\([-i+1 (\text{mod}
\dim )]+1\). We recall dim is the order of the cyclic group.

\begin{doublespace}
\noindent\(\pmb{\quad \text{tot}=0;}\\
\pmb{\quad \text{For}[i=1,i\leq \dim ,i\text{++},}\\
\pmb{\quad \text{For}[j=1,j\leq \dim ,j\text{++},}\\
\pmb{\quad \text{For}[k=1,k\leq \dim ,k\text{++},}\\
\pmb{\quad \text{For}[l=1,l\leq \dim ,l\text{++},\text{tot}\text{++};}\\
\pmb{\quad \quad\text{list1}[[\text{tot}]]=\{A[[i,j,k,l]]\text{==}A[[\text{Mod}[-k+1,\dim ]+1,i,l,}\\
\pmb{\quad \quad\text{Mod}[-j+1,\dim]+1]]\}}\\
\pmb{\quad ]]]]}\\
\pmb{\quad \text{Clear}[i,j,k,l,\text{tot}]}\)
\end{doublespace}

We now store the components of the crossing on a new list, list2.

\begin{doublespace}
\noindent\(\pmb{\quad \text{tot}=0;}\\
\pmb{\quad \text{For}[i=2,i\leq \dim ,i\text{++},}\\
\pmb{\quad \text{For}[j=1,j\leq \dim ,j\text{++},}\\
\pmb{\quad \text{For}[k=1,k\leq \dim ,k\text{++},}\\
\pmb{\quad \text{For}[l=1,l\leq \dim ,l\text{++},\text{tot}\text{++};\text{list2}[[\text{tot}]]=A[[i,j,k,l]]}\\
\pmb{\quad ]]]]}\\
\pmb{\quad \text{Clear}[i,j,k,l,\text{tot}]}\)
\end{doublespace}

We will use the equations in list1 to simplify the variables in list2, the components of the crossing. The solutions to these equations are stored
in a new tensor, B. It is worth noting we expect there to be more variables than independent equations. Hence the resulting warning -- \textit{Solve::svars:
{``}Equations may not give solutions for all $\backslash ${''}solve$\backslash ${''} variables.{''}} -- when the program is run which will not affect
the outcome. 

The operations performed over list1 are designed to eliminate unnecessary equations. The \textit{Mathematica} instruction to delete duplicates is
self-explanatory; erasing the first entry of the list, however, might not be so. This is done to get rid of a trivial condition: an equation which
is already identified as being {`}True{'} and that always appears in the first entry of the list after duplicates are removed. This approach will
be followed throughout the program when dealing with systems of equations. Flatten reduces the produced list to its most basic form within \textit{Mathematica}.

\begin{doublespace}
\noindent\(\pmb{\quad \text{eqns}=\text{Delete}[\text{list1}\text{//}\text{DeleteDuplicates},1]\text{//}\text{Flatten};}\\
\pmb{\quad \text{sols}=\text{Solve}[\text{eqns},\text{list2}];}\)
\end{doublespace}

In the above, Solve is a Mathematica routine which solves a list of equations (eqns) for a given list of variables (list2). Finally, we store the
results in the array B (which can be viewed as a matrix for convenience).

\begin{doublespace}
\noindent\(\pmb{\quad B=\text{Flatten}[A\text{/.}\text{sols},1];}\\
\pmb{\quad B\text{//}\text{MatrixForm};}\)
\end{doublespace}

The map \(\varphi :A \rightarrow A\) is now defined. The components \(\varphi \left(h^i\right)= \Sigma _j\varphi ^i{}_j\left(h^j\right)\),
\(\varphi ^i{}_j= \lambda _{i,k}{}^{j,k}\) will be stored through the tensor f.

\begin{doublespace}
\noindent\(\pmb{\quad f[\text{i$\_$},\text{j$\_$}]\text{:=}\text{Sum}[B[[i,k,j,k]],\{k,1,\dim \}];}\)\\
\noindent\(\pmb{\quad \text{list3}=\text{Table}\left[0,\left\{i,1,\dim ^5\right\}\right];}\\
\pmb{\quad \text{list4}=\text{Table}\left[0,\left\{i,1,\dim ^4\right\}\right];}\\
\pmb{\quad \text{list5}=\text{Table}\left[0,\left\{i,1,\dim ^6\right\}\right];}\\
\pmb{\quad \text{list6}=\text{Table}\left[0,\left\{i,1,\dim ^2\right\}\right];}\\
\pmb{\quad \text{list7}=\text{Table}\left[0,\left\{i,1,\dim ^2\right\}\right];}\\
\pmb{\quad \text{list8}=\text{Table}\left[0,\left\{i,1,\dim ^2\right\}\right];}\)
\end{doublespace}

To store the equations axiom 2 imposes, \(\lambda _{g,h}{}^{m,-l+p}=\lambda _{h,l}{}^{-g+k,n}\lambda _{k,n}{}^{m,p}\), we use list3. There are \(\dim ^5\) of them.

\begin{doublespace}
\noindent\(\pmb{\quad \text{tot}=0;}\\
\pmb{\quad \text{For}[g=1,g\leq \dim ,g\text{++},}\\
\pmb{\quad \text{For}[h=1,h\leq \dim ,h\text{++},}\\
\pmb{\quad \text{For}[m=1,m\leq \dim ,m\text{++},}\\
\pmb{\quad \text{For}[p=1,p\leq \dim ,p\text{++},}\\
\pmb{\quad \text{For}[l=1,l\leq \dim ,l\text{++},}\\
\pmb{\quad \text{tot}\text{++};}\\
\pmb{\quad \text{list3}[[\text{tot}]]=\{B[[g,h,m,\text{Mod}[\text{Mod}[-l+1,\dim ]+(p-1),\dim ]+1]]}\\
\pmb{\quad \quad ==\text{Sum}[B[[h,l,\text{Mod}[\text{Mod}[-g+1,\dim ]+(k-1),\dim ]+1,n]]B[[k,n,m,p]],}\\
\pmb{\quad \quad \{k,1,\dim \},\{n,1,\dim \}]\}}\\
\pmb{\quad ]]]]];}\\
\pmb{\quad \text{Clear}[g,h,m,p,l,\text{tot}]}\)
\end{doublespace}

We can now introduce axiom 3, the Reidemeister II move: \(\lambda _{i,j}{}^{m,n}\lambda _{m,n}{}^{k,l}=\delta _i{}^k\delta _j{}^l\). list4 is used
to store these identities; there are \(\dim ^4\) of them. 

\begin{doublespace}
\noindent\(\pmb{\quad \text{tot}=0;}\\
\pmb{\quad \text{For}[i=1,i\leq \dim ,i\text{++},}\\
\pmb{\quad \text{For}[j=1,j\leq \dim ,j\text{++},}\\
\pmb{\quad \text{For}[k=1,k\leq \dim ,k\text{++},}\\
\pmb{\quad \text{For}[l=1,l\leq \dim ,l\text{++},\text{tot}\text{++};}\\
\pmb{\quad \text{list4}[[\text{tot}]]=\{\text{Sum}[B[[i,j,m,n]]B[[m,n,k,l]],\{m,1,\dim \},\{n,1,\dim \}]}\\
\pmb{\quad \quad ==\text{KroneckerDelta}[i,k]\text{KroneckerDelta}[j,l]\}}\\
\pmb{\quad ]]]];}\\
\pmb{\quad \text{Clear}[i,j,k,l,\text{tot}]}\)
\end{doublespace}

Axiom 4 translates into \(\lambda _{g,h}{}^{s,n}\lambda _{n,l}{}^{t,r}=\lambda _{h,l}{}^{m,n}\lambda _{g,m}{}^{o,p}\lambda
_{p,n}{}^{q,r}\lambda _{o,q}{}^{s,t}\). There are \(\dim ^6\) such identities and they are stored in list5.

\begin{doublespace}
\noindent\(\pmb{\quad \text{tot}=0;}\\
\pmb{\quad \text{For}[g=1,g\leq \dim ,g\text{++},}\\
\pmb{\quad \text{For}[h=1,h\leq \dim ,h\text{++},}\\
\pmb{\quad \text{For}[l=1,l\leq \dim ,l\text{++},}\\
\pmb{\quad \text{For}[s=1,s\leq \dim ,s\text{++},}\\
\pmb{\quad \text{For}[t=1,t\leq \dim ,t\text{++},}\\
\pmb{\quad \text{For}[r=1,r\leq \dim ,r\text{++},}\\
\pmb{\quad \text{tot}\text{++};\text{list5}[[\text{tot}]]=\{\text{Sum}[B[[g,h,s,n]]*B[[n,l,t,r]],\{n,1,\dim \}]}\\
\pmb{\quad \quad ==\text{Sum}[B[[h,l,m,n]]*B[[g,m,o,p]]*B[[p,n,q,r]]*B[[o,q,s,t]],}\\
\pmb{\quad \quad \{m,1,\dim \},\{n,1,\dim \},\{o,1,\dim \},\{p,1,\dim \},\{q,1,\dim \}]\}}\\
\pmb{\quad ]]]]]];}\\
\pmb{\quad \text{Clear}[g,h,l,k,s,t,r,\text{tot}]}\)
\end{doublespace}

Axiom 5, which translates into \(\varphi ^i{}_j=\varphi ^j{}_i\) or \(\varphi ^i{}_m\varphi ^m{}_j=\delta ^i{}_j\) is now imposed. The resulting
identities are stored in list6 and list7, respectively.

\begin{doublespace}
\noindent\(\pmb{\quad \text{tot}=0;}\\
\pmb{\quad \text{For}[i=1,i\leq \dim ,i\text{++},}\\
\pmb{\quad \text{For}[j=1,j\leq \dim ,j\text{++},}\\
\pmb{\quad \text{tot}\text{++};\text{list6}[[\text{tot}]]=\{f[i,j]==f[j,i]\};}\\
\pmb{\quad ]];}\\
\pmb{\quad \text{Clear}[i,j,\text{tot}]}\\
\pmb{}\\
\pmb{\quad \text{tot}=0;}\\
\pmb{\quad \text{For}[i=1,i\leq \dim ,i\text{++},}\\
\pmb{\quad \text{For}[j=1,j\leq \dim ,j\text{++},}\\
\pmb{\quad \text{tot}\text{++};\text{list7}[[\text{tot}]]=\{\text{Sum}[f[i,k]f[k,j],\{k,1,\dim \}]==\text{KroneckerDelta}[i,j]\};}\\
\pmb{\quad ]];}\\
\pmb{\quad \text{Clear}[i,j,\text{tot}]}\)
\end{doublespace}

To ensure internal consistency we impose some identities the axioms would make redundant. For example, we know the map \(p :A\rightarrow  A\) must
obey \(p \circ  \varphi =p\). We store the component equalities the expression gives rise to, \(\Sigma _l\text{  }\lambda _{h,g}{}^{l,-l+k+h}=\Sigma
_l \varphi _g{}^m\lambda _{h,m}{}^{l,-l+k+h}\) , in list8. There are \(\dim ^2\) of these.

\begin{doublespace}
\noindent\(\pmb{\quad \text{tot}=0;}\\
\pmb{\quad \text{For}[g=1,g\leq \dim ,g\text{++},}\\
\pmb{\quad \text{For}[k=1,k\leq \dim ,k\text{++},}\\
\pmb{\quad \text{tot}\text{++};}\\
\pmb{\quad \text{list8}[[\text{tot}]]=}\\
\pmb{\quad \quad \{\text{Sum}[B[[h,g,l,\text{Mod}[\text{Mod}[-l+1,\dim ]+(k-1)+(h-1),\dim ]+1]],}\\
\pmb{\quad \quad \{l,1,\dim \},\{h,1,\dim \}]}\\
\pmb{\quad ==\text{Sum}[f[g,m]*}\\
\pmb{\quad \quad B[[h,m,l,\text{Mod}[\text{Mod}[-l+1,\dim ]+(k-1)+(h-1),\dim ]+1]],}\\
\pmb{\quad \quad \{l,1,\dim \},\{h,1,\dim \},\{m,1,\dim \}]\};}\\
\pmb{\quad ]];}\\
\pmb{\quad \text{Clear}[g,k,\text{tot}];}\)
\end{doublespace}

We are finally ready to find all the spin crossings. We first concatenate the full set of equations (list3 to list8) into a single list. We then
solve these equations for the variables contained within the array B, the components of the crossing. The solutions are initially stored in sols2
and then extracted for viewing convenience in \(\text{CC}_i\).

\begin{doublespace}
\noindent\(\pmb{\quad \text{eqns2}=\text{Join}[}\\
\pmb{\quad \text{Delete}[\text{list3}\text{//}\text{Simplify}\text{//}\text{DeleteDuplicates},1]\text{//}\text{Flatten},}\\
\pmb{\quad \text{Delete}[\text{list4}\text{//}\text{DeleteDuplicates},1]\text{//}\text{Flatten},}\\
\pmb{\quad \text{Delete}[\text{list5}\text{//}\text{DeleteDuplicates}\text{//}\text{Simplify},1]\text{//}\text{Flatten},}\\
\pmb{\quad \text{Delete}[\text{list6}\text{//}\text{DeleteDuplicates},1]\text{//}\text{Flatten},}\\
\pmb{\quad \text{Delete}[\text{list7}\text{//}\text{DeleteDuplicates},1]\text{//}\text{Flatten},}\\
\pmb{\quad \text{Delete}[\text{list8}\text{//}\text{Simplify}\text{//}\text{DeleteDuplicates},1]\text{//}\text{Flatten}];}\\
\pmb{\quad \text{Parallelize}[\text{sols2}}\\
\pmb{\quad \quad =\text{Solve}[\text{eqns2},\text{Delete}[B\text{//}\text{Flatten}\text{//}\text{DeleteDuplicates},\{\{1\},\{2\}\}]]];}\)\\
\noindent\(\pmb{\quad \text{CC}=\text{Table}\left[\text{CC}_i=B\text{/.}\text{sols2}[[i]],\{i,1,\text{Length}[\text{sols2}]\}\right];}\)
\end{doublespace}

All possible spin crossings, for the inital order dim, are displayed.

\begin{doublespace}
\noindent\(\pmb{\quad \text{Table}[\text{CC}[[i]]\text{//}\text{MatrixForm},\{i,1,\text{Length}[\text{sols2}]\}]}\)
\end{doublespace}

We are one step away of constructing partition functions. We need only determine the values of $\eta$ and $\chi$ associated with each of the valid
spin crossings. Some of those will be topological, others will not be topological but will still not distinguish spin structures as well and, finally,
a few ones will distinguish the spin parity.

We start by defining the value of \(\varphi ^i{}_j\) for each possible crossing solution \(\text{CC}_l\). This information will be stored in the
variable gg[i,j,l]. 

\begin{doublespace}
\noindent\(\pmb{\quad \text{gg}[\text{i$\_$},\text{j$\_$},\text{l$\_$}]\text{:=}\text{Sum}[\text{CC}[[l]][[i,k,j,k]],\{k,1,\dim \}];}\)
\end{doublespace}

As an element of the algebra, $\eta $ can be written as \(\eta =\Sigma _k\eta _kh^k\). For each crossing solution \(\text{CC}_i\) we present the
components of $\eta $ stored through the variable $\eta $[i,k]. The multiplicative constant \(\left|\Zb_n|^{-2} R^{-2}\right.\)
is omitted.

\begin{doublespace}
\noindent\(\pmb{\quad \eta [\text{i$\_$},\text{k$\_$}]\text{:=}}\\
\pmb{\quad \text{Sum}[\text{CC}[[i]][[h,g,\text{Mod}[\text{Mod}[-l+1,\dim ]+h+k+g-3,\dim
]+1,l]],}\\
\pmb{\quad \quad \{h,1,\dim \},\{g,1,\dim \},\{l,1,\dim \}];}\)
\end{doublespace}

As an element of the algebra, $\chi$ can be written as \(\chi =\Sigma _k\chi _kh^k\). For each crossing solution \(\text{CC}_i\) we present the
components of $\chi $ stored through the variable $\chi $[i,k]. The multiplicative constant \(\left| \Zb_n|^{-2} R^{-2}\right.\)
is omitted.

\begin{doublespace}
\noindent\(\pmb{\quad \chi [\text{i$\_$},\text{k$\_$}]\text{:=}}\\
\pmb{\quad \text{Sum}[\text{gg}[h,p,i]\text{gg}[\text{Mod}[-g+1,\dim ]+1,q,i]}\\
\pmb{\quad \text{CC}[[i]][[\text{Mod}[-h+1,\dim ]+1,g,l,}\\
\pmb{\quad \quad \text{Mod}[ \text{Mod}[-l+1,\dim ]}\\
\pmb{\quad \quad +\text{Mod}[-p+1,\dim ]+\text{Mod}[-q+1,\dim ]+(k-1),\dim ]+1]],}\\
\pmb{\quad \{h,1,\dim \},\{g,1,\dim \},\{l,1,\dim \},\{p,1,\dim \},\{q,1,\dim \}];}\)
\end{doublespace}

Knowing the possible pairs ($\eta $,$\chi $) is all the information we need to compute partition functions. We invite the reader to verify all examples
do satisfy \(\eta ^2=\chi ^2\).

\begin{doublespace}
\noindent\(\pmb{\quad \text{Table}[\eta [i,k],\{i,1,\text{Length}[\text{sols2}]\},\{k,1,\dim \}]}\\
\pmb{\quad \text{Table}[\chi [i,k],\{i,1,\text{Length}[\text{sols2}]\},\{k,1,\dim \}]}\)
\end{doublespace}

\listoffigures

\printglossary[title=List of Symbols,style=mcolindex]

\renewcommand{\bibname}{References} 
\bibliographystyle{unsrt}
\bibliography{6_backmatter/references} 

\end{document}